\newtheorem{thm}{Theorem}
\newtheorem{cor}[thm]{Corollary}
\newtheorem{lem}[thm]{Lemma}
\newtheorem{prop}[thm]{Proposition}
\newtheorem{fact}[thm]{Fact}
\newtheorem{defn}[thm]{Definition}
\newtheorem{ithm}{Theorem}
\newtheorem{icor}[ithm]{Corollary}
\newtheorem{idefn}[ithm]{Definition}
\theoremstyle{definition}
\newtheorem{rem}{Remark}
\newtheorem{notation}{Notation}
\newcommand{\rr}{\mathbb{R}}
\newcommand{\nn}{\mathbb{N}}
\newcommand{\ee}{\varepsilon}
\newcommand{\con}{\smallfrown}
\newcommand{\kk}{\mathcal{K}}
\newcommand{\bt}{\mathbb{N}^{<\mathbb{N}}}
\newcommand{\ct}{2^{<\mathbb{N}}}
\newcommand{\sg}{\sigma}
\newcommand{\incr}{\mathrm{Incr}}
\newcommand{\decr}{\mathrm{Decr}}
\newcommand{\llf}{\mathcal{L}_{\mathbf{f},f}}
\newcommand{\lbf}{\mathcal{L}_{\mathbf{f}}}
\newcommand{\splp}{\hat{S}_+(2^\nn)}
\newcommand{\splm}{\hat{S}_-(2^\nn)}
\newcommand{\alex}{\hat{A}(2^\nn)}
\newcommand{\dcantor}{\hat{D}(2^\nn)}
\newcommand{\dsplit}{\hat{D}\big(S(2^\nn)\big)}
\newcommand{\aaa}{\mathcal{A}}
\newcommand{\bbb}{\mathcal{B}}
\newcommand{\ccc}{\mathcal{C}}
\newcommand{\ppp}{\mathcal{P}}
\newcommand{\iii}{\mathcal{I}}
\newcommand{\jjj}{\mathcal{J}}
\newcommand{\fff}{\mathcal{F}}
\newcommand{\kkk}{\mathcal{K}}
\newcommand{\bseq}{\mathbf{f}=\{f_n\}_n}
\newcommand{\lbfc}{\mathcal{L}_{\mathbf{f},\mathcal{C}}}
\begin{document}

\title[A classification of separable Rosenthal compacta]{A
classification of separable Rosenthal compacta and
its applications}
\author{S. A. Argyros, P. Dodos and V. Kanellopoulos}
\address{National Technical University of Athens, Faculty of Applied
Sciences, Department of Mathematics, Zografou Campus, 157 80,
Athens, Greece} \email{sargyros@math.ntua.gr, pdodos@math.ntua.gr,
bkanel@math.ntua.gr}

\footnotetext[1]{2000 \textit{Mathematics Subject Classification}:
03E15, 05C05, 05D10, 46B03, 46B26, 54C35, 54D30, 54D55.}

\maketitle

\tableofcontents





\section{Introduction}

The theory of Rosenthal compacta, namely of compact subsets of the
first Baire class on a Polish space $X$, was initiated with the
pioneering work of H. P. Rosenthal \cite{Ro1}. Significant
contribution of many researchers coming from divergent areas has
revealed the deep structural properties of this class. Our aim is
to study some aspects of separable Rosenthal compacta, as well as,
to present some of their applications.

The present work consists of three parts. In the first one we
determine the prototypes of separable Rosenthal compacta and we
provide a classification theorem. The second part concerns an
extension of a theorem of S. Todor\v{c}evi\'{c} included in his
profound study of Rosenthal compacta \cite{To1}. The last one
is devoted to applications.

Our results, concerning the first part, are mainly included in
Theorems \ref{A} and \ref{B} below. Roughly speaking, we assert
that there exist seven separable Rosenthal compacta such that
every $\kk$ in the same class contains one of them in a very
canonical way. We start with the following.
\begin{idefn}
\label{id1} $\mathrm{(a)}$ Let $I$ be a countable set and $X, Y$ be Polish
spaces. Let $\{f_i\}_{i\in I}$ and $\{g_i\}_{i\in I}$ be two
pointwise bounded families of real-valued functions on $X$ and $Y$
respectively, indexed by the set $I$. We say that $\{f_i\}_{i\in
I}$ and $\{g_i\}_{i\in I}$ are equivalent if the natural map
$f_i\mapsto g_i$ is extended to a topological homeomorphism
between $\overline{\{f_i\}}^p_{i\in I}$ and
$\overline{\{g_i\}}^p_{i\in I}$. \\
$\mathrm{(b)}$ Let $X$ be a Polish space and $\{f_t\}_{t\in \ct}$ be
relatively compact in $\mathcal{B}_1(X)$. We say that
$\{f_t\}_{t\in \ct}$ is minimal if for every dyadic
subtree $S=(s_t)_{t\in\ct}$ of the Cantor tree $\ct$, the families
$\{f_t\}_{t\in\ct}$ and $\{f_{s_t}\}_{t\in\ct}$ are equivalent.
\end{idefn}
Related to the above notions, the following is proved.
\begin{ithm}
\label{A} $\mathrm{(a)}$ Up to equivalence, there are exactly seven minimal
families.\\
$\mathrm{(b)}$ For every family $\{f_t\}_{t\in\ct}$ relatively compact in
$\mathcal{B}_1(X)$, with $X$ Polish, there exists a regular dyadic
subtree $S=(s_t)_{t\in\ct}$ of $\ct$ such that
$\{f_{s_t}\}_{t\in\ct}$ is equivalent to one of the seven minimal
families.
\end{ithm}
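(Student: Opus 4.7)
My plan is to treat parts (a) and (b) in tandem, since the seven prototypes in (a) should emerge precisely as the stable endpoints of the Ramsey-type reduction carried out in (b). The overall strategy for (b) is to apply an iterated sequence of dichotomies on the dyadic tree $\ct$, each producing a regular dyadic subtree along which a prescribed aspect of the family stabilizes, and then to show that the list of possible stable types contains exactly seven pairwise non-equivalent classes.

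Fix a countable dense subset $D\subseteq X$. A first Ramsey-type selection, in the spirit of the combinatorial principles underlying Stern's theorem on perfect subtrees and the Halpern--L\"auchli theorem, would be applied to stabilize, for each $x\in D$, the behavior of the sequences $\{f_{\sigma\upharpoonright n}(x)\}_n$ along branches $\sigma\in 2^\nn$: whether they converge, and, in case of convergence, whether the limits vary coherently across $x$. Iteration over $D$ together with a diagonal argument yields a regular dyadic subtree along which branch-wise pointwise limits are well defined and tame. A second round of dichotomies then stabilizes the relationship \emph{between} branches (agreement, symmetric splitting, one-sided splitting, or purely local oscillation), and further refinements eliminate mixed behaviors. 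The resulting list of possible stable patterns would be identified with the seven canonical compacta $\spl$, $\splp$, $\splm$, $\alex$, $\dcantor$, $\dsplit$, together with the trivial convergent case.

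For part (a), I would exhibit explicit minimal families $\{f^{(i)}_t\}_{t\in\ct}$ realizing each of the seven types. Minimality of each prototype reduces to verifying that passage to any dyadic subtree leaves the pointwise closure homeomorphic to the original, which can be read off from the self-similar structure of the underlying compacta. Pairwise non-equivalence is then established via topological invariants of the pointwise closure: Cantor--Bendixson rank, the structure of the set of non-isolated points, the number of accumulation points attached to each branch limit, and whether splittings are one-sided or two-sided.

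The main obstacle I expect is the first step: formulating a Ramsey-type selection principle on dyadic subtrees that is sharp enough to detect the relevant first Baire class behavior yet robust enough to iterate without losing regularity of the subtree. A secondary difficulty is verifying that the stabilization produces exactly seven non-equivalent patterns rather than a larger finite number, which will require a careful case analysis combined with topological distinction among the resulting compacta.
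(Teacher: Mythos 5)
Your plan only ever stabilizes behavior along \emph{branches}, and that is the central gap. The equivalence class of $\{f_t\}_{t\in\ct}$ is governed by which arbitrary infinite subsets of $\ct$ index pointwise convergent subsequences, and the non-metrizable prototypes ($\splp$, $\splm$, $\alex$, $\dcantor$, $\dsplit$) are distinguished precisely by limits along \emph{antichains}; a scheme that canonicalizes only branch-wise limits cannot see these. The paper's proof rests on a new partition theorem (Theorem \ref{ap1}) for the classes of increasing and decreasing antichains, which are Ramsey and cofinal among all antichains, applied alongside Stern's theorem for chains to the coloring ``the subsequence $(f_t)_{t\in c}$ is pointwise convergent on $X$'' (Theorem \ref{rt1}). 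This also shows why your first step is unsound as written: stabilizing, for each $x$ in a countable dense $D\subseteq X$, the convergence of $(f_{\sigma|n}(x))_n$ and then diagonalizing does not control pointwise convergence on $X$, because Baire-1 functions are not determined by their values on $D$; what makes the tree Ramsey theorems applicable is the co-analyticity of the whole-space convergence condition, and one must additionally prove that the resulting limit assignments $\sg\mapsto g^0_\sg, g^+_\sg, g^-_\sg$ (chain, increasing-antichain and decreasing-antichain limits) are Borel, so that the ``relations between branches'' can be canonicalized by Galvin's theorem on analytic partitions of pairs from a perfect set (Theorem \ref{galvin2}) --- the mechanism your ``second round of dichotomies'' leaves unspecified.

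For part (a), verifying minimality and pairwise non-equivalence ``via topological invariants of the pointwise closure'' would fail, because equivalence (Definition \ref{equiv}) is strictly stronger than homeomorphism of the closures: $\splp$ and $\splm$ are homeomorphic compacta whose canonical dense families are \emph{not} equivalent, so no invariant of the compactum alone can separate them; likewise minimality demands that the specific map $d_t\mapsto d_{s_t}$ extend to a homeomorphism, not merely that the closure over a dyadic subtree be homeomorphic to the original. The paper handles both points through the criterion of Lemma \ref{l1} --- two discrete dense families indexed by the same set are equivalent iff exactly the same index sets give convergent subsequences --- together with the explicit computation of the sets $\mathcal{L}(\kk_i)$ for each prototype. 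Finally, your list of stable types (including $\spl$ plus a ``trivial convergent case'') does not match the actual seven, namely $A(\ct)$, $2^{\leqslant\nn}$, $\splp$, $\splm$, $\alex$, $\dcantor$, $\dsplit$; these arise from the exhaustive case analysis in the proof of Theorem \ref{basis} of which of $g^0_\sg$, $g^+_\sg$, $g^-_\sg$ coincide and how they vary with $\sg$, a case analysis your sketch cannot generate without the antichain machinery.
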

For any of the seven minimal families the corresponding pointwise
closure is a separable Rosenthal compact containing the family as
a discrete set. We denote them as follows
\[ A(\ct), \ 2^{\leqslant\nn}, \ \splp, \ \splm, \ \alex, \
\dcantor, \text{ and } \dsplit. \] The precise description of the
families and the corresponding compacta is given in \S 4.3. The
first two in the above list are metrizable spaces. The next two
are hereditarily separable, non-metrizable and mutually
homeomorphic (thus, the above defined notion of equivalence of
families is stronger than saying that the corresponding closures
are homeomorphic). The space $\splp$, and so the space $\splm$
as well, can be realized as a closed subspace of the split interval $S(I)$.
Following \cite{E}, we shall denote by $A(2^\nn)$ the one point
compactification of the Cantor set $2^\nn$. The space $\alex$ is
the standard separable extension of $A(2^\nn)$ (see \cite{Pol2},
\cite{Ma}). This is the only not first countable space from the
above list. The space $\dcantor$ is the separable extension of the
Alexandroff duplicate of the Cantor set $D(2^\nn)$, as it was
described in \cite{To1}. Finally, the space $\dsplit$ can be
realized as a closed subspace of the Helly space. Its accumulation
points is the closure of the standard uncountable discrete subset
of the Helly space.

Theorem \ref{A} is essentially a success of the infinite-dimensional
Ramsey Theory for trees and perfect sets. There is a long history
on the interaction between Ramsey Theory and Rosenthal compacta,
which can be traced back to the classical J. Farahat's proof
\cite{F} of H. P. Rosenthal's $\ell_1$ Theorem \cite{Ro-ell1} and
its tree extension due to J. Stern \cite{S}. This interaction
was further expanded by S. Todor\v{c}evi\'{c} in \cite{To1}
with the use of the parameterized Ramsey Theory for
perfect sets.

The new Ramsey theoretic ingredient in the proof of Theorem \ref{A}
is a result concerning partitions of two classes of antichains
of the Cantor tree, which we call \textit{increasing} and
\textit{decreasing}. We will briefly comment on the proof
of Theorem \ref{A} and the critical role of this result.
One starts with a family $\{f_t\}_{t\in\ct}$ relatively
compact in $\bbb_1(X)$. A first topological
reduction shows that in order to understand the closure of
$\{f_t\}_{t\in\ct}$ in $\rr^X$ it is enough to determine all
subsets of the Cantor tree for which the corresponding subsequence
of $\{f_t\}_{t\in\ct}$ is pointwise convergent. A second reduction
shows that it is enough to determine only a cofinal subset of
convergent subsequences. One is then led to analyze which classes
of subsets of the Cantor tree are Ramsey and cofinal. First, we observe
that every infinite subset of $\ct$ either contains an infinite chain,
or an infinite antichain. It is well-known, and goes back to Stern,
that chains are Ramsey. On the other hand, the set of all
antichains is not. However, the classes of increasing
and decreasing antichains are Ramsey and, moreover,
they are cofinal in the set of all antichains. Using the
above properties of chains and of increasing and decreasing
antichains we are able to have a satisfactory
control over the convergent subsequences of $\{f_t\}_{t\in\ct}$.
Finally, repeated applications of F. Galvin's theorem on
partitions of doubletons of perfect sets of reals
permit us to fully canonicalize the topological
behavior of $\{f_t\}_{t\in\ct}$ yielding the
proof of Theorem \ref{A}.

A direct consequence of Theorem \ref{A}(b) is that for every
separable Rosenthal compact and for every countable dense subset
$\{f_t\}_{t\in\ct}$ of it, there exists a regular dyadic subtree
$S=(s_t)_{t\in\ct}$ such that the pointwise closure of
$\{f_{s_t}\}_{t\in\ct}$ is homeomorphic to one of the above
described compacta. In general, for a given countable dense subset
$\{f_n\}_n$ of a separable Rosenthal compact $\kk$, we say that
one of the minimal families canonically embeds into $\kk$ with
respect to $\{f_n\}_n$ if there exists an increasing injection
$\phi:\ct\to\nn$ such that the family $\{f_{\phi(t)}\}_{t\in\ct}$
is equivalent to it. The next theorem is a supplement of Theorem
\ref{A}, showing that the minimal families can be chosen to
characterize certain topological properties of $\kk$.
\begin{ithm}
\label{B} Let $\kk$ be a separable Rosenthal compact and $\{f_n\}_n$
a countable dense subset of $\kk$.
\begin{enumerate}
\item[(a)] If $\kk$ consists of bounded functions in
$\mathcal{B}_1(X)$, is metrizable and non-separable in the
supremum norm, then $2^{\leqslant\nn}$ canonically embeds into
$\kk$ with respect to $\{f_n\}_n$ such that its image is norm
non-separable.
\item[(b)] If $\kk$ is non-metrizable and hereditarily separable,
then either $\splp$, or $\splm$ canonically embeds into $\kk$
with respect to $\{f_n\}_n$.
\item[(c)] If $\kk$ is not hereditarily separable and first
countable, then either $\dcantor$, or $\dsplit$ canonically embeds into
$\kk$ with respect to $\{f_n\}_n$.
\item[(d)] If $\kk$ is not first countable, then $\alex$ canonically
embeds into $\kk$ with respect to $\{f_n\}_n$.
\end{enumerate}
In particular, if $\kk$ is non-metrizable, then one of the
non-metrizable prototypes canonically embeds into $\kk$ with
respect to any dense subset of $\kk$.
\end{ithm}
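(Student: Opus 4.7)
\textit{Plan.} I would derive Theorem \ref{B} from Theorem \ref{A}(b) together with a case-by-case narrowing among the seven prototypes. Fix first an increasing bijection $\phi_0 : \ct \to \nn$; then $\{f_{\phi_0(t)}\}_{t \in \ct}$ is a reindexing of the entire dense family and its pointwise closure is $\kk$. Theorem \ref{A}(b) yields a regular dyadic subtree $S = (s_t)_{t\in\ct}$ of $\ct$ such that $\{f_{\phi_0(s_t)}\}_{t\in\ct}$ is equivalent to one of the seven minimal families, and the composition $\phi(t) = \phi_0(s_t)$ is an increasing injection $\ct \to \nn$. The task, in each of (a)--(d), is to refine $\phi_0$ beforehand so that a topological invariant of $\kk$ survives to the pointwise closure of $\{f_{\phi(t)}\}_{t\in\ct}$ and excludes all but the claimed prototype.

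\textit{Narrowing by invariants.} The prototypes split sharply by topological type: $A(\ct)$ and $2^{\leqslant\nn}$ are metrizable; $\splp$ and $\splm$ are first countable, non-metrizable, and hereditarily separable; $\dcantor$ and $\dsplit$ are first countable and not hereditarily separable (they carry an uncountable discrete subset); and $\alex$ is the unique prototype whose closure fails first countability. For (d), choose $p \in \kk$ of uncountable character and, using density of $\{f_n\}_n$, perform a fusion along $\ct$ producing $\phi_0$ whose associated closure still contains a point of uncountable character; since this passes to every cofinal subfamily and is invariant under equivalence, Theorem \ref{A}(b) must return $\alex$. For (c), start from an uncountable discrete subspace of $\kk$ and build $\phi_0$ so that the pointwise closure inherits an uncountable discrete set; first countability of $\kk$ excludes $\alex$, and non-hereditary separability excludes the remaining first-countable metrizable and hereditarily separable prototypes, leaving $\dcantor$ or $\dsplit$. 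For (b), non-metrizability of $\kk$ furnishes an accumulation point with no convergent sequence from $\{f_n\}_n$; preparing $\phi_0$ to transmit this, together with hereditary separability of $\kk$, forces $\splp$ or $\splm$ as the only non-metrizable first countable hereditarily separable options. For (a), metrizability of $\kk$ restricts the output of Theorem \ref{A}(b) to $A(\ct)$ or $2^{\leqslant\nn}$; norm non-separability lets us extract $\phi_0$ so that distinct branches of $\ct$ yield distinct pointwise limits at pairwise norm distance at least some $\ee > 0$, and since $A(\ct)$ has only one accumulation point (hence norm-separable closure), only $2^{\leqslant\nn}$ can occur, with norm non-separable image by construction.

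\textit{Main obstacle.} The substantial step in each case is the preparatory fusion giving $\phi_0$ that captures the relevant invariant of $\kk$. This is a diagonalization tailored to each property -- norm $\ee$-separation, presence of an uncountable discrete subspace, failure of metrizability through a witness accumulation point, or uncountable character at some $p \in \kk$ -- and must exploit density of $\{f_n\}_n$ in $\kk$ crucially. Once $\phi_0$ is arranged, identification of the prototype becomes a matter of checking which of the seven minimal families admits the invariant, an essentially combinatorial task once one uses that equivalence of families preserves all the topological properties under consideration. The hardest case looks to be (d): preserving uncountable character through the final refinement provided by Theorem \ref{A}(b) requires bookkeeping that couples the local neighborhood structure at $p$ with the branch structure of $\ct$, so that enough of the witnessing uncountable family of neighborhoods is captured by the regular dyadic subtree $S$.
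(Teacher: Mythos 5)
Your overall architecture --- prepare a canonical injection that captures a topological invariant of $\kk$, canonicalize via Theorem \ref{A}(b), then identify the prototype by asking which of the seven families admits that invariant --- is essentially how the paper handles parts (a), (b) and (c), and the identification-by-invariants step is sound since equivalence of families induces a homeomorphism of closures. But two of your preparatory steps contain genuine errors or gaps. In (b), your proposed witness of non-metrizability, ``an accumulation point with no convergent sequence from $\{f_n\}_n$'', does not exist: by the Bourgain--Fremlin--Talagrand theorem $\kk$ is angelic, so every point of $\kk$ is the pointwise limit of a sequence from the dense set. The paper instead uses the characterization that $\kk$ is metrizable iff some countable $D\subseteq X$ separates the points of $Acc(\kk)$; the resulting witness is a transfinite family of \emph{pairs} $g_\xi, h_\xi$ agreeing at all earlier coordinates $x_\zeta$ but separated at $x_\xi$. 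Consequently the fusion must be run for two canonical injections $\psi_1,\psi_2$ simultaneously (Proposition \ref{embp2}), both families $\{f_{\psi_1(s_t)}\}$ and $\{f_{\psi_2(s_t)}\}$ are canonicalized, and one argues that their closures cannot both be metrizable; a single subfamily prepared from a single witness point will not do.

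More seriously, in (d) the ``fusion preserving uncountable character'' is precisely the hard content of the theorem, and you give no mechanism for it: when you thin $\{f_n\}_n$ to a family indexed by a dyadic tree, nothing guarantees that $p$ remains in the closure as a non-$G_\delta$ point --- Theorem \ref{A}(b) offers no protection, and a naive diagonalization over neighborhoods of $p$ cannot exhaust an uncountable character. The paper's proof of (d) occupies all of Section 7: it first builds a Krawczyk tree at $f$ using Pol's bi-sequentiality of the ideal $\iii_f$ (Theorem \ref{thmkra}), then applies Milliken's theorem to a block sequence in $\Sigma$ so that every chain of the resulting dyadic subtree converges to a function different from $f$ while canonically placed antichains converge to $f$ (Lemma \ref{mill1}), and only then invokes the canonicalization theorem, with property (Q) ruling out $A(\ct)$ and Claim 1 ruling out everything except $\alex$. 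Your plans for (a) and (c) do match the paper's route (a Stegall-type $\ee$-separated fusion for (a); a left-separated family with pairwise separating open sets $V_\sg$ for (c)), but there too the recursive constructions are the entire proof and are only named, not carried out.
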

Part (a) is an extension of the classical Ch. Stegall's result
\cite{St}, which led to the characterization of the Radon-Nikodym
property in dual Banach spaces. We mention that Todor\v{c}evi\'{c}
\cite{To1} has shown that in case (b) above the split interval
$S(I)$ embeds into $\kk$. It is an immediate consequence of the
above theorem that every not hereditarily separable $\kk$ contains
an uncountable discrete subspace of the size of the continuum, a
result due to R. Pol \cite{Pol}. The proofs of parts (a), (b) and
(c) use variants of Stegall's fundamental construction, similar in
spirit as in the work of G. Godefroy and M. Talagrand \cite{GT}.
Part (d) is a consequence of a more general structural result
concerning non-$G_\delta$ points which we are about to describe.
To this end, we start with the following.
\begin{idefn}
\label{idan1} Let $\kk$ be a separable Rosenthal compact on a
Polish space $X$ and $\ccc$ a closed subspace of $\kk$. We say
that $\ccc$ is an analytic subspace if there exist a countable
dense subset $\{f_n\}_n$ of $\kk$ and an analytic subset $A$ of
$[\nn]$ such that the following are satisfied.
\begin{enumerate}
\item[(1)] For every $L\in A$ the accumulation points of the set
$\{f_n:n\in L\}$ in $\rr^X$ is a subset of $\ccc$. \item[(2)] For
every $g\in\ccc$ which is an accumulation point of $\kk$ there
exists $L\in A$ with $g\in\overline{\{f_n\}}^p_{n\in L}$.
\end{enumerate}
\end{idefn}
Observe that every separable Rosenthal compact $\kk$ is an analytic
subspace of itself with respect to any countable dense set.
Let us point out that while the class of analytic subspaces
is strictly wider than the class of separable ones, it shares all
the structural properties of the separable Rosenthal compacta.
This will become clear in the sequel.

A natural question raised by the above definition is whether the
concept of an analytic subspace depends on the choice of the
countable dense subset of $\kk$. We believe that it is
independent. This is supported by the fact that it is indeed the
case for analytic subspaces of separable Rosenthal compacta in
$\bbb_1(X)$ with $X$ compact metrizable.

To state our results concerning analytic subspaces, we also need the
following.
\begin{idefn}
\label{idan2} Let $\kk$ be a separable Rosenthal compact,
$\{f_n\}_n$ a countable dense subset of $\kk$ and $\ccc$ a closed
subspace of $\kk$. We say that one of the prototypes $\kk_i$
$(1\leq i\leq 7)$ canonically embeds into $\kk$ with respect to
$\{f_n\}_n$ and $\ccc$, if there exists a subfamily
$\{f_t\}_{t\in\ct}$ of $\{f_n\}_n$ which is equivalent to the
canonical dense family of $\kk_i$ and such that all accumulation
points of $\{f_t\}_{t\in\ct}$ are in $\ccc$.
\end{idefn}
The following theorem describes the structure of not first
countable analytic subspaces.
\begin{ithm}
\label{ialex} Let $\kk$ be a separable Rosenthal compact,
$\ccc$ an analytic subspace of $\kk$ and $\{f_n\}_n$ a
countable dense subset of $\kk$ witnessing the analyticity
of $\ccc$. Let also $f\in\ccc$ be a non-$G_\delta$ point of
$\ccc$. Then $\alex$ canonically embeds into $\kk$ with
respect to $\{f_n\}_n$ and $\ccc$ and such that $f$ is
the unique non-$G_\delta$ point of its image.
\end{ithm}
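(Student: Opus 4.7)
The plan is to combine the non-$G_\delta$ hypothesis on $f$ with the canonical form theorem \ref{A}(b), using the analytic witness of Definition \ref{idan1} as a selection device to ensure accumulation points land in $\ccc$. In outline, I would produce a subfamily $\{f_{n_t}\}_{t\in\ct}$ of $\{f_n\}_n$ whose pointwise closure has all accumulation points in $\ccc$ and admits $f$ as a non-$G_\delta$ point; then apply Theorem \ref{A}(b) and rule out six of the seven minimal prototypes.

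Set $A_f=\{L\in A:f\in\overline{\{f_n\}}^p_{n\in L}\}$. This is an analytic subset of $[\nn]$, and it is non-empty by condition (2) of Definition \ref{idan1}, since a non-$G_\delta$ point of $\ccc$ is certainly an accumulation point of $\kk$. The first technical step is to distinguish an $L^{*}\in A_f$ such that $f$ remains non-$G_\delta$ in the compact set $\overline{\{f_n\}}^p_{n\in L^{*}}$. The point is that a countable family of open neighborhoods of $f$ in $\ccc$ failing to shrink to $\{f\}$ can be reflected inside $L^{*}$ by a selection argument leveraging the analyticity of $A_f$ together with a Baire-category argument. Once $L^{*}$ is fixed, every subfamily $\{f_n:n\in L\}$ with $L\subseteq L^{*}$ has accumulation points contained in those of $\{f_n:n\in L^{*}\}$, hence in $\ccc$ by condition (1).

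Next, I would build a Cantor scheme of indices $\{n_t\}_{t\in\ct}\subseteq L^{*}$. At the inductive step corresponding to $t\in\ct$, interleaving with an enumeration $(V_k)_k$ of a countable sub-basis at $f$ in the topology of $\ccc$, I would choose the two successors $t\con 0$ and $t\con 1$ so that the eventual subtrees each produce a pointwise accumulation point of $\ccc$ lying outside $V_{|t|}$; this preserves non-$G_\delta$-ness of $f$ in the pointwise closure of $\{f_{n_t}\}_{t\in\ct}$. Applying Theorem \ref{A}(b) then yields a regular dyadic subtree $(s_t)_{t\in\ct}$ for which $\{f_{n_{s_t}}\}_{t\in\ct}$ is equivalent to one of the seven minimal families, and both the location of the accumulation set inside $\ccc$ and the non-$G_\delta$-ness of $f$ in the image persist under passage to this subtree.

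To close the argument: six of the prototypes, namely $A(\ct)$, $2^{\leqslant\nn}$, $\splp$, $\splm$, $\dcantor$, $\dsplit$, are first countable compact Hausdorff spaces, so every one of their points is $G_\delta$; only $\alex$ contains a non-$G_\delta$ point, uniquely, namely the \emph{infinity} inherited from the Alexandroff compactification of $2^\nn$. Since the image of our canonical family contains $f$ as a non-$G_\delta$ point, the prototype must be $\alex$, and $f$ must be the unique non-$G_\delta$ point of the image. The main obstacle is preserving non-$G_\delta$-ness when descending from $\ccc$ to the much smaller compact set $\overline{\{f_{n_t}\}}^p_{t\in\ct}$: being $G_\delta$ can strengthen in a subspace, so the Cantor-scheme step must be executed with a Stegall/Godefroy-Talagrand-style Baire-category argument (as used in parts (a)-(c) of Theorem \ref{B}), carefully married to the analytic selection dictated by condition (1) of Definition \ref{idan1}. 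Once that reflection is in place, the classification does the rest of the work.
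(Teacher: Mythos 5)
Your endgame --- canonicalize a suitable $\ct$-indexed subfamily via Theorem \ref{A}(b) and observe that among the seven prototypes only $\alex$ has a non-$G_\delta$ point --- matches the paper's, but the two steps meant to produce that subfamily both have genuine gaps. First, the reduction to a single $L^{*}\in A$ with $f$ non-$G_\delta$ in $\overline{\{f_n\}}^p_{n\in L^{*}}$ is false in general. By Debs' theorem there is a Borel, hereditary, cofinal subset $C$ of $\lbf$, and $C$ witnesses the analyticity of $\ccc=\kk$ in the sense of Definition \ref{idan1}; yet for every $L\in C$ the sequence $(f_n)_{n\in L}$ converges, so $\overline{\{f_n\}}^p_{n\in L}$ is countable metrizable and $f$ is $G_\delta$ in it, even when $f$ is non-$G_\delta$ in $\kk$. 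Non-$G_\delta$-ness of $f$ in $\ccc$ is equivalent to the statement that the \emph{whole family} $\fff=A\cap\iii_f$ (where $\iii_f=\{L:f\notin\overline{\{f_n\}}^p_{n\in L}\}$) is not countably $\iii_f$-generated; it cannot be localized to one member of $A$, and the paper accordingly carries the entire family $\fff$ through the construction. Second, and more seriously, the Cantor-scheme step cannot preserve non-$G_\delta$-ness by "interleaving with an enumeration $(V_k)_k$ of a countable sub-basis at $f$": defeating one fixed countable family of neighborhoods proves nothing, since $f$ being non-$G_\delta$ means precisely that \emph{no} countable family of neighborhoods isolates $f$. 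What must actually be arranged is that the continuum many branch limits $g_\sg$ are discrete \emph{and} accumulate back to $f$ over every perfect set of branches; the Stegall/Godefroy--Talagrand separation you invoke (avoidance of $V_{|t|}$) produces the discreteness, which is why it suffices for parts (a)--(c) of Theorem \ref{B}, but it gives no control whatsoever on the second requirement.

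The paper's proof replaces both steps by substantially heavier machinery. It uses the bi-sequentiality of $\iii_f$ (Pol) together with the non-countable-generation of $\fff$ to build a Krawczyk tree $\{f_t\}_{t\in\Sigma}$ whose domination property --- every set of nodes avoiding $f$ in its closure is almost covered by finitely many cones --- is a combinatorial reformulation of non-$G_\delta$-ness that survives passage to subtrees and forces every fan to converge to $f$. Milliken's theorem is then applied to a block sequence to arrange, canonically, that chains converge to points of $\ccc$ different from $f$ while designated antichains converge to $f$; only after this is a dyadic subtree extracted and the canonicalization theorem invoked, the resulting property (every regular dyadic subtree contains antichains over two distinct branches whose functions converge to $f$) being exactly what rules out the six first countable prototypes. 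Your final classification step is correct once such a subfamily is in hand, but the construction of the subfamily is where the theorem's difficulty lives, and the proposal does not supply it.
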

Theorem \ref{ialex} is the last step of a series of results
initiated by a fruitful problem concerning the character
of points in separable Rosenthal compacta, posed by
R. Pol \cite{Pol}. The first decisive step towards the solution
of this problem was made by A. Krawczyk \cite{Kra}. He
proved that a point $f\in\kk$ is non-$G_\delta$ if and only if
the set
\[ \llf=\{ L\in [\nn]: (f_n)_{n\in L} \text{ is pointwise convergent to } f \} \]
is co-analytic non-Borel. His analysis revealed a
fundamental construction, which we call \textit{Krawczyk tree}
($K$-tree) with respect to the given point $f$ and any countable
dense subset $\mathbf{f}=\{f_n\}_n$ of $\kk$. He actually showed
that there exists a subfamily  $\{f_t\}_{t\in\bt}$ of $\{f_n\}_n$
such that the following are fulfilled.
\begin{enumerate}
\item[(P1)] For every $\sg\in \nn^\nn$, $f\notin \overline{
\{f_{\sg|n}\}}^p_{n}$. \item[(P2)] If $A\subseteq \bt$ is such
that $f\notin \overline{ \{f_t\}}^p_{t\in A}$, then for $n\in\nn$
there exist $t_0, ..., t_k\in \nn^n$ such that $A$ is almost
included in the set of the successors of the $t_i$'s.
\end{enumerate}
Using $K$-trees, the second named author has shown that the set
\[ \lbf=\{ L\in[\nn]: (f_n)_{n\in L} \text{ is pointwise convergent}\} \]
is complete co-analytic if there exists a non-$G_\delta$ point $f\in\kk$
(\cite{D}). Let also point out that the deep effective version of
G. Debs' theorem \cite{De} yields that for any separable Rosenthal compact
the set $\lbf$ contains a Borel cofinal subset.

There are strong evidences, as Debs' theorem mentioned above, that
separable Rosenthal compacta are definable objects, hence, they
are naturally connected to descriptive set theory (see also
\cite{ADK}, \cite{B}, \cite{D}). One of the first results
illustrating this connection was proved in the late 70's by G.
Godefroy \cite{Go}, asserting that a separable compact $\kk$ is
Rosenthal if and only if $C(\kk)$ is an analytic subset of $\rr^D$
for every countable dense subset $D$ of $\kk$. Related to this, R.
Pol has conjectured that a separable Rosenthal compact $\kk$
embeds into $\mathcal{B}_1(2^\nn)$ if and only if $C(\kk)$ is a
Borel subset of $\rr^D$ (see \cite{Ma} and \cite{Pol2}). It is worth
mentioning that for a separable $\kk$ in $\mathcal{B}_1(2^\nn)$, for
every countable dense subset $\{f_n\}_n$ of $\kk$ and every
$f\in\kk$, there exists a Borel cofinal subset of the
corresponding set $\llf$, a property not shared by all separable
Rosenthal compacta.

The final step to the solution of Pol's problem was made by S.
Todor\v{c}evi\'{c} \cite{To1}. He proved that if $f$ is a
non-$G_\delta$ point of $\kk$, then the space $A(2^\nn)$ is
homeomorphic to a closed subset of $\kk$ with $f$ as the unique
limit point. His remarkable proof involves metamathematical
arguments like forcing method and absoluteness.

Let us proceed to a discussion on the proof of Theorem
\ref{ialex}. The first decisive step is the following theorem,
concerning the existence of $K$-trees.
\begin{ithm}
\label{iktrees} Let $\kk$, $\ccc$, $\{f_n\}_n$ and $f\in\ccc$ be as
in Theorem \ref{ialex}. Then there exists a $K$-tree
$\{f_t\}_{t\in\bt}$ with respect to the point $f$ and the dense
sequence $\{f_n\}_n$ such that for every $\sg\in \nn^\nn$ all
accumulation points of the set $\{f_{\sg|n}:n\in\nn\}$ are in
$\ccc$.
\end{ithm}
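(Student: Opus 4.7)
The plan is to adapt Krawczyk's construction of a $K$-tree so that, in addition to (P1) and (P2), every branch enumerates an element of the analytic family $A\subseteq[\nn]$ witnessing the analyticity of $\ccc$. Once each branch yields such an $L_\sigma\in A$, condition (1) of Definition \ref{idan1} forces all accumulation points of $\{f_{\sigma|n}\}_n$ to lie in $\ccc$, as required.

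First, fix a tree $T$ on $\nn\times\nn$ with $A=p[T]$, each $L\in[\nn]$ being identified with its strictly increasing enumeration, and prune $T$ so that every node extends to an infinite branch. Observe next that any $G_\delta$ subset of $\kk$ meets $\ccc$ in a $G_\delta$ subset of $\ccc$, so our hypothesis forces $f$ to be a non-$G_\delta$ point of $\kk$ as well; in particular, Krawczyk's theorem applies, $f$ is an accumulation point of $\{f_n\}_n$, and by (2) of Definition \ref{idan1} there exists at least one $L\in A$ with $f\in\overline{\{f_n\}}^p_{n\in L}$.

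The construction itself defines two maps $\phi,\psi:\bt\to\nn$, with $f_t:=f_{\phi(t)}$, by a Krawczyk-type fusion arranged so that for every $s\in\bt$ and every child $s\con k$ the finite sequence $\bigl((\phi(s|i),\psi(s|i))\bigr)_{i\le|s|}$ together with the new pair $(\phi(s\con k),\psi(s\con k))$ is a node of $T$. This clause automatically produces, for each $\sigma\in\nn^\nn$, an infinite branch $\bigl((\phi(\sigma|n),\psi(\sigma|n))\bigr)_n$ of $T$, whence $L_\sigma:=\{\phi(\sigma|n):n\in\nn\}\in A$. In parallel, one fixes an enumeration $(V_n)_n$ of basic open neighborhoods of $f$ and, at the $n$-th stage of the fusion, distributes the new children of each node across the complements of $V_0,\dots,V_n$ in the standard way, yielding (P1) and (P2) exactly as in the original Krawczyk argument.

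The heart of the proof, and the main obstacle, is the extension lemma sustaining this fusion: for every $\tau\in T$ and every open neighborhood $V$ of $f$ there must exist infinitely many pairs $(m,a)\in\nn\times\nn$ with $\tau\con(m,a)\in T$ and $f_m\in V$. Should this fail at some $\tau$ and $V$, the finitely many admissible first coordinates of extensions, together with condition (2) of Definition \ref{idan1}, would force every $L\in A$ extending the first-coordinate projection of $\tau$ to eventually avoid $V$; running $V$ over a countable neighborhood basis of $f$ and invoking (2) to cover each approach to $f$ within $\ccc$ by some $L\in A$, one would assemble a countable family of open subsets of $\ccc$ whose intersection is $\{f\}$, contradicting the non-$G_\delta$ character of $f$ in $\ccc$. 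Once this lemma is in hand, the rest of the construction follows Krawczyk's original recipe verbatim, simultaneously decoding a $T$-witness along every branch while enforcing (P1) and (P2) by diagonal fusion.
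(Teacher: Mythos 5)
The part of your plan that matches the paper is the easy half: coding the analytic set $A$ and decoding a witness $L_\sg\in A$ along every branch (you via a Suslin tree on $\nn\times\nn$, the paper via a continuous surjection of $\nn^\nn$ onto the relevant analytic family together with shrinking basic clopen sets). The genuine gap is in how you propose to obtain (P1) and, above all, the domination property (P2). Since $f$ is a non-$G_\delta$ point of $\ccc$, hence of $\kk$, it has no countable neighbourhood basis, so the relation ``$f\notin\overline{\{f_t\}}^p_{t\in B}$'' is not decided by countably many neighbourhoods; fixing an enumeration $(V_n)_n$ of ``basic open neighbourhoods of $f$'' and distributing children across their complements therefore cannot certify (P2), which quantifies over all sets $B$ of nodes -- including nodes lying far above the children you control -- and against arbitrary neighbourhoods of $f$. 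What actually yields (P2) in Krawczyk's construction (reproduced as Theorem \ref{thmkra}) is ideal-theoretic: one works inside $\iii_f=\{L: f\notin\overline{\{f_n\}}^p_{n\in L}\}$, uses Pol's theorem that $\iii_f$ is bi-sequential, and, via the splitting Lemma \ref{kral1}, attaches to the children of each node pairwise disjoint sets $A_{s^{\con}n}$ such that the $\iii_f$-trace on their union is generated by the $A_{s^{\con}n}$'s, while the labels of all descendants of $s^{\con}n$ stay inside $A_{s^{\con}n}$; this generation property is exactly what forces every fan to accumulate at $f$. Your proposal never invokes bi-sequentiality and offers no substitute. Note also that placing branches inside $A$ does not by itself give (P1): since $f\in\ccc$, condition (1) of Definition \ref{idan1} does not prevent $f$ from being an accumulation point along an $L\in A$. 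The paper handles both requirements simultaneously by working with $\fff=A\cap\iii_f$ and proving -- this is where the non-$G_\delta$-ness of $f$ \emph{in} $\ccc$ and condition (2) of Definition \ref{idan1} are used -- that $\fff$ is not countably $\iii_f$-generated; maintaining this invariant at every node (property (P4) in the proof of Theorem \ref{thmkra}) is what keeps the recursion alive.

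Your ``extension lemma'' cannot replace this machinery, and its proof is incorrect as stated. It is too weak: it controls only the immediate successors of a node and one neighbourhood at a time, whereas (P2) concerns arbitrary descendants spread over infinitely many children and arbitrary (not just countably many) neighbourhoods of $f$. Moreover the deduction fails: if at $(\tau,V)$ only finitely many admissible pairs $(m,a)$ have $f_m\in V$, this constrains only the \emph{next} element of an $L\in A$ passing through $\tau$, not its tail, so you cannot conclude that every such $L$ eventually avoids $V$, and the countable family of open sets you would assemble does not isolate $f$ in $\ccc$. To repair the argument you essentially have to redo \S 7.1 of the paper: replace the Suslin-tree bookkeeping by the family $\fff=A\cap\iii_f$, prove the non-countable-generation statement above, and run the fusion of Theorem \ref{thmkra}, where Lemma \ref{kral1} (bi-sequentiality) supplies at each node both the abundance of admissible continuations and the generation property that gives (P2); the accumulation of branches in $\ccc$ then comes for free from $\fff\subseteq A$, as in Corollary \ref{krac1}.
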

The proof of the above result is a rather direct extension of the
results of A. Krawczyk from \cite{Kra} and is based on the key
property of bi-sequentiality, established for separable Rosenthal
compacta by R. Pol \cite{Pol3}. We will briefly comment on some
further properties of the $K$-tree $\{f_t\}_{t\in\bt}$ obtained by
Theorem \ref{iktrees}. To this end, let us call an antichain
$\{t_n\}_n$ of $\bt$ a fan if there exist $s\in\bt$ and a strictly
increasing sequence $(m_n)_n$ in $\nn$ such that
$s^{\con}m_n\sqsubseteq t_n$ for every $n\in\nn$. Let us also say
that an antichain $\{t_n\}_n$ converges to $\sg\in \nn^\nn$ if for
every $k\in\nn$ the set $\{t_n\}_n$ is almost contained in the set
of the successors of $\sg|k$. Property (P2) of $K$-trees implies
that for every fan $\{t_n\}_n$ of $\bt$ the sequence $(f_{t_n})_n$
must be pointwise convergent to $f$. This fact combined with the
bi-sequentiality of separable Rosenthal compacta yields the
following.
\begin{enumerate}
\item[(P3)] For every $\sg\in \nn^\nn$ there exists an antichain
$\{t_n\}_n$ of $\bt$ which converges to $\sg$ and such that the
sequence $(f_{t_n})_n$ is pointwise convergent to $f$.
\end{enumerate}
In the second crucial step, we use the infinite dimensional
extension of Hindman's theorem, due to K. Milliken \cite{Mil}, to
pass to an infinitely splitting subtree $T$ of $\bt$ such that for
every $\sg\in [T]$ the corresponding antichain $\{t_n\}_n$,
described in property (P3), is found in a canonical way. We should
point out that, although Milliken's theorem is a result concerning
partitions of block sequences, it can be also considered as a
partition theorem for a certain class of infinitely splitting
subtrees of $\bt$. This fact was first realized by W. Henson, in
his alternative proof of Stern's theorem (see \cite{Od}), and it
is used in the proof of Theorem \ref{ialex} in a similar spirit.
The proof of Theorem \ref{ialex} is completed by choosing an
appropriate dyadic subtree $S$ of $T$ and applying the
canonicalization method (Theorem \ref{A}) to the family
$\{f_s\}_{s\in S}$.

The following consequence of Theorem \ref{ialex} describes the
universal property of $\alex$ among all fundamental prototypes.
\begin{icor}
\label{ikmk} Let $\kk$ be a non-metrizable separable Rosenthal
compact and $D=\{f_n\}_n$ a countable dense subset of $\kk$. Then
the space $\alex$ canonically embeds into $\kk-\kk$ with respect
to $D-D$ and with the constant function $0$ as the unique
non-$G_\delta$ point.
\end{icor}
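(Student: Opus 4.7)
The plan is to apply Theorem \ref{ialex} to the ambient Rosenthal compact $\kk-\kk:=\{f-g:f,g\in\kk\}$. First I would record the routine facts: $\kk-\kk$ is a separable Rosenthal compact on $X$, being the continuous image of $\kk\times\kk$ under $(f,g)\mapsto f-g$ and sitting inside the vector space $\mathcal{B}_1(X)$; and $D-D=\{f_n-f_m:n,m\in\nn\}$ is a countable dense subset, so $\kk-\kk$ is an analytic subspace of itself witnessed by (an enumeration of) $D-D$, in the sense of Definition \ref{idan1}.

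The substantive step, and the main obstacle to dispose of, is to verify that the constant function $0\in\kk-\kk$ is a non-$G_\delta$ point. I would argue by contradiction: assume $\{0\}$ is a $G_\delta$ in the compact Hausdorff space $\kk-\kk$, so that $\kk-\kk$ is first countable at $0$, and pick a countable neighborhood base $(V_n)_n$ of $0$, refined to basic pointwise-open sets of the form $V_n=\{h\in\kk-\kk:|h(x)|<\ee_n\text{ for all }x\in F_n\}$ with $F_n\subseteq X$ finite. Setting $F=\bigcup_n F_n$ (a countable subset of $X$), the restriction map $\kk\to\rr^F$ is continuous and, crucially, injective: indeed, if $f|_F=g|_F$ for some $f,g\in\kk$, then $(f-g)|_{F_n}=0$ for every $n$, hence $f-g\in\bigcap_n V_n=\{0\}$, using Hausdorffness and the base property. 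A continuous injection of the compact $\kk$ into the metrizable space $\rr^F$ is a topological embedding, forcing $\kk$ to be metrizable and contradicting the hypothesis.

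Once $0$ is identified as a non-$G_\delta$ point of $\kk-\kk$, I would invoke Theorem \ref{ialex} with the Rosenthal compact $\kk$ there replaced by $\kk-\kk$, with $\ccc=\kk-\kk$, with the countable dense set chosen to be any enumeration of $D-D$, and with the distinguished non-$G_\delta$ point $f=0$. The conclusion of that theorem delivers exactly a canonical embedding of $\alex$ into $\kk-\kk$ with respect to $D-D$ whose image has $0$ as its unique non-$G_\delta$ point, which is the content of the corollary.
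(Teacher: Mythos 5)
Your proposal is correct and follows essentially the same route as the paper: the paper likewise reduces the corollary to checking that $0$ is a non-$G_\delta$ point of $\kk-\kk$ (using, in contrapositive form, exactly your observation that a countable pointwise neighborhood base at $0$ in $\kk-\kk$ would make restriction to a countable subset of $X$ injective on $\kk$ and hence force $\kk$ to be metrizable) and then invokes Theorem \ref{ialex} for $\kk-\kk$ with the dense set $D-D$.
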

We notice that the above corollary remains valid within the class
of analytic subspaces.

The embedding of $\alex$ in an analytic subspace $\ccc$ of a
separable Rosenthal compact $\kk$ yields unconditional families of
elements of $\ccc$ as follows.
\begin{ithm}
\label{iuncC} Let $\kk$ be a separable Rosenthal compact on a
Polish space $X$ consisting of bounded functions. Let also $\ccc$
be an analytic subspace of $\kk$ having the constant function $0$
as a non-$G_\delta$ point. Then there exists a family
$\{f_\sg:\sg\in 2^\nn\}$ in $\ccc$ which is 1-unconditional
in the supremum norm, pointwise discrete and having $0$ as
unique accumulation point.
\end{ithm}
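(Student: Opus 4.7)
The plan is to invoke Theorem~\ref{ialex} applied to $\kk$, $\ccc$, $\{f_n\}_n$ and the non-$G_\delta$ point $0\in\ccc$. This yields a dyadic subfamily $\{f_t\}_{t\in\ct}\subseteq\{f_n\}_n$ equivalent to the canonical dense family of $\alex$, whose pointwise closure in $\rr^X$ is $\{f_t\}_{t\in\ct}\cup\{h_\sigma:\sigma\in 2^\nn\}\cup\{0\}$, with all accumulation points in $\ccc$ and $0$ as the unique non-$G_\delta$ point of the embedded copy. Here $h_\sigma:=\lim_n f_{\sigma|n}\in\ccc$. From the topology of $\alex$ the family $\{h_\sigma\}_\sigma$ is pointwise discrete in $\rr^X$ with $0$ as its unique accumulation point, and each $h_\sigma\neq 0$. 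In particular the pointwise-topological portion of the conclusion is already in hand for $\{h_\sigma\}$; what remains is the metric refinement yielding $1$-unconditionality in the supremum norm.

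The main combinatorial tool is the following finiteness fact, forced by unique accumulation at $0$: for every finite $F\subseteq X$ and every $\varepsilon>0$ the set $\{\sigma:\max_{x\in F}|h_\sigma(x)|\geq\varepsilon\}$ is finite (else a pointwise cluster point non-zero on $F$ would exist), so in particular $\{\sigma:h_\sigma(x)\neq 0\}$ is countable for every $x\in X$. Applying the perfect-set theorem to the Borel map $\sigma\mapsto\|h_\sigma\|_\infty$, I would pass to a perfect subset of $2^\nn$ on which $\|h_\sigma\|_\infty\geq\delta>0$ uniformly, then use a Borel selector to pick $x_\sigma\in X$ with $|h_\sigma(x_\sigma)|\geq\delta/2$, and thin once more so that $h_\sigma(x_\sigma)\geq\delta/2$ throughout the remaining perfect set.

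The key step is then an iterated Galvin-type partition argument on pairs from the resulting perfect subset of $2^\nn$, coloring $\{\sigma,\tau\}$ by whether $\mathrm{supp}(h_\sigma)\cap\mathrm{supp}(h_\tau)=\emptyset$. The finiteness fact, together with the fine structure of the underlying $K$-tree (supplied by Theorem~\ref{iktrees} and the Milliken-style canonicalisation used in the proof of Theorem~\ref{ialex}), permits one to rule out the "homogeneously intersecting" alternative on any perfect subset, producing a perfect $P\subseteq 2^\nn$ on which the supports of the $h_\sigma$'s are pairwise disjoint. Re-indexing $P\cong 2^\nn$ and setting $f_\sigma:=h_\sigma$, the disjointness of supports gives $|\sum_\sigma\epsilon_\sigma a_\sigma f_\sigma(x)|=|\sum_\sigma a_\sigma f_\sigma(x)|$ at every $x\in X$, since at most one summand is non-zero; taking suprema yields $1$-unconditionality in the supremum norm. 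Pointwise discreteness and $0$ as unique accumulation point are inherited from $\{h_\sigma\}$.

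The hard part is the exclusion of the "homogeneously intersecting" alternative of the Galvin dichotomy: the finiteness property alone does not forbid an uncountable subfamily whose pairs of supports meet at possibly different points, and one must do genuine extra work to rule this out. I expect the resolution to proceed by using the antichain-based approximants to $0$ coming from the $K$-tree of Theorem~\ref{iktrees}, which provide enough additional Borel data about how each $\sigma$ is approached, so that, after a further Galvin refinement and a diagonalisation across the levels of $\ct$, exact (rather than essential) disjointness of the supports is forced on a perfect subset while keeping the family inside $\ccc$. This bookkeeping is the technical heart of the argument.
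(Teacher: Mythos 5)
Your first step coincides with the paper's: apply Theorem \ref{ialex} to obtain $\{f_t\}_{t\in\ct}$ equivalent to the canonical dense family of $\alex$ with all accumulation points in $\ccc$, set $f_\sg=\lim_n f_{\sg|n}$, and record that $\{f_\sg:\sg\in 2^\nn\}$ is pointwise discrete with $0$ as unique accumulation point and that $(\sg,x)\mapsto f_\sg(x)$ is Borel (a point you use only implicitly; it holds because the limits are taken along a fixed canonical injection). At that stage the paper simply invokes the ``perfect unconditionality theorem'' (Theorem \ref{unct1}, from \cite{ADKbanach}) to pass to a perfect set on which the family is 1-unconditional. Your ``finiteness fact'' is correct and is part of why that theorem applies.

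The gap is in your substitute for Theorem \ref{unct1}: the plan to extract pairwise disjoint supports on a perfect set via a Galvin dichotomy cannot succeed, and the difficulty you defer as ``the technical heart of the argument'' is not merely hard but impossible in general. Take $X=2^\nn\times 2^\nn$ and, for $\sg\in 2^\nn$, set $f_\sg(x,y)=1$ if $x=y=\sg$, $f_\sg(x,y)=2^{-|x\wedge y|-1}$ if exactly one of $x,y$ equals $\sg$, and $f_\sg(x,y)=0$ otherwise. This family is uniformly bounded, consists of Baire-1 functions, is pointwise discrete with $0$ as its unique accumulation point (each point of $X$ lies in the support of at most two members), and the map $(\sg,(x,y))\mapsto f_\sg(x,y)$ is Borel; moreover one checks that $\big\|\sum_{\sg\in G}a_\sg f_\sg\big\|_\infty=\max_{\sg\in G}|a_\sg|$ for every finite $G$, so the family is already isometrically the $c_0$-basis and in particular 1-unconditional. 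Yet the supports of $f_\sg$ and $f_\tau$ always meet (at the point $(\sg,\tau)$), so no perfect --- indeed no two-element --- subfamily has pairwise disjoint supports, and the ``homogeneously intersecting'' side of your dichotomy is the one that holds on every perfect set. Disjointness of supports is therefore neither achievable from the stated hypotheses nor needed for the conclusion; the proof of Theorem \ref{unct1} in \cite{ADKbanach} reaches 1-unconditionality by a genuinely different Ramsey-theoretic mechanism, and your argument as written does not reconstruct it.
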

The proof of Theorem \ref{iuncC} follows by Theorem \ref{ialex}
and the ``perfect unconditionality theorem" form \cite{ADKbanach}.

A second application concerns representable Banach spaces, a class
introduced in \cite{GT} and closely related to separable Rosenthal
compacta.
\begin{ithm}
\label{repres} Let $X$ be a non-separable representable Banach
space. Then $X^*$ contains an unconditional family of size
$|X^*|$.
\end{ithm}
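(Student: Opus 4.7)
\emph{Proof proposal.} The plan is to derive Theorem~\ref{repres} from Corollary~\ref{ikmk} and Theorem~\ref{iuncC}. By the Godefroy--Talagrand definition of representability \cite{GT}, the weak*-compact unit ball $(B_{X^*}, w^*)$ is homeomorphic to a separable Rosenthal compactum $\kk$ of bounded Baire-$1$ functions on a Polish space $K$, and this representation is compatible with the dual norm in the sense that the supremum norm on $K$ is equivalent to the $X^*$-norm on the image. Since $X$ is non-separable, the weak* topology on $B_{X^*}$ is non-metrizable, so $\kk$ is a non-metrizable separable Rosenthal compactum. Moreover, since $\kk$ is Fr\'echet--Urysohn and separable we have $|\kk|\le 2^{\aleph_0}$, and as $X^* = \bigcup_n nB_{X^*}$ we conclude $|X^*| = 2^{\aleph_0}$; thus the target cardinality is the continuum.

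Fix any countable weak*-dense subset $D = \{x_n^*\} \subseteq B_{X^*}$, which corresponds to a countable dense subset of $\kk$. By Corollary~\ref{ikmk} applied to $\kk$ and $D$, the prototype $\alex$ canonically embeds into $\kk - \kk$ with respect to $D - D$, with the zero function $0$ as the unique non-$G_\delta$ point of its image. Denote this image by $\ccc$: it is homeomorphic to $\alex$, is contained in $\kk - \kk \subseteq 2B_{X^*}$, and is in particular an analytic subspace of $\kk - \kk$ with $0$ as a non-$G_\delta$ point.

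Theorem~\ref{iuncC}, applied to $\ccc$ inside the difference compactum $\kk - \kk$, now yields a family $\{f_\sg : \sg \in 2^\nn\} \subseteq \ccc$ that is $1$-unconditional in the supremum norm on $K$, is pointwise discrete, and has $0$ as its unique accumulation point. Because the supremum norm on $K$ is equivalent to the $X^*$-norm, this is a $C$-unconditional family in $X^*$ (for some universal constant $C$) of cardinality $2^{\aleph_0} = |X^*|$, which is the required conclusion.

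The main delicacy is the very first step: one must verify that the Polish-space representation of $(B_{X^*}, w^*)$ guaranteed by representability can be chosen so that the supremum norm on $K$ is equivalent to the $X^*$-norm, which is what allows the $1$-unconditionality in the supremum norm produced by Theorem~\ref{iuncC} to be upgraded to an unconditionality statement in the Banach-space sense. Once this norming representation is in place, the rest is a direct application of the structural theorems proved earlier, together with the elementary cardinality computation $|X^*| = 2^{\aleph_0}$.
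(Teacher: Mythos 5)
There is a genuine gap at the very first step, and it is not the ``delicacy'' you flag at the end. Representability of $X$ does \emph{not} imply that $(B_{X^*},w^*)$ is a separable Rosenthal compactum. The Godefroy--Talagrand definition only says that $X$ is isomorphic to a subspace of $\ell_\infty(\nn)$ which is analytic in the weak* topology; it says nothing directly about the dual ball. A concrete counterexample to your claim is $X=\ell_\infty$: it is $\sigma$-compact, hence analytic, in $(\ell_\infty,w^*)$, and non-separable, yet $(B_{\ell_\infty^*},w^*)$ contains a copy of $\beta\nn$ and so is not even sequentially compact, let alone a Rosenthal compactum; moreover $|\ell_\infty^*|=2^{\mathfrak{c}}$, so your cardinality computation $|X^*|=2^{\aleph_0}$ also fails. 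The paper's proof instead parametrizes $B_X$ by a continuous surjection $f:\nn^\nn\to B_X$, sets $f_n=x_n\circ f$ for a dense sequence $(x_n)$ in $B_{\ell_1}$, and applies the (non-effective) Debs/Bourgain--Fremlin--Talagrand dichotomy to $\{f_n\}_n$. Only in the second horn of that dichotomy --- when $\{f_n\}_n$ is relatively compact in $\mathcal{B}_1(\nn^\nn)$ --- does $\overline{\{f_n\}}^p_n\cong(B_{X^*},w^*)$ become a separable Rosenthal compactum with $0$ a non-$G_\delta$ point, and there your argument (or, more directly, Theorem \ref{tC}(a) applied to $\kk$ itself, with no need for the detour through $\kk-\kk$ and Corollary \ref{ikmk}) goes through and produces a $1$-unconditional family of size $\mathfrak{c}=|X^*|$.

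The first horn of the dichotomy is the case you are missing entirely: there a subsequence $(f_{n_k})$ behaves like an independent family of pairs, and the paper extracts from the ultrafilter limits $g_p=p\text{-}\lim_k f_{n_k}$, $p\in\beta\nn$, a family in $X^*$ equivalent to the unit basis of $\ell_1(2^{\mathfrak{c}})$ via Rosenthal's criterion. This is what delivers an unconditional family of size $|X^*|$ when $|X^*|=2^{\mathfrak{c}}$, a case your argument cannot reach since no family of size $2^{\mathfrak{c}}$ can live inside a separable Rosenthal compactum. So the proposal proves the theorem only for those representable $X$ whose dual ball happens to be weak*-angelic, and the missing case requires a genuinely different (Banach-space, $\ell_1$-theoretic) argument.
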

We also introduce the concept of spreading and level unconditional
tree bases. This notion is implicitly contained in
\cite{ADKbanach} where their existence was established in every
separable Banach space not containing $\ell_1$ and with
non-separable dual. We present some extensions of this result in
the framework of separable Rosenthal compacta.

We proceed to discuss how this work is organized. In \S 2, we set
up our notations concerning trees and we present the Ramsey
theoretic preliminaries needed in the rest of the paper. In the
next section we define and study the classes of increasing and
decreasing antichains. The main result in \S 3 is Theorem
\ref{ap1} which establishes the Ramsey properties of these
classes. Section 4 is exclusively devoted to the proof of Theorem
\ref{A}. It consists of four subsections. In the first one, we
prove a theorem (Theorem \ref{rt1} in the main text) which is the
first step towards the proof of Theorem \ref{A}. Theorem \ref{rt1}
is a consequence of the Ramsey and structural properties of chains
and of increasing and decreasing antichains. In \S 4.2, we
introduce the notion of equivalence of families of functions and
we provide a criterion for establishing it. As we have already
mentioned, in \S 4.3 we describe the seven minimal families. The
proof of Theorem \ref{A} is completed in \S 4.4.

In \S 5.1, we introduce the class of analytic subspaces of
separable Rosenthal compacta and we present some of their
properties, while in \S 5.2 we study separable Rosenthal compacta
in $\bbb_1(2^\nn)$. In \S 6, we present parts (a), (b) and (c) of
Theorem \ref{B}. Actually, Theorem \ref{B} is proved for the wider
class of analytic subspaces and within the context of Definition
\ref{idan2}. The precise statement is as follows.
\begin{ithm}
\label{C+} Let $\kk$ be a separable Rosenthal compact, $\ccc$ an
analytic subspace of $\kk$ and $\{f_n\}_n$ a countable dense
subset of $\kk$ witnessing the analyticity of $\ccc$.
\begin{enumerate}
\item[(a)] If $\ccc$ is metrizable in the pointwise topology,
consists of bounded functions and it is non-separable in the
supremum norm of $\mathcal{B}_1(X)$, then $2^{\leqslant\nn}$
canonically embeds into $\kk$ with respect to $\{f_n\}_n$ and
$\ccc$, such that its image is norm non-separable. \item[(b)] If
$\ccc$ is hereditarily separable and non-metrizable, then either
$\splp$, or $\splm$ canonically embeds into $\kk$ with respect to
$\{f_n\}_n$ and $\ccc$. \item[(c)] If $\ccc$ is not hereditarily
separable and first countable, then either $\dcantor$, or
$\dsplit$ canonically embeds into $\kk$ with respect to
$\{f_n\}_n$ and $\ccc$.
\end{enumerate}
\end{ithm}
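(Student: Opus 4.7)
The plan is to reduce each of the three parts to an application of the canonicalization theorem (Theorem \ref{A}(b)). In each case, I will construct a subfamily $\{f_t\}_{t\in\ct}$ of $\{f_n\}_n$ whose pointwise accumulation points all lie in $\ccc$, and then pass to a regular dyadic subtree $\{f_{s_t}\}_{t\in\ct}$ equivalent to one of the seven minimal families. The structural constraints on $\ccc$ in each part eliminate all but the asserted prototypes: metrizability in the pointwise topology rules out the five non-metrizable prototypes; hereditary separability rules out $\dcantor$, $\dsplit$ and $\alex$; first countability rules out $\alex$; and failure of hereditary separability rules out $\splp$ and $\splm$. To ensure that accumulations remain inside $\ccc$, every branch of the dyadic tree that we construct will correspond to an infinite subset of $\nn$ that belongs to the analytic witness $A$; clause (1) of Definition \ref{idan1} then guarantees that pointwise accumulation points land in $\ccc$. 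The uniform technique for building such a tree is a Stegall-type fusion along $\ct$ in which each refinement step is chosen from inside a suitable element of $A$.

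For part (a), the failure of norm-separability of $\ccc$ yields some $\ee>0$ such that no countable subfamily is norm-dense at scale $\ee/2$. A Stegall-type fusion along $\ct$, run inside $A$, produces a dyadic family $\{f_t\}_{t\in\ct}\subseteq\{f_n\}_n$ together with points $x_t\in X$ such that $|f_{t'}(x_t)-f_{t''}(x_t)|\geq\ee$ whenever $t'$ extends $t\con 0$ and $t''$ extends $t\con 1$. The $\ee$-separation persists at the pointwise limits along branches. Applying Theorem \ref{A}(b), metrizability of $\ccc$ restricts the possible prototype to $A(\ct)$ or $2^{\leqslant\nn}$, while the $\ee$-separation at incomparable branches (hence norm non-separability of the image) eliminates $A(\ct)$, whose only accumulation point is a single limit of a sequence.

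For part (b), non-metrizability of the hereditarily separable $\ccc$ together with the analyticity witness permit us, after a Stegall-type fusion, to build a dyadic family $\{f_t\}_{t\in\ct}\subseteq\{f_n\}_n$ admitting at every branch $\sigma\in 2^\nn$ two distinct pointwise accumulation points in $\ccc$, a ``left'' and ``right'' limit. Hereditary separability is inherited by the image after canonicalization, ruling out $\dcantor$ and $\dsplit$; hereditarily separable Rosenthal compacta are first countable by the results of \S 5, ruling out $\alex$; and metrizability would force the two branch limits to coincide, ruling out $A(\ct)$ and $2^{\leqslant\nn}$. The only surviving prototypes are $\splp$ and $\splm$. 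For part (c), failure of hereditary separability of $\ccc$ produces an uncountable discrete subfamily, whose accumulation structure (together with first countability and bi-sequentiality) seeds a Stegall-type construction of a dyadic family $\{f_t\}_{t\in\ct}$ whose branch-limits constitute a pointwise discrete subfamily of $\ccc$. After canonicalization, first countability rules out $\alex$, the nontrivial branch-discreteness rules out the metrizable prototypes $A(\ct)$ and $2^{\leqslant\nn}$, and the resulting non-hereditary-separability of the image rules out $\splp$ and $\splm$, leaving $\dcantor$ and $\dsplit$.

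The principal obstacle, common to all three parts, is harmonizing the Stegall-type fusion, which demands freedom to refine at each step so as to secure a separating point (or a distinguishing pair of limits, or a discrete branch value), with the requirement that the resulting branch-index sets all lie in the analytic set $A$. The fusion is carried out by a joint diagonal argument: at each level one refines simultaneously to capture the separation datum and to stay inside a section of $A$ that is ``fat enough'' to support further refinement, using the hereditary and pruning properties of analytic sets together with the structural results for analytic subspaces developed in \S 5.1. Once the analyticity is preserved through the construction, the canonicalization Theorem \ref{A}(b) delivers the prototype, and the elementary dichotomies recalled above pin down which one.
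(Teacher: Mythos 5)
Your overall strategy --- a Stegall-type fusion producing a dyadic subfamily of $\{f_n\}_n$ whose branch index-sets lie in the analytic witness $A$, followed by canonicalization (Theorem \ref{A}(b)) and elimination of prototypes by the structural dichotomies you list --- is exactly the paper's (Theorems \ref{metrnew}, \ref{embt1}, \ref{embt2}, \ref{reallynew}, via Propositions \ref{embp1} and \ref{embp2}), and your eliminations are the right ones. But the two steps you treat as routine are the substance of the proof, and as described they do not go through. First, ``staying inside $A$'' cannot be arranged by refining into ``fat sections'' of $A$: $A$ is a subset of $[\nn]$ and has no sections to speak of, and hereditarity alone gives no control over which infinite sets the fusion produces. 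The paper's device is to fix a continuous surjection $\Phi:\nn^\nn\to A$, attach to each member $f_\xi$ of the uncountable separated family a set $M_\xi\in A$ with $f_\xi=\lim_{n\in M_\xi}f_n$ and a code $b_\xi$ with $\Phi(b_\xi)=M_\xi$, and run the fusion so that the index chosen at node $t$ belongs to $M_\xi$ for every $\xi$ in the surviving uncountable set $\Delta_t$ while $\mathrm{diam}\{b_\xi:\xi\in\Delta_t\}\leq 2^{-|t|}$. Along a branch the codes converge to some $b$, continuity gives $\{\psi(\sg|n):n\geq 1\}\subseteq\Phi(b)$, and hereditarity of $A$ finishes. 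Without this (or an equivalent) mechanism, the central claim that every branch index-set lies in $A$ is unsupported.

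Second, in part (b) you propose a single dyadic family with two distinct accumulation points, a ``left'' and a ``right'' limit, at every branch. A node-by-node fusion only controls the chain limits $\lim_n f_{\sg|n}$, which are unique; a second accumulation point at $\sg$ must come from antichains converging to $\sg$, and that behaviour cannot be prescribed during the fusion --- organizing it is precisely what the canonicalization theorem is for, so your construction presupposes the $\splp$-like structure it is meant to produce. The paper instead extracts from non-metrizability pairs $(g_\xi,h_\xi)$ agreeing at all previously chosen points but $(q-p)$-separated at $x_\xi$, builds \emph{two} canonical injections $\psi_1,\psi_2$ simultaneously, separates the differences $\kk^1_\sg-\kk^2_\sg$ by open sets, canonicalizes both families, and concludes that at least one of them must be non-metrizable, hence equivalent to $\splp$ or $\splm$. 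Your parts (a) and (c) are sound once the fusion is repaired as above; for (a) note that the paper does not even need Theorem \ref{A}(b), since metrizability lets one read off $2^{\leqslant\nn}$ directly from the fusion.
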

Section 7 is devoted to the study of not first countable analytic
subspaces. In \S 7.1 we prove Theorem \ref{iktrees}, while \S 7.2
is devoted to the proof of Theorem \ref{ialex}. The final section
is devoted to applications and in particular to the proofs of
Theorem \ref{iuncC} and Theorem \ref{repres}.

We thank Stevo Todor\v{c}evi\'{c} for his valuable remarks and
comments.


\section{Ramsey properties of perfect sets and of subtrees
of the Cantor tree}

The aim of this section is to present the Ramsey theoretic
preliminaries needed in the rest of the paper, as well as, to set
up our notation concerning trees.

Ramsey Theory for trees was initiated with the fundamental
Halpern-L\"{a}uchli Partition Theorem \cite{HL}. The original
proof was using metamathematical arguments. The proof avoiding
metamathematics was given in \cite{AFK}. Partition theorems
related to the ones presented in this section can be found in the
work of K. Milliken \cite{Mil2}, A. Blass \cite{Bl} and A.
Louveau, S. Shelah and B. Veli\v{c}kovi\'{c} \cite{LSV}.


\subsection{Notations}

We let $\nn=\{0,1,2,...\}$. By $[\nn]$ we denote the set of all
infinite subsets of $\nn$, while for every $L\in[\nn]$ by $[L]$ we
denote the set of all infinite subsets of $L$. If $k\geq 1$ and
$L\in[\nn]$, then $[L]^k$ stands for the set of all finite subsets
of $L$ of cardinality $k$.\\
\textbf{A.} By $\ct$ we denote the set of all finite sequences of
$0$'s and $1$'s (the empty sequence is included).
We view $\ct$ as a tree equipped with the (strict) partial order
$\sqsubset$ of extension. If $t\in\ct$, then the length $|t|$ of
$t$ is defined to be the cardinality of the set $\{s:s\sqsubset
t\}$. If $s,t\in \ct$, then by $s^\con t$ we denote their
concatenation. Two nodes $s, t$ are said to be \textit{comparable}
if either $s\sqsubseteq t$ or $t\sqsubseteq s$; otherwise are
said to be \textit{incomparable}. A subset of $\ct$
consisting of pairwise comparable nodes is said to be
a \textit{chain} while a subset of $\ct$ consisting of
pairwise incomparable nodes is said to be an
\textit{antichain}. For every $x\in 2^\nn$ and every $n\geq 1$ we
set $x|n=\big( x(0),..., x(n-1)\big)\in \ct$ while
$x|0=(\varnothing)$. For $x,y\in (\ct\cup 2^\nn)$ with $x\neq y$
we denote by $x\wedge y$ the $\sqsubset$-maximal node $t$ of $\ct$
with $t\sqsubseteq x$ and $t\sqsubseteq y$. Moreover, we write
$x\prec y$ if $w^{\con}0\sqsubseteq x$ and $w^{\con}1\sqsubseteq
y$, where $w=x\wedge y$. The ordering $\prec$ restricted on
$2^\nn$ is the usual lexicographical ordering of the Cantor set.\\
\textbf{B.} We view every subset of $\ct$ as a \textit{subtree}
with the induced partial ordering. A subtree $T$ of $\ct$
is said to be \textit{pruned} if for every $t\in T$ there exists
$s\in T$ with $t\sqsubset s$. It is said to be \textit{downwards
closed} if for every $t\in T$ and every $s\sqsubset t$ we have
that $s\in T$. For a subtree $T$ of $\ct$ (not necessarily
downwards closed) we set $\hat{T}=\{ s:\exists t\in T \text{ with
} s\sqsubseteq t\}$. If $T$ is downwards closed, then the \textit{body}
$[T]$ of $T$ is the set $\{ x\in 2^\nn: x|n\in T \ \forall n\}$.\\
\textbf{C.} Let $T$ be a (not necessarily downwards closed) subtree of
$\ct$. For every $t\in T$ by $|t|_T$ we denote the cardinality of
the set $\{ s\in T: s\sqsubset t\}$ and for every $n\in\nn$ we set
$T(n)=\{ t\in T: |t|_T=n\}$. Moreover, for every $t_1, t_2\in T$
by $t_1\wedge_T t_2$ we denote the $\sqsubset$-maximal node $w$ of
$T$ such that $w\sqsubseteq t_1$ and $w\sqsubseteq t_2$. Notice
that $t_1\wedge_T t_2\sqsubseteq t_1\wedge t_2$. Given two
subtrees $S$ and $T$ of $\ct$, we say that $S$ is a
\textit{regular} subtree of $T$ if $S\subseteq T$ and for every
$n\in\nn$ there exists $m\in\nn$ such that $S(n)\subseteq T(m)$.
For a regular subtree $T$ of $\ct$, the \textit{level set} $L_T$
of $T$ is the set $\{ l_n: T(n)\subseteq 2^{l_n}\}\subseteq \nn$.
Notice that for every $x\in [\hat{T}]$ and every $m\in\nn$
we have that $x|m\in T$ if and only if $m\in L_T$. Hence,
the chains of $T$ are naturally identified with the
product $[\hat{T}]\times [L_T]$. A pruned subtree $T$ of $\ct$ is
said to be \textit{skew} if for every $n\in\nn$ there exists at
most one splitting node of $T$ in $T(n)$ with exactly two immediate
successors in $T$; it is said to be \textit{dyadic} if every
$t\in T$ has exactly two immediate successors in $T$.
We observe that a subtree $T$ of the Cantor
tree is regular dyadic if there exists a (necessarily unique)
bijection $i_T:\ct\to T$ such that the following are satisfied.
\begin{enumerate}
\item[(1)] For all $t_1, t_2\in \ct$ we have $|t_1|=|t_2|$ if and
only if $|i_T(t_1)|_T=|i_T(t_2)|_T$. \item[(2)] For all $t_1,
t_2\in \ct$ we have $t_1\sqsubset t_2$ (respectively $t_1\prec
t_2$) if and only if $i_T(t_1)\sqsubset i_T(t_2)$ (respectively
$i_T(t_1)\prec i_T(t_2)$).
\end{enumerate}
When we write $T=(s_t)_{t\in\ct}$, where $T$ is a regular dyadic
subtree of $\ct$, we mean that $s_t=i_T(t)$ for all $t\in\ct$.
Finally we notice the following. If $T$ is a regular dyadic
subtree of $\ct$ and $R$ is a regular dyadic subtree of $T$, then
$R$ is a regular dyadic subtree of $\ct$ too.


\subsection{Partitions of trees}

We begin by recalling the following notion from \cite{Ka}.
\begin{defn}
Let $T$ be a skew subtree of $\ct$. We define
$f_T:\nn\to\{1,2\}^{<\nn}$ as follows. For every $n\in\nn$, let
$T(n)=\{s_0\prec ...\prec s_{m-1}\}$ be the $\prec$-increasing
enumeration of $T(n)$. We set $f_T(n)=(e_0,...,e_{m-1})\in
\{1,2\}^m$, where for every $i\in\{0,...,m-1\}$, $e_i$ is the
cardinality of the set of the immediate successors of $s_i$ in
$T$. The function $f_T$ will be called the code of the tree $T$.
If $f:\nn\to \{1,2\}^{<\nn}$ is a function such that there exists
a skew tree $T$ with $f=f_T$, then $f$ will be called a skew tree
code.
\end{defn}
For instance, if $f_T(n)=(1)$ for all $n\in\nn$, then the tree $T$
is a chain. Also, if $f_T(0)=(2)$ and $f_T(n)=(1,1)$ for all
$n\geq 1$, then $T$ consists of two chains. Moreover, observe that
if $T$ and $S$ are two skew subtrees of $\ct$ with $f_T= f_S$,
then $T$ and $S$ are isomorphic with respect to both $\prec$ and
$\sqsubset$. If $f$ is a skew tree code and $T$ is a regular
dyadic subtree of $\ct$, then by $[T]_f$ we denote the set of all
regular skew subtrees of $T$ of code $f$. It is easy to see that
the set $[T]_f$ is a Polish subspace of $2^T$. Also observe that
if $R$ is a regular dyadic tree of $T$, then $[R]_f= [T]_f\cap
2^R$. We will need the following theorem, which is a consequence
of Theorem 46 in \cite{Ka}.
\begin{thm}
\label{tka1} Let $T$ be a regular dyadic subtree of $\ct$, $f$ a
skew tree code and $A$ be an analytic subset of $[T]_f$. Then
there exists a regular dyadic subtree $R$ of $T$ such that either
$[R]_f\subseteq A$, or $[R]_f\cap A=\varnothing$.
\end{thm}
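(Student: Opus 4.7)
The plan is to derive this statement directly from Theorem 46 of \cite{Ka}, which establishes the analogous partition dichotomy in the ambient setting where the role of $T$ is played by the full Cantor tree $\ct$. The only real content to add is the transfer of this ``absolute'' statement to the tree $T$, carried out through the canonical bijection $i_T:\ct\to T$ furnished by the very definition of a regular dyadic subtree.

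First, I would fix $i_T:\ct\to T$ and record its order-theoretic properties: it preserves $\sqsubset$, it preserves $\prec$, and $|t_1|=|t_2|$ iff $|i_T(t_1)|_T=|i_T(t_2)|_T$. From these it follows that $i_T$ maps regular skew subtrees of $\ct$ onto regular skew subtrees of $T$, and that the induced map $\Phi:[\ct]_f\to [T]_f$, defined by $\Phi(S)=i_T(S)$, is a bijection. The crucial observation is that the code $f_S$ of a regular skew tree is computed from the $\prec$-ordered splitting pattern along successive levels, and all of this data is preserved by $i_T$; hence $\Phi$ actually lands in $[T]_f$. Moreover, viewing $[\ct]_f\subseteq 2^{\ct}$ and $[T]_f\subseteq 2^T$ with their inherited Polish topologies, $\Phi$ is a homeomorphism, as it is induced by a bijection of the underlying coordinate sets. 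Finally, $\Phi$ restricts to a bijection between regular dyadic subtrees of $\ct$ and regular dyadic subtrees of $T$, and for each such regular dyadic subtree $R_0$ of $\ct$ we have $\Phi([R_0]_f)=[i_T(R_0)]_f$.

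With this dictionary in place, the proof is immediate. Given an analytic $A\subseteq [T]_f$, the set $B=\Phi^{-1}(A)$ is an analytic subset of $[\ct]_f$. Applying Theorem 46 of \cite{Ka} to $B$ produces a regular dyadic subtree $R_0$ of $\ct$ with either $[R_0]_f\subseteq B$ or $[R_0]_f\cap B=\varnothing$. Setting $R=i_T(R_0)$, the tree $R$ is a regular dyadic subtree of $T$ and, transferring under $\Phi$, we get the corresponding alternative $[R]_f\subseteq A$ or $[R]_f\cap A=\varnothing$.

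The step I expect to be the only real point of care is verifying that $\Phi$ genuinely identifies the two spaces as \emph{Ramsey} spaces, i.e.\ that regular dyadic subtrees of $\ct$ pass to regular dyadic subtrees of $T$ (and back) under $i_T$, and that the code $f$ is preserved on the nose. Everything else, including the upgrade from Borel to analytic sets, is already absorbed into the cited Theorem 46 of \cite{Ka}; in fact, if that theorem is formulated there for arbitrary regular dyadic host trees rather than only for $\ct$, the present statement is literally a special case and the transfer through $i_T$ becomes superfluous.
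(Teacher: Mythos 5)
Your proposal matches the paper's treatment of this statement: Theorem \ref{tka1} is given there without proof, simply as ``a consequence of Theorem 46 in \cite{Ka}'', and your transfer through the canonical bijection $i_T$ is exactly the routine relativization the paper leaves implicit (the order- and level-preservation properties of $i_T$ you invoke are precisely those recorded in \S 2.1, and the coherence $[R]_f=[T]_f\cap 2^R$ needed to identify $\Phi([R_0]_f)$ with $[i_T(R_0)]_f$ is also noted there). So the argument is correct and takes essentially the same route as the paper.
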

For a regular dyadic subtree $T$ of $\ct$, denote by
$[T]_{\mathrm{chains}}$ the set of all infinite chains of $T$.
Theorem \ref{tka1} includes the following result due to J. Stern
\cite{S}, A.W. Miller, S. Todor\v{c}evi\'{c} \cite{Mi} and J.
Pawlikowski \cite{Pa}.
\begin{thm}
\label{chains} Let $T$ be a regular dyadic subtree of $\ct$ and
$A$ be an analytic subset of $[T]_{\mathrm{chains}}$. Then there
exists a regular dyadic subtree $R$ of $T$ such that either
$[R]_{\mathrm{chains}}\subseteq A$, or $[R]_{\mathrm{chains}} \cap
A =\varnothing$.
\end{thm}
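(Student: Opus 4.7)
The plan is to deduce Theorem \ref{chains} directly from Theorem \ref{tka1} by isolating a skew tree code $f$ whose regular skew subtrees of $T$ are exactly the infinite chains of $T$. The obvious candidate is the constant code $f:\nn\to\{1,2\}^{<\nn}$ defined by $f(n)=(1)$ for every $n\in\nn$. First I would verify that $f$ is a genuine skew tree code by exhibiting a witness: the chain of constant-zero strings in $\ct$ is pruned, is vacuously skew since it contains no splitting nodes, each of its levels is a singleton, and each of its nodes has exactly one immediate successor in the tree, so its code equals $f$.

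Next I would check that, for any regular dyadic subtree $T$ of $\ct$, one has $[T]_f=[T]_{\mathrm{chains}}$. If $S\in[T]_f$, then each $S(n)$ is a singleton and its unique node has a unique immediate successor in $S(n+1)$, so the nodes of $S$ form a strictly $\sqsubset$-increasing sequence and $S$ is an infinite chain of $T$. Conversely, if $S=\{t_0\sqsubset t_1\sqsubset\cdots\}$ is an infinite chain of $T$, then $S$ is pruned, satisfies $S(n)=\{t_n\}$, has no splitting nodes, has code $f$, and is regular in $T$ because each $t_n$ lies in exactly one level of $T$. Since both $[T]_f$ and $[T]_{\mathrm{chains}}$ inherit their topology from $2^T$, the identity map between them is a homeomorphism, and the analytic subsets of the two spaces coincide.

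Given these observations, the theorem follows at once: apply Theorem \ref{tka1} to the skew tree code $f$ and the analytic set $A\subseteq[T]_{\mathrm{chains}}=[T]_f$ to produce a regular dyadic subtree $R$ of $T$ with $[R]_f\subseteq A$ or $[R]_f\cap A=\varnothing$, and then use $[R]_f=[R]_{\mathrm{chains}}$ to conclude. There is no substantial obstacle here: the content of the statement is absorbed entirely into Theorem \ref{tka1}, and the only point requiring any care is the identification of infinite chains with regular skew subtrees of code $f$, which amounts to unwinding the definitions.
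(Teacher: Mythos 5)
Your proposal is correct and is exactly the paper's route: the paper derives Theorem \ref{chains} as the special case of Theorem \ref{tka1} for the constant skew tree code $f(n)=(1)$, which it explicitly identifies with chains right after defining tree codes. Your verification that $[T]_f=[T]_{\mathrm{chains}}$ just fills in the details the paper leaves to the reader.
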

Theorem \ref{tka1} will essentially be applied to the following
classes of skew subtrees.
\begin{defn}
\label{ad2} Let $T$ be a regular dyadic subtree of $\ct$. A
subtree $S$ of $T$ will be called increasing (respectively
decreasing) if the following are satisfied.
\begin{enumerate}
\item[(a)] $S$ is uniquely rooted, regular, skew and pruned.
\item[(b)] For every $n\in\nn$, there exists a splitting node of
$S$ in $S(n)$, which is the $\prec$-maximum (respectively
$\prec$-minimum) node of $S(n)$ and it has two immediate
successors in $S$.
\end{enumerate}
The class of increasing (respectively decreasing) subtrees of $T$
will be denoted by $[T]_\incr$ (respectively $[T]_\decr$).
\end{defn}
It is easy to see that every increasing (respectively decreasing)
subtree is of fixed code. Thus Theorem \ref{tka1} can be applied
to give the following.
\begin{cor}
\label{cka1} Let $T$ be a regular dyadic subtree of $\ct$ and $A$
be an analytic subset of $[T]_{\incr}$. Then there exists a
regular dyadic subtree $R$ of $T$ such that either
$[R]_\incr\subseteq A$, or $[R]_\incr\cap A=\varnothing$.
Similarly for the case of $[T]_\decr$.
\end{cor}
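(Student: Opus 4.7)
The plan is to deduce Corollary \ref{cka1} directly from Theorem \ref{tka1} by checking that the class $[T]_\incr$ coincides with the class $[T]_{f_\incr}$ of regular skew subtrees of $T$ of a single, explicitly computable skew tree code $f_\incr$. Once this identification is established, the corollary is just a specialization of the partition theorem for skew codes.

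First I would extract the code from Definition \ref{ad2}. Uniqueness of the root gives $|S(0)|=1$. Skewness forces at most one splitting node per level, and that node has exactly two immediate successors; clause (b) then pins down that splitting node to be the $\prec$-maximum of $S(n)$. All other nodes in $S(n)$ must therefore have exactly one immediate successor in $S$ (by prunedness), so $|S(n)|=n+1$ and, enumerating $S(n)$ as $s_0 \prec \cdots \prec s_n$, the code is
\[ f_\incr(n)=(\underbrace{1,1,\ldots,1}_{n},2), \]
independent of the particular $S \in [T]_\incr$. Conversely, any regular skew subtree of $T$ with code $f_\incr$ is automatically pruned (every node has a successor), uniquely rooted (since $f_\incr(0)=(2)$ allows only a single node at level $0$), and has a $\prec$-maximum splitting node with two immediate successors at every level. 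Hence $[T]_\incr=[T]_{f_\incr}$ as Polish subspaces of $2^T$.

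With this in hand, $A$ is an analytic subset of $[T]_{f_\incr}$, so Theorem \ref{tka1} yields a regular dyadic subtree $R$ of $T$ with either $[R]_{f_\incr}\subseteq A$ or $[R]_{f_\incr}\cap A=\varnothing$. Since $[R]_\incr=[R]_{f_\incr}$ by the same argument applied to $R$, we obtain the desired dichotomy. The decreasing case is symmetric: one checks in the same way that every $S\in[T]_\decr$ has the common code $f_\decr(n)=(2,\underbrace{1,\ldots,1}_{n})$, so that $[T]_\decr=[T]_{f_\decr}$ and Theorem \ref{tka1} again applies.

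The only real content of the argument is the first step, the verification that skewness together with clause (b) leaves no freedom for splitting anywhere other than at the $\prec$-maximum (resp.\ $\prec$-minimum) of each level, so that the codes $f_\incr$ and $f_\decr$ are genuinely fixed across the class. Everything else is a formal invocation of Theorem \ref{tka1}, so I do not expect any serious obstacle here.
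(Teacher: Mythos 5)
Your proposal is correct and follows exactly the paper's argument: the paper also observes that every increasing (respectively decreasing) subtree has a fixed skew tree code and then invokes Theorem \ref{tka1}. Your explicit computation of the codes $f_\incr(n)=(1,\ldots,1,2)$ and $f_\decr(n)=(2,1,\ldots,1)$ and the verification that the code characterizes membership in $[T]_\incr$ simply spell out what the paper leaves as ``easy to see.''
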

The above corollary may be considered as a parameterized version
of the Louveau-Shelah-Veli\v{c}kovi\'{c} theorem \cite{LSV}.


\subsection{Partitions of perfect sets}

For every subset $X$ of $2^\nn$, by $[X]^2$ we denote the set of
all doubletons of $X$. We identify $[X]^2$ with the set of all
$(\sg,\tau)\in X^2$ with $\sg\prec \tau$. We will need
the following partition theorem due to F. Galvin
(see \cite{Kechris}, Theorem 19.7).
\begin{thm}
\label{galvin2} Let $P$ be a perfect subset of $2^\nn$. If $A$ is a
subset of $[P]^2$ with the Baire property, then there exists
a perfect subset $Q$ of $P$ such that either $[Q]^2\subseteq A$, or
$[Q]^2\cap A=\varnothing$.
\end{thm}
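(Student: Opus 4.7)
The plan is the classical Galvin fusion argument, which combines the Baire property of $A$ with a Mycielski-style construction of a perfect set. Throughout, I view $[P]^2$ as the open subset $\{(x,y)\in P\times P:x\prec y\}$ of the Polish space $P\times P$, with basic open sets given by the boxes $(U\times V)\cap [P]^2$ for $U,V$ disjoint nonempty clopen subsets of $P$.

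First, by the Baire property of $A$, there exist disjoint nonempty clopen sets $U_0,V_0\subseteq P$ such that $A$ is either comeager or meager in $W_0=(U_0\times V_0)\cap [P]^2$. Replacing $A$ with $[P]^2\setminus A$ if necessary, assume the former. Write $A=G\triangle M$ with $G$ open in $[P]^2$ and $M=\bigcup_n F_n$, each $F_n$ closed and nowhere dense in $[P]^2$. Since $A$ is comeager in $W_0$, the closed set $W_0\setminus G$ is nowhere dense in $W_0$; hence one can shrink $U_0$ and $V_0$ (keeping them disjoint, clopen and nonempty) so as to arrange $W_0\subseteq G$.

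Now fix an enumeration $\{(s_m,t_m,n_m)\}_{m\in\nn}$ of all triples $(s,t,n)$ with $s,t\in\ct$ distinct nodes of common length and $n\in\nn$. I build by recursion a family $\{U_s:s\in\ct,\ s\neq\varnothing\}$ of nonempty clopen subsets of $P$ such that $U_{(0)}\subseteq U_0$, $U_{(1)}\subseteq V_0$, $U_{s^\con 0}\cap U_{s^\con 1}=\varnothing$ and $U_{s^\con 0}\cup U_{s^\con 1}\subseteq U_s$, $\mathrm{diam}(U_s)\to 0$ as $|s|\to\infty$, and for every $m$, $(U_{s_m}\times U_{t_m})\cap F_{n_m}=\varnothing$. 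At the $m$-th step, this last requirement is met because $F_{n_m}$ is closed nowhere dense in $[P]^2$, so its intersection with the currently defined clopen box $(U_{s_m}\times U_{t_m})\cap [P]^2$ is closed nowhere dense there, and one can pass to a nonempty clopen sub-box avoiding $F_{n_m}$; the shrinking is then propagated to all already-constructed descendants of $s_m$ and $t_m$.

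Setting $Q=\bigcap_{k\geq 1}\bigcup_{s\in 2^k}U_s$, the diameter condition yields that $Q$ is a perfect subset of $P$. For any $x\prec y$ in $Q$ that separate at level $k$, with $s=x|k$ and $t=y|k$, every triple $(s,t,n)$ occurs in the enumeration, so $(x,y)\in U_s\times U_t$ lies outside every $F_n$ and hence outside $M$; since also $(x,y)\in W_0\subseteq G$, we conclude $(x,y)\in G\setminus M\subseteq A$, proving $[Q]^2\subseteq A$. In the meager case, the same fusion applied to $[P]^2\setminus A$ yields a perfect $Q$ with $[Q]^2\cap A=\varnothing$. The main care lies in the bookkeeping of the countably many simultaneous shrinkings: at each stage every $U_s$ must remain nonempty clopen and continue to admit a splitting into two disjoint nonempty clopen pieces, which is always possible since $P$ is perfect.
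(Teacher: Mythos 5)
Your reduction modulo the meager part is fine (the Mycielski-style fusion guaranteeing that all pairs from $Q$ avoid every $F_n$ is standard and correct), but the way you handle the open/comeager part has a genuine gap, and it is exactly the point where Galvin's theorem is more than a one-box localization. You establish comeagerness of $A$ (and, after shrinking, the inclusion in $G$) only in the single box $W_0=(U_0\times V_0)\cap[P]^2$, and at the end you assert that every pair $(x,y)\in[Q]^2$ lies in $W_0$. That is false: your $Q$ meets both $U_{(0)}\subseteq U_0$ and $U_{(1)}\subseteq V_0$, and any pair $x\prec y$ with both points on the same side (say both in $U_0$) is not in $U_0\times V_0$ at all, since $U_0\cap V_0=\varnothing$. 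Comeagerness of $A$ in $W_0$ gives no information whatsoever about boxes $U\times U'$ with $U,U'\subseteq U_0$, so nothing forces such pairs into $G$. A concrete failure: let $A=\{(x,y)\in[P]^2: x(0)=0,\ y(0)=1\}$, the clopen set of pairs splitting at the root. Then $A$ is comeager (indeed everything) in the box you would pick, your construction goes through verbatim, yet the resulting $Q$ has same-side pairs which are not in $A$, so the conclusion $[Q]^2\subseteq A$ is simply wrong. (Homogeneity is of course attainable here by placing $Q$ entirely inside one side, but your argument never does anything of the sort.)

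What the correct argument (this is Theorem 19.7 in \cite{Kechris}, which the paper quotes without proof) requires beyond your fusion is: (i) the category decision ``$A$ comeager vs.\ $A$ meager'' must be re-made locally at every stage, for each newly created pair of incomparable nodes (equivalently, for the pair of immediate successors of each splitting node), since this decision is not inherited by sub-boxes lying on the same side of an earlier split; the Mycielski part then yields a perfect set $C$ and a node coloring $c$ such that for $x\neq y\in C$ membership of $(x,y)$ in $A$ depends only on $c(x\wedge y)$ (computed in the indexing tree); and (ii) a further combinatorial extraction is needed to make this node coloring constant, namely passing to a dyadic subtree all of whose splitting nodes carry the same color, which yields the homogeneous perfect $Q$. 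Your proposal collapses both steps into a single decision at the root, and that is where it breaks.
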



\section{Increasing and decreasing antichains of a regular dyadic tree}


In this section we define the increasing and decreasing antichains
and we establish their fundamental Ramsey properties.

As we have already seen in \S 2 the class of infinite chains of
the Cantor tree is Ramsey. On the other hand an analogue of
Theorem \ref{chains} for infinite antichains is not valid. For instance,
color an antichain $(t_n)_n$ of $\ct$ red if $t_0\prec t_1$; otherwise color
it blue. It is easy to see that this is an open partition, yet there is no
dyadic subtree of $\ct$ all of whose antichains are monochromatic. So, it is
necessary, in order to have a Ramsey result for antichains, to restrict
our attention to those which are monotone with respect to $\prec$.
Still, however, this is not enough. To see this, consider the set of
all $\prec$-increasing antichains and color such an antichain $(t_n)_n$
red if $|t_0|\leq |t_1\wedge t_2|$; otherwise color it blue. Again we
see that this is an open partition which is not Ramsey.

The following definition incorporates all the restrictions
indicated by the above discussion and which are, as we shall see,
essentially the only obstacles to a Ramsey result for antichains.
\begin{defn}
\label{ad1} Let $T$ be a regular dyadic subtree of the Cantor tree
$\ct$. An infinite antichain $(t_n)_n$ of $T$ will be called
increasing if the following conditions are satisfied.
\begin{enumerate}
\item[(1)] For all $n,m\in\nn$ with $n<m$, $|t_n|_T<|t_m|_T$.
\item[(2)] For all $n,m,l\in\nn$ with $n<m<l$, $|t_n|_T\leq
|t_m\wedge_T t_l|_T$. \item[(3I)] For all $n,m\in\nn$ with $n<m$,
$t_n\prec t_m$.
\end{enumerate}
The set of all increasing antichains of $T$ will be denoted by
$\incr(T)$. Similarly, an infinite antichain $(t_n)_n$ of $T$ will
be called decreasing if (1) and (2) above are satisfied and (3I)
is replaced by the following.
\begin{enumerate}
\item[(3D)] For all $n,m\in\nn$ with $n<m$, $t_m\prec t_n$.
\end{enumerate}
The set of all decreasing antichains of $T$ will be denoted by
$\decr(T)$.
\end{defn}
The classes of increasing and decreasing antichains of $T$ have
the following crucial stability properties.
\begin{lem}
\label{al1} Let $T$ be a regular dyadic subtree of $\ct$. Then the
following hold.
\begin{enumerate}
\item[(1)] (Hereditariness) Let $(t_n)_n\in \incr(T)$ and
$L=\{l_0<l_1<...\}$ be an infinite subset of $\nn$. Then
$(t_{l_n})_n\in\incr(T)$. Similarly, if $(t_n)_n\in\decr(T)$, then
$(t_{l_n})_n\in\decr(T)$. \item[(2)] (Cofinality) Let $(t_n)_n$ be
an infinite antichain of $T$. Then there exists
$L=\{l_0<l_1<...\}\in[\nn]$ such that either
$(t_{l_n})_n\in\incr(T)$ or $(t_{l_n})_n\in\decr(T)$. \item[(3)]
(Coherence) We have $\incr(T)=\incr(\ct)\cap 2^T$ and similarly
for the decreasing antichains.
\end{enumerate}
\end{lem}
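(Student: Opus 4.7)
The plan is to dispatch parts (1) and (3) first, so that in the main combinatorial step (2) I can work inside $\ct$ via coherence. Part (1) is immediate: every defining clause of $\incr(T)$ and $\decr(T)$ is a universal statement over a finite tuple of indices $n<m$ or $n<m<k$, and passing to a subsequence preserves each such clause verbatim.

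For part (3), I would exploit the canonical bijection $i_T:\ct\to T$ together with the level set $L_T$. Since $T$ is regular, the $T$-rank $|t|_T$ of a node $t\in T$ is a strictly monotone function of its $\ct$-length $|t|$, so clauses (1) and (3I)/(3D) pass between $T$ and $\ct$ automatically. The only point needing care is the meet clause (2): from $t_m\wedge_T t_k\sqsubseteq t_m\wedge t_k$ the implication $|t_n|_T\le|t_m\wedge_T t_k|_T\Rightarrow|t_n|\le|t_m\wedge t_k|$ is immediate. For the converse, if $t_n\in T$ then $|t_n|\in L_T$, so the $\ct$-prefix of $t_m$ (equivalently of $t_k$) at length $|t_n|$ already lies in $T$ by the regularity of $T$; being a common $T$-ancestor of $t_m$ and $t_k$, it is $\sqsubseteq t_m\wedge_T t_k$, which yields the reverse inequality. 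Therefore $\incr(T)=\incr(\ct)\cap 2^T$, and similarly for $\decr(T)$.

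For part (2), given an infinite antichain $(t_n)_n$ of $T$, I would thin in three stages, using (3) to work with $\wedge$ and $|\cdot|$ throughout. Since each $T(n)$ is finite, a first extraction makes $(|t_n|)_n$ strictly increasing. A Ramsey argument on the pair-coloring $\{n,m\}\mapsto\mathbf{1}[t_n\prec t_m]$ then produces a $\prec$-monotone subsequence; WLOG it is $\prec$-increasing, the decreasing branch being handled symmetrically to yield $\decr(T)$.

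All that remains is clause (2). The key structural fact is the tree trichotomy: for $t_n\prec t_m\prec t_k$ in $\ct$, the meets $t_n\wedge t_m$ and $t_m\wedge t_k$ are $\sqsubseteq$-comparable (being ancestors of $t_m$) and distinct (since $t_m$ cannot lie on both sides of the same branching node), so writing $a_{p,q}=|t_p\wedge t_q|$, exactly one of $a_{n,m}<a_{m,k}$ or $a_{m,k}<a_{n,m}$ holds, with $a_{n,k}=\min(a_{n,m},a_{m,k})$ accordingly. I would apply Ramsey to this dichotomy on $[\nn]^3$. Specialized to consecutive triples $(j,j+1,j+2)$, the second alternative yields an infinite strictly decreasing sequence $a_{0,1}>a_{1,2}>\cdots$ of non-negative integers, which is impossible, so after Ramsey only the first alternative survives. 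Then $(a_{n,n+1})_n$ is strictly increasing, hence unbounded, and $a_{n,m}=a_{n,n+1}$ for every $m>n$. A final diagonal extraction $n_0<n_1<\cdots$ chosen so that $a_{n_q,n_q+1}\ge|t_{n_{q-1}}|$ gives clause (2). The main obstacle, and the only genuinely non-routine point, is this last observation: recognising that the ``wrong'' Ramsey outcome is combinatorially impossible rather than merely inconvenient.
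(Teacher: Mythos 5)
Your proposal is correct, and parts (1) and (3) are essentially the paper's own arguments: hereditariness is immediate, and your coherence proof (forward via $t_m\wedge_T t_l\sqsubseteq t_m\wedge t_l$, backward via the $T$-ancestors of $t_m$ and $t_l$ at the $T$-level of $t_n$, which must coincide once $|t_n|\leq |t_m\wedge t_l|$) is the one in the text. The genuine difference is in part (2). The paper stays inside $T$: it colors triples by the target condition $|t_n|_T\leq |t_m\wedge_T t_l|_T$ itself, applies Ramsey, and rules out the bad homogeneous alternative by a pigeonhole argument — if all meets had $T$-height below $k=|t_{\min L}|_T$, then, since $\{t\in T:|t|_T<k\}$ is finite, a further Ramsey application would force infinitely many of the $t_m$'s to meet pairwise at a single node $s$, which is impossible because $T$ is dyadic. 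You instead transfer to $\ct$ via (3), color triples by comparing the two consecutive meets $t_n\wedge t_m$ and $t_m\wedge t_l$ (comparable, and distinct for a $\prec$-monotone triple), exclude the decreasing alternative by infinite descent on meet lengths, and then recover condition (2) of Definition \ref{ad1} by a final diagonal extraction, using that $(|t_n\wedge t_{n+1}|)_n$ is strictly increasing and that $|t_n\wedge t_m|=|t_n\wedge t_{n+1}|$ for $m>n$. Both routes are sound: the paper's good color yields the condition with no further thinning, whereas yours needs the extra diagonalization but never uses dyadicity (only regularity, through coherence), and the structural facts you isolate along the way — the min-formula for meets and the role of the chain $(t_n\wedge t_{n+1})_n$ — are precisely what the paper exploits later in Fact \ref{rf11} and Lemma \ref{al2}.
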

\begin{proof}
(1) It is straightforward.\\
(2) The point is that all three properties in the definition of
increasing and decreasing antichains are cofinal in the set of all
antichains of $T$. Indeed, let $(t_n)_n$ be an infinite antichain
of $T$. Clearly there exists $N\in [\nn]$ such that the sequence
$\big( |t_n|_T\big)_{n\in N}$ is strictly increasing. Moreover, by
Ramsey's theorem, there exists $M\in [N]$ such that the sequence
$(t_n)_{n\in M}$ is either $\prec$-increasing or
$\prec$-decreasing. Finally, to see that condition (2) in
Definition \ref{ad1} is cofinal, let
\[ A=\big\{ (n,m,l)\in [M]^3: |t_n|_T\leq |t_m\wedge_T t_l|_T \big\} \]
By Ramsey's Theorem again, there exists $L\in [M]$ such that
either $[L]^3\subseteq A$ or $[L]^3\cap A=\varnothing$. We claim
that $[L]^3\subseteq A$, which clearly completes the proof. Assume
not, i.e. $[L]^3\cap A=\varnothing$. Let $n=\min L$ and
$L'=L\setminus\{n\}\in [L]$. Let also $k=|t_n|_T$. Then for every
$(m,l)\in [L']^2$ we have that $|t_m\wedge_T t_l|_T<k$. The set
$\{ t\in T: |t|_T<k\}$ is finite. Hence, by another application of
Ramsey's theorem, there exist $s\in T$ with $|s|_T<k$ and
$L''\in[L']$ such that for every $(m,l)\in [L'']^2$ we have that
$s=t_m\wedge_T t_l$. But this is clearly impossible as the tree
$T$ is dyadic.\\
(3) First we observe the following. As the tree $T$ is regular,
for every $t, s\in T$ we have $|t|_T< |s|_T$ (respectively
$|t|_T=|s|_T$) if and only if $|t|<|s|$ (respectively $|t|=|s|$).

Now, let $(t_n)_n\in \incr(T)$. In order to show that
$(t_n)_n\in\incr(\ct)\cap 2^T$ it is enough to prove
that for every $n<m<l$ we have $|t_n|\leq |t_m\wedge t_l|$.
By the above remarks, we have that $|t_n|\leq |t_m\wedge_T t_l|$.
As $t_m\wedge_T t_l\sqsubseteq t_m\wedge t_l$, we are done.

Conversely assume that $(t_n)_n\in\incr(\ct)\cap 2^T$. Again it is
enough to check that condition (2) in Definition \ref{ad1} is
satisfied. So let $n<m<l$. There exist $s_m,s_l\in T$ with
$|s_m|_T=|s_l|_T=|t_n|_T$, $s_m\sqsubseteq t_m$ and $s_l\sqsubseteq
t_l$. We claim that $s_m=s_l$. Indeed, if not, then $|t_m\wedge
t_l|=|s_m\wedge s_l|<|t_n|$ contradicting the fact that the
antichain $(t_n)_n$ is increasing in $\ct$. It follows that
$t_m\wedge_T t_l \sqsupseteq s_m$, and so,
$|s_m|_T=|t_n|_T\leq |t_m\wedge_T t_l|_T$, as desired.
The proof for the decreasing antichains is identical.
\end{proof}
A corollary of property (3) of Lemma \ref{al1} is the following.
\begin{cor}
\label{ac1} Let $T$ be a regular dyadic subtree of $\ct$ and $R$ a
regular dyadic subtree of $T$. Then $\incr(R)=\incr(T)\cap 2^R$
and $\decr(R)=\decr(T)\cap 2^R$.
\end{cor}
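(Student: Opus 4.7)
The plan is to deduce this directly from the Coherence property (item (3) of Lemma \ref{al1}) by applying it twice. Since the statement is entirely symmetric in the increasing and decreasing cases, I will only sketch the argument for $\incr$.

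First I would invoke the observation made at the end of \S 2.1: if $T$ is a regular dyadic subtree of $\ct$ and $R$ is a regular dyadic subtree of $T$, then $R$ is itself a regular dyadic subtree of $\ct$. This is important because Lemma \ref{al1}(3) is stated for regular dyadic subtrees of $\ct$, so I need to know that the hypothesis applies to both $T$ and $R$.

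Having established that, Lemma \ref{al1}(3) applied to $T$ gives $\incr(T)=\incr(\ct)\cap 2^T$, and applied to $R$ gives $\incr(R)=\incr(\ct)\cap 2^R$. Since $R\subseteq T$, any subset of $R$ is a subset of $T$, so $2^R\subseteq 2^T$. Intersecting the identity for $T$ with $2^R$ yields
\[
\incr(T)\cap 2^R = \incr(\ct)\cap 2^T\cap 2^R = \incr(\ct)\cap 2^R = \incr(R),
\]
which is exactly the desired equality. The argument for $\decr$ is verbatim the same, using the corresponding half of Lemma \ref{al1}(3).

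There is essentially no obstacle here: the work was already done in proving Coherence. The only small subtlety to be careful about is confirming that condition (2) of Definition \ref{ad1}, which involves the meet $\wedge_T$ and the tree-level function $|\cdot|_T$, transfers correctly between $R$ and $T$. This is not an issue because Coherence reduces both sides to the absolute notions $\wedge$ and $|\cdot|$ in $\ct$, so the $T$-dependent and $R$-dependent formulations automatically agree on antichains lying in $2^R$.
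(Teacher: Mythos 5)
Your proof is correct and follows exactly the route the paper intends: the corollary is stated there as an immediate consequence of the Coherence property, Lemma \ref{al1}(3), applied to both $T$ and $R$ (using that $R$ is a regular dyadic subtree of $\ct$), and your double application plus intersection with $2^R$ is precisely that argument.
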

We notice that for every regular dyadic subtree $T$ of the Cantor
tree $\ct$ the sets $\incr(T)$ and $\decr(T)$ are Polish subspaces
of $2^T$. The main result of this section is the following.
\begin{thm}
\label{ap1} Let $T$ be a regular dyadic subtree of $\ct$ and $A$
be an analytic subset of $\incr(T)$ (respectively of $\decr(T)$).
Then there exists a regular dyadic subtree $R$ of $T$ such that
either $\incr(R)\subseteq A$, or $\incr(R)\cap A=\varnothing$
(respectively, either $\decr(R)\subseteq A$, or $\decr(R)\cap
A=\varnothing$).
\end{thm}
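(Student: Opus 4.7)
The plan is to reduce Theorem \ref{ap1} to Corollary \ref{cka1} by constructing a continuous retraction from the space $[T]_\incr$ of increasing subtrees onto the space $\incr(T)$ of increasing antichains. I treat only the increasing case; the decreasing one is entirely symmetric, with the splitting node playing $\prec$-minimum rather than $\prec$-maximum.

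\textbf{Canonical antichain map.} For $S \in [T]_\incr$, let $b_n$ denote the unique splitting node of $S$ at $S$-level $n$ and let $a_n$ be the $\prec$-smaller of the two immediate $S$-successors of $b_n$. Setting $\Phi(S) = (a_n)_n$ gives a continuous map $\Phi : [T]_\incr \to \incr(T)$, since the conditions of Definition \ref{ad1} follow routinely from Definition \ref{ad2}: the $T$-levels of the $a_n$ strictly increase, condition (3I) holds because $a_m$ descends from the $\prec$-larger immediate $S$-successor of $b_n$ when $m > n$, and condition (2) uses the identity $a_m \wedge_T a_l = b_m$ for $n < m < l$.

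\textbf{Construction of a continuous section.} This is the main step: I build $\Psi : \incr(T) \to [T]_\incr$ with $\Phi \circ \Psi = \mathrm{id}$. Given $(t_n)_n \in \incr(T)$, set $w_n = t_n \wedge_T t_{n+1}$ and let $u_n$ be the $T$-ancestor of $w_{n+1}$ at $T$-level $|t_n|_T$; this is well-defined thanks to condition (2) of Definition \ref{ad1}. Define $\Psi((t_n))$ to be the subtree $S$ of $T$ with $T$-level set $\{|w_0|_T\} \cup \{|t_n|_T : n \geq 0\}$ and
\[
S(0) = \{w_0\}, \quad S(1) = \{t_0, u_0\}, \quad S(n) = \{t_{n-1}, u_{n-1}\} \cup \{t_i^{(n)} : 0 \leq i \leq n - 2\} \ \text{for } n \geq 2,
\]
where $t_i^{(n)}$ denotes the leftmost $T$-descendant of $t_i$ at $T$-level $|t_{n-1}|_T$. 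I then verify that $S$ is uniquely rooted, regular, skew and pruned, that its splitting nodes are exactly $w_0, u_0, u_1, \ldots$, and that each splitting node is the $\prec$-maximum of its $S$-level—this last assertion follows from preservation of lex-order under $\sqsubseteq$-descent combined with $t_0 \prec t_1 \prec \cdots \prec t_{n-1} \prec u_{n-1}$. Hence $S \in [T]_\incr$ and, by inspection of its immediate $S$-successors, $\Phi(\Psi((t_n))) = (t_n)_n$. Continuity of $\Psi$ is immediate, each coordinate being determined by finitely many $t_i$.

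\textbf{Conclusion.} Given an analytic $A \subseteq \incr(T)$, the preimage $\Phi^{-1}(A) \subseteq [T]_\incr$ is analytic, so Corollary \ref{cka1} yields a regular dyadic $R \subseteq T$ with either $[R]_\incr \subseteq \Phi^{-1}(A)$ or $[R]_\incr \cap \Phi^{-1}(A) = \varnothing$. Since $[R]_\incr = [T]_\incr \cap 2^R$ and, by Corollary \ref{ac1}, $\incr(R) = \incr(T) \cap 2^R$, the section $\Psi$ restricted to $\incr(R)$ takes values in $[R]_\incr$. Thus in the first case every $(t_n) \in \incr(R)$ satisfies $(t_n) = \Phi(\Psi((t_n))) \in A$, and in the second case $(t_n) \in A \cap \incr(R)$ would give $\Psi((t_n)) \in [R]_\incr \cap \Phi^{-1}(A)$, a contradiction. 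The main technical obstacle lies in verifying that $\Psi((t_n))$ genuinely lies in $[T]_\incr$—especially the $\prec$-maximality of its splitting nodes—which requires a careful bookkeeping of the interleaving $|w_n|_T < |t_n|_T \leq |w_{n+1}|_T$ provided by Definition \ref{ad1}.
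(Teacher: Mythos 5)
Your overall strategy is exactly the paper's: reduce the partition of $\incr(T)$ to a partition of $[T]_\incr$ via Corollary \ref{cka1}, using the map that sends an increasing subtree to the sequence of $\prec$-smaller immediate successors of its splitting nodes (the paper's $\Phi(S)=(s^{n+1}_n)_n$), and produce preimages of increasing antichains by essentially the same explicit construction as in Lemma \ref{al2} (your $w_n, u_n$ are the paper's $c_n, c'_{n+1}$). The construction itself is sound; the only substantive issue is in your concluding paragraph.

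The gap is the assertion that ``the section $\Psi$ restricted to $\incr(R)$ takes values in $[R]_\incr$,'' which you justify by $\incr(R)=\incr(T)\cap 2^R$. This does not follow: $\Psi$ is built from the nodes $w_n=t_n\wedge_T t_{n+1}$, the $T$-ancestors $u_n$ of $w_{n+1}$ at prescribed $T$-levels, and leftmost $T$-descendants, and none of these need belong to a regular dyadic subtree $R$ of $T$ even when every $t_n$ does — in general $t_n\wedge_R t_{n+1}$ is a proper $\sqsubset$-ancestor of $t_n\wedge_T t_{n+1}$, and the latter need not be a node of $R$ at all. Since the Ramsey alternative you obtain from Corollary \ref{cka1} only constrains $\Phi$ on $[R]_\incr$ and not on all of $[T]_\incr$, both halves of your case analysis break down as written: in the first case you cannot conclude $\Psi((t_n))\in\Phi^{-1}(A)$, and in the second you get no contradiction. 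The repair is immediate and is what the paper actually records: prove the surjectivity $\Phi\big([R]_\incr\big)=\incr(R)$ for an \emph{arbitrary} regular dyadic subtree $R$, by running your construction of $\Psi$ inside $R$ (using $\wedge_R$, $R$-levels and $R$-descendants; condition (2) of Definition \ref{ad1} relative to $R$ is available by Corollary \ref{ac1}). With a section $\Psi_R:\incr(R)\to[R]_\incr$ in hand your two cases go through verbatim; note also that continuity of the section is never needed, only surjectivity of $\Phi$ on each $[R]_\incr$. A minor imprecision elsewhere: $a_m\wedge_T a_l$ need not equal $b_m$ (the two immediate $S$-successors of $b_m$ may share a longer common prefix in $T$), but it does satisfy $a_m\wedge_T a_l\sqsupseteq b_m$, which is all that condition (2) requires.
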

We notice that, after a first draft of the present paper, S.
Todor\v{c}evi\'{c} informed us that he is also aware of the above
result with a proof based on K. Milliken's theorem for strong
subtrees (\cite{To2}).

The proof of Theorem \ref{ap1} is based on Corollary \ref{cka1}.
The method is to reduce the coloring of $\incr(T)$ (respectively
of $\decr(T)$) in Theorem \ref{ap1}, to a coloring of the class
$[T]_{\incr}$ (respectively $[T]_{\decr}$) of increasing
(respectively decreasing) regular subtrees of $T$ (see Definition
\ref{ad2}). To this end, we need the following easy fact
concerning the classes $[T]_\incr$ and $[T]_\decr$.
\begin{fact}
\label{af1} Let $T$ be a regular dyadic subtree of $\ct$. If $S\in
[T]_\incr$ or $S\in [T]_\decr$, then for every $n\in\nn$ we have
$|S(n)|=n+1$.
\end{fact}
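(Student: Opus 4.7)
The plan is to prove $|S(n)|=n+1$ by induction on $n$, reading off the count directly from the structural requirements in Definition \ref{ad2}. The base case is immediate: since $S$ is uniquely rooted, $|S(0)|=1$. For the inductive step, I would fix $n\in\nn$ with $|S(n)|=n+1$ and count how many immediate successors in $S$ the nodes of $S(n)$ contribute to $S(n+1)$.

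By the increasing (respectively decreasing) condition in Definition \ref{ad2}, there is a distinguished node in $S(n)$, namely its $\prec$-maximum (respectively $\prec$-minimum), which is a splitting node with two immediate successors in $S$. Because $S$ is skew and every splitting node of $\ct$ has exactly two children, no other node of $S(n)$ can be a splitting node, so every remaining node of $S(n)$ has at most one immediate successor in $S$. On the other hand, since $S$ is pruned, every node of $S(n)$ must have at least one immediate successor. Hence each of the $n$ non-distinguished nodes of $S(n)$ contributes exactly one node to $S(n+1)$, while the distinguished node contributes two, giving $|S(n+1)|=n+2$ and closing the induction.

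The argument is essentially bookkeeping, and I do not foresee any genuine obstacle; the entire content is the observation that ``skew'' together with the level condition of Definition \ref{ad2} forces exactly one splitting node per level of $S$, while ``pruned'' upgrades ``at most one successor'' to ``exactly one successor'' for all the non-splitting nodes. The decreasing case is completely symmetric, handled by the same induction with $\prec$-minimum in place of $\prec$-maximum.
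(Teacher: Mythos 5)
Your argument is correct, and since the paper states Fact \ref{af1} without proof (it is labelled an ``easy fact''), your induction is exactly the bookkeeping the authors leave implicit: unique rootedness gives $|S(0)|=1$, and at each level the pruned, skew and level-splitting conditions of Definition \ref{ad2} combine to show that exactly one node of $S(n)$ has two immediate successors in $S$ and all others have exactly one, whence $|S(n+1)|=|S(n)|+1$. The only cosmetic slip is your appeal to ``every splitting node of $\ct$ has exactly two children'' --- what you actually need is that the skew condition bounds the number of immediate successors \emph{in $S$} (not in $\ct$) by two, which is how the paper's notion of a skew tree code is set up --- but this does not affect the validity of the proof.
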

As we have indicated, the crucial fact in the present setting is
that there is a canonical correspondence between $[T]_\incr$ and
$\incr(T)$ (and similarly for the decreasing antichains) which we
are about to describe. For every $S\in [\ct]_\incr$ or $S\in
[\ct]_\decr$ and every $n\in\nn$, let $\{s^n_0\prec...\prec
s^n_n\}$ be the $\prec$-increasing enumeration of $S(n)$. Define
$\Phi: [\ct]_\incr\to \incr(\ct)$ by
\[ \Phi(S)= (s^{n+1}_{n})_n. \]
It is easy to see that $\Phi$ is a well-defined continuous map.
Respectively, define $\Psi:[\ct]_\decr\to \decr(\ct)$ by
$\Psi(S)=(s^{n+1}_1)_n$. Again it is easy to see that $\Psi$ is
well-defined and continuous.
\begin{lem}
\label{al2} Let $T$ be a regular dyadic subtree of $\ct$. Then
$\Phi\big( [T]_\incr\big)=\incr(T)$ and
$\Psi\big([T]_\decr\big)=\decr(T)$.
\end{lem}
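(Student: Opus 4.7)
My plan is to prove $\Phi([T]_\incr) = \incr(T)$ in detail; the equality $\Psi([T]_\decr) = \decr(T)$ is obtained by the symmetric argument in which the $\prec$-maximum splitting node is replaced by the $\prec$-minimum and ``left child'' by ``right child'' throughout.

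\emph{Forward inclusion.} Let $S \in [T]_\incr$ and enumerate $S(n) = \{s^n_0 \prec \cdots \prec s^n_n\}$, which has length $n+1$ by Fact~\ref{af1}. By Definition~\ref{ad2}, the splitting node at $S$-level $n$ is the $\prec$-maximum $s^n_n$, with $S$-children $s^{n+1}_n \prec s^{n+1}_{n+1}$, while each $s^n_i$ with $i<n$ has a unique $S$-successor; this successor must be $s^{n+1}_i$ because $\prec$-order is preserved under passing to $T$-descendants at a common $T$-level. An induction then shows that the $S$-ancestor chain of $t_n := s^{n+1}_n$ is $s^0_0 \sqsubset \cdots \sqsubset s^n_n \sqsubset t_n$, so $t_n \wedge_S t_m = s^n_n$ for $n<m$ and $t_m \wedge_S t_l = s^m_m$ for $n<m<l$. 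Since $S$ is regular in $T$, say $S(n) \subseteq T(m_n)$ with $m_n$ strictly increasing, I obtain $|t_n|_T = m_{n+1} < |t_m|_T$ and $|t_n|_T = m_{n+1} \leq m_m = |s^m_m|_T \leq |t_m \wedge_T t_l|_T$, which are properties (1) and (2) of Definition~\ref{ad1}; (3I) is immediate from $t_n$ being the $\prec$-left $S$-child of $s^n_n$ whereas $t_m$ is a $T$-descendant of its $\prec$-right $S$-child $s^{n+1}_{n+1}$.

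\emph{Reverse inclusion.} Given $(t_n)_n \in \incr(T)$, set $v_n := t_n \wedge_T t_{n+1}$. Property (2) of Definition~\ref{ad1} applied to the triples $(n-1,n,n+1)$ and $(n,n+1,n+2)$ yields respectively $|t_{n-1}|_T \leq |v_n|_T$ (for $n \geq 1$) and $|t_n|_T \leq |v_{n+1}|_T$, whence $v_n \sqsubset v_{n+1}$ since both are $T$-ancestors of $t_{n+1}$ of strictly increasing $T$-level. I prescribe the $T$-level set of $S$ by $m_0 := |v_0|_T$ and $m_n := |t_{n-1}|_T$ for $n \geq 1$, which is strictly increasing by property (1) and satisfies $m_n \leq |v_n|_T$. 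I then define $s^n_n$ to be the unique $T$-ancestor of $v_n$ at level $m_n$, set $s^{n+1}_n := t_n$ and $s^{n+1}_{n+1} :=$ the unique $T$-ancestor of $v_{n+1}$ at level $m_{n+1}$, and for $i<n$ I pick $s^{n+1}_i$ recursively as any $T$-descendant of $s^n_i$ at level $m_{n+1}$ (available since $T$ is dyadic and $m_{n+1} > m_n$).

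It remains to verify, using $\prec$-preservation under $T$-descent to a common level, that $S := \bigcup_n \{s^n_0, \ldots, s^n_n\}$ is uniquely rooted, pruned, regular in $T$, and skew, with $s^n_n$ the unique splitting node at level $n$ and $S(n)$ enumerated in $\prec$-order as $s^n_0 \prec \cdots \prec s^n_n$; hence $S \in [T]_\incr$ and $\Phi(S) = (s^{n+1}_n)_n = (t_n)_n$. The delicate point is the construction in the reverse direction: the condition $\Phi(S) = (t_n)_n$ forces the $T$-levels $m_{n+1} = |t_n|_T$, and the main obstacle is to ensure that a compatible splitting chain $s^n_n$ exists at the prescribed levels inside $T$ --- this is exactly what the comparison $|t_{n-1}|_T \leq |v_n|_T$ supplied by property (2) of an increasing antichain guarantees.
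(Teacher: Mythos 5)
Your proof is correct and follows essentially the same route as the paper: the forward inclusion is the same direct verification of conditions (1), (2), (3I) (the paper phrases it via the coherence Lemma \ref{al1}(3) for $\incr(\ct)\cap 2^T$), and the reverse inclusion uses the identical construction — the meets $v_n=t_n\wedge_T t_{n+1}$ form a chain, their truncations at level $|t_{n-1}|_T$ serve as the splitting nodes, and the remaining nodes are filled in by arbitrary extensions — with the comparison $|t_{n-1}|_T\le |v_n|_T$ from condition (2) playing the same key role. Your shortcut for the chain property (consecutive $v_n,v_{n+1}$ are both ancestors of $t_{n+1}$) is a mild simplification of the paper's Claims 1 and 2, not a different argument.
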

\begin{proof}
We shall give the proof only for the case of increasing subtrees.
The proof of the other case is similar. First, we notice that for
every $S\in[T]_\incr$ we have $\Phi(S)\in \incr(\ct)\cap 2^T$, and
so, by Lemma \ref{al1}(3) we get that $\Phi\big( [T]_\incr\big)
\subseteq\incr(T)$. Conversely, let $(t_n)_n\in\incr(T)$.
\bigskip

\noindent \textsc{Claim 1.} \textit{For every $n<m<l$ we have
$t_n\wedge_T t_m=t_n\wedge_T t_l$.}
\bigskip

\noindent \textit{Proof of the claim.} Let $n<m<l$. By condition
(2) in Definition \ref{ad1}, there exists $s\in T$ with
$|s|_T=|t_n|_T$ and such that $s\sqsubseteq t_m\wedge_T t_l$.
Moreover, observe that $t_n\prec s$, as $t_n\prec t_m$. It follows
that $t_n\wedge_T t_m = t_n\wedge_T s = t_n\wedge_T t_l$, as
claimed. \hfill $\lozenge$
\bigskip

\noindent For every $n\in\nn$, we set $c_n=t_n\wedge_T t_{n+1}$.
\bigskip

\noindent \textsc{Claim 2.} \textit{For every $n<m$ we have
$c_n\sqsubset c_m$. That is, the sequence $(c_n)_n$ is an infinite
chain of $T$. }
\bigskip

\noindent \textit{Proof of the claim.} Let $n<m$. By Claim 1, we
get that $c_n$ and $c_m$ are compatible, since $c_n=t_n\wedge_T
t_m$ and, by definition, $c_m= t_m\wedge_T t_{m+1}$. Now notice
that $|c_n|_T< |t_n|_T\leq |t_m\wedge_T t_{m+1}|_T =|c_m|_T$.
\hfill $\lozenge$
\bigskip

\noindent For every $n\geq 1$, let $c'_n$ be the unique node of
$T$ such that $c'_n\sqsubseteq c_n$ and $|c'_n|_T=|t_{n-1}|_T$. We
define recursively $S\in [T]_\incr$ as follows. We set $S(0)=\{
c_0\}$ and $S(1)=\{ t_0, c'_1\}$. Assume that $S(n)=\{
s^n_0\prec...\prec s^n_n\}$ has been defined so as
$s^n_{n-1}=t_{n-1}$ and $s^n_n=c'_n$. For every $0\leq i\leq n-1$,
we chose nodes $s^{n+1}_i$ such that $s^n_i\sqsubset s^{n+1}_i$
and $|s^{n+1}_i|_T=|t_n|_T$. We set $S(n+1)=\{
s^{n+1}_0\prec...\prec s^{n+1}_{n-1}\prec t_n \prec c'_{n+1}\}$.
It is easy to check that $S\in [T]_\incr$ and that
$\Phi(S)=(t_n)_n$. The proof is completed.
\end{proof}
We are ready to give the proof of Theorem \ref{ap1}.
\begin{proof}[Proof of Theorem \ref{ap1}]
Let $A$ be an analytic subset of $\incr(T)$. By Lemma \ref{al2},
the set $B=\Phi^{-1}(A)\cap [T]_\incr$ is an analytic subset of
$[T]_\incr$. By Corollary \ref{cka1}, there exists a regular
dyadic subtree $R$ of $T$ such that either $[R]_\incr\subseteq B$
or $[R]_\incr\cap B=\varnothing$. By Lemma \ref{al2}, the first
case implies that $\incr(R)=\Phi\big([R]_\incr\big)\subseteq
\Phi(B)\subseteq A$, while the second that $\incr(R)\cap
A=\Phi\big([R]_\incr\big)\cap A=\varnothing$. The proof for the
case of decreasing antichains is similar.
\end{proof}


\section{Canonicalizing sequential compactness of trees of functions}


The present section consists of four subsections. In the first
one, using the Ramsey properties of chains and of increasing
and decreasing antichains, we prove a strengthening of a result
of J. Stern \cite{S}. In the second one, we introduce the notion
of equivalence of families of functions and we provide a criterion
for establishing it. In the third subsection, we define the
seven minimal families. The last subsection is devoted to the
proof of the main result of the section, concerning the canonical
embedding in any separable Rosenthal compact of one of the
minimal families.

\subsection{Sequential compactness of trees of functions}

We start with the following definition.
\begin{defn}
\label{rd1} Let $L$ be an infinite subset of $\ct$ and $\sg\in
2^\nn$. We say that $L$ converges to $\sg$ if for every $k\in\nn$
the set $L$ is almost included in the set
$\{t\in\ct:\sg|k\sqsubseteq t\}$. The element $\sg$ will be called
the limit of the set $L$. We write $L\to\sg$ to denote that $L$
converges to $\sg$.
\end{defn}
It is clear that the limit of a subset $L$ of $\ct$ is unique, if
it exists.
\begin{fact}
\label{rf11} Let $(t_n)_n$ be an increasing (respectively
decreasing) antichain of $\ct$. Then $(t_n)_n$ converges to $\sg$,
where $\sg$ is the unique element of $2^\nn$ determined by the
chain $(c_n)_n$ with $c_n=t_n\wedge t_{n+1}$ (see the proof of
Lemma \ref{al2}).
\end{fact}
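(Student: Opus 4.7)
The plan is to extract from the proof of Lemma \ref{al2} the two combinatorial facts I need and then close with a short convergence argument. First, I would specialise the proof of Claims 1 and 2 of Lemma \ref{al2} to $T=\ct$ (so that $\wedge_T=\wedge$ and $|\cdot|_T=|\cdot|$ agree with the ambient meet and length). This yields, for any increasing antichain $(t_n)_n$ of $\ct$, two statements: (i) $t_n\wedge t_m=t_n\wedge t_l$ whenever $n<m<l$; and (ii) the sequence $c_n:=t_n\wedge t_{n+1}$ is a chain of $\ct$. Moreover, the estimate $|c_n|<|t_n|\leq|c_{n+1}|$---which follows from $c_n\sqsubsetneq t_n$ together with condition (2) of Definition \ref{ad1} applied to $n<n+1<n+2$---shows that $|c_n|\to\infty$. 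Therefore $\sg:=\bigcup_n c_n$ is a well-defined element of $2^\nn$ and is the unique branch containing every $c_n$, so the $\sg$ mentioned in the statement is unambiguously defined.

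It then remains to verify that $L:=\{t_n:n\in\nn\}$ converges to $\sg$ in the sense of Definition \ref{rd1}. For this I would fix $k\in\nn$ and choose $N$ with $|c_N|\geq k$; then for every $n\geq N$, chain-monotonicity gives $\sg|k\sqsubseteq c_N\sqsubseteq c_n$, while $c_n\sqsubseteq t_n$ by the very definition of the meet, so $\sg|k\sqsubseteq t_n$. Hence $L$ is almost contained in $\{t\in\ct:\sg|k\sqsubseteq t\}$, which is exactly the defining property of $L\to\sg$.

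For a decreasing antichain the argument is identical. Indeed, the proofs of Claims 1 and 2 of Lemma \ref{al2} use only the $\prec$-monotonicity of $(t_n)_n$, not its direction, so they survive verbatim once one replaces condition (3I) by (3D); the convergence step goes through unchanged. I do not anticipate any genuine obstacle beyond careful bookkeeping of the meet operation in the ambient tree, and this is transparent since $\ct$ is a regular dyadic subtree of itself, making Corollary \ref{ac1} applicable should one want to reduce formally to the previously proved setting.
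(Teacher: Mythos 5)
Your proposal is correct and follows essentially the route the paper intends: the paper states this as a Fact without a written proof, pointing precisely to Claims 1 and 2 in the proof of Lemma \ref{al2}, which give that $(c_n)_n$ with $c_n=t_n\wedge t_{n+1}$ is a chain with $|c_n|<|t_n|\leq |c_{n+1}|$, after which convergence to $\sg=\bigcup_n c_n$ is immediate exactly as you argue. Your explicit verification (including the symmetric decreasing case) fills in the same details the authors left to the reader.
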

We will also need the following notations.
\begin{notation}
For every $L\subseteq\ct$ infinite and every $\sg\in 2^\nn$ we
write $L\prec^* \sg$ if the set $L$ is almost included in the
set $\{t:t\prec \sg\}$. Respectively, we write $L\preceq^* \sg$
if $L$ is almost included in the set $\{t: t\prec \sg\}\cup
\{ \sg|n:n\in\nn\}$. The notations $\sg\prec^* L$ and $\sg\preceq^*
L$ have the obvious meaning. We also write $L\subseteq^* \sg$ if for
all but finitely many $t\in L$ we have $t\sqsubset\sg$, while by
$L\perp \sg$ we mean that the set $L\cap \{\sg|n: n\in\nn\}$ is
finite.
\end{notation}
The following fact is essentially a consequence of Lemma \ref{al1}(2).
\begin{fact}
\label{factnew} If $L$ is an infinite subset of $\ct$ and $\sg\in
2^\nn$ are such that $L\to \sg$ and $L\prec^* \sg$ (respectively
$\sg\prec^* L$), then every infinite subset of $L$ contains an
increasing (respectively decreasing) antichain converging to
$\sg$.
\end{fact}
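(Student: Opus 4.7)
The plan is to fix an arbitrary infinite $L'\subseteq L$ and produce an increasing antichain of $\ct$ contained in $L'$; the symmetric hypothesis $\sg\prec^* L$ is handled identically and yields a decreasing antichain. Using $L\prec^*\sg$, we first discard the finitely many $t\in L'$ that fail $t\prec\sg$, so that every remaining element is incomparable with $\sg$ and lies strictly $\prec$-below it. The convergence $L\to\sg$ then forces the lengths $|t\wedge\sg|$ to tend to infinity along $L'$.

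The first substantive step is to extract an infinite antichain inside $L'$. Here the key observation is that if $t,s\in L'$ with $t\sqsubset s$, then writing $w=t\wedge\sg$ one has $w^\con 0\sqsubseteq t\sqsubseteq s$ while $w^\con 1\sqsubseteq\sg$; thus $s$ already disagrees with $\sg$ at coordinate $|w|$, giving $s\wedge\sg=w=t\wedge\sg$. An infinite chain inside $L'$ would therefore produce a constant sequence of meets with $\sg$, contradicting $|t\wedge\sg|\to\infty$. Applying Ramsey's theorem to the partition of pairs in $L'$ according to comparability yields an infinite antichain $(s_n)_n\subseteq L'$.

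Next, Lemma \ref{al1}(2) allows us to pass to a further subsequence of $(s_n)_n$ which is either increasing or decreasing in the sense of Definition \ref{ad1}. Since this subsequence is contained in $L$, it still converges to $\sg$. Unwinding the description of the limit via the chain of branching nodes $c_n=s_n\wedge s_{n+1}$ furnished in the proof of Lemma \ref{al2} (in particular Claim~2 there), one checks the following: for an increasing antichain with limit $\sg$ the relations $c_n^\con 0\sqsubseteq s_n$ and $c_n^\con 1\sqsubseteq s_{n+1}\sqsupseteq c_{n+1}$ force $\sg$ to extend $c_n^\con 1$, so that $s_n\prec\sg$ for every $n$; whereas for a decreasing antichain the symmetric argument yields $\sg\prec s_n$ for every $n$. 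Our initial cleanup of $L'$ guarantees $s_n\prec\sg$ for all $n$, so the decreasing alternative is impossible and $(s_n)_n$ is the required increasing antichain.

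The only delicate point, and the main obstacle, is the last identification of the direction of the monotone antichain. It is a short but careful case analysis verifying that for an increasing (respectively decreasing) antichain converging to $\sg$, the limit lies to the right (respectively to the left) of every term, which is read off from how the definition of $\prec$ interacts with the chain of branching nodes exhibited in Fact \ref{rf11}.
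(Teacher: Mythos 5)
Your proof is correct and follows the route the paper intends: the paper gives no proof of this fact beyond the remark that it is ``essentially a consequence of Lemma \ref{al1}(2)'', and your argument is the natural fleshing-out of that hint. The two details you supply --- ruling out infinite chains in $L'$ via the observation that $t\sqsubset s$ with $t\prec\sg$ forces $s\wedge\sg=t\wedge\sg$ (contradicting $|t\wedge\sg|\to\infty$), and identifying the direction of the monotone antichain from the chain of branching nodes of Fact \ref{rf11} --- are exactly the steps the paper leaves implicit, and both are carried out correctly.
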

The aim of this subsection is to give a proof of the following
result.
\begin{thm}
\label{rt1} Let $X$ be a Polish space and $\{f_t\}_{t\in\ct}$ be a
family relatively compact in $\mathcal{B}_1(X)$. Then there exist
a regular dyadic subtree $T$ of $\ct$ and a family $\{g^0_\sg,
g^+_\sg, g^-_\sg: \sg\in P\}$, where $P=[\hat{T}]$, such that for
every $\sg\in P$ the following are satisfied.
\begin{enumerate}
\item[(1)] The sequence $(f_{\sg|n})_{n\in L_T}$ converges
pointwise to $g^0_{\sg}$ (recall that $L_T$ stands for the level
set of $T$). \item[(2)] For every sequence $(\sg_n)_n$ in $P$
converging to $\sg$ such that $\sg_n\prec \sg$ for all $n\in\nn$,
the sequence $(g^{\ee_n}_{\sg_n})_n$ converges pointwise to
$g^+_{\sg}$ for any choice of $\ee_n\in\{0,+,-\}$. If such a
sequence $(\sg_n)_n$ does not exist, then $g^+_\sg=g^0_\sg$.
\item[(3)] For every sequence $(\sg_n)_n$ in $P$ converging to
$\sg$ such that $\sg\prec \sg_n$ for all $n\in\nn$, the sequence
$(g^{\ee_n}_{\sg_n})_n$ converges pointwise to $g^-_{\sg}$ for any
choice of $\ee_n\in\{0,+,-\}$. If such a sequence $(\sg_n)_n$ does
not exist, then $g^-_\sg=g^0_\sg$. \item[(4)] For every infinite
subset $L$ of $T$ converging to $\sg$ with $L\prec^* \sg$, the
sequence $(f_{t})_{t\in L}$ converges pointwise to $g^+_{\sg}$.
\item[(5)] For every infinite subset $L$ of $T$ converging to
$\sg$ with $\sg\prec^* L$, the sequence $(f_t)_{t\in L}$ converges
pointwise to $g^-_{\sg}$.
\end{enumerate}
Moreover, the functions $0,+,-:P\times X\to\rr$ defined by
\[ 0(\sg,x)=g^0_\sg(x), \ +(\sg,x)=g^+_\sg(x), \ -(\sg,x)=g^-_\sg(x) \]
are all Borel.
\end{thm}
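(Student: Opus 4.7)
The plan is to perform a sequence of Ramsey reductions, controlling the pointwise behaviour of $\{f_t\}_{t\in\ct}$ first along chains, then along increasing antichains, and finally along decreasing antichains, and then to deduce the remaining clauses by combining these reductions with Fact \ref{factnew} and the standard angelicity of Rosenthal compacta. First, I would apply Theorem \ref{chains} to the set $C$ of chains of $\ct$ along which $(f_t)$ is pointwise convergent. Relative compactness in $\mathcal{B}_1(X)$ ensures every infinite chain has a pointwise convergent sub-chain, so $[\ct]_{\mathrm{chains}}\setminus C$ cannot contain $[R]_{\mathrm{chains}}$ for any regular dyadic $R$; the dichotomy yields a regular dyadic $T_1\subseteq\ct$ with $[T_1]_{\mathrm{chains}}\subseteq C$, defining $g^0_\sigma$ as the pointwise limit of $(f_{\sigma|n})_{n\in L_{T_1}}$. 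Two further applications of Theorem \ref{ap1}, first to increasing and then to decreasing antichains (the second class being preserved under shrinking by Corollary \ref{ac1}), provide a regular dyadic $T\subseteq T_1$ along which every element of $\incr(T)$ and every element of $\decr(T)$ is pointwise convergent.

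The critical step is to show that these limits depend only on the point $\sigma\in P=[\hat T]$ to which the antichain converges. Given two increasing antichains $(t_n)_n,(s_n)_n\in\incr(T)$ both converging to the same $\sigma$, I would pass to subsequences and interleave them into a single sequence $t_{n_0}\prec s_{m_0}\prec t_{n_1}\prec s_{m_1}\prec\cdots$ that still lies in $\incr(T)$: the $\prec$-increasing condition is achievable because both sequences approach $\sigma$ from the left in lexicographic order, and the level condition of Definition \ref{ad1}(2) follows from $|t_{n_k}\wedge_T s_{m_l}|_T\to\infty$, a consequence of the regular dyadic structure of $T$ together with $t_{n_k},s_{m_l}\to\sigma$. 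Pointwise convergence of the merged antichain then forces the two original limits to agree, producing a well-defined $g^+_\sigma$, and the same argument gives $g^-_\sigma$; when no such antichain exists we set the corresponding function equal to $g^0_\sigma$. The main obstacle lies precisely here, in simultaneously preserving all three conditions of Definition \ref{ad1} under the subsequencing needed for the merge, and is where Lemma \ref{al1} together with the regular dyadic structure of $T$ do the real work.

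Clauses (4) and (5) follow from Fact \ref{factnew} together with the subsequence principle in $\rr$: any $L\to\sigma$ with $L\prec^*\sigma$ has every infinite subset containing an increasing antichain of $T$ converging to $\sigma$ whose $f$-values converge pointwise to $g^+_\sigma$, so for each $x$ every subsequence of $(f_t(x))_{t\in L}$ has a sub-subsequence converging to $g^+_\sigma(x)$, forcing $(f_t)_{t\in L}\to g^+_\sigma$ pointwise. Clauses (2) and (3) follow from a sequential-compactness/angelicity argument: any pointwise cluster point $h$ of $(g^{\ee_n}_{\sigma_n})_n$ is realised as a pointwise limit of a single sequence $(f_{t_n})_n$ with $t_n\in T$ witnessing $g^{\ee_n}_{\sigma_n}$ from the appropriate chain or antichain near $\sigma_n$, arranged so that $\{t_n\}_n\to\sigma$ and $\{t_n\}_n\prec^*\sigma$, whereupon clause (4) yields $h=g^+_\sigma$. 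Borel measurability of the maps $0,+,-$ then follows because each $g^\ee_\sigma(x)$ is a pointwise limit of a jointly Borel family in $(\sigma,x)$, hence Baire class two on $P\times X$.
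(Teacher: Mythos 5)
Your proposal follows essentially the same route as the paper: three successive Ramsey reductions (Theorem \ref{chains} for chains, then Theorem \ref{ap1} for increasing and for decreasing antichains), the interleaving of two antichains with a common limit to make $g^{\pm}_\sg$ well defined (this is exactly Fact \ref{rf1}(1)), Fact \ref{factnew} for clauses (4)--(5), a diagonal antichain extraction (Fact \ref{rf1}(2)) for clauses (2)--(3), and iterated Borel limits for the measurability of $0,+,-$. The only cosmetic difference is that you prove (2)--(3) by identifying cluster points rather than by the paper's direct contradiction with a fixed neighbourhood of $g^+_\sg$, and both versions rest on the same diagonalization, so the argument is correct as it stands.
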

Before we proceed to the proof of Theorem \ref{rt1} we notice the
following fact (the proof of which is left to the reader).
\begin{fact}
\label{rf1} $\mathrm{(1)}$ Let $A_1=(t^1_n)_n$ and $A_2=(t^2_n)_n$
be two increasing (respectively decreasing) antichains of $\ct$
converging to the same $\sg\in 2^\nn$. Then there exists an
increasing (respectively decreasing) antichain $(t_n)_n$ of $\ct$
converging to $\sg$ such that $t_{2n}\in A_1$ and $t_{2n+1}\in A_2$
for every $n\in\nn$.\\
$\mathrm{(2)}$ Let $(\sg_n)_n$ be a sequence in $2^\nn$ converging
to $\sg\in 2^\nn$. For every $n\in\nn$, let $N_n=(t^n_k)_k$ be a
sequence in $\ct$ converging to $\sg_n$. If $\sg_n\prec \sg$
(respectively $\sg_n\succ\sg$) for all $n$, then there exist an increasing
(respectively decreasing) antichain $(t_m)_m$ and $L=\{n_m:m\in\nn\}$
such that $(t_m)_m$ converges to $\sg$ and $t_m\in N_{n_m}$ for
every $m\in\nn$.
\end{fact}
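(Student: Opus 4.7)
The plan is to exploit the explicit structure of increasing and decreasing antichains already extracted in Claims 1--2 of the proof of Lemma \ref{al2}. If $(t_n)_n$ is an increasing antichain of $\ct$ converging to $\sg\in 2^\nn$, then, setting $c_n=t_n\wedge t_{n+1}$, one has $c_n^{\con}0\sqsubseteq t_n$ and $c_n^{\con}1\sqsubseteq c_{n+1}$, the chain $(c_n)_n$ consists of prefixes of $\sg$ with $|c_n|\to\infty$ (so $\bigcup_n c_n=\sg$), and, by Claim 1, $t_n\wedge t_l=c_n$ for every $l>n$; in particular each $t_n\prec\sg$. The decreasing case is mirror-symmetric with $\sg\prec t_n$ throughout.

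For part (1), I construct the interleaved antichain recursively, alternating between $A_1$ and $A_2$. Writing $c^i_n=t^i_n\wedge t^i_{n+1}$, I set $\ee(m)\in\{1,2\}$ alternating with $m$; having produced $t_0\prec\cdots\prec t_{m-1}$, I select $k_m$ so that $|c^{\ee(m)}_{k_m}|>|t_{m-1}|$ (possible because $|c^{\ee(m)}_n|\to\infty$) and set $t_m=t^{\ee(m)}_{k_m}$. Because $c^{\ee(m)}_{k_m}\sqsubset\sg$ while each previous $t_j$ branches off $\sg$ at its own splitting node $c^{\ee(j)}_{k_j}$ of strictly smaller length, one checks $t_j\wedge t_m=c^{\ee(j)}_{k_j}$ for every $j<m$: indeed $t_m$ agrees with $\sg$ up through $|c^{\ee(m)}_{k_m}|$, hence extends $c^{\ee(j)}_{k_j}{}^{\con}1$, while $t_j$ extends $c^{\ee(j)}_{k_j}{}^{\con}0$. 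This simultaneously delivers strictly increasing lengths, the $\prec$-monotonicity $t_{m-1}\prec t_m$, and the wedge condition $|t_n|\le|t_m\wedge t_l|$ of Definition \ref{ad1}; convergence to $\sg$ follows at once from $|c^{\ee(m)}_{k_m}|\to\infty$ and $c^{\ee(m)}_{k_m}\sqsubset\sg$.

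For part (2), set $d_n=\sg_n\wedge\sg$; the assumption $\sg_n\prec\sg$ gives $d_n^{\con}0\sqsubseteq\sg_n$ and $d_n^{\con}1\sqsubseteq\sg$, while $\sg_n\to\sg$ forces $|d_n|\to\infty$. I first pick $n_0$ and, using $N_{n_0}\to\sg_{n_0}$, choose $t_0\in N_{n_0}$ with $t_0\sqsupseteq d_{n_0}^{\con}0$. Inductively, having chosen $t_0,\dots,t_{m-1}$, I pick $n_m>n_{m-1}$ with $|d_{n_m}|>|t_{m-1}|$, and then select $t_m\in N_{n_m}$ that extends $\sg_{n_m}|(|d_{n_m}|+1)=d_{n_m}^{\con}0$ and satisfies $|t_m|>|t_{m-1}|$. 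For $m<l$ the $d$'s are nested prefixes of $\sg$ with $|d_{n_l}|>|t_{l-1}|>|d_{n_m}|$, so $d_{n_l}\sqsupseteq d_{n_m}^{\con}1$, hence $d_{n_m}^{\con}1\sqsubseteq t_l$ while $d_{n_m}^{\con}0\sqsubseteq t_m$; therefore $t_m\wedge t_l=d_{n_m}$, and all three properties of Definition \ref{ad1} follow at once. Since $d_{n_m}\sqsubset\sg$ and $|d_{n_m}|\to\infty$, the antichain converges to $\sg$. The decreasing case is the mirror construction using $\sg\prec\sg_n$, with $d_n^{\con}0\sqsubseteq\sg$ and $d_n^{\con}1\sqsubseteq\sg_n$.

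No step presents a real obstacle once the splitting-chain description of increasing (and decreasing) antichains is in hand; the only bookkeeping requiring care is the level-of-meet condition (2) of Definition \ref{ad1}, which is precisely why, in both constructions, each newly chosen element is forced to have its splitting (respectively branching) node strictly longer than the \emph{entire} previous element, not merely longer than the previous splitting node.
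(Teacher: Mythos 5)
Your proof is correct. The paper explicitly leaves Fact \ref{rf1} to the reader, so there is no written argument to compare against; your construction --- reading off the chain of meets $c_n=t_n\wedge t_{n+1}$ (resp.\ $d_n=\sg_n\wedge\sg$) as initial segments of $\sg$ via Claims 1--2 of Lemma \ref{al2}, and then forcing each newly chosen node to branch off $\sg$ strictly beyond the \emph{entire} previous node --- is exactly the intended diagonalization, and it correctly secures the meet-level condition (2) of Definition \ref{ad1} together with $\prec$-monotonicity and convergence.
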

\begin{proof}[Proof of Theorem \ref{rt1}]
Our hypotheses imply that for every sequence $(g_n)_n$ belonging
to the closure of $\{f_t\}_{t\in\ct}$ in $\rr^X$, there exists a
subsequence of $(g_n)_n$ which is pointwise convergent. Consider
the following subset $\Pi_1$ of $[\ct]_{\mathrm{chains}}$ defined
by
\[ \Pi_1=\big\{ c\in [\ct]_{\mathrm{chains}}: \text{the sequence }
(f_{t})_{t\in c} \text{ is pointwise convergent}\big\}. \] Then
$\Pi_1$ is a co-analytic subset of $[\ct]_{\mathrm{chains}}$ (see
\cite{S}). Applying Theorem \ref{chains} and invoking our
hypotheses, we get a regular dyadic subtree $T_1$ of $\ct$ such
that $[T_1]_{\mathrm{chains}}\subseteq \Pi_1$. Now consider the
subset $\Pi_2$ of $\incr(T_1)$, defined by
\[ \Pi_2= \big\{ (t_n)_n\in\incr(T_1): \text{the sequence }
(f_{t_n})_n \text{ is pointwise convergent}\big\}. \] Again
$\Pi_2$ is co-analytic (this can be checked with similar arguments
as in \cite{S}). Applying Theorem \ref{ap1}, we get a regular
dyadic subtree $T_2$ of $T_1$ such that $\incr(T_2)\subseteq
\Pi_2$. Finally, applying Theorem \ref{ap1} for the decreasing
antichains of $T_2$ and the color
\[ \Pi_3= \big\{ (t_n)_n\in\decr(T_2): \text{the sequence }
(f_{t_n})_n \text{ is pointwise convergent}\big\}, \]
we obtain a regular dyadic subtree $T$ of $T_2$ such that,
setting $P=[\hat{T}]$, the following are satisfied.
\begin{enumerate}
\item[(i)] For every increasing antichain $(t_n)_n$ of $T$, the
sequence $(f_{t_n})_n$ is pointwise convergent. \item[(ii)] For
every decreasing antichain $(t_n)_n$ of $T$, the sequence
$(f_{t_n})_n$ is pointwise convergent. \item[(iii)] For every
$\sg\in P$, the sequence $(f_{\sg|n})_{n\in L_T}$ is pointwise
convergent to a function $g^0_\sg$.
\end{enumerate}
We notice the following. By Fact \ref{rf1}(1), if $(t^1_n)_n$ and
$(t^2_n)_n$ are two increasing (respectively decreasing)
antichains of $T$ converging to the same $\sg$, then
$(f_{t^1_n})_n$ and $(f_{t^2_n})_n$ are both pointwise convergent
to the same function. For every $\sg\in P$, we define $g^+_\sg$ as
follows. If there exists an increasing antichain $(t_n)_n$ of $T$
converging to $\sg$, then we set $g^+_\sg$ to be the pointwise
limit of $(f_{t_n})_n$ (by the above remarks $g^+_\sg$ is
independent of the choice of $(t_n)_n$). Otherwise we set
$g^+_\sg=g^0_\sg$. Similarly we define $g^-_\sg$ to be the
pointwise limit of $(f_{t_n})_n$, with $(t_n)_n$ a decreasing
antichain of $T$ converging to $\sg$, if such an antichain exists.
Otherwise we set $g^-_\sg=g^0_\sg$. By Fact \ref{factnew} and the
above discussion, properties (i) and (ii) can be strengthened as
follows.
\begin{enumerate}
\item[(iv)] For every $\sg\in P$ and every infinite $L\subseteq T$
converging to $\sg$ with $L\prec^*\sg$, the sequence $(f_t)_{t\in
L}$ is pointwise convergent to $g^+_\sg$. \item[(v)] For every
$\sg\in P$ and every infinite $L\subseteq T$ converging to $\sg$
with $\sg\prec^*L$, the sequence $(f_t)_{t\in L}$ is pointwise
convergent to $g^-_\sg$.
\end{enumerate}
We claim that the tree $T$ and the family $\{g^0_\sg,g^+_\sg,
g^-_\sg:\sg\in P\}$ are as desired. First we check that properties
(1)-(5) are satisfied. Clearly we only have to check (2) and (3).
We will prove only property (2) (the argument is symmetric). We
argue by contradiction. So, assume that there exist a sequence
$(\sg_n)_n$ in $P$, $\sg\in P$ and $\ee_n\in\{0,+,-\}$ such that
$\sg_n\prec \sg$, $(\sg_n)_n$ converges to $\sg$ while the
sequence $(g^{\ee_n}_{\sg_n})_n$ does not converge pointwise to
$g^+_\sg$. Hence there exist $L\in [\nn]$ and an open neighborhood
$V$ of $g^+_\sg$ in $\rr^X$ such that $g^{\ee_n}_{\sg_n}\notin
\overline{V}$ for all $n\in L$. By definition, for every $n\in L$
we may select a sequence $(t^n_k)_k$ in $T$ such that for every
$n\in L$ the following hold.
\begin{enumerate}
\item[(a)] The sequence $N_n=(t^n_k)_k$ converges to $\sg_n$.
\item[(b)] The sequence $(f_{t^n_k})_k$ converges pointwise to
$g^{\ee_n}_{\sg_n}$. \item[(c)] For all $k\in \nn$, we have
$f_{t^n_k}\notin \overline{V}$. \item[(d)] The sequence
$(\sg_n)_{n\in L}$ converges to $\sg$ and $\sg_n\prec \sg$.
\end{enumerate}
By Fact \ref{rf1}(2), there exist a diagonal increasing antichain
$(t_m)_m$ converging to $\sg$. By (c) above, we see that
$(f_{t_m})_m$ is not pointwise convergent to $g^+_\sg$. This leads
to a contradiction by the definition of $g^+_\sg$.

Now we will check the Borelness of the maps $0, +$ and $-$.
Let $L_T= \{l_0<l_1<...\}$ be the increasing enumeration of the
level set $L_T$ of $T$. For every $n\in \nn$ define
$h_n:P\times X\to \rr$ by $h_n(\sg,x)=f_{\sg|l_n}(x)$.
Clearly $h_n$ is Borel. As for all $(\sg,x)\in P\times X$
we have
\[ 0(\sg,x)=g^0_\sg(x)=\lim_{n\in\nn} h_n(\sg,x) \]
the Borelness of $0$ is clear. We will only check the Borelness of
the function $+$ (the argument for the map $-$ is symmetric).
For every $n\in\nn$ and every $\sg\in P$, let $l_n(\sg)$ be the
lexicographically minimum of the closed set $\{\tau\in P:
\sg|l_n\sqsubset \tau\}$. The function $P\ni\sg\mapsto l_n(\sg)\in
P$ is clearly continuous. Invoking the definition of $g^+_\sg$ and
property (2) in the statement of the theorem we see that for all
$(\sg,x)\in P\times X$ we have
\[ +(\sg,x) = g^+_\sg(x) = \lim_{n\in\nn} g^0_{l_n(\sg)}(x) = \lim_{n\in\nn}
0\big( l_n(\sg),x\big). \]
Thus $+$ is Borel too and the proof is completed.
\end{proof}
\begin{rem}
We would like to point out that in order to apply the Ramsey
theory for trees in the present setting one has to know that all
the colors are sufficiently definable. This is also the reason why
the Borelness of the functions $0, +$ and $-$ is emphasized in
Theorem \ref{rt1}. As a matter of fact, we will need the full
strength of the Ramsey theory for trees and perfect sets, in the
sense that in certain situations the color will belong to the
$\sg$-algebra generated by the analytic sets. It should be noted
that this is in contrast with the classical Silver's theorem
\cite{Si} for which, most applications, involve Borel partitions.
\end{rem}


\subsection{Equivalence of families of functions}

Let us give the following definition.
\begin{defn}
\label{equiv} Let $I$ be a countable set and $X, Y$ be Polish
spaces. Let also $\{f_i\}_{i\in I}$ and $\{g_i\}_{i\in I}$ be two
pointwise bounded families of real-valued functions on $X$ and $Y$
respectively, indexed by the set $I$. We say that $\{f_i\}_{i\in
I}$ is equivalent to $\{g_i\}_{i\in I}$ if the map
\[ f_i\mapsto g_i \]
is extended to a topological homeomorphism between
$\overline{\{f_i\}}^p_{i\in I}$ and $\overline{\{g_i\}}^p_{i\in
I}$.
\end{defn}
The equivalence of the families $\{f_i\}_{i\in I}$ and
$\{g_i\}_{i\in I}$ is stronger than saying that
$\overline{\{f_i\}}^p_{i\in I}$ is homeomorphic to
$\overline{\{g_i\}}^p_{i\in I}$ (such an example will be given in
the next subsection). The crucial point in Definition \ref{equiv}
is that the equivalence of $\{f_i\}_{i\in I}$ and $\{g_i\}_{i\in
I}$ gives a natural homeomorphism between their closures.

The following lemma provides an efficient criterion for checking the
equivalence of families of Borel functions. We mention that in its
proof we will often make use of the Bourgain-Fremlin-Talagrand
theorem \cite{BFT} without making an explicit reference. From the
context it will be clear that this is what we use.
\begin{lem}
\label{l1} Let $I$ be a countable set and $X,Y$ be Polish spaces.
Let $\kk_1$ and $\kk_2$ be two separable Rosenthal compacta on $X$ and
$Y$ respectively. Let $\{f_i\}_{i\in I}$ and $\{g_i\}_{i\in I}$ be two
dense families of $\kk_1$ and $\kk_2$ respectively. Assume that
for every $i\in I$ the functions $f_i$ and $g_i$ are isolated in $\kk_1$
and $\kk_2$ respectively. Then the following are equivalent.
\begin{enumerate}
\item[(1)] The families $\{f_i\}_{i\in I}$ and $\{g_i\}_{i\in I}$
are equivalent. \item[(2)] For every $L\subseteq I$ infinite, the
sequence $(f_i)_{i\in L}$ converges pointwise if and only if the
sequence $(g_i)_{i\in L}$ does.
\end{enumerate}
\end{lem}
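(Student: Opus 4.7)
The direction $(1)\Rightarrow(2)$ is essentially immediate: if $\Phi:\kk_1\to\kk_2$ is the homeomorphism extending $f_i\mapsto g_i$, then $(f_i)_{i\in L}$ converges pointwise to some $f\in\kk_1$ if and only if $(g_i)_{i\in L}=(\Phi(f_i))_{i\in L}$ converges to $\Phi(f)\in\kk_2$, since pointwise convergence coincides with convergence in the subspace topology of $\kk_j$.

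For $(2)\Rightarrow(1)$ my plan is to produce the desired homeomorphism by analyzing the pointwise closure $\kk_0=\overline{D}^p$ of the diagonal $D=\{(f_i,g_i):i\in I\}$ inside the product $\kk_1\times\kk_2$. The first step is to realize $\kk_1\times\kk_2$ as a Rosenthal compact on the Polish space $X\sqcup Y$ via the identification $(h,h')\mapsto h\cup h'$, the function equal to $h$ on $X$ and to $h'$ on $Y$, since the union of two Baire-1 functions on the two parts of a disjoint union is Baire-1. The Bourgain-Fremlin-Talagrand theorem then guarantees that $\kk_1\times\kk_2$ is angelic, hence Fr\'{e}chet-Urysohn, a property inherited by the closed subspace $\kk_0$. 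The projections $\pi_j|_{\kk_0}:\kk_0\to\kk_j$ are continuous, and since their compact images contain the dense families $\{f_i\}$ and $\{g_i\}$ respectively, they are surjective.

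The heart of the argument is to show that $\kk_0$ is the graph of a function, equivalently that $\pi_1|_{\kk_0}$ is injective. Given $(h,h'_1),(h,h'_2)\in\kk_0$, the Fr\'{e}chet-Urysohn property supplies sequences $(f_{i_n},g_{i_n})\to(h,h'_1)$ and $(f_{j_n},g_{j_n})\to(h,h'_2)$ from $D$. If $h=f_{i_0}$ for some index $i_0$, isolatedness forces both first-coordinate sequences to be eventually equal to $f_{i_0}$, and (after noting that one may assume without loss that the pairs $(f_i,g_i)$ are distinct) this yields $h'_1=h'_2=g_{i_0}$. Otherwise $\{i_n\}_n$ and $\{j_n\}_n$ consist of distinct indices; any interleaving of $(f_{i_n})_n$ and $(f_{j_n})_n$ still converges pointwise to $h$, so applying hypothesis (2) to $L=\{i_n:n\in\nn\}\cup\{j_n:n\in\nn\}$ yields that $(g_i)_{i\in L}$ is pointwise convergent. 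Since $(g_{i_n})$ and $(g_{j_n})$ appear as subsequences, they share a common limit, forcing $h'_1=h'_2$. The reverse implication of (2) gives by symmetry the injectivity of $\pi_2|_{\kk_0}$.

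At that stage both restricted projections are continuous bijections between compact Hausdorff spaces, hence homeomorphisms, and $\Phi=\pi_2\circ(\pi_1|_{\kk_0})^{-1}$ is the sought homeomorphism sending $f_i$ to $g_i$. The main delicate point throughout is the lack of metrizability of the Rosenthal compacta involved: the Bourgain-Fremlin-Talagrand theorem is precisely what permits the reduction to sequences inside $\kk_0$, and the interleaving step is the only place where hypothesis (2) is actually invoked to collapse the two candidate limits. The isolatedness hypothesis, although seemingly mild, is essential in the degenerate case where $h$ itself lies in the dense family, a case in which no interleaving is available.
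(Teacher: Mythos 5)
Your proof is correct, and it reaches the conclusion by a genuinely different route from the one in the paper. The paper defines the candidate map directly, setting $\Phi(\lim_{i\in L}f_i)=\lim_{i\in L}g_i$, verifies well-definedness, injectivity and surjectivity through the localized compacta $\kk_1^M=\overline{\{f_i\}}^p_{i\in M}$, and then proves sequential continuity by a separation argument: assuming $h_n\to h$ while $\Phi(h_n)\to w\neq\Phi(h)$, it picks $z=\Phi^{-1}(w)$, a point $x$ and $\ee>0$ with $|h(x)-z(x)|>\ee$, and uses the set $M=\{i:|f_i(x)-z(x)|\geq\ee/2\}$ to trap the $h_n$ inside $\kk_1^M$ while excluding $z$, contradicting the bijectivity of $\Phi_M$. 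You instead pass to the closure $\kk_0$ of the diagonal in $\kk_1\times\kk_2$, viewed as a separable Rosenthal compact on $X\oplus Y$, prove that both projections restricted to $\kk_0$ are injective, and then obtain continuity for free from the fact that a continuous bijection between compact Hausdorff spaces is a homeomorphism. The combinatorial core is the same in both proofs --- interleaving two index sets $L_1,L_2$ with the same $f$-limit into $L_1\cup L_2$ and invoking hypothesis (2) to force the $g$-limits to coincide (the paper uses exactly this for well-definedness and injectivity of $\Phi_M$) --- but your graph argument entirely eliminates the continuity verification, which is the most delicate part of the paper's proof. What the paper's localization buys in exchange is the explicit description $\Phi(\lim_{i\in L}f_i)=\lim_{i\in L}g_i$ together with the compatibility $\Phi|_{\kk_1^M}=\Phi_M$, which is occasionally convenient later; your $\Phi=\pi_2\circ(\pi_1|_{\kk_0})^{-1}$ of course satisfies the same formula, but one has to unwind the definition to see it.

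One small point of precision in your degenerate case $h=f_{i_0}$: reducing to ``the pairs $(f_i,g_i)$ are distinct'' is not quite enough, since one could still have $f_{i_n}=f_{i_0}$ for infinitely many indices $i_n\neq i_0$ with pairwise different $g_{i_n}$. What is really needed is that $i\mapsto f_i$ (and symmetrically $i\mapsto g_i$) is injective; without this the lemma itself fails (take all $f_i$ equal and $(g_i)_i$ a nontrivially convergent sequence of distinct isolated points: then (2) holds vacuously but (1) cannot). This injectivity is an implicit hypothesis, tacitly assumed in the paper's own proof when it declares $\Phi_M(f_i)=g_i$, and it holds in every application, so this is a shared imprecision rather than a gap in your argument; with it, your Case A goes through exactly as written.
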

\begin{proof}
The direction $(1)\Rightarrow (2)$ is obvious. What remains is to
prove the converse. So assume that $(2)$ holds. Let $M\subseteq I$
infinite. We set $\kk_1^M=\overline{ \{f_i\}}^p_{i\in M}$ and
$\kk_2^M=\overline{\{g_i\}}^p_{i\in M}$. Notice that both
$\kk_1^M$ and $\kk_2^M$ are separable Rosenthal compacta. Our
assumptions imply that the isolated points of $\kk_1^M$ is
precisely the set $\{f_i:i\in M\}$ and similarly for $\kk_2^M$.
Define $\Phi_M:\kk_1^M\to \kk_2^M$ as follows. First, for every
$i\in M$ we set $\Phi_M(f_i)=g_i$. If $h\in \kk_1^M$ with
$h\notin\{ f_i:i\in M\}$, then there exists $L\subseteq M$
infinite such that $h$ is the pointwise limit of the sequence
$(f_i)_{i\in L}$. Define $\Phi_M(h)$ to be the pointwise limit of
the sequence $(g_i)_{i\in L}$ (by our assumptions this limit
exists). To simplify notation, let $\Phi=\Phi_I$.
\bigskip

\noindent \textsc{Claim.} \textit{Let $M\subseteq I$ infinite.
Then the following hold.
\begin{enumerate}
\item[(1)] The map $\Phi_M$ is well-defined, 1-1 and onto.
\item[(2)] We have $\Phi|_{\kk_1^M}=\Phi_M$.
\end{enumerate} }
\bigskip

\noindent \textit{Proof of the claim.} (1) Fix $M\subseteq I$
infinite. To see that $\Phi_M$ is well-defined, notice that for
every $h\in \kk_1^M$ with $h\notin\{f_i:i\in M\}$ and every $L_1,
L_2\subseteq M$ infinite with $h=\lim_{i\in L_1} f_i=\lim_{i\in
L_2} f_i$ it holds that $\lim_{i\in L_1} g_i=\lim_{i\in L_2} g_i$.
For if not, we would have that the sequence $(f_i)_{i\in L_1\cup
L_2}$ converges pointwise while the sequence $(g_i)_{i\in L_1\cup
L_2}$ does not, contradicting our assumptions.

We observe the following consequence of our assumptions and the
definition of the map $\Phi_M$. For every $h\in\kk^M_1$,
the point $h$ is isolated in $\kk^M_1$ if and only if $\Phi_M(h)$
is isolated in $\kk^M_2$. Using this we will show that $\Phi_M$ is
1-1. Indeed, let $h_1, h_2\in \kk_1^M$ with $\Phi_M(h_1)=\Phi_M(h_2)$.
Then, either $\Phi_M(h_1)$ is isolated in $\kk^M_2$ or not.
In the first case, there exists an $i_0\in M$ with
$\Phi_M(h_1)=g_{i_0}=\Phi_M(h_2)$. Thus, $h_1=f_{i_0}=h_2$.
So, assume that $\Phi_M(h_1)$ is not isolated in $\kk^m_2$. Hence,
neither $\Phi_M(h_2)$ is. It follows that both $h_1$ and
$h_2$ are not isolated points of $\kk_1^M$. Pick $L_1,
L_2\subseteq M$ infinite with $h_1=\lim_{i\in L_1} f_i$ and
$h_2=\lim_{i\in L_2} f_i$. As the sequence $(g_i)_{i\in L_1\cup
L_2}$ is pointwise convergent to $\Phi_M(h_1)=\Phi_M(h_2)$, our
assumptions yield that
\[ h_1=\lim_{i\in L_1} f_i =\lim_{i\in L_1\cup L_2} f_i = \lim_{i\in L_2} f_i=h_2 \]
which proves that $\Phi_M$ is 1-1. Finally, to see that $\Phi_M$
is onto, let $w\in \kk_2^M$ with $w\notin\{ g_i: i\in M\}$. Let
$L\subseteq M$ infinite with $w=\lim_{i\in L} g_i$. By our
assumptions, the sequence $(f_i)_{i\in L}$
converges pointwise to an $h\in \kk_1^M$ and clearly $\Phi_M(h)=w$.\\
(2) By similar arguments as in (1). \hfill $\lozenge$
\bigskip

\noindent By the above claim, it is enough to show that the map
$\Phi$ is continuous. Notice that it is enough to show that if
$(h_n)_n$ is a sequence in $\kk_1$ that converges pointwise to an
$h\in\kk_1$, then the sequence $\big(\Phi(h_n)\big)_n$ converges
to $\Phi(h)$. Assume on the contrary. Hence, there exist a
sequence $(h_n)_n$ in $\kk_1$, $h\in\kk_1$ and $w\in \kk_2$ such
that $h=\lim_n h_n$, $w=\lim_n \Phi(h_n)$ and $w\neq \Phi(h)$. As
the map $\Phi$ is onto, there exists $z\in \kk_1$ such that $z\neq
h$ and $\Phi(z)=w$. Pick $x\in X$ and $\ee>0$ such that
$|h(x)-z(x)|>\ee$. As the sequence $(h_n)_n$ converges pointwise
to $h$ we may assume that for all $n\in\nn$ we have
$|h_n(x)-z(x)|>\ee$. Let
\[ M=\big\{i\in I: |f_i(x)-z(x)|\geq\frac{\ee}{2}\big\}. \]
Observe the following.
\begin{enumerate}
\item[(O1)] For all $n\in\nn$, $h_n\in \kk_1^M$.
\item[(O2)] $z\notin \kk_1^M$.
\end{enumerate}
By part (2) of the above claim and (O1), we get that
$\Phi(h_n)=\Phi_M(h_n)\in\kk_2^M$ for all $n\in\nn$ and so $w\in
\kk_2^M$. As $\Phi_M$ is onto, there exists $h'\in\kk_1^M$ such
that $\Phi_M(h')=w$. Hence by (O2) and invoking the claim once
more, we have that $z\neq h'$ while $\Phi_M(h')=\Phi(h')=\Phi(z)$,
contradicting that $\Phi$ is 1-1. The proof of the lemma is
completed.
\end{proof}


\subsection{Seven families of functions}

The aim of this subsection is to describe seven families
\[ \{d^i_t:t\in \ct\} \ \ (1\leq i\leq 7)\]
of functions indexed by the Cantor tree. For every
$i\in\{1,...,7\}$, the closure of the family $\{d^i_t:t\in\ct\}$
in the pointwise topology is a separable Rosenthal compact
$\kk_i$. Each one of them is \textit{minimal}, namely, for every
dyadic (not necessarily regular) subtree $S=(s_t)_{t\in\ct}$ of
$\ct$ and every $i\in \{1,...,7\}$ the families
$\{d^i_t\}_{t\in\ct}$ and $\{d^i_{s_t}\}_{t\in\ct}$ are equivalent
in the sense of Definition \ref{equiv}. Although the families are
mutually non-equivalent, the corresponding compacta might be
homeomorphic. In all cases, the family $\{d^i_t:t\in \ct\}$ will
be discrete in its closure. For any of the corresponding compacta
$\kk_i$ $(1\leq i\leq 7)$, by $\mathcal{L}(\kk_i)$ we shall denote
the set of all infinite subsets $L$ of $\ct$ for which the
sequence $(d^i_t)_{t\in L}$ is pointwise convergent. We will name
the corresponding compacta (all of them are homeomorphic to closed
subspaces of well-known compacta -- see \cite{AU}, \cite{E}) and
we will refer to the families of functions as the canonical dense
sequences of them. We will use the following notations.

If $\sg\in 2^\nn$, then $\delta_\sg$ is the Dirac function at
$\sg$. By $x_\sg^+$ we denote the characteristic function of the
set $\{ \tau\in 2^\nn: \sg\preceq \tau\}$, while by $x_\sg^-$
the characteristic function of the set $\{ \tau\in 2^\nn:
\sg\prec \tau\}$. Notice that if $t\in\ct$, then $t^{\con} 0^\infty\in
2^\nn$, and so, the function $x^+_{t^{\con} 0^\infty}$ is
well-defined. It is useful at this point to isolate the
following property of the functions $x^+_\sg$ and $x^-_\sg$ which
will justify the notation $g^+_\sg$ and $g^-_\sg$ in Theorem
\ref{rt1}. If $(\sg_n)_n$ is a sequence in $2^\nn$ converging to
$\sg$ with $\sg_n\prec \sg$ (respectively $\sg\prec \sg_n$) for all
$n\in\nn$, then sequence $(x^{\ee_n}_{\sg_n})_n$ converges
pointwise to $x^+_\sg$ (respectively to $x^-_\sg$) for any choice
of $\ee_n\in \{+,-\}$.

By identifying the Cantor set with a subset of the unit interval,
we will identify every $\sg\in 2^\nn$ with the real-valued function
on $2^\nn$ which is equal everywhere with $\sg$. Notice that for
every $t\in\ct$, we have $t^{\con}0^\infty\in 2^\nn$, and so, the
function $t^{\con}0^\infty$ is well-defined. For every $t\in\ct$,
$v_t$ stands for the characteristic function of the clopen set
$V_t=\{ \sg\in 2^\nn: t\sqsubset \sg\}$. By $0$ we denote the
constant function on $2^\nn$ which is equal everywhere with zero.
We will also need to deal with functions on $2^\nn\oplus 2^\nn$.
In this case when we write, for instance, $(\delta_\sg,x^+_\sg)$
we mean that this function is the function $\delta_\sg$ on the
first copy of $2^\nn$ while it is the function $x^+_\sg$ on the
second copy.

We also fix a regular dyadic subtree $R=(s_t)_{t\in\ct}$ of $\ct$
with the following property.
\begin{enumerate}
\item[(Q)] For every $s,s'\in R$, we have that $s^{\con}
0^\infty\neq s'^{\con} 0^\infty$ and $s^{\con} 1^\infty\neq
s'^{\con} 1^\infty$. Hence, the set $[\hat{R}]$ does not contain
the eventually constant sequences.
\end{enumerate}
In what follows by $P$ we shall denote the perfect set
$[\hat{R}]$. By $P^+$ we shall denote the subset of $P$ consisting
of all $\sg$'s for which there exists an increasing antichain
$(s_n)_n$ of $R$ converging to $\sg$ in the sense of Definition
\ref{rd1}. Respectively, by $P^-$ we shall denote the subset of
$P$ consisting of all $\sg$'s for which there exists a decreasing
antichain $(s_n)_n$ of $R$ converging to $\sg$.

\subsubsection{The Alexandroff compactification of the
Cantor tree $A(\ct)$} It is the pointwise closure of the family
\[ \Big\{ \frac{1}{|t|+1} v_t: t\in\ct\Big\}. \]
Clearly the space $A(\ct)$ is countable compact, as the whole
family accumulates to $0$. Setting $d^1_t=\frac{1}{|t|+1}v_t$ for
all $t\in\ct$, we see that the family $\{d^1_t:t\in\ct\}$ is a
dense discrete subset of $A(\ct)$. In this case the description of
$\mathcal{L}\big(A(\ct)\big)$ is trivial as
\[ L\in \mathcal{L}\big(A(\ct)\big)\Leftrightarrow L\subseteq \ct. \]

\subsubsection{The space $2^{\leqslant \nn}$}
It is the pointwise closure of the family
\[ \{ s^{\con} 0^\infty: s\in R\}. \]
The accumulation points of $2^{\leqslant\nn}$ is the set
\[ \{\sg: \sg\in P\} \]
which is clearly homeomorphic to $2^{\nn}$. Thus, the space
$2^{\leqslant\nn}$ is uncountable compact metrizable. Setting
$d^2_t=s_t^{\con} 0^\infty$ for all $t\in\ct$ and invoking
property (Q) above, we see that the family $\{d^2_t:t\in\ct\}$ is
a dense discrete subset of $2^{\leqslant\nn}$. The description
of $\mathcal{L}\big( 2^{\leqslant\nn}\big)$ is given by
\[ L\in\mathcal{L}\big( 2^{\leqslant\nn}\big) \Leftrightarrow
\exists \sg\in 2^\nn \text{ with } L\to \sg. \]

\subsubsection{The extended split Cantor set $\splp$} It is the
pointwise closure of the family
\[ \{ x^+_{s^{\con} 0^\infty}:s\in R\}. \]
Notice that $\splp$ can be realized as a closed subspace of the
split interval $S(I)$. Thus, it is hereditarily separable. For
every $\sg\in P$, the function $x^+_\sg$ belongs to $\splp$.
However, for an element $\sg\in P$, the function $x^-_\sg$ belongs
to $\splp$ if and only if there exists a decreasing antichain
$(s_n)_n$ of $R$ converging to $\sg$. Finally observe that the
family $\{ x^+_{s^{\con} 0^\infty}:s\in R\}$ is a discrete subset
of $\splp$ (this is essentially a consequence of property (Q)
above). Hence, the accumulation points of $\splp$ is the set
\[ \{ x^+_\sg: \sg\in P\} \cup \{ x^-_\sg: \sg\in P^- \}. \]
Setting $d^3_t=x^+_{s_t^{\con} 0^\infty}$ for all $t\in \ct$, we
see that the family $\{d^3_t:t\in \ct\}$ is a dense discrete
subset of $\splp$. Moreover, we have the following description of
$\mathcal{L}\big(\splp\big)$
\[ L\in\mathcal{L}\big(\splp\big) \Leftrightarrow \exists \sg\in 2^\nn
\text{ with } L\to\sg \text{ and } (\text{either } L\preceq^* \sg
\text{ or } \sg\prec^* L). \]

\subsubsection{The mirror image $\splm$ of the extended split Cantor set}
The space $\splp$ has a natural mirror image $\splm$ which is the
pointwise closure of the set
\[ \{ x^-_{s^{\con} 1^\infty}:s\in R\}. \]
The spaces $\splp$ and $\splm$ are homeomorphic. To see this, for
every $t\in\ct$ let $\bar{t}\in\ct$ be the finite sequence
obtained by reversing $0$ with $1$ and $1$ with $0$ in the finite
sequence $t$. Define $\phi:R\to R$ by $\phi(s_t)=s_{\bar{t}}$ for
all $t\in \ct$. Then it is easy to see that the map
\[ \splp\ni x^+_{s_t^{\con}0^\infty} \mapsto
x^-_{\phi(s_t)^{\con}1^{\infty}}\in \splm\]
is extended to a topological homeomorphism between $\splp$ and
$\splm$. However, the canonical dense sequences in them are
\textit{not} equivalent. Notice that for every $\sg\in P$ the
function $x^-_\sg$ belongs to $\splm$, while the function
$x^+_\sg$ belongs to $\splm$ if and only if there exists an
increasing antichain $(s_n)_n$ of $R$ converging to $\sg$. It
follows that the accumulation points of $\splm$ is the set
\[ \{ x^-_\sg: \sg\in P\} \cup \{ x^+_\sg: \sg\in P^+\}. \]
As before, setting $d^4_t=x^-_{s_t^{\con} 1^\infty}$ for all
$t\in \ct$, we see that the family $\{d^4_t: t\in\ct\}$ is a
dense discrete subset of $\mathcal{L}\big(\splm\big)$
and moreover
\[ L\in \mathcal{L}\big(\splm\big) \Leftrightarrow \exists
\sg\in 2^\nn\text{ with } L\to\sg \text{ and } (\text{either }
L\prec^* \sg \text{ or } \sg\preceq^* L). \]

\subsubsection{The extended Alexandroff compactification of the Cantor set $\alex$}
The space $\alex$ is the pointwise closure of the family
\[ \{ v_t:t\in\ct\}. \]
For every $\sg\in 2^\nn$ the function $\delta_\sg$
belongs in $\alex$, the family $\{\delta_\sg:\sg\in 2^\nn\}$ is
discrete and accumulates to $0$. The function $0$ is the only
non-$G_\delta$ point of $\alex$ and this is witnessed in the most
extreme way. The accumulation points of $\alex$ is the set
\[ \{ \delta_\sg: \sg\in 2^\nn\} \cup \{0\} \]
Setting $d^5_t=v_t$ for all $t\in \ct$, the family $\{d^5_t:t\in
\ct\}$ is a dense discrete subset of $\alex$ and
\[ L\in\mathcal{L}\big(\alex\big) \Leftrightarrow (\exists \sg\in 2^\nn
\text{ with } L\subseteq^* \sg) \text{ or } (\forall \sg\in 2^\nn \ L\perp \sg). \]

\subsubsection{The extended duplicate of the Cantor set $\dcantor$} The space
$\dcantor$ is the pointwise closure of the family
\[ \{ (v_t, t^{\con} 0^\infty): t\in \ct\}. \]
This is the separable extension of the space $D(2^\nn)$,
as it was described in \cite{To1}. The accumulation points of
$\dcantor$ is the set
\[ \{ (\delta_\sg,\sg): \sg\in 2^\nn\} \cup \{ (0,\sg):\sg\in 2^\nn\},\]
which is homeomorphic to the Alexandroff duplicate of the Cantor
set. Todor\v{c}evi\'{c} was the first to realize that this classical
construction can be represented as a compact subset of the first
Baire class. The space $\dcantor$ is not only first countable
but it is also pre-metric of degree at most two, in the sense of
\cite{To1}. As in the previous cases, setting
$d^6_t=(v_t,t^{\con} 0^\infty)$ for every $t\in\ct$, we
see that the family $\{d^6_t:t\in \ct\}$ is a dense discrete
subset of $\dcantor$ and
\[ L\in\mathcal{L}\big(\dcantor\big) \Leftrightarrow \exists \sg\in 2^\nn \text{ with }
L\to\sg \text{ and } (\text{either } L\subseteq^* \sg \text{ or } L\perp \sg). \]

\subsubsection{The extended duplicate of the split Cantor set $\dsplit$} It is
the pointwise closure of the family
\[ \{ (v_s, x^+_{s^{\con} 0^\infty}): s\in R\}. \]
The space $\dsplit$ is homeomorphic to a subspace of the Helly
space $\mathcal{H}$. To see this, let $\{(a_t, b_t):t\in\ct\}$
be a family in $[0,1]^2$ such that
\begin{enumerate}
\item[(i)] $a_t=a_{t^{\con}0}<b_{t^{\con}0}<a_{t^{\con}1}<b_{t^{\con}1}=b_t$, and
\item[(ii)] $b_t-a_t \leq \frac{1}{3^{|t|}}$
\end{enumerate}
for every $t\in\ct$. Define $h_t:[0,1]\to [0,1]$ by
\[ h_t(x)= \left\{ \begin{array} {r@{\quad:\quad}l}
    1 & b_t< x,\\
   \frac{1}{2} & a_t\leq x\leq b_t, \\
   0 & x< a_t. \end{array} \right. \]
It is easy to see that the map
\[ \dsplit \ni (v_{s_t}, x^+_{s_t^{\con}0^\infty})\mapsto
h_t\in \mathcal{H}\]
is extended to a homeomorphic embedding. It is follows that
the space $\dsplit$ is first countable. We notice, however,
that it is not pre-metric of degree at most two.

As in all previous cases, we will describe the accumulation
points of $\dsplit$. First we observe that if $(s_n)_n$ is a chain
of $R$ converging to $\sg\in P$, then the sequence $\big((v_{s_n},
x^+_{{s_n}^{\con} 0^\infty})\big)_n$ is pointwise convergent to
$(\delta_\sg,x^+_\sg)$. If $(s_n)_n$ is an increasing antichain of
$R$ converging to $\sg$, then the sequence $\big((v_{s_n},
x^+_{{s_n}^{\con} 0^\infty})\big)_n$ is pointwise convergent to
$(0,x^+_\sg)$, while if it is decreasing, then it is pointwise
convergent to $(0,x^-_\sg)$. Thus, the accumulation points of
$\dsplit$ is the set
\[ \{(\delta_\sg,x^+_\sg):\sg\in P\} \cup \{ (0,x^+_\sg): \sg\in P^+\}
\cup \{ (0,x^-_\sg):\sg\in P^-\}. \] Finally, setting
$d^7_t=(v_{s_t}, x^+_{{s_t}^{\con} 0^\infty})$ for all $t\in \ct$,
we see that the family $\{d^7_t:t\in \ct\}$ is a dense discrete
subset of $\dsplit$. The description of $\mathcal{L}\big(\dsplit\big)$
is given by
\[ L\in \mathcal{L}\big(\dsplit\big)\Leftrightarrow \exists \sg\in 2^\nn
\text{ with } L\to \sg \text{ and } (L\prec^* \sg \text{ or } L\subseteq^* \sg
\text{ or } \sg\prec^* L). \]
We close this subsection by noticing the following minimality
property of the above described families.
\begin{prop}
\label{stability} Let $\{d^i_t:t\in\ct\}$ with $i\in\{1,...,7\}$ be one
of the seven families of functions and let $S=(s_t)_{t\in\ct}$
be a dyadic (not necessarily regular) subtree of $\ct$.
Then the family $\{d^i_t:t\in\ct\}$ and the corresponding
family $\{d^i_{s_t}:t\in\ct\}$ determined by the tree $S$ are equivalent.
\end{prop}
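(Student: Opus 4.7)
The plan is to apply Lemma \ref{l1} to reduce the equivalence to a purely combinatorial statement about convergent subsequences, and then to verify this statement using the explicit descriptions of $\mathcal{L}(\kk_i)$ obtained in \S 4.3. First I would check that each $\kk_i$ and each pointwise closure $\overline{\{d^i_{s_t}\}}^p_{t\in\ct}$ is a separable Rosenthal compact, and that in both cases the isolated points are exactly the elements of the dense family; the latter holds for the subfamily because its members are distinct and are isolated already in the larger compact $\kk_i$. Thus Lemma \ref{l1} applies and the proposition reduces to showing that for every infinite $L\subseteq\ct$,
\[ L\in\mathcal{L}(\kk_i)\ \Longleftrightarrow\ i_S(L)\in\mathcal{L}(\kk_i), \]
where $i_S:\ct\to S$, $t\mapsto s_t$, is the canonical bijection associated to the dyadic tree $S$.

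The heart of the proof is then to exploit the fact that $i_S$ preserves both $\sqsubset$ and $\prec$, and that it extends canonically to a bijection $\hat{i}_S:2^\nn\to [\hat{S}]$ sending a branch $\sg$ to the unique branch of $\hat{S}$ containing $\{s_{\sg|k}:k\in\nn\}$. Using this, I would verify that each of the basic combinatorial predicates appearing in the descriptions of $\mathcal{L}(\kk_i)$ in \S 4.3 --- namely $L\to\sg$, $L\subseteq^*\sg$, $L\perp\sg$, $L\prec^*\sg$, $L\preceq^*\sg$, $\sg\prec^*L$ and $\sg\preceq^*L$ --- is preserved in both directions under the correspondence $L\leftrightarrow i_S(L)$ with $\sg$ replaced by $\hat{i}_S(\sg)$. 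Each verification is an almost mechanical manipulation using the order-preserving properties of $i_S$ together with the observation that the branches of $\hat{S}$ arising from $S$-subsets are precisely the $\hat{i}_S(\sg)$ for $\sg\in 2^\nn$. Combining these statements case by case against the seven descriptions of $\mathcal{L}(\kk_i)$ in \S 4.3 yields the desired equivalence, and the proposition follows.

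The main (mild) obstacle is bookkeeping: for several of the compacta, notably $\splp$, $\splm$ and $\dsplit$, the condition defining $\mathcal{L}(\kk_i)$ is a disjunction of several clauses, so one has to check separately that no clause migrates to another under $i_S$ and also that the disjunction is exhaustive on both sides. In particular, the description of $\mathcal{L}(\alex)$ contains the universal clause ``for every $\sg\in 2^\nn$, $L\perp\sg$'', which is most cleanly handled by reformulating it as the purely tree-combinatorial statement ``$L$ contains no infinite chain'' --- a property manifestly preserved by $i_S$ in both directions, since $i_S$ is an isomorphism with respect to $\sqsubset$. Once these clauses are dispatched, no further work is required, and the non-regularity of $S$ plays no role in the argument.
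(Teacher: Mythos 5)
Your proposal is correct, and it follows exactly the route the paper intends: the paper states Proposition \ref{stability} without proof, but the two ingredients you use --- the criterion of Lemma \ref{l1} (applicable since the canonical families are dense and discrete, hence consist of isolated points, in their closures) and the explicit descriptions of $\mathcal{L}(\kk_i)$ in \S 4.3 --- are precisely the tools the paper develops immediately beforehand for this purpose. The key points are all in place, in particular the observation that any $\tau\in 2^\nn$ to which an infinite subset of $S$ converges must lie in $[\hat{S}]$ and hence be of the form $\hat{i}_S(\sg)$, and the reformulation of the universal clause in $\mathcal{L}\big(\alex\big)$ as ``$L$ contains no infinite chain''.
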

We also observe that any two of the seven families are
\textit{not} equivalent. Moreover, beside the case of $\splp$ and
$\splm$, the corresponding compacta are not mutually homeomorphic
either.


\subsection{Canonicalization}

The main result of this section is the following.
\begin{thm}
\label{basis} Let $\{f_t\}_{t\in\ct}$ be a family of real-valued
functions on a Polish space $X$ which is relatively compact in
$\mathcal{B}_1(X)$. Let also $\{d^i_t\}_{t\in\ct} \ (1\leq i\leq
7)$ be the families described in the previous subsection. Then
there exist a regular dyadic subtree $S=(s_t)_{t\in\ct}$ of $\ct$
and $i_0\in\{1,...,7\}$ such that $\{f_{s_t}\}_{t\in\ct}$ is
equivalent to $\{d^{i_0}_t\}_{t\in\ct}$.
\end{thm}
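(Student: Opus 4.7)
The plan is to build on Theorem \ref{rt1} and use Galvin's theorem (Theorem \ref{galvin2}) to canonicalize the qualitative behavior of the three limit maps $g^0_\sg,g^+_\sg,g^-_\sg$, and then match the resulting configuration with one of the seven prototypes of \S4.3 using the equivalence criterion of Lemma \ref{l1}.

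First I would apply Theorem \ref{rt1} to $\{f_t\}_{t\in\ct}$ to produce a regular dyadic subtree $T_0$ and the Borel limit functions $g^0_\sg, g^+_\sg, g^-_\sg$ on $P_0=[\hat{T}_0]$. By the descriptions of $\mathcal{L}(\kk_i)$ in \S4.3, the classification of $\{f_t\}_{t\in\ct}$ is entirely encoded by a handful of Borel binary relations on $P_0$: whether $g^0_\sg=g^+_\sg$, whether $g^0_\sg=g^-_\sg$, whether $g^+_\sg=g^-_\sg$, whether $\sg\mapsto g^0_\sg$ is injective, whether $g^0_\sg$ coincides with the pointwise limit along some other $\tau\neq\sg$, and whether the isolated function $f_t$ itself equals $g^0_{t^\con 0^\infty}$ (i.e.\ the metrizable case). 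Each of these is a Borel relation on $P_0\times P_0$, hence has the Baire property in $[P_0]^2$.

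Next I would apply Galvin's theorem repeatedly: starting with $P_0$, for each of the above relations I obtain a perfect subset of the previous one which is monochromatic for that relation, and I pass to a regular dyadic subtree $T_1\subseteq T_0$ whose body lies in this perfect set (here one uses that the perfect subset of $2^\nn$ contains the body of some regular dyadic subtree of $T_0$ satisfying the analogue of property (Q) from \S4.3). Since finitely many Galvin reductions suffice, we end up with a regular dyadic subtree $T\subseteq T_0$ and with $P=[\hat{T}]$ on which all the relevant relations are monochromatic. Using the description in \S4.3 and Fact \ref{factnew}, a straightforward case analysis shows that exactly one of seven possible configurations can occur, and each one is characterized by the same system of convergent subsequences as one of the prototypes $\kkk_i$.

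Finally, fix the $i_0\in\{1,\dots,7\}$ selected by the previous step and the associated $\{d^{i_0}_t\}_{t\in\ct}$. I would extract from $T$ a regular dyadic subtree $S=(s_t)_{t\in\ct}$ so that the extra combinatorial requirements of the prototype (such as property (Q) and the distinctness of the $f_{s_t}$) are met; this is possible by an easy pruning since the relevant exceptional set is meager. Then by construction and by Theorem \ref{rt1}, for every infinite $L\subseteq\ct$ the sequence $(f_{s_t})_{t\in L}$ is pointwise convergent if and only if $L\in \mathcal{L}(\kkk_{i_0})$, and the $f_{s_t}$ are all isolated in $\overline{\{f_{s_t}\}}^p_{t\in\ct}$. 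Lemma \ref{l1} then yields the equivalence of $\{f_{s_t}\}_{t\in\ct}$ with $\{d^{i_0}_t\}_{t\in\ct}$.

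The main obstacle I expect is the case analysis in the second step: one must verify that among all conceivable Boolean combinations of the canonicalized relations only seven are internally consistent with the constraints imposed by Fact \ref{factnew}, by the monotonicity of $\prec$, and by the requirement that the $f_{s_t}$ remain isolated. Once this bookkeeping is done carefully, each case reads off directly as one of the $\mathcal{L}(\kkk_i)$ descriptions of \S4.3, and the use of Lemma \ref{l1} is routine.
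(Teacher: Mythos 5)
Your proposal follows essentially the same route as the paper's proof: apply Theorem \ref{rt1}, canonicalize the equality pattern among $g^0_\sg$, $g^+_\sg$, $g^-_\sg$ on a perfect set via Theorem \ref{galvin2} (and the perfect set theorem for the unary relations), pass to a regular dyadic subtree with body inside it and with the $f_{s_t}$ isolated, and then match the resulting configuration with one of the seven prototypes through the descriptions of $\mathcal{L}(\kk_i)$ and the criterion of Lemma \ref{l1}. The only divergence is bookkeeping: the paper also makes monochromatic the cross relations $g^{\ee_1}_{\sg_1}\neq g^{\ee_2}_{\sg_2}$ for $\ee_1\neq\ee_2$ and distinct $\sg_1,\sg_2$ (your illustrative list covers only part of these), but they are handled by exactly the Galvin reductions you describe, so your plan coincides with the paper's argument.
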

\begin{proof}
The family $\{f_t\}_{t\in\ct}$ satisfies all hypotheses of Theorem
\ref{rt1}. Thus, there exist a regular dyadic subtree $T$ of $\ct$
and a family of functions $\{ g^0_\sg, g^+_\sg, g^-_\sg:\sg\in
P\}$, with $P=[\hat{T}]$, as described in Theorem \ref{rt1}. Let
also $0, +$ and $-$ be the corresponding Borel functions.
We recall that for every subset $X$ of $2^\nn$ we identify the
set $[X]^2$ of doubletons of $X$ with the set of all $(\sg,\tau)\in X^2$
with $\sg\prec \tau$. For every $\ee\in\{0,+,-\}$ let
\[ A_{\ee,\ee}=\{ (\sg_1,\sg_2)\in [P]^2: g^\ee_{\sg_1}\neq g^\ee_{\sg_2} \}.\]
Then $A_{\ee,\ee}$ is an analytic subset of $[P]^2$. To see this,
notice that
\begin{eqnarray*}
(\sg_1,\sg_2)\in A_{\ee,\ee} &\Leftrightarrow & \exists x\in X \text{ with }
g^\ee_{\sg_1}(x)\neq g^\ee_{\sg_2}(x) \\
& \Leftrightarrow & \exists x\in X \text{ with } \ee(\sg_1,x)\neq \ee(\sg_2,x).
\end{eqnarray*}
Invoking the Borelness of the functions $0,+,-$ we see that
$A_{\ee,\ee}$ is analytic, as desired. Notice that for every
$Q\subseteq P$ perfect and every $\ee\in\{0,+,-\}$, the set
$A_{\ee,\ee}\cap [Q]^2$ is analytic in $[Q]^2$. Thus, applying
Theorem \ref{galvin2} successively three times, we get a perfect
subset $Q_0$ of $P$ such that for all $\ee\in \{0,+,-\}$ we have
that
\[ \text{ either } [Q_0]^2\subseteq A_{\ee,\ee} \text{ or }
A_{\ee,\ee} \cap [Q_0]^2= \varnothing. \]

\noindent \textbf{Case 1.} $A_{0,0}\cap [Q_0]^2=\varnothing$. Is
this case, we have that $g^0_{\sg_1}=g^0_{\sg_2}$ for all
$(\sg_1,\sg_2)\in [Q_0]^2$. Thus, there exists a function $g$ such
that $g^0_\sg=g$ for all $\sg\in Q_0$. By properties (2) and (3)
in Theorem \ref{rt1} and the homogeneity of $Q_0$, we see that
$g^+_\sg=g^-_\sg=g^0_\sg=g$ for all $\sg\in Q_0$. Pick a regular
dyadic subtree $S=(s_t)_{t\in\ct}$ of $T$ such that $[\hat{S}]\subseteq Q_0$ and
$f_s\neq g$ for all $s\in S$. Invoking properties (1), (4) and (5)
of Theorem \ref{rt1} as well as Lemma \ref{al1}(2), we see that
for every infinite subset $A$ of $S$, the sequence $(f_t)_{t\in
A}$ accumulates to $g$. It follows that
$\overline{\{f_s\}}^p_{s\in S}=\{f_s\}_{s\in S}\cup \{g\}$, and
so, $\{f_{s_t}\}_{t\in\ct}$ is equivalent to the canonical dense
family of $A(\ct)$.
\bigskip

\noindent \textbf{Case 2.} $[Q_0]^2\subseteq A_{0,0}$. Then for
every $(\sg_1,\sg_2)\in [Q_0]^2$ we have that $g^0_{\sg_1}\neq
g^0_{\sg_2}$. By passing to a further perfect subset of $Q_0$ if
necessary, we may also assume that
\begin{enumerate}
\item[(P1)] $g^0_\sg\neq f_t$ for every $\sg\in Q_0$ and every $t\in T$.
\end{enumerate}
\bigskip

\noindent \textbf{Case 2.1.} Either $A_{+,+}\cap
[Q_0]^2=\varnothing$, or $A_{-,-}\cap [Q_0]^2=\varnothing$. Assume
first that $A_{+,+}\cap [Q_0]^2=\varnothing$. In this case we have
that there exists a function $g$ such that $g^+_\sg=g$ for all
$\sg\in Q_0$. By property (3) in Theorem \ref{rt1} and the
homogeneity of $Q_0$, we must also have that $g^-_\sg=g$ for all
$\sg\in Q_0$. This means that $A_{-,-}\cap [Q_0]^2=\varnothing$.
Thus, by symmetry, this case is equivalent to say that
$A_{+,+}\cap [Q_0]^2=\varnothing$ and $A_{-,-}\cap
[Q_0]^2=\varnothing$. It follows that there exists a function $g$
such that $g^+_\sg=g^-_\sg=g$ for all $\sg\in Q_0$. By passing to
a further perfect subset of $Q_0$ if necessary, we may also assume
that $g^0_\sg\neq g$ for all $\sg\in Q_0$. We select a regular
dyadic subtree $S=(s_t)_{t\in\ct}$ of $T$ such that $[\hat{S}]\subseteq Q_0$ and
$f_s\neq g$ for all $s\in S$. This property combined with (P1)
implies that for every $s\in S$ the function $f_s$ is isolated in
$\overline{\{f_s\}}^p_{s\in S}$.

We claim that $\{f_{s_t}\}_{t\in\ct}$ is equivalent to the
canonical dense family of $\alex$. We will give a detailed
argument which will serve as a prototype for the other
cases as well. First, we notice that, by Lemma \ref{l1} and the
description of $\mathcal{L}\big(\alex\big)$, it is enough to show
that for a subset $A$ of $S$, the sequence $(f_s)_{s\in A}$
converges pointwise if and only if either $A$ is almost included
in a chain, or $A$ does not contain an infinite chain. For the if part, we
observe that if $A$ is almost contained in a chain, then by
property (1) of Theorem \ref{rt1}, the sequence $(f_s)_{s\in A}$
is pointwise convergent. Assume that $A$ does not contain an
infinite chain. Since $g^+_\sg=g^-_\sg=g$ for all $\sg\in Q_0$,
we see that for every increasing and every decreasing antichain
$(s_n)_n$ of $S$, the sequence $(f_{s_n})_n$ converges pointwise to $g$. Thus,
$(f_s)_{s\in A}$ is pointwise convergent to $g$. For the only if
part we argue by contradiction. If there exist $\sg_1\neq \sg_2$
contained in $[\hat{S}]$ with $A\cap \{\sg_1|n:n\in\nn\}$ and
$A\cap \{\sg_2|n:n\in\nn\}$ infinite, then the fact that
$g^0_{\sg_1}\neq g^0_{\sg_2}$ implies that the sequence
$(f_s)_{s\in A}$ is not pointwise convergent. Finally, if $A$
contains an infinite chain and an infinite antichain, then the
fact that $g^0_\sg\neq g$ for all $\sg\in [\hat{S}]$ implies that
$(f_s)_{s\in A}$ is not pointwise convergent too.
\bigskip

\noindent \textbf{Case 2.2.} $[Q_0]^2 \subseteq A_{+,+}$ and
$[Q_0]^2\subseteq A_{-,-}$. In this case we have that
\begin{enumerate}
\item[(P2)] $g^\ee_{\sg_1}\neq g^\ee_{\sg_2}$ for all
$(\sg_1, \sg_2)\in [Q_0]^2$ and $\ee\in\{0,+,-\}$.
\end{enumerate}
Moreover, by passing to a further perfect subset of $Q_0$ if
necessary, we may strengthen (P1) to
\begin{enumerate}
\item[(P3)] $g^\ee_\sg\neq f_t$ for all $\sg\in Q_0$, $\ee\in\{ 0,+,-\}$
and $t\in T$.
\end{enumerate}
Observe that (P3) implies the following. For every regular dyadic
subtree $S$ of $T$ with $[\hat{S}]\subseteq Q_0$ and every $s\in
S$, the function $f_s$ is isolated in the closure of
$\{f_s\}_{s\in S}$ in $\rr^X$. Thus, as in Case 2.1, in what
follows Lemma \ref{l1} will be applicable.

For every $\ee_1,\ee_2\in\{0,+,-\}$ with $\ee_1\neq\ee_2$ let
\[ A_{\ee_1,\ee_2}=\{ (\sg_1,\sg_2)\in [Q_0]^2:
g^{\ee_1}_{\sg_1}\neq g^{\ee_2}_{\sg_2} \}.\]
Then $A_{\ee_1,\ee_2}$ is an analytic subset of $[Q_0]^2$. Applying
Theorem \ref{galvin2} successively six times, we find
$Q_1\subseteq Q_0$ perfect such that for all $\ee_1,\ee_2\in
\{0,+,-\}$ with $\ee_1\neq \ee_2$ we have that
\[ \text{either } [Q_1]^2\subseteq A_{\ee_1,\ee_2} \text{ or }
A_{\ee_1,\ee_2} \cap [Q_1]^2=\varnothing. \] We claim that for
each pair $\ee_1, \ee_2$ the first alternative must occur. Assume
on the contrary that there exist $\ee_1,\ee_2$ with $\ee_1\neq
\ee_2$ and such that $A_{\ee_1,\ee_2}\cap [Q_1]^2=\varnothing$.
Let $\tau$ be the lexicographical minimum of $Q_1$. Then for every
$\sg,\sg'\in Q_1$ with $\tau\prec \sg\prec \sg'$ we
have $g^{\ee_2}_{\sg}=g^{\ee_1}_{\tau}=g^{\ee_2}_{\sg'}$ which
contradicts (P2). Summing up, by passing to $Q_1$, we have
strengthen (P2) to
\begin{enumerate}
\item[(P4)] $g^{\ee_1}_{\sg_1}\neq g^{\ee_2}_{\sg_2}$ for all
$(\sg_1,\sg_2)\in [Q_1]^2$ and $\ee_1,\ee_2\in \{0,+,-\}$.
\end{enumerate}
For every $\ee\in\{+,-\}$, define $B_{0,\ee}\subseteq Q_1$ by
\[ B_{0,\ee}=\{ \sg\in Q_1: g^0_\sg\neq g^\ee_{\sg}\}. \]
It is easy to see that $B_{0,\ee}$ is an analytic subset of $Q_1$.
Thus, by the classical perfect set theorem, we find $Q_2\subseteq
Q_1$ perfect such that for every $\ee\in\{+,-\}$ we have
\[ \text{either } Q_2\subseteq B_{0,\ee} \text{ or } B_{0,\ee}\cap Q_2=\varnothing.\]

\noindent \textbf{Case 2.2.a.} $B_{0,+}\cap Q_2=\varnothing$ and
$B_{0,-}\cap Q_2= \varnothing$. In this case, for every $\sg\in
Q_2$ there exists a function $g_\sg$ such that
$g_\sg=g^0_\sg=g^+_\sg=g^-_\sg$. Moreover, $g_{\sg_1}\neq
g_{\sg_2}$ for all $\sg_1\neq \sg_2$ in $Q_2$, as $Q_2\subseteq
Q_1$. Invoking properties (2) and (3) in Theorem \ref{rt1}, we see
that the set $\{g_\sg:\sg\in Q_2\}$ is homeomorphic to $Q_2$. We
select a regular dyadic subtree $S=(s_t)_{t\in\ct}$ of $T$ such that
$[\hat{S}]\subseteq Q_2\subseteq Q_0$. It follows that
$\overline{\{f_s\}}^p_{s\in S}=\{f_s\}_{s\in S}\cup \{g_\sg:\sg\in
[\hat{S}]\}$, and so, the family $\{f_{s_t}\}_{t\in\ct}$ is
equivalent to the canonical dense family of $2^{\leqslant\nn}$.
\bigskip

\noindent \textbf{Case 2.2.b.} $B_{0,+}\cap Q_2=\varnothing$ and
$Q_2\subseteq B_{0,-}$. This means that $g^0_\sg=g^+_\sg$ and
$g^0_\sg\neq g^-_\sg$ for all $\sg\in Q_2$. Let $S=(s_t)_{t\in\ct}$ be a regular
dyadic subtree of $T$ such that $[\hat{S}]\subseteq Q_2\subseteq
Q_0$. Invoking (P3) and the remarks following it, the description
of $\mathcal{L}\big(\splp\big)$ and Lemma \ref{l1}, arguing
precisely as in Case 2.1, we see that $\{f_{s_t}\}_{t\in\ct}$ is
equivalent to the canonical dense family of $\splp$.
\bigskip

\noindent \textbf{Case 2.2.c.} $Q_2\subseteq B_{0,+}$ and
$B_{0,-}\cap Q_2=\varnothing$. This means that $g^0_\sg=g^-_\sg$
and $g^0_\sg\neq g^+_\sg$ for all $\sg\in Q_2$. As in the previous
case, let $S=(s_t)_{t\in\ct}$ be a regular dyadic subtree of $T$
such that $[\hat{S}]\subseteq Q_2\subseteq Q_0$. In this case
$\{f_{s_t}\}_{t\in\ct}$ is equivalent to canonical dense family of
the mirror image $\splm$ of the extended split Cantor set (the
argument is as in Case 2.1).
\bigskip

\noindent \textbf{Case 2.2.d.} $Q_2\subseteq B_{0,+}$ and
$Q_2\subseteq B_{0,-}$. In this case we have
\begin{enumerate}
\item[(P5)] $g^0_\sg\neq g^+_\sg$ and $g^0_\sg\neq g^-_\sg$
for all $\sg\in Q_2$.
\end{enumerate}
Let
\[ B_{+,-}=\{ \sg\in Q_2: g^+_\sg\neq g^-_\sg\} \]
Again, $B_{+,-}$ is an analytic subset of $Q_2$. Thus there exists
$Q_3\subseteq Q_2$ perfect such that either $Q_3\subseteq B_{+,-}$
or $Q_3\cap B_{+,-}=\varnothing$.
\bigskip

\noindent \textbf{Case 2.2.d.I.} $Q_3\cap B_{+,-}=\varnothing$.
This means that for every $\sg\in Q_3$ there exists a function
$g_\sg$ such that $g_\sg=g^+_\sg=g^-_\sg$ and $g_\sg\neq g^0_\sg$.
Moreover, by property (P4) above, we have that $g_{\sg_1}\neq g_{\sg_2}$ and
$g^0_{\sg_1}\neq g^0_{\sg_2}$ for all $(\sg_1,\sg_2)\in [Q_3]^2$,
as $Q_3\subseteq Q_2\subseteq Q_1$. Let $S=(s_t)_{t\in\ct}$ be a regular dyadic
subtree of $T$ such that $[\hat{S}]\subseteq Q_3\subseteq Q_0$. In
this case $\{f_{s_t}\}_{t\in\ct}$ is equivalent to the canonical
dense family of $\dcantor$. The verification is similar to the
previous cases.
\bigskip

\noindent \textbf{Case 2.2.d.II.} $Q_3\subseteq B_{+,-}$. This
means that $g^+_\sg\neq g^-_\sg$ for all $\sg\in Q_3$. Combining
this with (P4) and (P5), we see that $g^{\ee_1}_{\sg_1}\neq
g^{\ee_2}_{\sg_2}$ if either $\ee_1\neq\ee_2$ or $\sg_1\neq\sg_2$.
As before, let $S=(s_t)_{t\in\ct}$ be a regular dyadic subtree of
$T$ such that $[\hat{S}]\subseteq Q_3\subseteq Q_0$. Then
$\{f_{s_t}\}_{t\in\ct}$ is equivalent to the canonical dense
family of $\dsplit$.
\bigskip

\noindent All the above cases are exhaustive and the proof is
completed.
\end{proof}
By Theorem \ref{basis} and Proposition \ref{stability} we get the
following corollary.
\begin{cor}
\label{corbasis} Let $X$ be a Polish space and $\{f_t\}_{t\in\ct}$
be family of functions relatively compact in $\mathcal{B}_1(X)$.
Then for every regular dyadic subtree $T$ of $\ct$ there exist a
regular dyadic subtree $S$ of $T$ and $i_0\in\{1,...,7\}$ such
that for every regular dyadic subtree $R=(r_t)_{t\in\ct}$ of
$S$, the family $\{f_{r_t}\}_{t\in\ct}$ is equivalent to
$\{d^{i_0}_t\}_{t\in\ct}$.
\end{cor}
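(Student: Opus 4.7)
The plan is to invoke Theorem~\ref{basis} once inside $T$ to fix the prototype index $i_0$ and the subtree $S$, and then use Proposition~\ref{stability} to propagate the equivalence down to every regular dyadic subtree of $S$. Concretely, I would first use the canonical bijection $i_T:\ct\to T$ to transfer the problem back to $\ct$: the family $\{f_{i_T(t)}\}_{t\in\ct}$ is still relatively compact in $\mathcal{B}_1(X)$, so Theorem~\ref{basis} yields a regular dyadic subtree $S'=(s'_t)_{t\in\ct}$ of $\ct$ and an index $i_0\in\{1,\dots,7\}$ with $\{f_{i_T(s'_t)}\}_{t\in\ct}$ equivalent to $\{d^{i_0}_t\}_{t\in\ct}$. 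Setting $S=i_T(S')$ gives a regular dyadic subtree of $T$, which is also a regular dyadic subtree of $\ct$ by the final remark of \S 2.1.C, and its canonical bijection is $i_S=i_T\circ i_{S'}$.

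Next I would fix an arbitrary regular dyadic subtree $R=(r_t)_{t\in\ct}$ of $S$ and push it back through $i_S$: because $i_S$ preserves levels, $\sqsubset$, and $\prec$, the preimage $S'':=i_S^{-1}(R)$ is a regular dyadic subtree of $\ct$, and by the uniqueness of canonical bijections one obtains $r_t=i_S(i_{S''}(t))$ for every $t\in\ct$. Thus $\{f_{r_t}\}_{t\in\ct}$ is the subfamily of $\{f_{i_S(t)}\}_{t\in\ct}$ indexed by the dyadic subtree $S''$. Since the homeomorphism between $\overline{\{f_{i_S(t)}\}}^p_{t\in\ct}$ and $\overline{\{d^{i_0}_t\}}^p_{t\in\ct}$ provided by Step~1 restricts (via continuity of $\Phi$ and $\Phi^{-1}$) to a homeomorphism between the pointwise closures of any two corresponding subfamilies, one obtains $\{f_{r_t}\}_{t\in\ct}\sim\{d^{i_0}_{i_{S''}(t)}\}_{t\in\ct}$. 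Finally, Proposition~\ref{stability} applied to the dyadic subtree $S''$ of $\ct$ yields $\{d^{i_0}_{i_{S''}(t)}\}_{t\in\ct}\sim\{d^{i_0}_t\}_{t\in\ct}$, and composing the two equivalences gives the desired conclusion.

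The main obstacle is essentially bookkeeping of canonical bijections: one has to verify cleanly that the subfamily of the $S$-indexed family corresponding to $R$ matches, through $i_S$, the reindexing of the prototype by the canonical bijection of $S''$; once this is pinned down, there is no extra Ramsey-theoretic content, and the corollary reduces to a single application of Theorem~\ref{basis} combined with the minimality of the seven prototypes. The only point that deserves a short remark in a full write-up is that equivalence is inherited by subfamilies, which is immediate from the definition since a homeomorphism of pointwise closures carries the closure of any subfamily onto the closure of the corresponding subfamily on the other side.
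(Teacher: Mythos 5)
Your proposal is correct and is exactly the paper's intended argument: the paper derives Corollary \ref{corbasis} in one line from Theorem \ref{basis} together with Proposition \ref{stability}, and your write-up simply supplies the (correct) bookkeeping with the canonical bijections $i_T$, $i_S$, $i_{S''}$ and the observation that equivalence passes to corresponding subfamilies. Nothing further is needed.
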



\section{Analytic subspaces of separable Rosenthal compacta}

In this section we introduce a class of subspaces of separable
Rosenthal compacta and we present some of their basic properties.

\subsection{Definitions and basic properties}

Let $\kk$ be a separable Rosenthal compact on a Polish space $X$.
For every subset $\fff$ of $\kk$ by $Acc(\fff)$ we denote the set
of accumulation points of $\fff$ in $\rr^X$. We start with the
following definition.
\begin{defn}
\label{nad1} Let $\kk$ be a separable Rosenthal compact on
a Polish space $X$ and $\ccc$ a closed subspace of $\kk$.
We say that $\ccc$ is an analytic subspace of $\kk$ if there
exist a countable dense subset $\{f_n\}_n$ of $\kk$ and
an analytic subset $A$ of $[\nn]$ such that the following
are satisfied.
\begin{enumerate}
\item[(1)] For every $L\in A$ we have that $Acc\big(\{f_n: n\in L\}\big)
\subseteq \ccc$.
\item[(2)] For every $g\in \ccc\cap Acc(\kk)$ there exists $L\in A$
with $g\in \overline{\{f_n\}}^p_{n\in L}$.
\end{enumerate}
\end{defn}
Let us make some remarks concerning the above notion.
First we notice that the analytic set $A$ witnessing the
analyticity of $\ccc$ can always be assumed to be hereditary.
We also observe that an analytic subspace of $\kk$ is not
necessarily separable. For instance, if $\kk=\alex$
and $\ccc=A(2^\nn)$, then it is easy to see that $\ccc$
is an analytic subspace of $\kkk$. The following
proposition gives some examples of analytic subspaces.
\begin{prop}
\label{nap1} Let $\kk$ be a separable Rosenthal compact.
Then the following hold.
\begin{enumerate}
\item[(1)] $\kk$ is analytic with respect to any countable
dense subset $\{f_n\}_n$ of $\kk$.
\item[(2)] Every closed $G_\delta$ subspace $\ccc$ of $\kk$ is analytic.
\item[(3)] Every closed separable subspace $\ccc$ of $\kk$ is analytic.
\end{enumerate}
\end{prop}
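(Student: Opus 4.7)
The plan is to produce, for each part, a countable dense subset $\{f_n\}_n$ of $\kk$ together with an analytic set $A\subseteq[\nn]$ meeting conditions (1) and (2) of Definition \ref{nad1}. For part (1) I would take any such $\{f_n\}_n$ and $A=[\nn]$, which is a closed (hence analytic) subset of $2^\nn$. Condition (1) is automatic because $\kk$ is closed in $\rr^X$ and contains every $f_n$. Condition (2) is just the Fréchet--Urysohn property of separable Rosenthal compacta (Bourgain--Fremlin--Talagrand): any $g\in Acc(\kk)$ is the pointwise limit of a sequence extracted from the dense set $\{f_n\}_n$, and the index set of that sequence serves as the required $L$.

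For part (2) I would fix any countable dense $\{f_n\}_n$ and, using normality of the compact Hausdorff space $\kk$, refine a given representation $\ccc=\bigcap_m U_m$ into one with $\overline{U_{m+1}}\subseteq U_m$; then $\ccc=\bigcap_m\overline{U_m}$ as well. I would set
\[ A=\bigcap_{m}\bigcup_{k}\bigl\{L\in[\nn]:\ L\cap[k,\infty)\subseteq\{n:f_n\in U_m\}\bigr\}, \]
which is $F_{\sigma\delta}$, in particular Borel and analytic. For any $L\in A$ and any $m$, the set $\{f_n:n\in L\}$ is eventually contained in $U_m$, so all of its accumulation points lie in $\overline{U_m}$ and hence in $\ccc$; this gives condition (1). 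For condition (2), a Fréchet--Urysohn extraction $f_{n_k}\to g$ yields $L=\{n_k\}\in A$, because each $U_m$ is an open neighborhood of $g\in\ccc$ and therefore contains a tail of $(f_{n_k})$.

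For part (3) I would pick a countable dense $\{g_m\}_m$ of the separable space $\ccc$, enlarge it to a countable dense $\{f_n\}_n$ of $\kk$, set $I=\{n:f_n\in\{g_m\}_m\}$, and take $A=[I]$, a closed (hence analytic) subset of $[\nn]$. Since $\{f_n:n\in L\}\subseteq\ccc$ whenever $L\in A$ and $\ccc$ is closed in $\rr^X$, condition (1) is automatic. For condition (2), any $g\in\ccc\cap Acc(\kk)$ is either an accumulation point of $\{g_m\}_m$ -- in which case Fréchet--Urysohn inside the Rosenthal compact $\ccc$ yields a subsequence of $(g_m)$ pointwise convergent to $g$ whose index set is the required $L$ -- or $g=g_{m_0}\in\{g_m\}_m$ is isolated in $\{g_m\}_m$, in which case any $L\in[I]$ containing the index of $g$ already contains $g$ and so has $g$ in its pointwise closure.

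The only step where care is needed is the nested refinement in part (2): without arranging $\overline{U_{m+1}}\subseteq U_m$, condition (1) would only place accumulation points inside $\bigcap_m\overline{U_m}$, which may strictly contain $\ccc$. Normality of the compact Hausdorff space $\kk$ makes this refinement routine, so I don't foresee any genuine obstruction.
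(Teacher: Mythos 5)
Your parts (1) and (2) are essentially the paper's own proof: the paper also takes $A=[\nn]$ for (1), and for (2) it also sets $M_k=\{n:f_n\in U_k\}$ for a sequence of open sets with $\overline{U}_{k+1}\subseteq U_k$ and $\ccc=\bigcap_k U_k$, and takes $A=\{L:\ L\subseteq^* M_k\ \forall k\}$, which is exactly your $F_{\sigma\delta}$ set; you merely make explicit the normality refinement that the paper assumes implicitly. Both verifications (tails in $U_m$ give accumulation points in $\bigcap_m\overline{U}_m=\ccc$; Bourgain--Fremlin--Talagrand gives the witnessing $L$) coincide with the paper's.

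In part (3) you genuinely deviate, and there is a small but real hole. The paper enumerates $D_1\cup D_2$ as $\{f_n\}_n$, sets $L=\{n:f_n\in D_2\}$, but then also picks, for every $k\in M=\{k\in L: f_k\in Acc(\kk)\}$, a set $L_k$ with $f_k=\lim_{n\in L_k}f_n$ (via Bourgain--Fremlin--Talagrand applied in $\kk$), and takes $A=[L]\cup\bigcup_{k\in M}[L_k]$. Your choice $A=[I]$ (the paper's $[L]$) is cleaner and does verify Definition \ref{nad1} whenever $\ccc$ is infinite, since condition (2) only asks for membership in the pointwise closure, as you observe. But the statement does not exclude finite $\ccc$: for instance $\ccc=\{f\}$ with $f$ a non-$G_\delta$ point of $\kk$ is a closed separable subspace (and an important one for the rest of the paper), it need not be a $G_\delta$ set, so it cannot be folded into part (2); in that case your $I$ is finite, $[I]=\varnothing$, and condition (2) of Definition \ref{nad1} fails for $f\in\ccc\cap Acc(\kk)$. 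The paper's extra sets $[L_k]$ exist precisely to cover the elements of the dense subset of $\ccc$ that are accumulation points of $\kk$, and adding them (note that every infinite subset of $L_k$ still converges to $f_k$, so condition (1) is preserved) repairs your argument with no further change.
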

\begin{proof}
(1) Take $A=[\nn]$. \\
(2) Let $(U_k)_k$ be a sequence of open subsets of $\kk$ with
$\overline{U}_{k+1}\subseteq U_k$ for all $k\in\nn$ and such that
$\ccc=\bigcap_k U_k$. Let also $\{f_n\}_n$ be a countable dense subset of $\kkk$.
For every $k\in\nn$, let $M_k=\{n\in\nn: f_n\in U_k\}$. Notice that the
sequence $(M_k)_k$ is decreasing. Let $A\subseteq [\nn]$ be defined by
\[ L\in A \Leftrightarrow \forall k\in\nn \ (L\subseteq^* M_k). \]
Clearly $A$ is Borel. It is easy to see that $A$ satisfies condition
(1) of Definition \ref{nad1} for $\ccc$. To see that condition (2)
is also satisfied, let $g\in \ccc\cap Acc(\kk)$. By the Bourgain-Fremlin-Talagrand
theorem \cite{BFT}, there exists an infinite subset $L$ on $\nn$
such that $g$ is the pointwise limit of the sequence $(f_n)_{n\in L}$.
As $g\in U_k$ for all $k\in\nn$, we see that $L\subseteq^* M_k$ for all
$k$. Hence the set $A$ witness the analyticity of $\ccc$.\\
(3) Let $D_1$ be a countable dense subset of $\kk$ and $D_2$
a countable dense subset of $\ccc$. Let $\{f_n\}_n$ be an enumeration
of the set $D_1\cup D_2$ and set $L=\{n\in\nn: f_n\in D_2\}$.
Let also $M=\{k\in L: f_k\in Acc(\kk)\}$ and for every $k\in M$
select $L_k\in [\nn]$ such that $f_k$ is the pointwise limit
of the sequence $(f_n)_{n\in L_k}$. Define $A=[L] \cup \big(\bigcup_{k\in M} [L_k]\big)$.
The countable dense subset $\{f_n\}_n$ of $\kk$ and the set $A$ verify
the analyticity of $\ccc$.
\end{proof}
To proceed with our discussion on the properties of analytic subspaces
we need some pieces of notation. Let $\kk$ be a separable Rosenthal
compact and $\mathbf{f}=\{f_n\}_n$ a countable dense subset of $\kk$. We set
\[ \lbf=\{ L\in [\nn]: (f_n)_{n\in L} \text{ is pointwise convergent}\}. \]
Moreover, for every accumulation point $f$ of $\kk$ we let
\[ \llf=\{ L\in [\nn]: (f_n)_{n\in L} \text{ is pointwise convergent
to } f\}. \]
We notice that both $\lbf$ and $\llf$ are co-analytic.
The first result relating the topological behavior of a
point $f$ in $\kk$ with the descriptive set-theoretic properties
of the set $\llf$ is the result of A. Krawczyk from \cite{Kra}
asserting that a point $f\in\kk$ is $G_\delta$ if and only if
the set $\llf$ is Borel. Another important structural property
is the following consequence of the effective
version of the Bourgain-Fremlin-Talagrand theorem,
proved by G. Debs in \cite{De}.
\begin{thm}
\label{Debs} Let $\kk$ be a separable Rosenthal compact.
Then for every countable dense subset $\mathbf{f}=\{f_n\}_n$
of $\kk$, there exists a Borel, hereditary and cofinal subset
$C$ of $\lbf$.
\end{thm}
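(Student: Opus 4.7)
The statement is the effective Bourgain--Fremlin--Talagrand theorem of G.~Debs \cite{De}. My plan is to combine the classical BFT theorem (which guarantees that every sequence in $\kk$ has a pointwise convergent subsequence) with a descriptive set-theoretic rank argument that makes this selection definable.

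As a first step I would fix a countable dense subset $D=\{d_k\}_k$ of $X$ and introduce the auxiliary set
\[ A_0=\{L\in[\nn]:(f_n(d_k))_{n\in L}\text{ converges in }\rr\text{ for every }k\in\nn\}. \]
Using the pointwise boundedness of $\{f_n\}_n$ together with a standard diagonal extraction, $A_0$ is cofinal in $[\nn]$, and it is plainly $\Pi^0_3$ and hereditary, so it already has the formal properties we want. The difficulty is that $A_0\not\subseteq\lbf$ in general: a Baire-1 function is not determined by its values on a dense set, so $(f_n)_{n\in L}$ could converge on $D$ while oscillating at a point outside~$D$.

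To control this oscillation I would introduce a rank. For each pair of rationals $p<q$ and each basic open set $U\subseteq X$, call a finite increasing tuple $s\subseteq\nn$ a $(p,q,U)$-\emph{witness} if there are $n_1,n_2\in s$ and $x_1,x_2\in U$ with $f_{n_1}(x_1)<p$ and $f_{n_2}(x_2)>q$. Then $L\in\lbf$ if and only if, for every $(p,q,U)$ and every point of $U$, only finitely many $n\in L$ contribute such witnesses. Classical BFT, applied to one triple $(p,q,U)$ at a time, provides for every $L$ an infinite subsequence suppressing that particular witness; iterating over the countably many parameters and diagonalizing gives the classical non-effective selection.

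The core of the proof, which is the genuinely new contribution of Debs, is to carry out this iteration uniformly in $L$ in a Borel way. Concretely, one constructs an idempotent Borel map $\Psi\colon[\nn]\to[\nn]$ with $\Psi(L)\subseteq L$, $\Psi(L)\in\lbf$, and the stronger compatibility $M\subseteq\Psi(L)\Rightarrow\Psi(M)=M$. Given such a $\Psi$, the set $C=\{L\in[\nn]:\Psi(L)=L\}$ is automatically Borel (even closed in the appropriate Polish topology), cofinal (since $\Psi(L_0)\in C$ and $\Psi(L_0)\subseteq L_0$ for every $L_0$), and hereditary (from the compatibility). The principal obstacle is precisely producing this compatible idempotent selector: one needs the rank computation to depend on $L$ in a monotone Borel manner, and must replace the implicit quantifier over points of $X$ by one over the countable base of $X$ using that the $f_n$ are Baire-1 and $\kk$ is angelic. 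This is handled by a Galvin--Prikry type refinement applied to the rank on each parameter $(p,q,U)$, yielding the desired Borel hereditary cofinal subset $C\subseteq\lbf$.
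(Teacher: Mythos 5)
There is a genuine gap here, and it is worth being clear about what the paper itself does: Theorem \ref{Debs} is not proved in the paper at all. It is quoted as a consequence of Debs' effective version of the Bourgain--Fremlin--Talagrand theorem \cite{De}, with the reader referred to \cite{D} for the derivation. So any ``blind proof'' of this statement is really an attempt to reprove Debs' theorem, which is a substantial piece of effective descriptive set theory.

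Your reduction of the statement to the existence of a Borel selector $\Psi\colon[\nn]\to[\nn]$ with $\Psi(L)\subseteq L$, $\Psi(L)\in\lbf$ and the compatibility $M\subseteq\Psi(L)\Rightarrow\Psi(M)=M$ is formally correct: given such a $\Psi$, the set $C=\{L:\Psi(L)=L\}$ is indeed Borel, cofinal and hereditary. But this is where the proof stops being a proof. The existence of such a $\Psi$ (even without the compatibility clause) \emph{is} Debs' theorem; asserting that it is ``handled by a Galvin--Prikry type refinement applied to the rank'' does not establish it. Galvin--Prikry/Silver produces a single homogeneous set for a single (sufficiently definable) partition; it gives no uniformity in $L$, no Borel dependence of the output on the input, and no control of the compatibility condition, which is strictly stronger than what Debs' theorem delivers. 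The delicacy of the situation is visible inside the paper itself: Remark 2(1) exhibits a separable Rosenthal compact for which the local set $\llf$ has \emph{no} Borel cofinal subset, so the conclusion for $\lbf$ cannot follow from soft cardinality, rank, or partition arguments alone --- it genuinely needs the effective (lightface) analysis of \cite{De}. Two further, smaller problems: your $(p,q,U)$-witness condition does not characterize $\lbf$ (the witnesses quantify over possibly distinct points $x_1,x_2\in U$, so it measures oscillation over $U$ rather than divergence of $(f_n(x))_{n\in L}$ at a fixed $x$, and is strictly stronger than pointwise convergence); and if one only has, from Debs, a Borel selector without your compatibility clause, hereditariness must be recovered differently --- the paper's Remark 2(2) does this by iterating Luzin separation between a Borel cofinal set, its (analytic) hereditary closure, and the co-analytic set $\lbf$, a step your argument would also need.
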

We refer the reader to \cite{D} for an explanation of how
Debs' theorem yields the above result.

Let $\kk$ and $\mathbf{f}=\{f_n\}_n$ as above. For every $A\subseteq
\lbf$ we set
\[ \kk_{A,\mathbf{f}}=\{g\in\kk: \exists L\in A \text{ with }
g=\lim_{n\in L} f_n\}.\]
We have the following characterization of analytic subspaces
which is essentially a consequence of Theorem \ref{Debs}.
\begin{prop}
\label{nap2} Let $\kk$ be a separable Rosenthal compact and
$\ccc$ a closed subspace of $\kk$. Then $\ccc$ is analytic if and only
if there exist a countable dense subset $\bseq$ of $\kk$ and
a hereditary and analytic subset $A'$ of $\lbf$ such that
$\kk_{A',\mathbf{f}}=\ccc\cap Acc(\kk)$.
\end{prop}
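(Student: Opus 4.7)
The plan is to prove the two directions of the equivalence separately. The reverse direction is essentially a matter of unpacking definitions: given $\mathbf{f}$ and a hereditary analytic $A'\subseteq\lbf$ with $\kk_{A',\mathbf{f}}=\ccc\cap Acc(\kk)$, I would simply take $A:=A'$ in Definition \ref{nad1}. For each $L\in A'$ the sequence $(f_n)_{n\in L}$ converges pointwise to some $g\in\kk_{A',\mathbf{f}}\subseteq\ccc$, and by uniqueness of pointwise limits $g$ is the only accumulation point of $\{f_n:n\in L\}$ in $\rr^X$, so condition (1) holds. Condition (2) is immediate from the equality $\kk_{A',\mathbf{f}}=\ccc\cap Acc(\kk)$.

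For the forward direction I would start with $\mathbf{f}$ and an analytic $A\subseteq[\nn]$ witnessing the analyticity of $\ccc$, which by the remark following Definition \ref{nad1} can be assumed hereditary. The naive candidate $A\cap\lbf$ has the right ``dynamical'' behaviour but there is no reason for it to be analytic, since $\lbf$ is in general only co-analytic. This is the main obstacle, and it is exactly what Theorem \ref{Debs} is designed to resolve: it produces a Borel, hereditary, cofinal subset $C$ of $\lbf$. I then define
\[
A':=A\cap C.
\]
Being the intersection of an analytic set with a Borel set, $A'$ is analytic; it is hereditary as an intersection of hereditary sets; and by construction $A'\subseteq C\subseteq\lbf$.

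The remaining task is to verify $\kk_{A',\mathbf{f}}=\ccc\cap Acc(\kk)$. For the inclusion $\subseteq$, I would argue that if $L\in A'$ and $g=\lim_{n\in L}f_n$, then the membership $L\in A$ together with condition (1) of Definition \ref{nad1} yields $g\in Acc\bigl(\{f_n:n\in L\}\bigr)\subseteq\ccc$, while $g\in Acc(\kk)$ because $g$ is the pointwise limit of infinitely many distinct $f_n$'s (so $g$ cannot be isolated in $\kk$). For $\supseteq$, given $g\in\ccc\cap Acc(\kk)$, condition (2) supplies $L\in A$ with $g\in\overline{\{f_n\}}^p_{n\in L}$; the Bourgain--Fremlin--Talagrand theorem then provides an infinite $L'\subseteq L$ with $(f_n)_{n\in L'}$ converging pointwise to $g$, and hereditariness of $A$ gives $L'\in A\cap\lbf$. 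Invoking cofinality of $C$ in $\lbf$ yields $L''\in C$ with $L''\subseteq L'$; by hereditariness of $A$ we have $L''\in A\cap C=A'$, and $(f_n)_{n\in L''}$, being a further subsequence, still converges pointwise to $g$, so $g\in\kk_{A',\mathbf{f}}$. This completes the plan; the only non-routine step is the appeal to Debs' theorem to upgrade $A\cap\lbf$ to the analytic set $A\cap C$ without losing the elements of $\lbf$ needed to hit every point of $\ccc\cap Acc(\kk)$.
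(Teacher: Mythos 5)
Your proposal is correct and follows essentially the same route as the paper: the backward direction by taking the witness $A=A'$, and the forward direction by intersecting a hereditary witness $A$ with the Borel, hereditary, cofinal set $C$ supplied by Debs' theorem, then checking $\kk_{A',\mathbf{f}}=\ccc\cap Acc(\kk)$ via condition (1) of the definition and the Bourgain--Fremlin--Talagrand theorem combined with cofinality and hereditariness. There is nothing essentially different from the paper's argument.
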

\begin{proof}
The direction $(\Leftarrow)$ is immediate. Conversely, assume
that $\ccc$ is analytic and let $\bseq$ and $A\subseteq [\nn]$
verifying its analyticity. As we have already remarked, we may assume
that $A$ is hereditary. By Theorem \ref{Debs}, there exists a Borel,
hereditary and cofinal subset $C$ of $\lbf$. We set $A'=A\cap C$.
We claim that $A'$ is the desired set. Clearly $A'$ is a hereditary
and analytic subset of $\lbf$. Also observe that, by condition (1) of
Definition \ref{nad1}, for every $L\in A'$ the sequence $(f_n)_{n\in L}$
must be pointwise convergent to a function $g\in\ccc$. Hence
$\kk_{A',\mathbf{f}}\subseteq \ccc\cap Acc(\kk)$. Conversely
let $g\in \ccc\cap Acc(\kk)$. There exists $M\in A$ with
$g\in\overline{\{f_n\}}^p_{n\in M}$. By the Bourgain-Fremlin-Talagrand
theorem, there exists $N\in [M]$ such that $g$ is the pointwise
limit of the sequence $(f_n)_{n\in N}$. Clearly $N\in\lbf$. As $C$ is
cofinal in $\lbf$, there exists $L\in [N]$ with $L\in C$. As $A$
is hereditary, we see that $L\in A\cap C=A'$. The proof is completed.
\end{proof}

\subsection{Separable Rosenthal compacta in $\mathcal{B}_1(2^\nn)$}

Let $\kk$ be separable Rosenthal compact on a Polish space
$X$ and $\mathbf{f}=\{f_n\}_n$ a countable dense subset of $\kk$.
By Theorem \ref{Debs}, there exists a Borel cofinal subset
of $\lbf$. The following proposition shows that if $X$ is
compact metrizable, then the global property of $\lbf$ (namely
that it contains a Borel cofinal set) is also valid locally.
We notice that in the argument below we make use of the Arsenin-Kunugui
theorem in a spirit similar as in \cite{Pol2}.
\begin{prop}
\label{newpcof} Let $X$ be a compact metrizable space, $\kk$ a
separable Rosenthal compact on $X$ and $\mathbf{f}=\{f_n\}_n$
a countable dense subset of $\kkk$. Then for every $f\in\kkk$
there exists an analytic hereditary subset $B$ of $\llf$ which
is cofinal in $\llf$.
\end{prop}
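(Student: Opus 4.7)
My plan is to combine Debs' theorem (Theorem~\ref{Debs}) with an application of the Arsenin--Kunugui theorem, exploiting the compactness of $X$ to ensure the required regularity of vertical sections.

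First, I would apply Theorem~\ref{Debs} to obtain a Borel, hereditary, cofinal subset $C$ of $\lbf$. I then propose
\[
B \;=\; C\cap \llf \;=\; \{\, L\in C : (f_n)_{n\in L} \text{ converges pointwise to } f\,\}
\]
as the desired set. Hereditariness of $B$ is inherited from $C$ and $\llf$. Cofinality in $\llf$ is immediate: given $L\in\llf \subseteq \lbf$, the cofinality of $C$ in $\lbf$ produces $M\in [L]\cap C$, and since every subsequence of a pointwise convergent sequence has the same limit, $M\in\llf\cap C = B$. The whole burden is therefore to show that $B$ is analytic (in fact, I would get it Borel).

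To do this I would consider
\[
E \;=\; \bigl\{\, (L,x)\in C\times X : (f_n(x))_{n\in L}\ \text{does not converge to}\ f(x)\,\bigr\},
\]
which rewrites as $\{(L,x): \exists k\, \exists^{\infty} n\in L\ |f_n(x)-f(x)|\geq 1/k\}$ and is therefore a Borel subset of $C\times X$. For any fixed $L\in C$, the pointwise limit $g_L=\lim_{n\in L}f_n$ belongs to $\kk\subseteq \bbb_1(X)$; since $f\in\bbb_1(X)$ as well and $X$ is metric, the difference $g_L-f$ is again Baire-1. Consequently the section
\[
E_L \;=\; \{\, x\in X : g_L(x)\neq f(x)\,\} \;=\; (g_L-f)^{-1}(\rr\setminus\{0\})
\]
is $F_\sigma$ in $X$, and the compactness of $X$ promotes this to $K_\sigma$. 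The Arsenin--Kunugui theorem then guarantees that $\pi_C(E)=\{L\in C: g_L\neq f\}$ is Borel in $C$, so that $B = C\setminus \pi_C(E)$ is Borel, and in particular analytic and hereditary.

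The main obstacle is verifying that the vertical sections $E_L$ are $\sigma$-compact: this is precisely where the assumption that $X$ is compact metrizable (rather than merely Polish) is used, via the classical fact that the level set $\{h\neq 0\}$ of a Baire-1 function $h$ on a metric space is $F_\sigma$. Without compactness, $F_\sigma$ sections need not be $K_\sigma$, the Arsenin--Kunugui conclusion on Borelness of the projection fails, and one would only recover a coanalytic bound for $B$, which is insufficient.
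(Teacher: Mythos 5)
Your argument is correct and is essentially the paper's proof: Debs' theorem gives the Borel hereditary cofinal $C\subseteq\lbf$, and then the Borelness of $C\cap\llf$ is extracted exactly as in the paper, using that for each $L\in C$ the section $\{x: g_L(x)\neq f(x)\}$ is $F_\sigma$ (difference of Baire-1 functions), hence $K_\sigma$ by compactness of $X$, so that the Arsenin--Kunugui theorem makes the projection Borel. The only cosmetic difference is that the paper first checks Borelness of $(L,x)\mapsto f_L(x)$ and projects the sets $P_m=\{(L,x):|f_L(x)-f(x)|>\frac{1}{m+1}\}$ one at a time, while you project the single set $E$ directly; this changes nothing.
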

\begin{proof}
We apply Theorem \ref{Debs} and we get a hereditary, Borel and
cofinal subset $C$ of $\lbf$. Consider the function $\Phi:C\times X\to \rr$
defined by $\Phi(L,x)=f_L(x)$, where by $f_L$ we denote the pointwise
limit of the sequence $(f_n)_{n\in L}$. Then $\Phi$ is Borel.
To see this, for every $n\in\nn$ let $\Phi_n:C\times X \to \rr$
be defined by $\Phi_n(L,x)=f_{l_n}(x)$, where $l_n$ is the $n^{th}$
element of the increasing enumeration of $L$. Clearly $\Phi_n$ is Borel.
As $\Phi(L,x)=\lim_{n} \Phi_n(L,x)$ for all $(L,x)\in C\times X$,
the Borelness of $\Phi$ is shown. For every $m\in\nn$ define
$P_m\subseteq C\times X$ by
\begin{eqnarray*}
(L,x)\in P_m & \Leftrightarrow & |f_L(x)-f(x)|>\frac{1}{m+1} \\
& \Leftrightarrow & (c,x)\in \Phi^{-1}\big( (-\infty,-\frac{1}{m+1})\cup
(\frac{1}{m+1},+\infty)\big).
\end{eqnarray*}
Clearly $P_m$ is Borel. For every $L\in C$ the function $x\mapsto
|f_L(x)-f(x)|$ is Baire-1. Hence, for every $L\in C$ the section
$(P_m)_L=\{x\in X:(c,x)\in P_m\}$ of $P_m$ at $L$ is $F_\sg$, and
as $X$ is compact metrizable, it is $K_\sg$. By the
Arsenin-Kunugui theorem (see \cite{Kechris}, Theorem 35.46), the
set
\[ G_m=\mathrm{proj}_C P_m \]
is Borel. It follows that the set $G=\bigcup_m G_m$ is a Borel
subset of $C$. Put $D=C\setminus G$. Now observe that for every
$L\in C$ we have that $L\in\llf$ if and only if $L\notin G$.
Hence, the set $D$ is a Borel subset of $\llf$, and as $C$ is
cofinal, we get that $D$ is cofinal in $\llf$. Hence,
setting $B$ to be the hereditary closure of $D$, we see
that $B$ is as desired.
\end{proof}
\begin{rem}
\label{r1} (1) We notice that Proposition \ref{newpcof} is not valid
for an arbitrary separable Rosenthal compact. A counterexample,
taken from \cite{Pol2} (see also \cite{Ma}), is the following.
Let $A$ be an analytic non-Borel subset of $2^\nn$
and denote by $\kk_A$ the separable Rosenthal compact
obtained by restrict every function of $\alex$ on $A$.
Clearly the function $0|_A$ belongs to $\kk_A$ and is
a non-$G_\delta$ point of $\kk_A$. It is easy to check that,
in this case, there does not exist a Borel cofinal subset
of $\mathcal{L}_{0|_A}$.\\
(2) We should point out that the hereditary and cofinal subset
$B$ of $\llf$, obtained by Proposition \ref{newpcof}, can be chosen
to be Borel. To see this, start with an analytic and cofinal
subset $A_0$ of $\llf$. Using Souslin's separation theorem
we construct two sequences $(B_n)_n$ and $(C_n)_n$ such that
$B_n$ is Borel, $C_n$ is the hereditary closure of $B_n$ and
$A_0\subseteq B_n\subseteq C_n\subseteq B_{n+1}\subseteq \llf$
for all $n\in\nn$. Setting $B=\bigcup_n B_n$, we see that
$B$ is as desired.
\end{rem}
The arguments in the proof of Proposition \ref{newpcof} can
be used to derive certain properties of analytic subspaces
of separable Rosenthal compacta. To state them we need one more
piece of notation. For a separable Rosenthal compact $\kk$
on a Polish space $X$, $\bseq$ a countable dense
subset of $\kk$ and $\ccc$ a closed subspace of $\kk$ we
set
\[ \lbfc=\{ L\in[\nn]: \exists g\in\ccc \text{ with }
g=\lim_{n\in L} f_n\}. \]
Clearly $\lbfc$ is a subset of $\lbf$. Also notice that if $\ccc=\{f\}$
for some $f\in\kk$, then $\lbfc=\llf$.

Part (1) of the following proposition extends Proposition \ref{newpcof}
for analytic subspaces. The second part shows that the notion
of an analytic subspace of $\kk$ is independent
of the choice of the dense sequence, for every separable
Rosenthal compact $\kk$ in $\bbb_1(2^\nn)$.
\begin{prop}
\label{analp1} Let $X$ be a compact metrizable space,
$\kk$ be a separable Rosenthal compact on $X$ and $\ccc$
and analytic subspace of $\kk$. Let $\bseq$ be a countable dense
subset of $\kk$ and $A\subseteq [\nn]$ witnessing the analyticity
of $\ccc$. Then the following hold.
\begin{enumerate}
\item[(1)] There exists an analytic cofinal subset
$A_1$ of $\lbfc$.
\item[(2)] For every countable dense subset $\mathbf{g}=\{g_n\}_n$
of $\kk$ there exists an analytic subset $A_2$ of $\mathcal{L}_{\mathbf{g}}$
such that $\kk_{A_2,\mathbf{g}}=\ccc\cap Acc(\kk)$.
\end{enumerate}
\end{prop}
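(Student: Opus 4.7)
The plan is to mimic and extend the proof of Proposition \ref{newpcof}. First, I would apply Theorem \ref{Debs} to produce a hereditary, Borel, cofinal subset $C$ of $\lbf$, and then, arguing as in the proof of Proposition \ref{nap2}, I would pass from the given witness $A$ to the hereditary analytic set $A'=A\cap C\subseteq C$, which still satisfies $\kk_{A',\bff}=\ccc\cap Acc(\kk)$. By thinning out the indexing if necessary, I would also assume that $\bff$ is injective; in that case every infinite $L\in\lbf$ has $f_L\in Acc(\kk)$, which simplifies the cofinality argument.

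For part (1) I would take
\[ A_1=\{L\in C: \exists M\in A',\ f_L=f_M\}. \]
The analyticity of $A_1$ comes from the Arsenin--Kunugui argument used in the proof of Proposition \ref{newpcof}: the map $(L,x)\mapsto f_L(x)$ is Borel on $C\times X$, so the set $P=\{(L,M,x)\in C\times C\times X: f_L(x)\neq f_M(x)\}$ is Borel; each section $P_{L,M}$ is $F_\sg$ and hence $K_\sg$ in the compact metric space $X$; and Arsenin--Kunugui then gives that $\mathrm{proj}_{C\times C}(P)$ is Borel, making $B=\{(L,M)\in C\times C: f_L=f_M\}$ Borel and $A_1=\mathrm{proj}_C\big(B\cap(C\times A')\big)$ analytic. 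The inclusion $A_1\subseteq\lbfc$ is immediate. For cofinality, given $L\in\lbfc$, set $g=f_L\in\ccc$; since $\bff$ is injective, $g\in Acc(\kk)$ and hence $g\in\kk_{A',\bff}$, so some $M\in A'$ satisfies $f_M=g$. Cofinality of $C$ yields $N\in[L]\cap C$, and then $f_N=f_L=f_M$ places $N$ in $A_1$.

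For part (2) I would apply Theorem \ref{Debs} to $\mathbf{g}$ to obtain a hereditary Borel cofinal subset $C'$ of $\mathcal{L}_\mathbf{g}$ and set
\[ A_2=\{N\in C': \exists L\in A',\ g_N=f_L\}. \]
The identical Arsenin--Kunugui argument, applied to $\{(N,L,x)\in C'\times C\times X: g_N(x)\neq f_L(x)\}$, shows that $\{(N,L)\in C'\times C: g_N=f_L\}$ is Borel and hence that $A_2$ is analytic; the inclusions $A_2\subseteq\mathcal{L}_\mathbf{g}$ and $\kk_{A_2,\mathbf{g}}\subseteq\ccc\cap Acc(\kk)$ are immediate. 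For the reverse inclusion, take $h\in\ccc\cap Acc(\kk)=\kk_{A',\bff}$; write $h=f_L$ for some $L\in A'$, and use density of $\mathbf{g}$, the Bourgain--Fremlin--Talagrand theorem, and cofinality of $C'$ in $\mathcal{L}_\mathbf{g}$ to produce $N\in C'$ with $g_N=h$. Then $N\in A_2$, as required.

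The main technical point is the Arsenin--Kunugui step that yields Borelness of the two-variable relation $\{f_L=f_M\}$ (and its analogue $\{g_N=f_L\}$), for which the compactness of $X$ is essential; the rest of the argument is bookkeeping that combines Debs' theorem with Proposition \ref{nap2}.
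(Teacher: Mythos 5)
Your argument is correct and is essentially the paper's own proof: combine Debs' theorem with Proposition \ref{nap2} to get the hereditary analytic set $A'=A\cap C$ with $\kk_{A',\bff}=\ccc\cap Acc(\kk)$, prove via the Arsenin--Kunugui device (using compactness of $X$) that the ``same pointwise limit'' relation on $C\times C$ (resp.\ $C'\times C$) is Borel, and take $A_1$, $A_2$ to be the corresponding saturations of $A'$ --- exactly the route the paper follows. The one wrinkle is your ``by thinning out the indexing assume $\bff$ is injective'': this is not literally without loss of generality, since $\lbf$ and $\lbfc$ depend on the enumeration and cofinality for the thinned sequence does not transfer back to arbitrary $L$ in the original $\lbfc$; however, injectivity is only used to guarantee $f_L\in Acc(\kk)$ for $L\in\lbfc$, and the exceptional case (an $L$ whose limit is an isolated point $h\in\ccc$ of $\kk$, which forces $f_n=h$ for infinitely many $n$) is degenerate and is absorbed by adding to $A_1$ the countably many Borel families $[\{n:f_n=h\}]$ --- a point the paper's own ``it is easy to check that $A_1$ is cofinal'' likewise passes over.
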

\begin{proof}
(1) By Proposition \ref{nap2}, there exists a hereditary and
analytic subset $A'$ of $\lbf$ such that $\kk_{A',\mathbf{f}}=
\ccc\cap Acc(\kk)$. Applying Theorem \ref{Debs}, we get a Borel,
hereditary and cofinal subset $C$ of $\lbf$. As in Proposition
\ref{newpcof}, for every $L\in C$ by $f_L$ we denote the pointwise
limit of the sequence $(f_n)_{n\in L}$. Let $A''=A'\cap C$. Clearly
$A''$ is analytic and hereditary. Moreover, it is easy to see that
$\kk_{A'',\mathbf{f}}=\ccc\cap Acc(\kk)$ (i.e. the set $A''$
codes all function in $Acc(\kk)\cap\ccc$). Consider the following
equivalence relation $\sim$ on $C$, defined by
\[ L\sim M \Leftrightarrow f_L=f_M \Leftrightarrow \forall x\in X \
f_L(x)=f_M(x). \]
We claim that $\sim$ is Borel. To see this notice that the map
\[ C\times C\times X\ni (L,M,x)\mapsto |f_L(x)-f_M(x)|\]
is Borel (this can be easily checked arguing as in Proposition
\ref{newpcof}). Moreover, for every $(L,M)\in C\times C$, the
map $x\mapsto |f_L(x)-f_M(x)|$ is Baire-1. Observe that
\[ \neg (L\sim M) \Leftrightarrow \exists x\in X \ \exists \ee>0
\text{ with } |f_L(x)-f_M(x)|>\ee.\]
By the fact that $X$ is compact metrizable and by the Arsenin-Kunugui theorem
we see that $\sim$ is Borel. We set $A_1$ to be the $\sim$
saturation of $A''$, i.e.
\[ A_1=\{ M\in C: \exists L\in A'' \text{ with } M\sim L\}. \]
As $A''$ is analytic and $\sim$ is Borel, we get that $A_1$ is analytic.
As $C$ is cofinal, it is easy to check that $A_1$ is cofinal
in $\lbfc$. Thus, the set $A_1$ is the desired one.\\
(2) Let $C_1$ and $C_2$ be two hereditary, Borel subsets of
$\lbf$ and $\mathcal{L}_{\mathbf{g}}$ cofinal in $\lbf$
and $\mathcal{L}_{\mathbf{g}}$ respectively. By part (1),
there exists a hereditary and analytic subset $A_1$
of $\lbf$ which is cofinal in $\lbfc$. We set $A'_1=A_1\cap C_1$.
Consider the following subset $S$ of $C_1\times C_2$ defined by
\[ (L,M)\in S \Leftrightarrow f_L=g_M \Leftrightarrow
\forall x\in X \ f_L(x)=g_M(x) \]
where $f_L$ denotes the pointwise limit of the sequence $(f_n)_{n\in L}$
while $g_M$ denotes the pointwise limit of the sequence $(g_n)_{n\in M}$.
As $X$ is compact metrizable, arguing as in part (1), it is easy
to see that $S$ is Borel. We set
\[ A_2=\{ M\in C_2: \exists L\in A'_1 \text{ with } (L,M)\in S\}. \]
The set $A_2$ is the desired one.
\end{proof}
We close this subsection with the following proposition
which provides further examples of analytic subspaces.
\begin{prop}
\label{analp2} Let $\kk$ be a separable Rosenthal compact on a
Polish space $X$. Let also $F$ be a $K_\sg$ subset of $X$. Then
the subspace $\mathcal{C}_F=\{ f\in\kk: f|_F=0\}$ of $\kk$ is
analytic with respect to any countable dense subset
$\mathbf{f}=\{f_n\}_n$ of $\kk$.
\end{prop}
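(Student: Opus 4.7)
\smallskip

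\noindent \textbf{Proof proposal for Proposition \ref{analp2}.} The plan is to mimic the Arsenin--Kunugui argument of Proposition \ref{newpcof}, replacing the pointwise evaluation at a single point $x$ by a uniform test ``vanishes on $F$''. Fix the countable dense subset $\bseq$ of $\kk$ and apply Theorem \ref{Debs} to obtain a Borel, hereditary and cofinal subset $C$ of $\lbf$. For every $L\in C$ write $f_L$ for the pointwise limit of $(f_n)_{n\in L}$. Exactly as in the proof of Proposition \ref{newpcof}, the map $\Phi:C\times X\to\rr$ with $\Phi(L,x)=f_L(x)$ is Borel. The candidate witnessing the analyticity of $\ccc_F$ is
\[ A=\{L\in C: f_L|_F=0\}. \]

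\smallskip

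\noindent To see that $A$ is Borel (hence analytic) I would consider the Borel set
\[ B=(C\times F)\cap \Phi^{-1}(\rr\setminus\{0\})\subseteq C\times X. \]
For each $L\in C$ the function $f_L$ belongs to $\bbb_1(X)$, so $\{x\in X:f_L(x)\neq 0\}$ is $F_\sg$. Writing $F=\bigcup_k K_k$ with each $K_k$ compact and $\{f_L\neq 0\}=\bigcup_m F_m$ with $F_m$ closed in $X$, the section
\[ B_L=F\cap\{x:f_L(x)\neq 0\}=\bigcup_{k,m}(K_k\cap F_m) \]
is a countable union of compact sets, i.e.\ $K_\sg$. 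By the Arsenin--Kunugui theorem (used exactly as in Proposition \ref{newpcof}) the projection $\mathrm{proj}_C(B)$ is Borel, so $A=C\setminus\mathrm{proj}_C(B)$ is Borel. Moreover $A$ is hereditary: if $L\in A$ and $M\in[L]$, then $M\in C$ by hereditariness of $C$, and $f_M=f_L$ because $(f_n)_{n\in L}$ already converges pointwise, so $f_M|_F=0$ and $M\in A$.

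\smallskip

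\noindent Finally I would check conditions (1) and (2) of Definition \ref{nad1}. For (1): if $L\in A$ and $g$ is an accumulation point of $\{f_n:n\in L\}$ in $\rr^X$, then by the Bourgain--Fremlin--Talagrand theorem some $M\in[L]$ satisfies $g=\lim_{n\in M}f_n$; since $(f_n)_{n\in L}$ is already pointwise convergent to $f_L$ we get $g=f_L$, and $f_L|_F=0$ gives $g\in\ccc_F$. For (2): given $g\in\ccc_F\cap Acc(\kk)$, the Bourgain--Fremlin--Talagrand theorem yields $M\in\lbf$ with $f_n\to g$ pointwise along $M$; cofinality of $C$ produces $L\in[M]\cap C$, and then $f_L=g$, so $g|_F=0$ puts $L$ in $A$ while $g\in\overline{\{f_n\}}_{n\in L}^p$. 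The main (really the only) technical point is the applicability of Arsenin--Kunugui: it is crucial that the hypotheses on $F$ and on $f_L$ combine to give genuinely $K_\sg$ sections rather than merely $F_\sg$ sections, and this is exactly where the assumption that $F$ is $K_\sg$ is used.
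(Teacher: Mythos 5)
Your argument is correct and follows essentially the same route as the paper: Debs' theorem gives the Borel hereditary cofinal set $C$, the Borel set of pairs $(L,x)$ with $x\in F$ and $f_L(x)\neq 0$ has $K_\sigma$ sections (the $F_\sigma$ set $\{f_L\neq 0\}$ intersected with the $K_\sigma$ set $F$), and the Arsenin--Kunugui theorem makes its projection Borel, so $A=C\setminus\mathrm{proj}_C(B)$ works. The paper leaves the verification of hereditariness and of conditions (1) and (2) of Definition \ref{nad1} implicit; your write-up simply supplies those routine details.
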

\begin{proof}
Let $C$ be a hereditary, Borel and cofinal subset of $\lbf$.
Let $Z$ be the subset of $C\times X$ defined by
\[ (L,x)\in Z \Leftrightarrow (x\in F) \text{ and }
(\exists \ee>0 \text{ with } |f_L(x)|>\ee).\]
The set $Z$ is Borel. As $F$ is $K_\sg$, we see that for every
$L\in C$ the section $Z_L=\{x\in X:(L,x)\in Z\}$ of $Z$ at $L$ is $K_\sg$.
Thus, setting $A=C\setminus \mathrm{proj}_C Z$ and invoking the
Arsenin-Kunugui theorem, we see that the set $A$ witnesses the
analyticity of $\ccc_F$ with respect to $\{f_n\}_n$.
\end{proof}
Related to the above propositions and the concept of an analytic
subspace of $\kk$, the following questions are open to us.
\bigskip

\noindent \textit{Problem 1.} Is it true that the concept
of an analytic subspace is independent of the choice of the
countable dense subset of $\kk$? More precisely, if $\ccc$
is an analytic subspace of a separable Rosenthal compact
$\kk$ on a Polish space $X$ and $\mathbf{f}=\{f_n\}_n$ is
an arbitrary countable dense subset of $\kk$, does there
exists $A\subseteq \lbf$ analytic with
$\kk_{A,\mathbf{f}}=\ccc\cap Acc(\kk)$?
\bigskip

\noindent \textit{Problem 2.} Let $\kk$ be a separable Rosenthal
compact on a Polish space $X$ and let $B\subseteq X$ Borel. Is
the subspace $\ccc_B=\{f\in\kk: f|_B=0\}$ analytic?


\section{Canonical embeddings in analytic subspaces}

This section is devoted to the canonical embedding of the most
representative prototype, among the seven minimal families, into a
given analytic subspace of a separable Rosenthal compact. The section
is divided into two subsections. The first subsection concerns
metrizable Rosenthal compacta and the second the non-metrizable ones.
We start with the following definitions.
\begin{defn}
\label{caninj} An injection $\phi:\ct\to\nn$ is said to be
canonical provided that $\phi(s)<\phi(t)$ if either $|s|<|t|$, or
$|s|=|t|$ and $s\prec t$. By $\phi_0$ we denote the unique
canonical bijection between $\ct$ and $\nn$.
\end{defn}
\begin{defn}
\label{canemb} Let $\kk$ be a separable Rosenthal compact,
$\{f_n\}_n$ a countable dense subset of $\kk$ and $\ccc$ a closed
subspace of $\kk$. Let also $\{d^i_t\}_{t\in\ct} \ (1\leq i\leq
7)$ be the canonical families described in \S 4.3 and let
$\kk_i \ (1\leq i\leq 7)$ be the corresponding separable Rosenthal
compacta. For every $i\in \{1,...,7\}$, we say that $\kk_i$
canonically embeds into $\kk$ with respect to $\{f_n\}_n$ and
$\ccc$ if there exists a canonical injection $\phi:\ct\to\nn$
such that the families $\{d^i_t\}_{t\in\ct}$ and
$\{f_{\phi(t)}\}_{t\in\ct}$ are equivalent, that is if the map
\[ \kk_i\ni d^i_t \mapsto f_{\phi(t)} \in\kk \]
is extended to a homeomorphism between $\kk_i$ and
$\overline{\{f_{\phi(t)}\}}^p_{t\in\ct}$, and moreover
\[ Acc\big(\{f_{\phi(t)}:t\in\ct\}\big)\subseteq \ccc.\]
If $\ccc=\kk$, then we simply say that $\kk_i$ canonical embeds
into $\kk$ with respect to $\{f_n\}_n$.
\end{defn}


\subsection{Metrizable Rosenthal compacta}

This subsection is devoted to the proof of the following theorem.
\begin{thm}
\label{metrnew} Let $\kk$ be a separable Rosenthal compact on a
Polish space $X$ consisting of bounded functions. Let also
$\{f_n\}_n$ be a countable dense subset of $\kk$. Assume that
$\kk$ is metrizable in the pointwise topology and non-separable
in the supremum norm of $\mathcal{B}_1(X)$. Then there exists
a canonical embedding of $2^{\leqslant\nn}$ into $\kk$ with
respect to $\{f_n\}_n$ whose accumulation points are
$\ee$-separated in the supremum norm for some $\ee>0$. In
particular, its image is non-separable in the supremum norm.
\end{thm}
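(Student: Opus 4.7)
My plan is to carry out a Stegall-type Cantor tree construction inside $\kkk$, then to select indices canonically from $\{f_n\}_n$ along the branches of this tree, and finally to verify equivalence with the prototype $2^{\leqslant\nn}$ by Lemma~\ref{l1}.

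First I would reduce to the case of uniformly bounded functions. Writing $\kkk=\bigcup_M\kkk_M$ with $\kkk_M=\{h\in\kkk:\|h\|_\infty\leq M\}$, each $\kkk_M$ is pointwise closed in $\kkk$, hence itself a metrizable Rosenthal compact; since sup-norm separability is preserved by countable unions, some $\kkk_M$ is non-separable in $\|\cdot\|_\infty$, and I replace $\kkk$ by it. The standard characterization of non-separability in a pseudo-metric then yields $\ee_0>0$ and an uncountable family $\{h_\alpha\}_{\alpha<\omega_1}\subseteq\kkk$ which is $\ee_0$-separated in $\|\cdot\|_\infty$.

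Fixing a compatible metric $d$ for the pointwise topology on $\kkk$, the main step is to build recursively on $|t|$, for each $t\in\ct$, an uncountable $I_t\subseteq\omega_1$, a pointwise open $U_t\subseteq\kkk$ containing $\{h_\alpha:\alpha\in I_t\}$, a point $x_t\in X$, and a real $v_t$, so that (i) the $d$-diameter of $U_t$ tends to $0$ with $|t|$, (ii) the pointwise closures $\overline{U_{t^\con 0}}$ and $\overline{U_{t^\con 1}}$ are disjoint and contained in $U_t$, and (iii) $h_\alpha(x_t)\geq v_t+\ee_0/4$ for every $\alpha\in I_{t^\con 0}$ while $h_\alpha(x_t)\leq v_t-\ee_0/4$ for every $\alpha\in I_{t^\con 1}$. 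Conditions (i) and (ii) then force $\bigcap_n U_{\sg|n}=\{h_\sg\}$ for each $\sg\in 2^\nn$, making $\sg\mapsto h_\sg$ a pointwise homeomorphism from $2^\nn$ onto $\{h_\sg\}_{\sg\in 2^\nn}$; (iii) gives $|h_\sg(x_t)-h_\tau(x_t)|\geq\ee_0/2$ whenever $t=\sg\wedge\tau$, so $\|h_\sg-h_\tau\|_\infty\geq\ee_0/2$ for every $\sg\neq\tau$.

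The main obstacle is the inductive splitting lemma: given any uncountable $\ee_0$-separated subfamily of $\kkk$, produce two uncountable halves together with a common witness $x\in X$ at which the values are separated by at least $\ee_0/4$. Following Stegall's original construction, I would argue by contradiction. If no such splitting existed, then for each $x\in X$ a two-sided pigeonhole on the bounded values $\{h_\alpha(x)\}_\alpha$ would yield a real $\varphi(x)$ such that cocountably many $\alpha$ satisfy $|h_\alpha(x)-\varphi(x)|\leq\ee_0/2$. Choosing a countable $D\subseteq X$ that metrizes the pointwise topology on $\kkk$ and removing the countably many exceptional indices for each $x\in D$, I would obtain an uncountable subfamily all of whose members agree pointwise with $\varphi$ within $\ee_0/2$ on $D$; the $\|\cdot\|_\infty$-separation would then be contradicted via a further pigeonhole of this subfamily onto a countable cover of $\kkk$ by $d$-balls combined with the pointwise continuity of evaluations on the compact metric space $(\kkk,d)$. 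This is the only genuinely technical point of the proof.

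Finally, having produced the $h_\sg$'s, I would use density of $\{f_n\}_n$ to define $\phi:\ct\to\nn$ canonically. Enumerating $\ct$ in the order of Definition~\ref{caninj}, for each $t$ I pick $\phi(t)\in\nn$ larger than all previously chosen indices and with $f_{\phi(t)}\in U_t$; this is possible since $U_t$ is nonempty pointwise open and $\{f_n\}_n$ is pointwise dense. The shrinking diameters then force $f_{\phi(\sg|n)}\to h_\sg$ pointwise for each $\sg\in 2^\nn$, and more generally $(f_{\phi(t)})_{t\in L}\to h_\sg$ for every infinite $L\subseteq\ct$ with $L\to\sg$. Conversely, if $L$ does not converge to any $\sg\in 2^\nn$, then by the chain-antichain dichotomy and Lemma~\ref{al1}(2) two subsequences of $(f_{\phi(t)})_{t\in L}$ converge to distinct elements of $\{h_\sg\}_{\sg\in 2^\nn}$, so $(f_{\phi(t)})_{t\in L}$ does not converge. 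By Lemma~\ref{l1} this yields equivalence of $\{f_{\phi(t)}\}_{t\in\ct}$ with $\{d^2_t\}_{t\in\ct}$; the accumulation points of the image are precisely $\{h_\sg\}_{\sg\in 2^\nn}$, which are $\ee_0/2$-separated in $\|\cdot\|_\infty$.
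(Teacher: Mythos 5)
Your overall scheme is the same as the paper's: extract an uncountable sup-norm $\ee_0$-separated family, build a Stegall-type dyadic system of shrinking pointwise-open sets with witnesses $x_t$ splitting the values, choose the indices $\phi(t)$ canonically with $f_{\phi(t)}$ in the corresponding open set, and verify equivalence with $\{d^2_t\}_{t\in\ct}$ via Lemma \ref{l1} and the description of $\mathcal{L}(2^{\leqslant\nn})$. The problem lies in the step you yourself single out as the only technical one: your proof of the splitting lemma by contradiction does not go through. The final move --- deriving a contradiction with the $\|\cdot\|_\infty$-separation from the fact that uncountably many $h_\alpha$ lie within $\ee_0/2$ of $\varphi$ on a countable set $D$ metrizing the pointwise topology, via a further pigeonhole onto a countable cover by $d$-balls and the continuity of the evaluations on $(\kk,d)$ --- cannot be carried out. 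The members of $\kk$ are merely Baire-1, so $\|h_\alpha-h_\beta\|_\infty$ is not computed by the values on $D$, and closeness in the pointwise metric $d$ gives no control on the supremum norm: an uncountable norm-$\ee_0$-separated family can perfectly well be pointwise convergent, which is precisely the phenomenon the theorem is about. (Equicontinuity of the evaluations $h\mapsto h(x)$, $x\in X$, on the compact metric space $(\kk,d)$ would make the identity from the pointwise to the norm topology continuous and hence $\kk$ norm-separable, contradicting the hypothesis.) The classical Stegall argument for dual balls uses at this point that the elements are continuous functionals, so the norm is a supremum over a countable dense subset of $B_X$; that feature is absent here.

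The splitting lemma itself is true, and the repair is short and is exactly what the paper does: since $\kk$ is pointwise metrizable, hence second countable, after discarding countably many indices you may assume that every $h_\alpha$ is a condensation point of the family in the pointwise topology. At a node $t$, pick two distinct members $h_\alpha\neq h_\beta$ of the uncountable set $\{h_\gamma:\gamma\in I_t\}$; the $\ee_0$-separation yields a single point $x_t\in X$ with $|h_\alpha(x_t)-h_\beta(x_t)|\geq\ee_0$, and then the basic open sets $\{h:h(x_t)<r\}$ and $\{h:h(x_t)>q\}$, with $q-r>\ee_0/2$ say, are neighbourhoods of $h_\alpha$ and $h_\beta$ respectively and therefore each meets the family in an uncountable set. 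No contradiction argument is needed; the direct argument also makes your preliminary reduction to a uniformly bounded $\kk_M$ superfluous. One further small point: before invoking Lemma \ref{l1} you must arrange that each $f_{\phi(t)}$ is isolated in the closure of $\{f_{\phi(t)}\}_{t\in\ct}$ (at most countably many of the limits $h_\sg$ can belong to $\{f_n\}_n$, so this is achieved by passing to a regular dyadic subtree), since otherwise the hypotheses of that lemma are not satisfied.
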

\begin{proof}
Fix a compatible metric $\rho$ for the pointwise topology of
$\kk$. Our assumptions on $\kk$ yield that there exist $\ee>0$
and a family $\Gamma=\{f_\xi:\xi<\omega_1\}\subseteq \kk$ such
that $\Gamma$ is $\ee$-separated in the supremum norm and each
$f_\xi$ is a condensation point of the family $\Gamma$ in the
pointwise topology.

By recursion on the length of finite sequences in $\ct$ we shall
construct the following.
\begin{enumerate}
\item[(C1)] A family $(B_t)_{t\in\ct}$ of open subsets of $\kk$,
\item[(C2)] a family $(x_t)_{t\in\ct}$ in $X$, \item[(C3)] two
families $(r_t)_{t\in\ct}$, $(q_t)_{t\in\ct}$ of reals and
\item[(C4)] a canonical injection $\phi:\ct\to\nn$
\end{enumerate}
such that for every $t\in\ct$ the following are satisfied.
\begin{enumerate}
\item[(P1)] $\overline{B}_{t^{\con}0}\cap
\overline{B}_{t^{\con}1}= \varnothing$,
$\overline{B}_{t^{\con}0}\cup \overline{B}_{t^{\con}1} \subseteq
B_t$ and $\rho-\mathrm{diam}(B_t)\leq \frac{1}{|t|+1}$.
\item[(P2)] $|B_t\cap \Gamma|=\aleph_1$. \item[(P3)] $r_t<q_t$ and
$q_t-r_t>\ee$. \item[(P4)] For every $f\in
\overline{B}_{t^{\con}0}$, $f(x_t)<r_t$, while for every $f\in
\overline{B}_{t^{\con}1}$, $f(x_t)>q_t$. \item[(P5)]
$f_{\phi(t)}\in B_t$.
\end{enumerate}
We set $B_{(\varnothing)}=\kk$ and $\phi\big((\varnothing)\big)
=0$. We choose $f,g\in \Gamma$ and we pick $x\in X$ and
$r,q\in\rr$ such that $f(x)<r<q<g(x)$ and $q-r>\ee$. We set
$x_{(\varnothing)}=x$, $r_{(\varnothing)}=r$ and
$q_{(\varnothing)}=q$. We select $B_{(0)}, B_{(1)}$ open subsets
of $\kk$ such that $f\in B_{(0)}\subseteq \{h\in \rr^X:
h(x_{(\varnothing)})<r_{(\varnothing)}\}$, $g\in B_{(1)} \subseteq
\{h\in \rr^X: h(x_{(\varnothing)})> q_{(\varnothing)}\}$,
$\rho-\mathrm{diam}(B_{(0)})<\frac{1}{2}$ and
$\rho-\mathrm{diam}(B_{(1)}) <\frac{1}{2}$. Let us observe that
$x_{(\varnothing)}, r_{(\varnothing)}, q_{(\varnothing)}, B_{(0)}$
and $B_{(1)}$ satisfy properties (P1)-(P4) above. Notice also that
$B_{(0)}, B_{(1)}$ are uncountable, hence, they intersect the
dense set $\{f_n\}_n$ at an infinite set. So, we may select
$\phi\big((\varnothing)\big)<\phi\big((0)\big)< \phi\big((1)\big)$
satisfying (P5). The general inductive step proceeds in a similar
manner assuming that
\begin{enumerate}
\item[(a)] for each $t\in \ct$ with $|t|< n-1$, $x_t$, $r_t$ and
$q_t$ have been chosen, and \item[(b)] for each $t\in\ct$ with
$|t|<n$, $B_t$ and $\phi(t)$ have been chosen
\end{enumerate}
such that (P1)-(P5) are satisfied. This completes the recursive
construction.

Notice that for every $\sg\in 2^\nn$ we have $\bigcap_n
B_{\sg|n}=\{f_\sg\}$. The map $2^\nn\ni \sg\mapsto f_\sg\in
\kk$ is a homeomorphic embedding. Moreover, for every $\sg\in
2^\nn$, the sequence $(f_{\phi(\sg|n)})_n$ is pointwise convergent
to $f_\sg$. We also observe the following consequence of
properties (P3) and (P4). If $\sg<\tau\in 2^\nn$, then, setting
$t=\sg\wedge \tau$, we have that $f_\sg(x_t)\leq r_t<q_t\leq
f_\tau(x_t)$ and so $\|f_\sg-f_\tau\|_{\infty} >\ee$. As there are
at most countable many $\sg\in 2^\nn$ with $f_\sg\in \{f_n\}_n$,
by passing to a regular dyadic subtree of $\ct$ if necessary, we
may assume that for every $t\in \ct$ the function $f_{\phi(t)}$ is
isolated in $\overline{\{f_{\phi(t)}\}}^p_{t\in\ct}$. This easily
yields that the family $\{f_{\phi(t)}\}_{t\in\ct}$ is equivalent
to the canonical dense family of $2^{\leqslant\nn}$. The proof is
completed.
\end{proof}


\subsection{Non-metrizable separable Rosenthal compacta}

The main results of this subsection are the following.
\begin{thm}
\label{embt1} Let $\kk$ be a separable Rosenthal compact on a
Polish space $X$ and let $\ccc$ be an analytic subspace of $\kkk$.
Let also $\{f_n\}_n$ be a countable dense subset of $\kk$ and
$A\subseteq [\nn]$ analytic, witnessing the analyticity of $\ccc$.
Assume that $\ccc$ is not hereditarily separable. Then either
$\alex$, or $\dcantor$, or $\dsplit$ canonically embeds into
$\kk$ with respect to $\{f_n\}_n$ and $\ccc$.

In particular, if $\kk$ is first countable and not hereditarily
separable, then either $\dcantor$, or $\dsplit$ canonically embeds
into $\kk$ with respect to every countable dense subset $\{f_n\}_n$
of $\kk$.
\end{thm}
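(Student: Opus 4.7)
My plan is to combine a Stegall--Godefroy--Talagrand dyadic tree construction (driven by an uncountable left-separated subfamily of $\ccc$) with the canonicalization Theorem~\ref{basis}, and then rule out the hereditarily separable prototypes. Since $\kkk$ is angelic (Bourgain--Fremlin--Talagrand) and $\ccc$ is closed but not hereditarily separable, a standard angelic-space extraction produces an uncountable pointwise left-separated family $\Gamma=\{g_\xi:\xi<\omega_1\}\subseteq\ccc$ with basic pointwise-open boxes $V_\xi\ni g_\xi$ (each depending on finitely many coordinates of $X$) such that $g_\eta\notin V_\xi$ for all $\eta<\xi$. After discarding countably many, $\Gamma\subseteq\ccc\cap Acc(\kkk)$; invoking Proposition~\ref{nap2} with $A$ hereditary, each $g_\xi$ is the pointwise limit of $(f_n)_{n\in L_\xi}$ for some $L_\xi\in A$.

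Next I would build, by recursion on $|t|$, a canonical injection $\phi:\ct\to\nn$, basic open sets $B_t\subseteq\rr^X$, and finite $F_t\subseteq X$ so that for every $t\in\ct$: (P1) $\overline{B}_{t^\con 0}$ and $\overline{B}_{t^\con 1}$ are disjoint subsets of $B_t$, separated by a positive gap on $F_t$; (P2) both $B_{t^\con 0}\cap\Gamma$ and $B_{t^\con 1}\cap\Gamma$ are uncountable; (P3) $f_{\phi(t)}\in B_t$. The uncountable splitting in (P2) is the Stegall step: since the witnesses $V_\xi$ depend on finitely many coordinates, at each stage some $x\in X$ together with a cut-value partitions $B_t\cap\Gamma$ into two uncountable pieces, which then become $B_{t^\con 0}$ and $B_{t^\con 1}$. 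I would then apply Theorem~\ref{rt1} and Corollary~\ref{corbasis} to $\{f_{\phi(t)}\}_{t\in\ct}$, obtaining a regular dyadic subtree $S=(s_t)_{t\in\ct}$ and $i_0\in\{1,\dots,7\}$ for which $\{f_{\phi(s_t)}\}_{t\in\ct}$ is equivalent to $\{d^{i_0}_t\}_{t\in\ct}$. To force all accumulation points into $\ccc$ I would use that, in the Borel parametrization of Theorem~\ref{rt1}, the conditions $g^\ee_\sigma\in\ccc$ ($\ee\in\{0,+,-\}$) are Borel in $\sigma\in[\hat T]$ and uncountable (because uncountably many $g_\xi\in\Gamma\subseteq\ccc$ arise as such canonical limits, thanks to (P1)--(P2) and the analyticity witness $A$); the classical perfect set theorem then provides a perfect $Q\subseteq[\hat T]$ on which all three conditions hold, and re-running the Galvin refinements of Theorem~\ref{basis} inside $Q$ yields the desired canonical embedding of $\kkk_{i_0}$ with accumulation points in $\ccc$.

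Finally I rule out $i_0\in\{1,2,3,4\}$. For every $\sigma\in 2^\nn$, compactness gives $h_\sigma\in\bigcap_n\overline{B}_{\phi(s_{\sigma|n})}$, and property (P1) forces $\{h_\sigma:\sigma\in 2^\nn\}$ to be an uncountable pointwise-discrete subset of the image closure; however $A(\ct)$, $2^{\leqslant\nn}$, $\splp$ and $\splm$ are all hereditarily separable compacta, so $i_0\in\{5,6,7\}$ and one of $\alex$, $\dcantor$, $\dsplit$ canonically embeds. For the ``in particular'' clause, take $\ccc=\kkk$ (which is analytic with respect to any dense subset by Proposition~\ref{nap1}(1)); since $\alex$ is not first countable, first countability of $\kkk$ excludes $i_0=5$, leaving $\dcantor$ or $\dsplit$. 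The principal obstacle is coordinating the Stegall uncountable-splitting with the analytic witness $A$ so that the canonical limits along all types of subsequences (chains and increasing or decreasing antichains) land inside $\ccc$; this is precisely the point at which the hypothesis that $\ccc$ be an \emph{analytic} subspace, rather than merely closed, is used essentially.
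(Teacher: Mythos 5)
Your overall architecture is the same as the paper's: a Stegall-type dyadic construction driven by a left-separated uncountable family in $\ccc$ (obtained from the failure of hereditary separability), followed by the canonicalization Theorem \ref{basis}, with the uncountable discrete set of branch limits ruling out the four hereditarily separable prototypes, and first countability excluding $\alex$ for the ``in particular'' clause. That part is sound. The antichain limits are also unproblematic, since they are pointwise limits of the chain limits $g^0_\sg$ and $\ccc$ is closed.

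The genuine gap is in your mechanism for forcing the chain limits into $\ccc$. You assert that the conditions $g^\ee_\sg\in\ccc$ are Borel in $\sg\in[\hat T]$ and that the corresponding sets are uncountable, and you then invoke the perfect set theorem. Neither claim is justified. Membership in a closed subspace $\ccc$ of $\rr^X$ is not a Borel (or even obviously analytic) condition on $\sg$ via the parametrization of Theorem \ref{rt1}: $\rr^X$ carries no standard Borel structure here, and the only definable handle on $\ccc$ is the analytic witness $A\subseteq[\nn]$. Moreover, the branch limits $g^0_\sg$ of your Stegall tree have no reason to coincide with members of your left-separated family $\Gamma$ (unlike the metrizable case of Theorem \ref{metrnew}, the open boxes $B_t$ do not shrink to points), so uncountability of $\{\sg:g^0_\sg\in\ccc\}$ does not follow from the construction. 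The paper resolves exactly this difficulty by threading $A$ through the recursion: it fixes a continuous $\Phi:\nn^\nn\to[\nn]$ with $\Phi(\nn^\nn)=A$, attaches to each $f_\xi\in\Gamma$ a set $M_\xi\in A$ with $f_\xi=\lim_{n\in M_\xi}f_n$ and a code $b_\xi$ with $\Phi(b_\xi)=M_\xi$, and carries along uncountable sets $\Delta_t\subseteq\omega_1$ subject to $\mathrm{diam}\{b_\xi:\xi\in\Delta_t\}\le 2^{-|t|}$ and $\psi(t)\in M_\xi$ for all $\xi\in\Delta_t$. Then for each branch $\sg$ the set $M=\{\psi(\sg|n):n\ge 1\}$ is contained in $\Phi(b)$ for $b=\lim_k b_{\xi_k}$, hence $M\in A$ by hereditariness, and Definition \ref{idan1}(1) puts \emph{all} accumulation points of $(f_{\psi(\sg|n)})_n$ into $\ccc$. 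Without some such bookkeeping your proof does not deliver the ``with respect to $\{f_n\}_n$ and $\ccc$'' part of the conclusion; you correctly identified this as the principal obstacle, but the perfect-set-theorem shortcut you propose does not overcome it.
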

As it is shown in Corollary \ref{gdt1}, if $\kk$ is not first
countable, then $\alex$ canonically embeds into $\kk$.
\begin{thm}
\label{embt2} Let $\kk$ be a separable Rosenthal compact on a
Polish space $X$ and $\{f_n\}_n$ a countable dense subset of $\kk$.
Assume that $\kk$ is hereditarily separable and non-metrizable.
Then either $\splp$, or $\splm$ canonically embeds into $\kk$ with
respect to $\{f_n\}_n$.
\end{thm}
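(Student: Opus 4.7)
The plan is to carry out a Stegall-type recursion (in the spirit of the proof of Theorem~\ref{metrnew}) to produce a $\ct$-indexed subfamily of $\{f_n\}_n$ whose pointwise closure inherits both the non-metrizability and the hereditary separability of $\kk$, and then to feed this family into the canonicalization machinery of Corollary~\ref{corbasis}. The constraints imposed by the two hypotheses will force the resulting prototype to be $\splp$ or $\splm$.

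Concretely, since $\kk$ is separable, non-metrizable and therefore of weight strictly greater than $\aleph_0$ (and first countable, as a consequence of hereditary separability together with part (d) of Theorem~\ref{B}), first fix a subset $\Gamma=\{h_\alpha:\alpha<\omega_1\}\subseteq\kk$ of cardinality $\aleph_1$ in which every $h_\alpha$ is a condensation point of $\Gamma$ in the pointwise topology. By recursion on $|t|$ one constructs open sets $B_t\subseteq\kk$ with disjoint nested closures, points $x_t\in X$, reals $r_t<q_t$ with $q_t-r_t>\ee$ for a fixed $\ee>0$, and a canonical injection $\phi:\ct\to\nn$ with $f_{\phi(t)}\in B_t$, satisfying the Stegall splitting $f\in\overline{B_{t^\con 0}}\Rightarrow f(x_t)\leq r_t$ and $f\in\overline{B_{t^\con 1}}\Rightarrow f(x_t)\geq q_t$, while maintaining the additional non-metrizability witness $|B_t\cap\Gamma|=\aleph_1$ at every stage. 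Corollary~\ref{corbasis} applied to $\{f_{\phi(t)}\}_{t\in\ct}$ then delivers a regular dyadic subtree $S=(s_t)_{t\in\ct}$ of $\ct$ and some $i_0\in\{1,\ldots,7\}$ such that $\{f_{\phi(s_t)}\}_{t\in\ct}$ is equivalent to the canonical dense family $\{d^{i_0}_t\}_{t\in\ct}$.

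It remains to eliminate five of the seven prototypes. Since hereditary separability passes to closed subspaces and each of $\alex$, $\dcantor$, $\dsplit$ contains an uncountable discrete subspace, $i_0\notin\{5,6,7\}$. The Stegall splitting provides uncountably many pointwise-distinct accumulation points indexed by the branches of $S$, so $i_0\neq 1$ (as $A(\ct)$ has only one accumulation point). Finally, the persistence of $\Gamma$ through the recursion forces the pointwise closure of $\{f_{\phi(s_t)}\}_{t\in\ct}$ to be non-metrizable -- it inherits $\aleph_1$-many condensation points coming from $\Gamma$ -- thereby excluding the metrizable prototype $2^{\leqslant\nn}$; hence $i_0\in\{3,4\}$, as desired.

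The main obstacle is verifying that the non-metrizability witness truly survives the passage to the regular dyadic subtree $S$ and rules out equivalence with the canonical dense family of $2^{\leqslant\nn}$ in the sense of Definition~\ref{equiv}. The cleanest approach is to refine the recursion so that at every node $t$ the $\aleph_1$-many condensation points in $B_t\cap\Gamma$ split uncountably into $\overline{B_{t^\con 0}}\cap\Gamma$ and $\overline{B_{t^\con 1}}\cap\Gamma$; this ensures that after canonicalization each branch $\sg\in[\hat S]$ carries distinct ``left'' and ``right'' one-sided pointwise limits, which is the precise feature distinguishing $\splp$ and $\splm$ from $2^{\leqslant\nn}$. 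The bookkeeping parallels that of the proof of Theorem~\ref{metrnew}, now driven by non-metrizability in place of norm non-separability.
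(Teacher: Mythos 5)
Your overall strategy (Stegall-type recursion feeding into Corollary~\ref{corbasis}) is the right one, but there is a genuine gap at the very first step, and it propagates to the end. The recursion of Theorem~\ref{metrnew} needs, at every node $t$, two functions $f,g\in B_t\cap\Gamma$, a point $x\in X$ and rationals $r<q$ with $q-r>\ee$ and $f(x)<r<q<g(x)$, for a \emph{fixed} $\ee>0$; this is exactly what the hypothesis of Theorem~\ref{metrnew} supplies (a bounded, uncountable, $\ee$-separated family of condensation points), and it is \emph{not} what non-metrizability gives you. Non-metrizability only says that for every countable $D\subseteq X$ the restriction map to $\rr^D$ is not injective on $Acc(\kk)$, i.e.\ it produces \emph{pairs} $g_\xi\neq h_\xi$ that agree on all previously chosen points and are uniformly separated only at their own witness point $x_\xi$; there is no single uncountable family $\Gamma$ that is hereditarily $\ee$-separated, and the functions need not even be bounded (Theorem~\ref{embt2} has no boundedness hypothesis). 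For the same reason your final step fails: ensuring that $B_t\cap\Gamma$ splits uncountably into both children does not rule out $i_0=2$. The closure of $\{f_{\phi(s_t)}\}_{t\in\ct}$ need not contain any point of $\Gamma$, and having $2^\nn$-many pairwise distinct branch limits is perfectly compatible with the closure being metrizable --- that is precisely the prototype $2^{\leqslant\nn}$. What distinguishes $\splp$ from $2^{\leqslant\nn}$ in the canonicalization is that the one-sided antichain limits $g^{\pm}_\sg$ differ from the chain limit $g^0_\sg$, and nothing in your construction produces that.

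The paper's proof resolves both problems by working with the pairs rather than a single family: from non-metrizability it extracts $(x_\xi)$, $(g_\xi)$, $(h_\xi)$ with $g_\xi(x_\zeta)=h_\xi(x_\zeta)$ for $\zeta<\xi$ and $g_\xi(x_\xi)<p<q<h_\xi(x_\xi)$, and then runs a Stegall-type recursion producing \emph{two} canonical injections $\psi_1,\psi_2$ (Proposition~\ref{embp2}) so that the difference sets $\kk^1_\sg-\kk^2_\sg$ of branch-accumulation points are separated by open sets $V_\sg=\{w:|w(x_\sg)|>(q-p)/2\}$. Applying Corollary~\ref{corbasis} twice canonicalizes both families on a common regular dyadic subtree; hereditary separability restricts each to $\{A(\ct),2^{\leqslant\nn},\splp,\splm\}$, and the branch-separation of the differences shows the two closures cannot both be metrizable, so at least one is $\splp$ or $\splm$. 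If you want to salvage your single-family approach you would have to first prove that a non-metrizable $\kk$ contains an uncountable family which is uniformly separated in the strong hereditary sense needed for the splitting --- but that is essentially equivalent to what you are trying to prove.
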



\subsubsection{Proof of Theorem \ref{embt1}}

The main goal is to prove the following.
\begin{prop}
\label{embp1} Let $\kk$, $\ccc$ and $\{f_n\}_n$ be as in
Theorem \ref{embt1}. Then there exists a canonical injection
$\psi:\ct\to\nn$ such that, setting
\[ \kk_\sg=\overline{ \{f_{\psi(\sg|n)}\}}^p_{n} \setminus
\{f_{\psi(\sg|n)}\}_n \]
for all $\sg\in 2^\nn$, there exists an open subset
$V_\sg\subseteq \rr^X$ with $\kk_\sg\subseteq V_\sg\cap \ccc$ and
such that $\kk_\tau\cap V_\sg=\varnothing$ for every $\tau\in 2^\nn$
with $\tau\neq \sg$.
\end{prop}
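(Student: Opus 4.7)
The plan is a Stegall-type fusion construction on the Cantor tree $\ct$, extending Theorem \ref{metrnew} to the non-metrizable, analytic-subspace setting, in the spirit of Godefroy and Talagrand. First I would use the failure of hereditary separability of $\ccc$ to extract an uncountable left-separated family $\Gamma=\{g_\xi:\xi<\omega_1\}\subseteq\ccc\cap Acc(\kk)$; by thinning I would arrange that every $g_\xi$ is a condensation point of $\Gamma$. The analyticity of $\ccc$ witnessed by $\{f_n\}_n$ and $A$ then attaches to each $g_\xi$ some $L_\xi\in A$ with $(f_n)_{n\in L_\xi}\to g_\xi$ pointwise, and a preliminary thinning of $\Gamma$ forces the $L_\xi$'s to cohere enough for the subsequent bookkeeping.

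Next I would run a recursion on $\ct$ producing, at each node $t$, a pointwise open set $B_t\subseteq\rr^X$, an uncountable $\Gamma_t\subseteq\Gamma\cap B_t$ of condensation points, Stegall separators $(x_t,r_t,q_t)$, and a canonical value $\psi(t)\in\nn$ with $f_{\psi(t)}\in B_t$. The splits $B_{t^\con 0},B_{t^\con 1}$ would satisfy analogues of (P1)--(P4) of Theorem \ref{metrnew}: closures disjoint inside $B_t$, with $f\in\overline{B}_{t^\con i}$ forcing $f(x_t)$ into the half determined by $r_t<q_t$. Uncountability of $\Gamma_{t^\con i}$ follows from condensation (pick two distinct $g_0,g_1\in\Gamma_t$, a coordinate separating them, thresholds $r_t<q_t$, and half-space neighborhoods, each containing uncountably many elements of $\Gamma_t$). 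Critically, each $\psi(t)$ is drawn from some $L_{\xi_t}$ with $g_{\xi_t}\in\Gamma_t$, the $\xi_t$'s chosen coherently so that for every $\sg\in 2^\nn$ the branch $\{\psi(\sg|n)\}_n$ lies almost inside some member of the hereditary closure of $A$.

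The resulting chain $(f_{\psi(\sg|n)})_n$ therefore accumulates only inside $\ccc$, so $\kk_\sg\subseteq\ccc$ for every $\sg$. For the open separators, the Stegall data give that for any $\tau\neq\sg$ with first disagreement at position $N$, $\kk_\sg\subseteq\overline{B}_{\sg|(N+1)}$ and $\kk_\tau\subseteq\overline{B}_{\tau|(N+1)}$ are disjoint. To upgrade this pairwise separation into a single open $V_\sg\subseteq\rr^X$, I would refine the recursion — exploiting the left-separation and condensation structure of $\Gamma$, together with additional ``control'' coordinates threaded into the $B_t$'s — so as to arrange $\kk_\sg\cap\overline{\bigcup_{\tau\neq\sg}\kk_\tau}=\varnothing$ in the pointwise topology; then $V_\sg:=\rr^X\setminus\overline{\bigcup_{\tau\neq\sg}\kk_\tau}$ is open and witnesses the conclusion.

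The two genuinely hard parts are, first, the bookkeeping that forces every branch $\{\psi(\sg|n)\}_n$ into an element of $A$: distinct nodes of the tree naturally rest on distinct $g_\xi$'s, so one needs a diagonal/pigeonhole passage at each level to an uncountable subfamily on which the relevant $L_\xi$'s already share the finite initial data needed to glue along any infinite branch; and, second, engineering the pointwise isolation $\kk_\sg\cap\overline{\bigcup_{\tau\neq\sg}\kk_\tau}=\varnothing$, which is not automatic from the Stegall structure alone and requires careful use of the left-separation of $\Gamma$ to keep the limit sets $\kk_\sg$ from pointwise-accumulating to one another.
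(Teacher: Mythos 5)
Your overall architecture (a Stegall-type fusion over $\ct$ driven by a left-separated uncountable family in $\ccc$, with the canonical injection threaded through the sets witnessing analyticity) is the right one, but the two steps you yourself flag as ``the genuinely hard parts'' are exactly where the proof lives, and your sketch of each would fail as stated. First, the separation. You take per-node separating data $(x_t,r_t,q_t)$ and condensation points among the functions $g_\xi$ in $\kk$; but $\kk$ is not metrizable here, so there is no shrinking-diameter fusion available on the $B_t\subseteq\rr^X$, and pairwise separation at splitting nodes does not upgrade to a single open $V_\sg$. Your candidate $V_\sg=\rr^X\setminus\overline{\bigcup_{\tau\neq\sg}\kk_\tau}$ presupposes $\kk_\sg\cap\overline{\bigcup_{\tau\neq\sg}\kk_\tau}=\varnothing$, which is precisely the unproved claim. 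The paper's mechanism is a prior uniformization of the left-separation: by pigeonhole over $\omega_1$ one fixes a single $m\in\nn$, a single rational tuple $\lambda\in\qq^m$ and rationals $\ee<\ee+\delta$ so that each $f_\xi$ lies in $V(y_\xi,\lambda,\ee)$ while every earlier $f_\zeta$ avoids $\overline{V}(y_\xi,\lambda,\ee+\delta)$, where now the witnesses $y_\xi$ are tuples in the \emph{Polish} space $X^m$ and the condensation/ball-shrinking fusion is run on the $y_\xi$'s there. Crucially the exclusion condition is imposed between \emph{all} pairs of distinct same-level nodes (not only siblings), with the same quantitative gap $\delta$ at every level; then $y_\sg=\bigcap_n B_{\sg|n}$ is a genuine point of $X^m$ and $V_\sg=V(y_\sg,\lambda,\ee+\tfrac{\delta}{2})$ separates $\kk_\sg$ from every $\kk_\tau$ in one stroke. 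Without fixing $(m,\lambda,\ee,\delta)$ once and for all, varying thresholds $r_t<q_t$ give no uniform gap to pass to the limit functions in $\kk_\sg$.

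Second, the membership of each branch in $A$. Arranging that the sets $L_\xi$ attached to the ordinals surviving at a node ``share the finite initial data needed to glue along any infinite branch'' is not enough: the union of an increasing sequence of such finite initial segments is a limit of the $L_\xi$'s in $\mathcal{P}(\nn)$, and $A$ is only analytic, hence not closed under such limits, so the branch $\{\psi(\sg|n):n\}$ need not lie in (the hereditary closure of) $A$. The paper fixes this by parametrizing $A$ as $\Phi(\nn^\nn)$ with $\Phi$ continuous, attaching to each $\xi$ a code $b_\xi\in\nn^\nn$ with $\Phi(b_\xi)=M_\xi$, and adding to the fusion the requirement $\mathrm{diam}(\{b_\xi:\xi\in\Delta_t\})\leq 2^{-|t|}$ together with $\psi(t)\in M_\xi$ for all $\xi\in\Delta_t$. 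Along a branch the codes then converge to some $b\in\nn^\nn$, continuity gives $\{\psi(\sg|n):n\}\subseteq\Phi(b)\in A$, and hereditariness of $A$ concludes. You need some device of this kind (fusion on codes in $\nn^\nn$, not on the sets $L_\xi$ themselves) for this step to close.
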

Granting Proposition \ref{embp1}, we complete the proof as follows.
Let $\psi$ be the canonical injection obtained by the above
proposition and define $f_t=f_{\psi(t)}$ for all $t\in\ct$. We
apply Theorem \ref{basis} and we get a regular dyadic subtree
$S=(s_t)_{t\in\ct}$ of $\ct$ and $i_0\in\{1,...,7\}$ such that
$\{f_{s_t}\}_{t\in\ct}$ is equivalent to
$\{d^{i_0}_t\}_{t\in\ct}$. By Proposition \ref{embp1}, we see that
the closure of $\{f_{s_t}\}_{t\in\ct}$ in $\rr^X$ contains an
uncountable discrete set. Thus $\{f_{s_t}\}_{t\in\ct}$ is
equivalent to the canonical dense family of either $\alex$, or
$\dcantor$, or $\dsplit$. Setting $\phi=\psi\circ i_S$ we see that
$\phi$ is a canonical injection imposing an embedding of either
$\alex$, or $\dcantor$, or $\dsplit$ into $\kk$ with respect to
$\{f_n\}_n$ and $\ccc$.

We proceed to the proof of Proposition \ref{embp1}. By enlarging the
topology on $X$ if necessary (see \cite{Kechris}), we may assume
that the functions $\{f_n\}_n$ are continuous. We may also assume
that the set $A$ is hereditary. It follows by condition (2) of
Definition \ref{nad1} and the Bourgain-Fremlin-Talagrand theorem,
that for every $g\in \ccc\cap Acc(\kk)$ there exists $L\in A$ such
that $g$ is the pointwise limit of the sequence $(f_n)_{n\in L}$.
We fix a continuous map $\Phi:\nn^\nn\to [\nn]$ with
$\Phi(\nn^\nn)=A$.

We will need the following notation. For every $m\in\nn$,
$y=(x_1,...,x_m)\in X^m$,
$\lambda=(\lambda_1,...,\lambda_m)\in\rr^m$ and $\ee>0$ we set
\[ V(y,\lambda,\ee)=\{ g\in \rr^X: \lambda_i-\ee < g(x_i)<\lambda_i+\ee \
\forall i=1,...,m\} \] By $\overline{V}(y,\lambda,\ee)$ we denote
the closure of $V(y,\lambda,\ee)$ in $\rr^X$.

Using the fact that $\ccc$ is not hereditarily separable, by
recursion on countable ordinals we get
\begin{enumerate}
\item[(1)] $m\in\nn$, $\lambda=(\lambda_1,..., \lambda_m)
\in\mathbb{Q}^m$ and positive rationals $\ee$ and $\delta$,
\item[(2)] a family $\Gamma=\{y_\xi=(x_1^\xi,...,x^\xi_m):
\xi<\omega_1\}\subseteq X^m$, \item[(3)] a family $\{f_\xi:
\xi<\omega_1\}\subseteq \ccc$, \item[(4)] a family
$\{M_\xi:\xi<\omega_1\}\subseteq [\nn]$, and \item[(5)] a family
$\{b_\xi:\xi<\omega_1\}\subseteq \nn^\nn$
\end{enumerate}
such that for every $\xi<\omega_1$ the following are satisfied.
\begin{enumerate}
\item[(i)] $f_\xi\in Acc(\kk)$. \item[(ii)] $f_\xi\in
V(y_\xi,\lambda,\ee)$, while for every $\zeta<\xi$ we have
$f_\zeta\notin \overline{V}(y_\xi,\lambda,\ee+\delta)$.
\item[(iii)] $y_\xi$ is a condensation point of $\Gamma$ in $X^m$.
\item[(iv)] $\Phi(b_\xi)=M_\xi$ and $f_\xi$ is the pointwise limit
of the sequence $(f_n)_{n\in M_\xi}$.
\end{enumerate}

Now, by induction on the length of the finite sequences in $\ct$
we shall construct the following.
\begin{enumerate}
\item[(C1)] A canonical injection $\psi:\ct\to\nn$. \item[(C2)] A
family $(B_t)_{t\in\ct}$ of open balls in $X^m$, taken with
respect to a compatible complete metric $\rho$ of $X^m$.
\item[(C3)] A family $(\Delta_t)_{t\in\ct}$ of uncountable subsets
of $\omega_1$.
\end{enumerate}
The construction is done so that for every $t\in\ct$ the following
are satisfied.
\begin{enumerate}
\item[(P1)] If $t\neq (\varnothing)$, then $f_{\psi(t)}\in
V(y,\lambda,\ee)$ for all $y\in B_t$. \item[(P2)] For all $t',
t\in\ct$ with $|t'|=|t|$ and $t'\neq t$ we have $f_{\psi(t)}\notin
\overline{V}(y,\lambda,\ee+\delta)$ for every $y\in B_{t'}$.
\item[(P3)] $\overline{B}_{t^{\con}0}\cap \overline{B}_{t^{\con}1}
= \varnothing$, $\overline{B}_{t^{\con}0}\cup
\overline{B}_{t^{\con}1}\subseteq B_t$ and
$\rho-\mathrm{diam}(B_t)\leq \frac{1}{|t|+1}$. \item[(P4)]
$\Delta_{t^{\con}0}\cap \Delta_{t^{\con}1}=\varnothing$ and
$\Delta_{t^{\con}0}\cup \Delta_{t^{\con}1}\subseteq \Delta_t$.
\item[(P5)] $\mathrm{diam}\big(\{b_\xi:\xi\in \Delta_t\}\big)\leq
\frac{1}{2^{|t|}}$. \item[(P6)] $\{y_\xi:\xi\in\Delta_t\}\subseteq
B_t$. \item[(P7)] If $t\neq (\varnothing)$, then $\psi(t)\in M_\xi$
for every $\xi\in \Delta_t$.
\end{enumerate}
Assume that the construction has been carried out. We set
$y_\sg=\bigcap_n B_{\sg|n}$ and
$V_\sg=V(y_\sg,\lambda,\ee+\frac{\delta}{2})$ for all $\sg\in
2^\nn$. Using (P1) and (P2), it is easy to see that
$\kk_\sg\subseteq V_\sg$ and $\kk_\sg\cap V_\tau=\varnothing$ if
$\sg\neq\tau$. We only need to check that $\kk_\sg\subseteq\ccc$
for every $\sg\in 2^\nn$. So, let $\sg\in 2^\nn$ arbitrary. We set
$M=\{\psi(\sg|n):n\geq 1\}\in[\nn]$. It is enough to show that
$M\in A$. For every $k\geq 1$ we select $\xi_k\in \Delta_{\sg|k}$.
By properties (P4), (P5) and (P7), the sequence
$(b_{\xi_k})_{k\geq 1}$ converges to a unique $b\in \nn^\nn$ and,
moreover, $\psi(\sg|n)\in M_{\xi_k}= \Phi(b_{\xi_k})$ for every
$1\leq n\leq k$. By the continuity of $\Phi$ we get that
$M_{\xi_k}\to\Phi(b)$, and so, $M\subseteq \Phi(b)$. As $A$ is
hereditary, we see that $M\in A$, as desired.

We proceed to the construction. We set $\psi\big(
(\varnothing)\big)=0$, $B_{(\varnothing)}=X^m$ and
$\Delta_{(\varnothing)}=\omega_1$. Assume that for some $n\geq 1$
and for all $t\in 2^{<n}$ the values $\psi(t)\in\nn$, the open
balls $B_t$ and the sets $\Delta_t$ have been constructed.
Refining if necessary, we may assume that for every $t\in 2^{<n}$
and every $\xi\in\Delta_t$ the point $y_\xi$ is a condensation
point of the set $\{y_\zeta:\zeta\in\Delta_t\}$.

Let $\{t_0\prec ...\prec t_{2^{n-1}-1}\}$ be the
$\prec$-increasing enumeration of $2^{n-1}$. For every $j\in
\{0,...,2^n-1\}$ we choose an open ball $B^{-1}_j$ in $X^m$ and an
uncountable subset $\Delta^{-1}_j$ of $\omega_1$ such that
$\rho-\mathrm{diam}(B^{-1}_j)<\frac{1}{n+1}$, $\{y_\xi:\xi\in
\Delta^{-1}_j\}\subseteq B^{-1}_j$ and
$\mathrm{diam}\{b_\xi:\xi\in \Delta^{-1}_j\}\leq \frac{1}{2^n}$.
Moreover, the selection is done so that for $j$ even we have
$\overline{B^{-1}_j}\cap\overline{B^{-1}_{j+1}}=\varnothing$,
$\overline{B^{-1}_j}\cup\overline{B^{-1}_{j+1}}\subseteq B_{t_{j/2}}$
and $\Delta^{-1}_j\cup \Delta^{-1}_{j+1}\subseteq \Delta_{t_{j/2}}$.
We set $m_{-1}=\max\{ \psi(t): t\in 2^{<n}\}$.

By finite recursion on $k\in \{0,...,2^n-1\}$, we will construct a
family $\{B^k_j: j=0,...,2^n-1\}$ of open balls of $X^m$,
a family $\{\Delta^k_j:j=0,..., 2^n-1\}$ of uncountable subsets of
$\omega_1$ and a positive integer $m_k\in\nn$ such that for every
$k\in\{0,...,2^n-1\}$ the following are satisfied.
\begin{enumerate}
\item[(a)] For every $j\in\{0,...,2^n-1\}$, $B^{k-1}_j\supseteq
B^k_j$, $\Delta_j^{k-1}\supseteq \Delta^k_j$ and $\{y_\xi:\xi\in
\Delta^k_j\}\subseteq B^k_j$. Moreover, for every $j$ and every
$\xi\in \Delta^k_j$ the point $y_\xi$ is a condensation point of
$\{y_\zeta:\zeta\in \Delta^k_j\}$. \item[(b)] $m_{k-1}< m_k$.
\item[(c)] For every $y\in B^k_k$ we have $f_{m_k}\in V(y,\lambda,\ee)$,
while for every $j\in\{0,..., 2^n-1\}$ with $j\neq k$ and
every $y\in B^k_j$ we have $f_{m_k}\notin \overline{V}(y,\lambda,\ee+\delta)$.
\item[(d)] $m_k\in M_\xi$ for all $\xi\in \Delta_k^k$.
\end{enumerate}
As the first step of this construction is identical to the general
one, we may assume that the construction has been carried out for
all $k'<k$, where $k\in\{0,...,2^n-1\}$. Fix a countable base
$\mathcal{B}$ of open balls of $X^m$. We first observe that for
each $\xi\in \Delta^{k-1}_k$ and every $j\in \{0, ..., 2^n-1\}$
there exist
\begin{enumerate}
\item[(e)] a basic open ball $B^{k,\xi}_j\subseteq B^{k-1}_j$, and
\item[(f)] a positive integer $m_\xi\in M_\xi$ with
$m_{k-1}<m_\xi$
\end{enumerate}
such that the following hold.
\begin{enumerate}
\item[(g)] For every $y\in B^{k,\xi}_k$ we have $f_{m_\xi}\in
V(y,\lambda,\ee)$, while for every $j\neq k$ and every $y\in B^{k,\xi}_j$
we have $f_{m_\xi}\notin \overline{V}(y,\lambda, \ee+\delta)$.
\end{enumerate}
To see that such choices are possible, fix $\xi\in \Delta^{k-1}_k$.
We can choose $\xi_0,..., \xi_{2^n-1}$ distinct countable ordinals
such that
\begin{enumerate}
\item[(h)] for all $j\in\{0,...,2^n-1\}$ we have $\xi_j\in
\Delta^{k-1}_j$, and \item[(k)] $\xi=\xi_k=\min\{ \xi_j: j=0,...,
2^n-1\}$.
\end{enumerate}
By (ii) and (k) above, we have that $f_{\xi_k}\in
V(y_{\xi_k},\lambda, \ee)$ while $f_{\xi_k}\notin
\overline{V}(y_{\xi_j}, \lambda,\ee+\delta)$ for all $j\neq k$. By
(iv), we can choose $m_\xi\in M_\xi$ with $m_{k-1}<m_\xi$ (thus
condition (f) above is satisfied) and such that $f_{m_\xi}\in V(y_{\xi},
\lambda,\ee)$ while $f_{m_\xi}\notin \overline{V}(y_{\xi_j},
\lambda,\ee+\delta)$ for all $j\neq k$. As $f_{m_\xi}$ is
continuous, for every $j\in\{0,...,2^n-1\}$ we can find a basic
open ball $B^{k,\xi}_j$ in $X^m$ containing $y_{\xi_j}$ such that
conditions (e) and (g) above are satisfied.

By cardinality arguments, we see that there exist
$\Delta^k_k\subseteq \Delta^{k-1}_k$ uncountable,
$m_k\in\nn$ and for every $j$ a ball $B^k_j$ such that $m_\xi=m_k$
and $B^{k,\xi}_j=B^k_j$ for all $\xi\in \Delta^k_k$. Setting
$\Delta^k_j=\{\xi: y_\xi\in B^k_j\}\cap \Delta^{k-1}_j$, the
recursive construction, described by clauses (a)-(d) above,
is completed.

Now let $\{t_0\prec ...\prec t_{2^n-1}\}$ be the
$\prec$-increasing enumeration of $2^n$. We set $\psi(t_k)=m_k$,
$B_{t_k}=B^{2^n-1}_k$ and $\Delta_{t_k}=\Delta^{2^n-1}_k$ for all
$k\in\{0,...,2^n-1\}$. It is easy to check that (P1)-(P7) are
satisfied. This completes the construction described in (C1), (C2)
and (C3). Having completed the proof of Proposition \ref{embp1},
the proof of Theorem \ref{embt1} follows.


\subsubsection{Proof of Theorem \ref{embt2}}

As in the proof of Theorem \ref{embt1}, the main goal is to prove
the following.
\begin{prop}
\label{embp2} Let $\{f_n\}_n$ be a family of continuous functions
relatively compact in $\mathcal{B}_1(X)$. If the closure $\kk$ of
$\{f_n\}_n$ in $\rr^X$ is non-metrizable, then there exist canonical
injections $\psi_1:\ct\to\nn$ and $\psi_2:\ct\to\nn$ such that
following are satisfied. Setting
\[ \kk^1_\sg=\overline{ \{f_{\psi_1(\sg|n)}\}}^p_{n} \setminus \{f_{\psi_(\sg|n)}\}_n
\text{ and } \kk^2_\sg=\overline{ \{f_{\psi_2(\sg|n)}\}}^p_{n}
\setminus \{f_{\psi_2(\sg|n)}\}_n \] for all $\sg\in 2^\nn$, there
exists an open subset $V_\sg$ of $\rr^X$ with $(\kk^1_\sg-\kk^2_\sg)
\subseteq V_\sg$ and $(\kk^1_\tau-\kk^2_\tau)\cap
V_\sg=\varnothing$ for every $\tau\in 2^\nn$ with $\tau\neq \sg$.
\end{prop}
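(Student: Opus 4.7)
The plan is to mirror the strategy of Proposition~\ref{embp1}, building the two canonical injections $\psi_1,\psi_2$ in parallel by a recursive construction along the Cantor tree $\ct$, where the role played there by an uncountable free family in $\ccc$ is now played by an uncountable family of pairs in $\kk\times\kk$ with uniformly off-diagonal differences. The targeted open sets $V_\sg$ will be basic neighborhoods of the form $V(y_\sg,\lambda,\ee+\delta/2)$ for suitably chosen rational parameters $\lambda,\ee,\delta$ and a shrinking sequence of balls $B_t\subseteq X^m$ in a fixed product of copies of $X$; here $V(y,\lambda,\ee)$ is the rectangular open neighborhood defined in the proof of Proposition~\ref{embp1}.

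The first task is to extract the key uncountable family. I would use the following equivalent formulation of non-metrizability of $\kk$: no countable subset of $X$ separates the points of $\kk$. Indeed, had a countable $S\subseteq X$ separated points, the evaluation map $\kk\to\rr^S$ would be a continuous injection from a compact space into a Polish one, hence a homeomorphism onto its image, contradicting non-metrizability. Using this equivalence as the engine at every countable stage of a transfinite recursion, I would produce pairs $(\hat f_\xi,\hat g_\xi)\in\kk\times\kk$ with $\hat f_\xi\neq\hat g_\xi$ together with witnesses $y_\xi\in X^m$. After pigeonholing on $m\in\nn$, $\lambda\in\qq^m$ and rationals $\ee,\delta>0$, I would pass to an uncountable subfamily along which $(\hat f_\xi-\hat g_\xi)\in V(y_\xi,\lambda,\ee)$, while for every $\zeta<\xi$ we have $(\hat f_\zeta-\hat g_\zeta)\notin\overline V(y_\xi,\lambda,\ee+\delta)$. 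A further refinement guarantees that each $y_\xi$ is a condensation point of $\{y_\zeta:\zeta<\omega_1\}$ in $X^m$, and that each $\hat f_\xi,\hat g_\xi$ is the pointwise limit of a subsequence $(f_n)_{n\in M^1_\xi}$ respectively $(f_n)_{n\in M^2_\xi}$ of the dense sequence.

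The second step is the parallel tree recursion, following Proposition~\ref{embp1} node by node but selecting \emph{two} integers $\psi_1(t)\neq\psi_2(t)$ at each $t\in\ct$ and producing an open ball $B_t\subseteq X^m$ and an uncountable $\Delta_t\subseteq\omega_1$. Continuity of the $f_n$'s is used to localize the separation conditions into basic open balls. The key inductive clauses are: for every $y\in B_t$, $f_{\psi_1(t)}-f_{\psi_2(t)}\in V(y,\lambda,\ee)$; for every $t'$ of the same length as $t$ with $t'\neq t$ and every $y\in B_{t'}$, $f_{\psi_1(t)}-f_{\psi_2(t)}\notin\overline V(y,\lambda,\ee+\delta)$; the balls $B_t$ are nested with diameters shrinking to zero; $\{y_\xi:\xi\in\Delta_t\}\subseteq B_t$ with $\Delta_{t^\con 0}\cup\Delta_{t^\con 1}\subseteq\Delta_t$; and $\psi_1(t)\in M^1_\xi$, $\psi_2(t)\in M^2_\xi$ for each $\xi\in\Delta_t$. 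The values $\psi_1(t)$ and $\psi_2(t)$ are chosen, in the prescribed enumeration order, as the next available elements of $\nn$, so that each of $\psi_1,\psi_2$ is a canonical injection. Setting $y_\sg=\bigcap_n B_{\sg|n}\in X^m$ and $V_\sg=V(y_\sg,\lambda,\ee+\delta/2)$, the two separation clauses together with the stability of open conditions under pointwise limits yield $\kk^1_\sg-\kk^2_\sg\subseteq V_\sg$ for every $\sg\in 2^\nn$, while $(\kk^1_\tau-\kk^2_\tau)\cap V_\sg=\varnothing$ for every $\tau\neq\sg$.

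I expect the main obstacle to be the extraction of the pair family in the first step. The naive use of non-metrizability, of the form ``given countably many points $\{y_\zeta\}_{\zeta<\xi}\subseteq X$, find $\hat f_\xi\neq\hat g_\xi$ in $\kk$ agreeing on every $y_\zeta$'', yields pairs with $(\hat f_\xi-\hat g_\xi)(y_\zeta)=0$ for all $\zeta<\xi$, which controls the wrong direction of off-diagonality: the requirement is instead that for $\zeta<\xi$ the value $(\hat f_\zeta-\hat g_\zeta)(y_\xi)$ stays outside a fixed neighborhood of $\lambda$. Reversing this asymmetry requires a more delicate selection, combining non-metrizability with a Fodor-style or $\Delta$-system refinement after the pigeonhole fixing of $(\lambda,\ee,\delta)$. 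Once this extraction is completed, the remainder of the argument imitates Proposition~\ref{embp1} closely, with each inductive step duplicated so as to keep track of both $\psi_1$ and $\psi_2$ simultaneously.
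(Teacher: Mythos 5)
Your overall architecture (a parallel recursion along $\ct$ producing $\psi_1,\psi_2$, shrinking balls $B_t$, rational pigeonholing, and neighborhoods $V_\sg$ read off at a limit point of the balls) is the paper's, but the step you yourself single out as the main obstacle --- extracting the uncountable family of pairs --- is left unproved, and that is a genuine gap. Moreover, the difficulty you diagnose is illusory and the strengthening you ask for is the wrong target. The paper uses exactly the ``naive'' consequence of non-metrizability that you dismiss: by transfinite recursion it produces points $x_\xi$ and pairs $g_\xi,h_\xi\in Acc(\kk)$ with (i) $g_\xi(x_\zeta)=h_\xi(x_\zeta)$ for all $\zeta<\xi$ (the later pair agrees at the earlier points), (ii) $g_\xi(x_\xi)<p<q<h_\xi(x_\xi)$ for fixed rationals $p<q$ obtained by pigeonhole, and (iii) each $x_\xi$ a condensation point of $\Gamma=\{x_\zeta:\zeta<\omega_1\}$. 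No Fodor-style or $\Delta$-system reversal is required: in the finite recursion at each level one picks the distinguished ordinal $\xi_k$ to be the \emph{maximum} of the chosen $\xi_0,\dots,\xi_{2^n-1}$ (instead of the minimum used in Proposition \ref{embp1}), so that (i) makes the chosen difference $g_{\xi_k}-h_{\xi_k}$ vanish at the witness points $x_{\xi_j}$, $j\neq k$, lying in the other balls, while (ii) keeps the pair separated by $q-p$ at its own witness point; approximating $g_{\xi_k},h_{\xi_k}$ at these finitely many points by terms $f_{m_k},f_{l_k}$ of the dense sequence and using their continuity gives the inductive clauses in the form ``$f_{\psi_1(t)}<p<q<f_{\psi_2(t)}$ on $B_t$'' and ``$|f_{\psi_1(t)}-f_{\psi_2(t)}|<\frac{1}{|t|+1}$ on $B_{t'}$ for $t'\neq t$ of the same length'', with $V_\sg=\{w\in\rr^X:|w(x_\sg)|>(q-p)/2\}$.

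Your proposed extraction, by contrast, demands that the \emph{previously constructed} differences avoid a fixed $\lambda$-rectangle at the \emph{newly chosen} witness point; non-metrizability gives control of the new pair at old points, not of old pairs at new points, you give no argument for how to achieve the reversed condition, and it is not needed. Two secondary remarks: insisting on the literal $(\lambda,\ee,\delta)$-format of Proposition \ref{embp1} also weakens the final verification, since the paper's clause bounds $f_{\psi_1(t)}$ and $f_{\psi_2(t)}$ separately against $p<q$ on $B_t$, so that $\kk^1_\sg-\kk^2_\sg\subseteq V_\sg$ holds for limits taken along possibly different subsequences, whereas your clause constrains only the same-index differences $f_{\psi_1(t)}-f_{\psi_2(t)}$; and the bookkeeping with $M^1_\xi, M^2_\xi$ and the sets $\Delta_t$ (imported from clause (P7) of Proposition \ref{embp1}) is superfluous here, as there is no analytic subspace $\ccc$ whose membership must be certified --- the paper keeps only $|B_t\cap\Gamma|=\aleph_1$.
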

Granting Proposition \ref{embp2}, we complete the proof
of Theorem \ref{embt2} as follows. Let $\{f_n\}_n$ be the
countable dense subset of $\kk$. As we have already remarked,
we may assume that the functions $\{f_n\}_n$ are actually
continuous. We apply Proposition \ref{embp2} to the family
$\{f_n\}_n$ and we get the canonical injections $\psi_1:\ct\to\nn$
and $\psi_2:\ct\to\nn$ as described above. We define
$g_t=f_{\psi_1(t)}$ and $h_t=f_{\psi_2(t)}$ for every $t\in\ct$.
Applying Corollary \ref{corbasis} successively two times,
we get a regular dyadic subtree $S=(s_t)_{t\in\ct}$ of
$\ct$ such that the families $\{g_{s_t}\}_{t\in\ct}$ and
$\{h_{s_t}\}_{t\in\ct}$ are canonicalized. The fact that $\kk$ is
hereditarily separable implies that each of the above families must
be equivalent to the canonical dense family of either
$A(\ct)$, or $2^{\leqslant\nn}$, or $\splp$, or $\splm$. By
Proposition \ref{embp2}, we see that it cannot be the case
that both $\overline{\{g_{s_t}\}}^p_{t\in\ct}$ and
$\overline{\{h_{s_t}\}}^p_{t\in\ct}$ are metrizable. Thus, at
least one of them is equivalent to either $\splp$, or $\splm$,
which clearly implies Theorem \ref{embt2}.

We proceed to the proof of Proposition \ref{embp2} which is similar
to the proof of Proposition \ref{embp1} and relies on the fact
that $\kk$ is metrizable if and only if there exists $D\subseteq
X$ countable such that the map $\mathrm{Acc}(\kk)\ni f\mapsto
f|_D\in \rr^D$ is 1-1. Thus, by our assumptions and by transfinite
recursion on countable ordinals, we get
\begin{enumerate}
\item[(1)] $p<q\in\mathbb{Q}$, \item[(2)] a set
$\Gamma=\{x_\xi:\xi<\omega_1\}\subseteq X$, and \item[(3)] two
families $\{g_\xi:\xi<\omega_1\}$ and $\{h_\xi:\xi<\omega_1\}$ in
$\mathrm{Acc}(\kk)$
\end{enumerate}
such that for every $\xi<\omega_1$ the following are satisfied.
\begin{enumerate}
\item[(i)] $g_\xi(x_\zeta)=h_\xi(x_\zeta)$ for all $\zeta<\xi$.
\item[(ii)] $g_\xi(x_\xi)<p<q< h_\xi(x_\xi)$. \item[(iii)] $x_\xi$
is a condensation point of $\Gamma$.
\end{enumerate}

By recursion on the length of the finite sequences in $\ct$ we
shall construct the following.
\begin{enumerate}
\item[(C1)] Two canonical injections $\psi_1:\ct\to\nn$ and
$\psi_2:\ct\to\nn$. \item[(C2)] A family $(B_t)_{t\in\ct}$ of open
balls in $X$, taken with respect to a compatible complete metric
$\rho$ of $X$.
\end{enumerate}
The construction is done so that for every $t\in\ct$ the following
are satisfied.
\begin{enumerate}
\item[(P1)] If $t\neq (\varnothing)$, then $f_{\psi_1(t)}(x)<
p<q<f_{\psi_2(t)}(x)$ for every $x\in B_t$.
\item[(P2)] For every $t, t'\in\ct$ with $|t'|=|t|$ and
$t'\neq t$ and for every $x'\in B_{t'}$ we have
$|f_{\psi_1(t)}(x')- f_{\psi_2(t)}(x')|<\frac{1}{|t|+1}$.
\item[(P3)] $\overline{B}_{t^{\con}0}\cap
\overline{B}_{t^{\con}1}= \varnothing$,
$\overline{B}_{t^{\con}0}\cup \overline{B}_{t^{\con}1}\subseteq
B_t$ and $\rho-\mathrm{diam}(B_t)\leq \frac{1}{|t|+1}$.
\item[(P4)] $|B_t\cap \Gamma|=\aleph_1$.
\end{enumerate}
Assuming that the construction has been carried out, setting
$x_\sg=\bigcap_n B_{\sg|n}$ and
\[ V_\sg=\{ w\in \rr^X: |w(x_\sg)|>(q-p)/2\} \]
for all $\sg\in 2^\nn$, it is easy to check that $\psi_1$,
$\psi_2$ and $\{V_\sg:\sg\in 2^\nn\}$ satisfy all requirements of
Proposition \ref{embp2}.

We proceed to the construction. We set $\psi_1\big(
(\varnothing)\big)= \psi_2\big( (\varnothing)\big)=0$ and
$B_{(\varnothing)}=X$. Assume that for some $n\geq 1$ and for all
$t\in 2^{<n}$ the values $\psi_1(t), \psi_2(t)\in\nn$ and the open
balls $B_t$ have been constructed. As in Proposition \ref{embp1},
in order to determine $\psi_1(t), \psi_2(t)$ and $B_t$ for
every $t\in 2^n$ we shall follow a finite recursion.

Let $\{t_0\prec ...\prec t_{2^{n-1}-1}\}$ be the $\prec$-increasing
enumeration of $2^{n-1}$. For every $j\in \{0,...,2^n-1\}$
we choose an open ball $B^{-1}_j$ in $X$ such that
$\rho-\mathrm{diam}(B^{-1}_j)<\frac{1}{n+1}$ and $|B^{-1}_j\cap
\Gamma|=\aleph_1$. Moreover, the selection is done so
that for $j$ even we have $\overline{B^{-1}_j}\cap
\overline{B^{-1}_{j+1}}=\varnothing$ and $\overline{B^{-1}_j}\cup
\overline{B^{-1}_{j+1}}\subseteq B_{t_{j/2}}$. We set
$m_{-1}=\max\{\psi_1(t):t\in 2^{<n}\}$ and $l_{-1}=\max\{
\psi_2(t): t\in 2^{<n}\}$.

By finite recursion on $k\in \{0,...,2^n-1\}$, we will construct a
family $\{B^k_j: j=0,...,2^n-1\}$ of open balls of $X$ and a
pair $m_k, l_k$ in $\nn$ such that for every
$k\in\{0,...,2^n-1\}$ the following are satisfied.
\begin{enumerate}
\item[(a)] For every $j\in\{0,...,2^n-1\}$ we have $B^{k-1}_j\supseteq
B^k_j$. \item[(b)] $m_{k-1}< m_k$ and $l_{k-1}<l_k$. \item[(c)]
For every $x\in B^k_k$ we have $f_{m_k}(x)<p<q<f_{l_k}(x)$, while
for every $j\in\{0,..., 2^n-1\}$ with $j\neq k$ and every
$x'\in B^k_j$ we have $|f_{m_k}(x')-f_{l_k}(x')|<\frac{1}{n+1}$.
\item[(d)] For every $j\in\{0,...,2^n-1\}$ we have $|B^k_j\cap \Gamma|=\aleph_1$.
\end{enumerate}
We omit the above construction as it is similar to the one in
Proposition \ref{embp1}. We only notice that condition (k) is
replaced by
\begin{enumerate}
\item[(k')] $\xi_k=\max\{ \xi_j: j=0,...,2^n-1\}$.
\end{enumerate}
Now let $\{t_0\prec ...\prec t_{2^n-1}\}$ be the
$\prec$-increasing enumeration of $2^n$. We set $\psi_1(t_k)=m_k$,
$\psi_2(t_k)=l_k$ and $B_{t_k}=B^{2^n-1}_k$ for every
$k\in\{0,...,2^n-1\}$. It is easy to check that (P1)-(P4) are
satisfied. The proof of Proposition \ref{embp2} is completed.
\begin{rem}
We notice that Theorem \ref{metrnew} (respectively Theorem
\ref{embt2}) is valid for an analytic and metrizable (respectively
hereditarily separable) subspace $\ccc$ of $\kk$. In particular,
we have the following.
\begin{thm}
\label{reallynew} Let $\kk$ be a separable Rosenthal compact
and $\ccc$ an analytic subspace of $\kk$. Let $\{f_n\}_n$
be a countable dense subset of $\kk$ and $A\subseteq [\nn]$
witnessing the analyticity of $\ccc$.

If $\ccc$ is metrizable in the pointwise topology, consists of
bounded functions and it is norm non-separable, then
$2^{\leqslant\nn}$ canonically embeds into $\kk$ with respect to
$\{f_n\}_n$ and $\ccc$, such that its image is norm non-separable.

Respectively, if $\ccc$ is hereditarily separable and not
metrizable, then either $\splp$ or $\splm$ canonically embeds into
$\kk$ with respect to $\{f_n\}_n$ and $\ccc$.
\end{thm}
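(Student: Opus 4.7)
The plan is to revisit the recursive constructions in the proofs of Theorem \ref{metrnew} and Theorem \ref{embt2} and graft onto them the ``analyticity bookkeeping'' developed in the proof of Proposition \ref{embp1}. Since $A\subseteq[\nn]$ witnesses the analyticity of $\ccc$, we may assume $A$ is hereditary and fix a continuous surjection $\Phi:\nn^\nn\to A$. By condition (2) of Definition \ref{nad1} and the Bourgain--Fremlin--Talagrand theorem, every point of $\ccc\cap Acc(\kk)$ is then the pointwise limit of the sequence $(f_n)_{n\in\Phi(b)}$ for some $b\in\nn^\nn$.

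For the metrizable part, we first perform a condensation-point transfinite recursion inside $\ccc\cap Acc(\kk)$, yielding, as in Theorem \ref{metrnew}, some $\ee>0$ and an uncountable $\ee$-separated family $\{f_\xi:\xi<\omega_1\}\subseteq\ccc\cap Acc(\kk)$ each of whose members is a pointwise condensation point of the family. With each $f_\xi$ we associate $b_\xi\in\nn^\nn$ with $f_\xi=\lim_{n\in M_\xi}f_n$, where $M_\xi=\Phi(b_\xi)$. The recursion from the proof of Theorem \ref{metrnew} producing the canonical injection $\psi$, the open balls $B_t$ and the separation witnesses $x_t,r_t,q_t$ is now augmented, exactly as in Proposition \ref{embp1}, by uncountable sets $\Delta_t\subseteq\omega_1$ obeying $\Delta_{t^\con 0}\cap\Delta_{t^\con 1}=\varnothing$, $\Delta_{t^\con 0}\cup\Delta_{t^\con 1}\subseteq\Delta_t$, $\mathrm{diam}\{b_\xi:\xi\in\Delta_t\}\leq 2^{-|t|}$, and $\psi(t)\in M_\xi$ for every $\xi\in\Delta_t$. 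For each $\sg\in 2^\nn$, any choice $\xi_k\in\Delta_{\sg|k}$ gives a Cauchy sequence $(b_{\xi_k})_k$ in $\nn^\nn$ whose limit $b$ satisfies $\Phi(b)\supseteq\{\psi(\sg|n):n\geq 1\}$ by continuity of $\Phi$; hereditariness of $A$ forces this index set into $A$, so its pointwise limit lies in $\ccc$. Passing to a regular dyadic subtree on which each $f_{\psi(t)}$ is isolated in the closure delivers the canonical embedding of $2^{\leqslant\nn}$ into $\kk$ with respect to $\{f_n\}_n$ and $\ccc$, with image $\ee$-separated in the supremum norm.

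For the hereditarily separable part, we mimic the proof of Proposition \ref{embp2}, but now drawing the transfinitely recursive families $\{g_\xi\},\{h_\xi\}$ of accumulation points from $\ccc\cap Acc(\kk)$ and associating to each $\xi$ a pair $(b^1_\xi,b^2_\xi)\in\nn^\nn\times\nn^\nn$ with $g_\xi=\lim_{n\in\Phi(b^1_\xi)}f_n$ and $h_\xi=\lim_{n\in\Phi(b^2_\xi)}f_n$. The geometric recursion producing $\psi_1,\psi_2$ and the balls $B_t$ is interleaved with the analogous bookkeeping of nested uncountable sets $\Delta_t\subseteq\omega_1$ of shrinking $\nn^\nn\times\nn^\nn$-diameter satisfying $\psi_1(t)\in\Phi(b^1_\xi)$ and $\psi_2(t)\in\Phi(b^2_\xi)$ for every $\xi\in\Delta_t$. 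As in the metrizable case, the accumulation points of both $\{f_{\psi_1(t)}\}_{t\in\ct}$ and $\{f_{\psi_2(t)}\}_{t\in\ct}$ are thereby forced into $\ccc$. Two applications of Corollary \ref{corbasis} canonicalize both families along a common regular dyadic subtree, and the separation property established in Proposition \ref{embp2} rules out metrizability of both closures; by hereditary separability of $\ccc$, at least one of them is then equivalent to the canonical dense family of $\splp$ or $\splm$, yielding the desired canonical embedding.

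The main obstacle is the simultaneous finite-step coordination of the two recursions: at each splitting level one must, while selecting $\psi(t)$ (or $\psi_1(t),\psi_2(t)$) from uncountably many sup-separating or pointwise-separating candidates, further refine the sets $\Delta_t$ to secure the crucial descent condition $\psi(t)\in M_\xi$ (or its two-variable analogue) for every surviving $\xi$. This is exactly the interleaving performed in the proof of Proposition \ref{embp1}, and its adaptation here requires nothing beyond keeping track of one or two additional $b$-coordinates throughout; the uncountability of the $\Delta_t$'s is preserved by a routine cardinality refinement.
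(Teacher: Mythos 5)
Your proposal is correct and is essentially the paper's own argument: the paper proves Theorem \ref{reallynew} precisely by rerunning the recursions of Theorems \ref{metrnew} and \ref{embt2} augmented with the $\Delta_t$/$b_\xi$ bookkeeping from the proof of Proposition \ref{embp1}, which is exactly what you describe. The only point worth making explicit is that in the metrizable case one should finish by canonicalizing via Corollary \ref{corbasis} (the accumulation points lie in the metrizable, closed $\ccc$, which together with the uncountable $\ee$-separated set of branch limits forces the prototype to be $2^{\leqslant\nn}$), since without metrizability of $\kk$ itself the shrinking-ball argument of Theorem \ref{metrnew} does not directly control antichain limits.
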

The additional information, provided by Theorem \ref{reallynew},
is that the canonical embedding of the corresponding prototype
is found with respect to the dense subset $\{f_n\}_n$
of $\kk$ witnessing the analyticity of $\ccc$, which is not
necessarily a subset of $\ccc$. The proof of Theorem
\ref{reallynew} follows the lines of Theorems \ref{metrnew}
and \ref{embt2}, using the arguments of the proof of Proposition
\ref{embp1}.
\end{rem}


\section{Non-$G_\delta$ points in analytic subspaces}

This section is devoted to the study of the structure of not first
countable analytic subspaces. The first subsection is devoted to
the presentation of an extension of a result of A. Krawczyk from
\cite{Kra}. The proof follows the same lines as in \cite{Kra}. In
the second one we show that $\alex$ canonically embeds into any
not first countable analytic subspace $\ccc$ of a separable
Rosenthal compact $\kk$ and with respect to any countable
subset $D$ of $\kk$, witnessing the analyticity of $\ccc$.

\subsection{Krawczyk trees}

We begin by introducing some pieces of notation and by recalling some
standard terminology. By $\Sigma$ we shall denote the set of all
non-empty strictly increasing finite sequences of $\nn$. We view
$\Sigma$ as a tree equipped with the (strict) partial order
$\sqsubset$ of extension. We view, however, every $t\in \Sigma$
not only as a finite increasing sequence, but also, as a finite
subset of $\nn$. Thus, for every $t,s\in \Sigma$ with $\max s<\min t$
we will frequently denote by $s\cup t$ the concatenation of $s$ and $t$. By
$[\Sigma]$ we denote the branches of $\Sigma$, i.e. the set
$\{\sg\in \nn^\nn: \sg|n\in\Sigma \ \forall n\geq 1\}$.
For every $t\in \Sigma$ by $\Sigma_t$ we denote the
set $\{s\in\Sigma:t\sqsubseteq s\}$.

For every $A, B\in [\nn]$ we write $A\subseteq^* B$ if the set
$A\setminus B$ is finite. If $\aaa\subseteq [\nn]$, then we set
$\aaa^*=\{ \nn\setminus A: A\in\aaa\}$. For a pair $\aaa, \bbb\subseteq [\nn]$
we say that $\aaa$ is \textit{$\bbb$-generated} if for every $A\in\aaa$
there exist $B_0, ..., B_k\in\bbb$ such that $A\subseteq B_0\cup ...\cup B_k$.
We say that $\aaa$ is \textit{countably $\bbb$-generated} if there
exists a sequence $(B_n)_n$ is $\bbb$ such that $\aaa$ is
$\{B_n:n\in\nn\}$-generated. An ideal $\iii$ on $\nn$ is said to
be \textit{bi-sequential} if for every $p\in\beta\nn$ with
$\iii\subseteq p^*$, the family $\iii$ is countably $p^*$-generated.
Finally, for every family $\fff$ of subsets of $\nn$ and every
$A\subseteq \nn$ we let $\fff[A]=\{ L\cap A: L\in\fff\}$ to be the
\textit{trace} of $\fff$ on $A$. Observe that if $\fff$ is hereditary,
then $\fff[A]=\fff\cap \ppp(A)=\{ L\in \fff: L\subseteq A\}$. The
following lemma is essentially Lemma 1 from \cite{Kra}.
\begin{lem}
\label{kral1} Let $\iii$ be a bi-sequential ideal. Let also
$\fff\subseteq \iii$ and $A\in [\nn]$. Assume that $\fff[A]$
is not countably $\iii$-generated. Then there exists a sequence
$(A_n)_n$ of pairwise disjoint infinite subsets of $A$ such that, setting
$\mathcal{A}= \{A_n:n\in\nn\}$, we have that $\iii[A]$
is $\mathcal{A}$-generated, while $\fff[A_n]$ is not
countably $\iii$-generated for every $n\in\nn$.
\end{lem}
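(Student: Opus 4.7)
The plan is to reduce to the case $A = \nn$ and then combine the bi-sequentiality of $\iii$ with a splitting argument driven by the failure of $\fff[A]$ to be countably $\iii$-generated.

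First, I would verify that the ideal $\iii[A]$ is itself bi-sequential as an ideal on $A$. Any ultrafilter $q$ on $A$ with $\iii[A]\cap q = \varnothing$ extends to an ultrafilter $p$ on $\nn$ with $A\in p$ and $\iii\cap p = \varnothing$ by declaring $S\in p\Leftrightarrow S\cap A\in q$, and restricting the $p^*$-generators of $\iii$ (provided by bi-sequentiality of $\iii$) to $A$ gives $q^*$-generators of $\iii[A]$. Also, the property ``countably $\iii$-generated'' for subfamilies of $\ppp(A)$ is insensitive to whether generators are taken in $\iii$ or in $\iii[A]$, by intersecting with $A$. Thus we may work as if $A=\nn$, with the standing hypothesis that $\fff$ is not countably $\iii$-generated.

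Second, I would construct a disjoint generating decomposition of $\nn$ from bi-sequentiality. Since $\fff$ is not countably $\iii$-generated, $\iii$ is non-principal, so $\nn\notin\iii$; hence we may pick an ultrafilter $p$ with $p\cap\iii=\varnothing$. By bi-sequentiality $\iii$ is countably $p^*$-generated, and after replacing the generators by their finite unions we get an increasing chain $C_0\subseteq C_1\subseteq\cdots$ in $p^*$ generating $\iii$; since $\iii$ contains all finite sets, $\bigcup_n C_n=\nn$. The pairwise disjoint differences $D_n = C_n\setminus C_{n-1}$ then also generate $\iii$, and every $I\in\iii$ meets only finitely many $D_n$.

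Third, I would use the non-countable $\iii$-generation of $\fff$ to refine $\{D_n\}$ into the desired sequence. The key observation is that since every $F\in\fff$ meets only finitely many $D_n$, a choice of countable $\iii$-generators for each $\fff[D_n]$ assembles into a countable $\iii$-generation of $\fff$; hence not every $\fff[D_n]$ can be countably $\iii$-generated. Iterating this observation — applying the same bi-sequentiality construction inside any ``residual'' set whose $\fff$-trace is still not countably $\iii$-generated, and splitting further whenever only finitely many pieces turn out to be bad — produces infinitely many pairwise disjoint ``bad'' pieces. Enumerating these as $\{A_k^{0}\}_k$, absorbing the remaining ``good'' pieces into the $A_k^{0}$'s by partitioning them into disjoint infinite blocks and taking unions $A_k = A_k^{0}\cup(\text{assigned goods})$, yields pairwise disjoint infinite subsets of $\nn$ that generate $\iii[A]$ (as the $D_n$'s did), and the non-countable $\iii$-generation of each $\fff[A_k]$ is preserved because any hypothetical generation would restrict to a countable $\iii$-generation of $\fff[A_k^{0}]$. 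The main obstacle is securing infinitely many bad pieces in a controlled way: one application of bi-sequentiality produces at least one bad $\fff[D_n]$, but a diagonal / recursive re-application inside bad pieces is needed to extract an infinite disjoint family; coordinating this iteration so that the final sequence $\{A_n\}$ still generates all of $\iii[A]$, rather than merely being bad, is the technical crux.
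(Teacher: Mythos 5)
There is a genuine gap, and it sits exactly at the point you flag as the ``technical crux'': extracting \emph{infinitely many pairwise disjoint} bad pieces. Your ultrafilter $p$ is chosen only to avoid $\iii$, so a single application of bi-sequentiality gives you generators $(D_n)_n$ of which you can certify only that \emph{at least one} has $\fff[D_n]$ not countably $\iii$-generated (your assembling argument is correct for that). The proposed iteration does not close the gap: when you re-apply the construction inside a bad piece, you again get only one certified bad sub-piece, and nothing prevents the recursion from producing a strictly decreasing chain $B_0\supsetneq B_1\supsetneq\cdots$ of bad sets all of whose successive differences are good, so that no two disjoint bad sets are ever banked. Note that for a general ideal one \emph{cannot} split a positive set into two disjoint positive sets (take a maximal ideal), so the assertion that a bad set splits into two disjoint bad ones is essentially the content of the lemma itself and cannot be assumed during the iteration.

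The paper's proof repairs this with one extra idea: introduce the ideal $\jjj=\{C\subseteq\nn:\fff[C]$ is countably $\iii$-generated$\}$, observe that $\iii\subseteq\jjj$ and $\nn\notin\jjj$ (your hypothesis), and choose $p\in\beta\nn$ with the \emph{whole of} $\jjj$ contained in $p^*$, not merely $\iii$. Bi-sequentiality then yields disjoint generators $D_n\in p^*$ of $\iii$, and --- since $\fff$ is $\{D_n\}$-generated and the $D_n$ are disjoint --- the union $D=\bigcup\{D_n: D_n\in\jjj\}$ of the good pieces again lies in $\jjj\subseteq p^*$. If only finitely many $D_n$ were bad, $\nn$ would be a finite union of sets in $p^*$, a contradiction; so infinitely many $D_n$ are bad after a \emph{single} application of bi-sequentiality. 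Your remaining steps (reduction to $A=\nn$, disjointification, absorbing the good pieces into the bad ones to recover generation of $\iii[A]$, and the automatic infiniteness of the bad pieces since finite sets lie in $\iii\subseteq\jjj$) are all sound and match the paper; only the choice of ultrafilter needs to be strengthened as above.
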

\begin{proof}
(Sketch) It suffices to prove the lemma for $A=\nn$. We set
\[ \jjj=\{ C\subseteq \nn: \fff[C] \text{ is countably } \iii-\text{generated}\}. \]
Then $\jjj$ is an ideal, $\nn\notin \jjj$ and $\iii\subseteq \jjj$.
We select $p\in \beta\nn$ with $\jjj\subseteq p^*$. By the bi-sequentiality
of $\iii$, there exists a sequence $(D_n)_n$ in $p^*$ such that
$\iii$ is $\{D_n:n\in\nn\}$-generated. As $p^*$ is an ideal,
we may assume that $D_n\cap D_m=\varnothing$ if $n\neq m$. Define
$M=\{ n\in\nn: D_n\in\jjj\}$. By the fact that $\iii$ is
$\{D_n:n\in\nn\}$-generated and $\fff\subseteq \iii$,
we have that $\fff$ is $\{D_n:n\in\nn\}$-generated.
This observation and the fact that $(D_n)_n$ are disjoint
yield that the set $D=\bigcup_{n\in M} D_n$
belongs to $\jjj$. Moreover, the set $\nn\setminus M$ is infinite
(for if not we would get that $\nn\in p^*$). Let $\{k_0<k_1<...\}$
be the increasing enumeration of $\nn\setminus M$ and define
$A_0=D\cup D_{k_0}$ and $A_n=D_{k_n}$ for $n\geq 1$. It is easy to
see that the sequence $(A_n)_n$ is as desired.
\end{proof}
The main result of this subsection is the following theorem, which
corresponds to Lemma 2 in \cite{Kra}. We notice that it is one
of the basic ingredients in the proof of the embedding of $\alex$
in not first countable separable Rosenthal compacta.
\begin{thm}
\label{thmkra} Let $\iii$ be a bi-sequential ideal and $\fff\subseteq \iii$
analytic and hereditary. Assume that $\fff$ is not countably $\iii$-generated.
Then there exists a 1-1 map $\kappa:\Sigma\to \nn$ such that,
setting $\jjj_\fff=\{ \kappa^{-1}(L):L\in\fff\}$ and
$\jjj=\{\kappa^{-1}(M):M\in\iii\}$, the following are satisfied.
\begin{enumerate}
\item[(1)] For every $\sg\in[\Sigma]$, $\{\sg|n:n\geq 1\}\in \jjj_\fff$.
\item[(2)] (Domination property) For every $B\in \jjj$ and every
$n\geq 1$ there exist $t_0,...,t_k\in \Sigma$ with
$|t_0|=...=|t_k|=n$ and such that
$B\subseteq^* \Sigma_{t_0}\cup ...\cup \Sigma_{t_k}$.
\end{enumerate}
\end{thm}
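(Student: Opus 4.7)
The plan is to construct $\kappa:\Sigma\to\nn$ by recursion on the length of $t\in\Sigma$, following the scheme of Krawczyk's original argument (Lemma~2 of \cite{Kra}), with Lemma \ref{kral1} as the splitting engine at every level. Alongside $\kappa(t)$ I will build an infinite ``active'' set $A_t\subseteq\nn$ with $\fff[A_t]$ not countably $\iii$-generated and $\kappa(t)\in A_t$; the disjoint-scheme structure of the $A_t$'s will give injectivity of $\kappa$ and the $\{A_{t^{\con}m}\}$-generation of $\iii[A_t\setminus\{\kappa(t)\}]$ will give property~(2).

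The recursive step is as follows. At the base, apply Lemma \ref{kral1} to $(\fff,\iii)$ with $A=\nn$ (using the hypothesis that $\fff=\fff[\nn]$ is not countably $\iii$-generated) to produce pairwise disjoint infinite sets $(A_{(n)})_{n\in\nn}$ such that $\iii$ is $\{A_{(n)}\}$-generated and each $\fff[A_{(n)}]$ remains not countably $\iii$-generated. At a general node $t$, I choose $\kappa(t)$ to be any fresh element of $A_t$, verify that $\iii[A_t\setminus\{\kappa(t)\}]$ is still bi-sequential and $\fff[A_t\setminus\{\kappa(t)\}]$ is still not countably $\iii$-generated, apply Lemma \ref{kral1} to the restricted pair, and bijectively assign the resulting pieces to the children $A_{t^{\con}m}$, $m>\max t$.

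For property~(2) I induct on $n\geq 1$. At level~$1$, the generation of $\iii$ by $\{A_{(j)}\}$ together with injectivity of $\kappa$ and disjointness of the scheme yields that any $B\subseteq\Sigma$ with $\kappa(B)\in\iii$ satisfies $B\subseteq\Sigma_{(j_0)}\cup\cdots\cup\Sigma_{(j_k)}$ for some finite set of indices. For the inductive step, given a level-$n$ cover $B\subseteq^*\bigcup_i\Sigma_{t_i}$, for each $t_i$ I restrict to $B\cap\Sigma_{t_i}\setminus\{t_i\}$ and apply the generation of $\iii[A_{t_i}\setminus\{\kappa(t_i)\}]$ by $\{A_{t_i^{\con}m}\}$ to obtain a level-$(n{+}1)$ cover of $B\cap\Sigma_{t_i}$ modulo the single point $t_i$. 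Summing these finite exceptions over $i$ yields the ``$\subseteq^*$'' in property~(2).

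The main obstacle is property~(1), because the sets $A_t$ furnished by Lemma \ref{kral1} are not in general members of $\fff$, so the trace $\kappa(\{\sg|n:n\geq 1\})$ of a branch has no obvious witness in $\fff$. To overcome this I fix at the outset a continuous surjection $\Phi:\nn^\nn\to\fff$ (using analyticity of $\fff$ viewed as a subset of $2^\nn$) and strengthen the recursion to carry, alongside each $A_t$, a finite sequence $\tau_t\in\nn^{<\nn}$ with $\tau_s\sqsubset\tau_t$ for $s\sqsubset t$ and with the property that every $\tau\in[\tau_t]$ satisfies $\kappa(s)\in\Phi(\tau)$ for all nonempty $s\sqsubseteq t$. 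Granting this, for every $\sg\in[\Sigma]$ the common extension $\tau_\sg=\bigcup_n\tau_{\sg|n}\in\nn^\nn$ produces $F=\Phi(\tau_\sg)\in\fff$ with $\{\kappa(\sg|n):n\geq 1\}\subseteq F$, and hereditariness of $\fff$ gives $\{\sg|n\}\in\jjj_\fff$. Maintaining the $\tau$-thread requires intersecting the active set at each step with the clopen $\Phi$-preimage of ``$\kappa(t)\in L$'' and arguing that the restriction still admits Lemma \ref{kral1}; ruling out a pathological splitting of $\fff[A_t]$ into countably many countably-$\iii$-generated pieces here is precisely where the bi-sequentiality of $\iii$ is invoked, and this is the technical heart of the proof, imported essentially unchanged from \cite{Kra}.
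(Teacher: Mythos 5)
Your plan follows the same route as the paper's proof: Krawczyk's Lemma‑2 scheme with Lemma \ref{kral1} as the splitting engine at every node, a nested disjoint scheme of active sets $A_t$ giving injectivity of $\kappa$ and (via the $\{A_{t^{\con}m}\}$-generation of $\iii[A_t\setminus\{\kappa(t)\}]$) the domination property, and a shrinking cylinder in the code space $\nn^\nn$, coming from a continuous map $\Phi$ onto $\fff$, all of whose members contain the committed values of $\kappa$, so that hereditariness of $\fff$ yields property (1). (The paper proves (2) by a fan/contradiction argument rather than your level‑by‑level induction, but that is the same computation.)

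As written, however, the recursion does not close, for two related reasons. First, $\kappa(t)$ cannot be ``any fresh element of $A_t$'': to keep the thread property you must take the new committed integer inside $\Phi(\tau^*)\cap A$ for a specific code $\tau^*$ lying in the current cylinder and the relevant child piece $A$, and only afterwards shrink the cylinder around $\tau^*$ so that, by continuity of $\Phi$, every code in it contains that integer; this is exactly the order of choices in the paper's construction (its conditions (vi)--(viii)). Second, and more substantially, the invariant to carry is not that $\fff[A_t]$ is not countably $\iii$-generated but that the \emph{restricted} family $\Phi([\tau_t])[A_t]$ is not countably $\iii$-generated (the paper's (P4)); Lemma \ref{kral1} must be applied to this coded subfamily, otherwise after you shrink the cylinder to force membership of the committed integers there is no guarantee that the codes still available trace richly on the child pieces, and the next application of Lemma \ref{kral1} fails. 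The repair is standard and is what you gesture at in your last paragraph, but note that the mechanism there is not bi-sequentiality (which enters only inside Lemma \ref{kral1}): it is the observation that a cylinder is a countable union of sub-cylinders and a countable union of countably $\iii$-generated families is countably $\iii$-generated, so some sub-cylinder retains non-generation, combined with continuity of $\Phi$ when picking the new integer. With the invariant strengthened in this way, your outline coincides with the paper's proof.
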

It is easy to see that property (2) in Theorem \ref{thmkra} is
equivalent to say that $B$ is contained in a finitely splitting
subtree of $\Sigma$.
\begin{proof}
We fix a continuous map $\phi:\nn^\nn\to [\nn]$ with $\phi(\nn^\nn)=\fff$.
Recursively, we shall construct the following.
\begin{enumerate}
\item[(C1)] A family $(A_s)_{s\in\bt}$ of infinite subsets of $\nn$.
\item[(C2)] A family $(a_s)_{s\in \bt}$ of finite subsets of $\nn$.
\item[(C3)] A family $(U_s)_{s\in\bt}$ of basic clopen subsets of
$\nn^\nn$.
\end{enumerate}
The construction is done so that the following are satisfied.
\begin{enumerate}
\item[(P1)] $A_s\subseteq A_t$ if $t\sqsubset s$ and $A_s\cap A_t=\varnothing$
if $s$ and $t$ are incomparable.
\item[(P2)] For every $s\in\bt$, $|a_s|=|s|$ and $\max a_s\in A_s$ for
all $s\in\bt$ with $s\neq (\varnothing)$. Moreover $a_s\sqsubset a_t$
if and only if $s\sqsubset t$.
\item[(P3)] $U_s\subseteq U_t$ if $t\sqsubset s$ and $\mathrm{diam}(U_s)\leq
\frac{1}{2^{|s|}}$. Moreover, $U_s\cap U_t=\varnothing$ if $s$ and $t$
are incomparable.
\item[(P4)] For every $s\in\bt$, $\phi(U_s)[A_s]$ is not countably
$\iii$-generated.
\item[(P5)] For every $s\in\bt$ and every $\tau\in U_s$ we have that
$a_s\subseteq \phi(\tau)$.
\item[(P6)] For every $s\in\bt$, $\iii[\bigcup_n A_{s^{\con}n}]$ is
$\{A_{s^{\con}n}:n\in\nn\}$-generated.
\end{enumerate}
Assuming that the construction has been carried out we complete
the proof as follows. We define $\lambda:\bt\setminus
\{(\varnothing)\}\to\nn$ by $\lambda(s)=\max a_s$. By (P1) and
(P2) above, we see that $\lambda$ is 1-1. Let $\sg\in \nn^\nn$. We
claim that $\{\lambda(\sg|n):n\geq 1\}=\bigcup_{n} a_{\sg|n}
\in\fff$. To see this, by (P3), let $\tau$ be the unique element
of $\bigcap_n U_{\sg|n}$. Then, by (P5), we have that
$a_{\sg|n}\subseteq \phi(\tau)$ for all $n\in\nn$. Thus,
$\bigcup_{n} a_{\sg|n}\subseteq \phi(\tau)\in\fff$. As $\fff$ is
hereditary, our claim is proved. Now, let $B\subseteq
\bt\setminus\{(\varnothing)\}$ be such that $\{\lambda(t):t\in
B\}\in\iii$. We claim that $B$ must be dominated, i.e. for every
$n\geq 1$ there exist $s_0, ..., s_k\in \nn^n$ such that $B$ is
almost included in the set of the successors of the $s_i$'s in
$\bt$. If not, then we may find a subset $\{t_n\}_n$ of $B$, a
node $s$ of $\bt$ and a subset $\{m_n:n\in\nn\}$ of $\nn$ such
that $s^{\con} m_n\sqsubseteq t_n$ for every $n\in\nn$. Notice
that $\{\lambda(t_n):n\in\nn\}\in\iii$, as $\iii$ is hereditary.
Moreover, by the definition of $\lambda$ and by properties (P1)
and (P2) above, we see that $\lambda(t_n)\in A_{s^{\con}m_n}$ for
every $n\in\nn$. Hence $\{\lambda(t_n):n\in\nn\}\in \iii[\bigcup_n
A_{s^{\con}n}]$. This leads to a contradiction by properties (P1)
and (P6) above. We set $\kappa=\lambda|_{\Sigma}$. Clearly
$\kappa$ is as desired.

We proceed to the construction. We set $A_{(\varnothing)}=\nn$,
$a_{(\varnothing)}=\varnothing$ and $U_{(\varnothing)}=\nn^\nn$.
Assume that $A_s$, $a_s$ and $U_s$ have been constructed for some
$s\in\bt$. We set $\fff_s=\phi(U_s)$. By property (P4) above and
by Lemma \ref{kral1}, there exists a sequence $(A_n)_n$ of
pairwise disjoint infinite subsets of $A_s$ such that
$\fff_s[A_n]$ is not countably $\iii$-generated for every
$n\in\nn$, while $\iii[A_s]$ is $\{A_n:n\in\nn\}$-generated.
Recursively, we may select a subset $\{\tau_n: n\in\nn\}$ in $U_s$
such that for all $n\in\nn$ the following are satisfied.
\begin{enumerate}
\item[(i)] $\phi(\tau_n)\cap A_n$ is infinite.
\item[(ii)] $\phi(V_{\tau_n|k})[A_n]$ is not countably $\iii$-generated
for every $k\in\nn$, where $V_{\tau_n|k}=\{\sg\in \nn^\nn: \tau_n|k\sqsubset \sg\}$.
\end{enumerate}
This can be easily done, as $\phi(U_s)[A_n]=\fff_s[A_n]$ is not countably $\iii$-generated
for every $n\in\nn$. We may select $L=\{l_0<l_1<...\}\in[\nn]$ and
a sequence $(k_n)_n$ in $\nn$ such that, setting $\sg_n=\tau_{l_n}$ for
all $n$, the following are satisfied.
\begin{enumerate}
\item[(iii)] $V_{\sg_n|k_n}\cap V_{\sg_m|k_m}=\varnothing$ if
$n\neq m$. \item[(iv)] For every $n\in\nn$ we have $V_{\sg_n|k_n}\subseteq
U_s$. \item[(v)] For every $n\in\nn$ we have $\mathrm{diam}(V_{\sg_n|k_n})<
\frac{1}{2^{|s|+1}}$.
\end{enumerate}
Using the continuity of the map $\phi$, for every $n\in\nn$ we may also select
$k'_n, i_n\in\nn$ such that the following are satisfied.
\begin{enumerate}
\item[(vi)] $i_n\in \phi(\sg_n)\cap A_{l_n}$.
\item[(vii)] $\max a_s< i_n$ and $k_n<k'_n$.
\item[(viii)] For every $\tau\in V_{\sg_n|k'_n}$ we have $i_n\in \phi(\tau)$.
\end{enumerate}
For every $n\in\nn$ we set $a_{s^{\con}n}=a_s\cup\{ i_n\}$,
$A_{s^{\con}n}=A_{l_n}$ and $U_{s^{\con}n}=V_{\sg_n|k'_n}$.
It is easy to check that conditions (P1)-(P6) are satisfied.
The proof of the theorem is completed.
\end{proof}

\subsection{The embedding of $\alex$ in analytic subspaces}

The main result of this subsection is the following.
\begin{thm}
\label{agdt1} Let $\kk$ be a separable Rosenthal compact and
$\mathcal{C}$ an analytic subspace of $\kk$. Let
$\{f_n\}_n$ be a countable dense subset of $\kk$ and
$A\subseteq [\nn]$ analytic, witnessing the analyticity of
$\mathcal{C}$. Let also $f\in\mathcal{C}$ be a non-$G_\delta$
point of $\mathcal{C}$. Then there exists a canonical homeomorphic
embedding of $\alex$ into $\kk$ with respect to $\{f_n\}_n$ and
$\ccc$ which sends $0$ to $f$.
\end{thm}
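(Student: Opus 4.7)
The plan is to combine the Krawczyk-tree machinery of Theorem~\ref{thmkra} with a Milliken-style canonicalization of fans, and then to finish via the canonicalization Theorem~\ref{basis}. First I would apply Theorem~\ref{thmkra} to the ideal $\iii_f=\{L\in[\nn]:f\notin\overline{\{f_n\}}^p_{n\in L}\}$, which is bi-sequential by R.~Pol's theorem on separable Rosenthal compacta, together with the analytic hereditary family $\fff=A\cap\iii_f$, where $A$ is the analytic witness of the analyticity of $\ccc$; since $f$ is non-$G_\delta$ in $\ccc$, $\fff$ is not countably $\iii_f$-generated. After identifying $\Sigma$ with $\bt$ via an order-preserving bijection and relabeling the functions through the injection $\kappa$ produced by Theorem~\ref{thmkra}, this yields a subfamily $\{f_t\}_{t\in\bt}$ of $\{f_n\}_n$ which satisfies the $K$-tree properties~(P1) and~(P2) from the introduction and which, because we worked inside $A$, additionally has all accumulation points of $\{f_{\sg|n}\}_n$ in $\ccc$ for every $\sg\in\nn^\nn$. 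Combining the domination clause~(P2) with bi-sequentiality of $\kk$ at $f$ then gives property~(P3): for every $\sg\in\nn^\nn$ there is an antichain $\{t_n\}_n$ of $\bt$ converging to $\sg$ in the sense of Definition~\ref{rd1} such that $(f_{t_n})_n$ converges pointwise to $f$.

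Next I would invoke Milliken's theorem for strong subtrees of $\bt$, read as a partition theorem for infinitely splitting subtrees, to pass to an infinitely splitting subtree $T\subseteq\bt$ on which fans are canonical: every fan $\{t_n\}_n$ of $T$ (an antichain with $s^{\con}m_n\sqsubseteq t_n$ for a common stem $s\in T$ and a strictly increasing $(m_n)_n$) satisfies $f_{t_n}\to f$ pointwise. The relevant coloring is the co-analytic collection of fans whose associated function sequence is pointwise convergent to $f$; the other color is precluded by~(P2), (P3) and bi-sequentiality, so Milliken furnishes $T$ of the right kind. This Milliken step is the main obstacle, as it requires the Ramsey theory for infinitely splitting trees rather than the dyadic Ramsey theorems of \S\S~2--3, and one must carefully verify that the ``wrong'' color is impossible on any strong subtree.

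With $T$ in hand I would select a regular dyadic subtree $S=(s_t)_{t\in\ct}$ of $T$ and apply Theorem~\ref{basis} to $\{f_{s_t}\}_{t\in\ct}$, obtaining a regular dyadic refinement of $S$ (again denoted $S$) such that $\{f_{s_t}\}_{t\in\ct}$ is equivalent to the canonical dense family of one of the seven prototypes. Two features pin down the prototype. Property~(P1) forbids any chain of $S$ from converging pointwise to $f$, so along a branch the limit produced by Theorem~\ref{basis} is distinct from $f$, and the canonicalization separates these branch-limits. On the other hand, the canonical fan property of $T$ forces every infinite antichain of $S$ to contain a subantichain which is a fan of $T$, and hence to have a subsequence with pointwise limit $f$. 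Comparing with the descriptions of $\mathcal{L}(\kk_i)$ given in \S~4.3, the only prototype consistent with both constraints is $\alex$; thus $\{f_{s_t}\}_{t\in\ct}$ is equivalent to $\{v_t:t\in\ct\}$ with $f$ playing the role of the constant function~$0$.

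Finally, defining the canonical injection $\phi:\ct\to\nn$ by $f_{\phi(t)}=f_{s_t}$, the equivalence extends to a homeomorphism of $\alex$ onto $\overline{\{f_{\phi(t)}\}}^p_{t\in\ct}$ sending $0\in\alex$ to $f$, while the inclusion $Acc(\{f_{\phi(t)}:t\in\ct\})\subseteq\ccc$ is inherited from the analyticity condition secured in the first step. This produces the canonical homeomorphic embedding of $\alex$ into $\kk$ with respect to $\{f_n\}_n$ and $\ccc$ sending $0$ to $f$, as required.
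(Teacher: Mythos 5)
Your overall architecture (Krawczyk tree for $\iii_f$ and $\fff=A\cap\iii_f$, a Milliken step, a dyadic subtree, then Theorem \ref{basis}) is the same as the paper's, and the first step is carried out correctly. But there is a genuine gap in the way you use fans, and it affects both your Milliken step and your endgame. A fan has a \emph{fixed} stem $s$: its members extend distinct immediate successors $s^{\con}m_n$ of one and the same node. For such antichains property (P2) alone already forces $(f_{t_n})_n\to f$: a subsequence whose pointwise closure misses $f$ would have to be almost contained in the successors of finitely many nodes of level $|s|+1$, which a fan never is; no Ramsey theorem is needed here. The antichains that actually matter are those converging to a \emph{branch} $\sg$ in the sense of Definition \ref{rd1}; their consecutive meets $t_n\wedge t_{n+1}$ increase to infinity along $\sg$, so they are not fans and (by condition (2) of Definition \ref{ad1}) contain no fan subantichain with three or more terms. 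Your claim that ``every infinite antichain of $S$ contains a subantichain which is a fan of $T$'' is therefore false: the increasing and decreasing antichains of a dyadic subtree, which are exactly the ones that determine the prototype in Theorem \ref{basis}, are never fans. With that claim gone, nothing forces the antichains of $S$ to accumulate at $f$, and nothing rules out the prototypes $2^{\leqslant\nn}$, $\splp$, $\splm$, $\dcantor$ or $\dsplit$, in all of which antichains converging to distinct branches have pairwise distinct limits. Property (P3) does not repair this, because the antichains it provides live in $\bt$ and need not survive as antichains of the dyadic subtree you eventually select.

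The missing content is precisely how to force branch-convergent antichains \emph{inside the chosen dyadic subtree} to converge to $f$. The paper does this in two steps that have no counterpart in your proposal: (i) a diagonalization based on the bi-sequentiality of $\iii_f$ (Lemma \ref{poll1}), applied to the sequence of fans $A_l=\{i_l\cup b_m:m>l+1\}$, which produces one block sequence $\mathbf{c}$ whose canonical branch-convergent antichain $A(\mathbf{c})$ satisfies $(f_t)_{t\in A(\mathbf{c})}\to f$; this is the nontrivial ``good colour is non-empty'' step that makes the Milliken application (Lemma \ref{mill1}) give the right alternative, and the relevant colouring is on block sequences, not on fans; and (ii) a careful recursive choice of the dyadic system $(\beta_s)_{s\in\ct}$ with property (C3), guaranteeing that for each branch $\tau$ the right-successor nodes $\cup\beta_{s^{\con}1}$, $s\sqsubset\tau$, lie on the canonical antichain $A(\mathbf{b}_\tau)$. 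This yields the (weaker but sufficient) property that every regular dyadic subtree contains two antichains converging to distinct branches whose function sequences both converge to $f$, which together with Theorem \ref{basis} and the fact that chains converge to points of $\ccc$ different from $f$ pins down $\alex$. An arbitrary regular dyadic subtree of $T$, as in your proposal, will not do.
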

For the proof we need to make some preliminary steps. Let $\kk$,
$\ccc$, $\{f_n\}_n$ and $f\in\mathcal{C}$ be as in Theorem \ref{agdt1}.
We may assume that $f_n\neq f$ for all $n\in\nn$. We let
\[ \iii_f=\{ L\in [\nn]: f\notin \overline{\{f_n\}}^p_{n\in L}\}.\]
Then $\iii_f$ is an analytic ideal on $\nn$ (see \cite{Kra}).
A fundamental property of $\iii_f$ is that it is bi-sequential.
This is due to R. Pol (see \cite{Pol3}). We notice that the
bi-sequentiality of $\iii_f$ can be also derived by the
non-effective proof of the Bourgain-Fremlin-Talagrand
theorem due to G. Debs (see \cite{De}, or \cite{AGR}).

Let also $A\subseteq [\nn]$ be as in Theorem \ref{agdt1}.
As we have already pointed out, we may assume that $A$ is
hereditary. We set
\[ \mathcal{F}=A\cap \iii_f. \]
Clearly $\mathcal{F}$ is an analytic and hereditary subset of $\iii_f$.
The assumption that $f$ is a non-$G_\delta$ point of $\mathcal{C}$
yields (and in fact is equivalent to say) that $\mathcal{F}$ is
not countably $\iii_f$-generated, i.e. there does exist a sequence
$(M_k)_k$ in $\iii_f$ such that for every $L\in \mathcal{F}$ there exists
$k\in\nn$ with $L\subseteq M_0\cup ... \cup M_k$. To see this,
assume on the contrary that such a sequence $(M_k)_k$ existed.
We set $N_k=M_0\cup ...\cup M_k$ for every $k\in\nn$. As $\iii_f$
is an ideal, we see that $N_k\in\iii_f$ for every $k$.
We set $F_k=\overline{\{f_n\}}^p_{n\in N_k}\cup \{f_k\}$.
The fact that $N_k\in\iii_f$ implies that $f\notin F_k$ for every $k\in\nn$.
Let $g\in \ccc\cap Acc(\kk)$ with $g\neq f$. By condition (2)
of Definition \ref{nad1}, there exists $L\in A$ with
$g\in \overline{\{f_n\}}^p_{n\in L}$. Hence, there exists $M\in [L]$
such that $g$ is the pointwise limit of the sequence $(f_n)_{n\in M}$.
As $A$ is hereditary, we see that $M\in \fff$, and so, there
exists $k_0\in \nn$ with $M\subseteq N_{k_0}$. This implies that
$g\in F_{k_0}$. It follows by the above discussion that
$\{f\}=\bigcap_k (\mathcal{C}\setminus F_k)$,
that is the point $f$ is $G_\delta$ in $\mathcal{C}$,
a contradiction.

Summarizing, we have that $\iii_f$ is bi-sequential, $\fff\subseteq \iii_f$
is analytic, hereditary and not countably $\iii_f$-generated. Thus,
we may apply Theorem \ref{thmkra} and we get the 1-1 map $\kappa:\Sigma\to
\nn$ as described in the theorem. Setting $f_t=f_{\kappa(t)}$ for every
$t\in\Sigma$ and invoking condition (1) of Definition \ref{nad1},
we get the following corollary.
\begin{cor}
\label{krac1} There exists a family $\{f_t\}_{t\in\Sigma} \subseteq
\{f_n\}_n$ such that the following are satisfied.
\begin{enumerate}
\item[(1)] For every $\sg\in [\Sigma]$, we have that
$f\notin \overline{\{f_{\sg|n}\}}^p_n$ and $Acc\big(\{f_{\sg|n}:n\in\nn\}\big)
\subseteq \ccc$.
\item[(2)] For every $B\subseteq \Sigma$ such that $f\notin\overline{\{f_t\}}^p_{t\in B}$
and every $n\geq 1$, there exist $t_0, ..., t_k\in \Sigma$
with $|t_0|=...=|t_k|=n$ and
such that $B\subseteq^* \Sigma_{t_0}\cup ... \cup \Sigma_{t_k}$.
\end{enumerate}
\end{cor}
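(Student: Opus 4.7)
The plan is to extract Corollary \ref{krac1} from Theorem \ref{thmkra} by translating its conclusion through the labelling $t\mapsto f_{\kappa(t)}$ and then importing the defining property of analyticity of $\ccc$.

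First I would verify that the hypotheses of Theorem \ref{thmkra} are in place for the pair $(\iii_f,\fff)$, where $\fff=A\cap\iii_f$. This has essentially been done in the paragraph preceding the corollary: the ideal $\iii_f$ is bi-sequential by Pol's theorem, the set $\fff$ is analytic and hereditary as the intersection of two such sets (using that $A$ may be taken hereditary), and $\fff$ is \emph{not} countably $\iii_f$-generated precisely because $f$ fails to be a $G_\delta$ point of $\ccc$. So Theorem \ref{thmkra} produces a 1-1 map $\kappa:\Sigma\to\nn$ satisfying properties (1) and (2) of that theorem with respect to $\jjj_\fff=\{\kappa^{-1}(L):L\in\fff\}$ and $\jjj=\{\kappa^{-1}(M):M\in\iii_f\}$. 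I then define $f_t=f_{\kappa(t)}$ for every $t\in\Sigma$; since $\kappa$ is injective this gives an honest subfamily of $\{f_n\}_n$.

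Next I would verify clause (1) of the corollary. Fix $\sg\in[\Sigma]$. Property (1) of Theorem \ref{thmkra} asserts that $\{\sg|n:n\geq 1\}\in\jjj_\fff$, which by definition of $\jjj_\fff$ means that the image set $L_\sg:=\kappa(\{\sg|n:n\geq 1\})$ belongs to $\fff=A\cap\iii_f$. The membership $L_\sg\in\iii_f$ unfolds to $f\notin\overline{\{f_n\}}^p_{n\in L_\sg}=\overline{\{f_{\sg|n}\}}^p_n$, giving the first half of clause (1). For the second half, since $L_\sg\in A$, clause (1) of Definition \ref{nad1} applies directly and yields $Acc\big(\{f_n:n\in L_\sg\}\big)=Acc\big(\{f_{\sg|n}:n\in\nn\}\big)\subseteq\ccc$.

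For clause (2), let $B\subseteq\Sigma$ satisfy $f\notin\overline{\{f_t\}}^p_{t\in B}$. Setting $M=\kappa(B)$, this says exactly $M\in\iii_f$, and injectivity of $\kappa$ gives $B=\kappa^{-1}(M)\in\jjj$. The domination property (2) of Theorem \ref{thmkra} then supplies, for every $n\geq 1$, finitely many $t_0,\dots,t_k\in\Sigma$ of common length $n$ with $B\subseteq^*\Sigma_{t_0}\cup\cdots\cup\Sigma_{t_k}$, which is the required conclusion.

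There is no substantial obstacle: all the serious work is done in Theorem \ref{thmkra} and in the verification (given just before the corollary) that $\fff$ is not countably $\iii_f$-generated. The only thing one must be careful about is to cite Definition \ref{nad1} in the right direction so that the image tree automatically has its accumulation points inside $\ccc$, which is precisely what witnesses the analyticity of $\ccc$ via the \emph{hereditary} analytic set $A$.
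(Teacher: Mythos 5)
Your proposal is correct and follows exactly the paper's route: the authors also apply Theorem \ref{thmkra} to the pair $\iii_f$, $\fff=A\cap\iii_f$ (whose hypotheses are verified in the discussion preceding the corollary), set $f_t=f_{\kappa(t)}$, and invoke condition (1) of Definition \ref{nad1} together with hereditariness to place the accumulation points inside $\ccc$. Your unwinding of $\jjj_\fff$ and $\jjj$ into clauses (1) and (2) is precisely the intended translation, so there is nothing to add.
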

We call the family $\{f_t\}_{t\in\Sigma}$ obtained in Corollary
\ref{krac1} as the \textit{Krawczyk tree} of $f$ with respect to
$\{f_n\}_n$ and $\mathcal{C}$. Let us isolate the following property
of the Krawczyk tree $\{f_t\}_{t\in\Sigma}$ that we will need later on.
\begin{enumerate}
\item[(P)] Let $i\in\Sigma$ and $(b_n)_n$ be a sequence in $\Sigma$
such that $\max i<\min b_n$ and $\max b_n<\min b_{n+1}$ for every $n\in\nn$.
Then, setting $s_n=i\cup b_n$ for all $n\in\nn$, the sequence $(f_{s_n})_n$
is pointwise convergent to $f$. To see this observe that, by property
(2) in Corollary \ref{krac1}, every subsequence of the sequence $(f_{s_n})_n$
accumulates to $f$. Hence the sequence $(f_{s_n})_n$ is pointwise
convergent to $f$.
\end{enumerate}
We will also need the following well-known consequence of the
bi-sequentiality of $\iii_f$. For the sake of completeness we
include a proof.
\begin{lem}
\label{poll1} Let $(A_l)_l$ be a sequence in $[\nn]$ such that
$\lim_{n\in A_l} f_n=f$ for every $l\in\nn$. Then there
exists $D\in[\nn]$ with $\lim_{n\in D} f_n=f$, $D\subseteq \bigcup_l A_l$
and such that $D\cap A_l\neq \varnothing$ for infinitely many $l\in\nn$.
\end{lem}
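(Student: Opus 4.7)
The plan is to prove this standard consequence of the bi-sequentiality of $\iii_f$ by a diagonal argument built on a carefully chosen ultrafilter. First I would reduce to the case that the $A_l$'s are pairwise disjoint: since $\lim_{n\in A_l}f_n=f$ is inherited by every infinite subset of $A_l$, the $A_l$'s may be thinned inductively to a pairwise disjoint sequence of infinite sets still satisfying the hypothesis.

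Next I would construct an ultrafilter $p$ on $\nn$ containing every tail $T_k:=\bigcup_{l\geq k}A_l$ and satisfying $\iii_f\subseteq p^*$. Indeed, each $T_k$ contains $A_k\notin \iii_f$ and hence lies outside the hereditary ideal $\iii_f$; moreover, for any $L\in \iii_f$ and any $G$ in the filter generated by $\{T_k\}_k$, the inclusion $G\subseteq L$ would force $G\in\iii_f$, which is impossible. Therefore the family $\{T_k\}_k\cup\{\nn\setminus L:L\in \iii_f\}$ generates a proper filter which extends to the desired $p$. The bi-sequentiality of $\iii_f$ (due to R.~Pol, as recalled just before the statement) now furnishes a sequence $(B_n)_n$ with $B_n\notin p$ such that every element of $\iii_f$ is contained in some finite union $B_{n_1}\cup\cdots\cup B_{n_j}$. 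Setting $C_n=\bigcap_{i\leq n}(\nn\setminus B_i)$, each $C_n$ belongs to $p$ and therefore meets every $T_k$.

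I would then define $D$ by diagonalization: inductively choose strictly increasing indices $k_n$ together with $m_n\in C_n\cap A_{k_n}$, and set $D=\{m_n:n\in \nn\}$. By construction $D\subseteq \bigcup_l A_l$ and $D$ meets infinitely many $A_l$'s. To verify $\lim_{n\in D}f_n=f$ I would invoke the following consequence of the Bourgain--Fremlin--Talagrand theorem (using the standing reduction $f_n\neq f$): a set $D\in[\nn]$ belongs to $\llf$ if and only if no infinite $D'\subseteq D$ lies in $\iii_f$, because by angelicity any pointwise cluster point $g\neq f$ of a subsequence $(f_n)_{n\in D'}$ would witness $D'\in \iii_f$, while the converse is immediate. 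But if some infinite $D'\subseteq D$ were in $\iii_f$, it would be covered by a finite union $B_{n_1}\cup\cdots\cup B_{n_j}$; for every $n>\max\{n_1,\dots,n_j\}$ the point $m_n\in C_n$ avoids this union, contradicting the infinitude of $D'$.

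The delicate point is choreographing the construction so that $D$ simultaneously meets infinitely many $A_l$'s and is almost disjoint from each $B_n$. The ultrafilter $p$ is engineered precisely to reconcile these opposing requirements, and this is where the bi-sequentiality of $\iii_f$ plays its essential role; the remaining ingredients are heredity of $\iii_f$ and the angelic behaviour of separable Rosenthal compacta.
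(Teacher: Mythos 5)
Your argument is correct and is essentially the paper's own proof: both take the decreasing tails $\bigcup_{l\geq k}A_l$, extend them together with $\{\nn\setminus L: L\in\iii_f\}$ to an ultrafilter $p$ concentrating on $f$, invoke Pol's bi-sequentiality of $\iii_f$ to obtain a countable $p^*$-generating family, and then diagonalize to produce $D$. You merely add an unnecessary disjointification step and spell out the final verification (via the elementary fact that $D\in\llf$ iff no infinite subset of $D$ lies in $\iii_f$) that the paper leaves to the reader.
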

\begin{proof}
For every $k\in\nn$ we set $B_k=\bigcup_{l\geq k} A_l$. Then
$(B_k)_k$ is a decreasing sequence of infinite subsets of $\nn$.
We may select $p\in\beta\nn$ such that $\lim_{n\in p} f_n=f$ and
$B_k\in p$ for every $k\in\nn$. By the bi-sequentiality of $\iii_f$,
there exists a sequence $(C_m)_m$ of elements of $p$ converging to $f$.
We select a strictly increasing sequence $(l_k)_k$ in $\nn$ such that
$l_k\in B_k\cap C_0\cap ...\cap C_k$ for all $k\in\nn$ and we
set $D=\{l_k:k\in\nn\}$. It is easy to check that $D$ is as
desired.
\end{proof}
In the sequel we will apply K. Milliken's theorem \cite{Mil}. To
this end, we need to recall some pieces of notation. Given $b,
b'\in\Sigma$ we write $b<b'$ if $\max b<\min b'$. By $\mathbf{B}$
we denote the subset of $\Sigma^\nn$ consisting of all sequences
$(b_n)_n$ which are increasing, i.e. $b_n<b_{n+1}$ for every
$n\in\nn$. It is easy to see that $\mathbf{B}$ is a closed
subspace of $\Sigma^\nn$, where $\Sigma$ is equipped with the
discrete topology and $\Sigma^\nn$ with the product topology. For
every $\mathbf{b}=(b_n)_n\in \mathbf{B}$ we set
\[ \langle \mathbf{b}\rangle=\big\{ \bigcup_{n\in F} b_n: F\subseteq\nn
\text{ finite}\big\} \  \text{ and } \
[\mathbf{b}]=\{(c_n)_n\in\mathbf{B}: c_n\in \langle\mathbf{b}\rangle \
\forall n\}.\]
Let us point out that for every block sequence $\mathbf{b}$ the set
$\langle\mathbf{b}\rangle$ corresponds to an infinitely branching
subtree of $\Sigma$, denoted by $\mathcal{T}_{\mathbf{b}}$.
We also recall that the chains of $\mathcal{T}_{\mathbf{b}}$
are in one-to-one correspondence with the family $[\mathbf{b}]$
of all block subsequences of $\mathbf{b}$.
More precisely if $(t_n)_n$ is a chain of $\mathcal{T}_{\mathbf{b}}$,
then $(t_0, t_1\setminus t_0, ..., t_{n+1}\setminus t_n, ...)$
is the block subsequence of $\mathbf{b}$ corresponding to the
chain $(t_n)_n$. This observation was used by W. Henson to derive
an alternative proof of J. Stern's theorem (see \cite{Od}).
If $\beta=(b_0,...,b_k)$ with $b_0<...<b_k$ and
$\mathbf{d}\in\mathbf{B}$, then we set
\[ [\beta,\mathbf{d}]= \{ (c_n)_n\in\mathbf{B}: c_n=b_n \ \forall n\leq k
\text{ and } c_n\in \langle\mathbf{d}\rangle \ \forall n>k\}.\]
We will use the following consequence of Milliken's theorem.
\begin{thm}
\label{milth} For every $\mathbf{b}\in\mathbf{B}$ and every analytic subset
$A$ of $\mathbf{B}$ there exists $\mathbf{c}\in [\mathbf{b}]$
such that either $[\mathbf{c}]\subseteq A$, or $[\mathbf{c}]
\cap A=\varnothing$.
\end{thm}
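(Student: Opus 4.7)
The plan is to reduce Theorem \ref{milth} to Milliken's Ramsey theorem for block sequences in its standard form. Milliken's theorem \cite{Mil} asserts that for every analytic $A\subseteq \mathbf{B}$ there exists $\mathbf{d}\in\mathbf{B}$ with either $[\mathbf{d}]\subseteq A$ or $[\mathbf{d}]\cap A=\varnothing$. This is exactly the special case of Theorem \ref{milth} corresponding to the ``trivial'' block sequence $\mathbf{b}_0=(\{0\},\{1\},\{2\},\dots)$; the task is therefore to perform an appropriate change of variables.

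The key device is the canonical homeomorphism $\Phi_\mathbf{b}:\mathbf{B}\to[\mathbf{b}]$ that reindexes block sequences through $\mathbf{b}$. Explicitly, for $\mathbf{e}=(e_n)_n\in\mathbf{B}$, set
\[ \Phi_\mathbf{b}(\mathbf{e})=\Big(\bigcup_{i\in e_n}b_i\Big)_n. \]
Since $\max e_n<\min e_{n+1}$ in $\nn$ forces $\max\bigcup_{i\in e_n}b_i<\min\bigcup_{i\in e_{n+1}}b_i$ in $\Sigma$, the output lies in $\mathbf{B}$; in fact in $[\mathbf{b}]$, and every element of $[\mathbf{b}]$ is obtained in exactly one way. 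Continuity of $\Phi_\mathbf{b}$ and of its inverse is immediate from the product topology on $\Sigma^\nn$, so $\Phi_\mathbf{b}$ is a homeomorphism of $\mathbf{B}$ onto $[\mathbf{b}]$. Moreover, $\Phi_\mathbf{b}$ commutes with the operation of taking block subsequences, that is, for every $\mathbf{e}\in\mathbf{B}$ one has $\Phi_\mathbf{b}([\mathbf{e}])=[\Phi_\mathbf{b}(\mathbf{e})]$, a fact which is combinatorially transparent once one unpacks what it means for one block sequence to lie in $\langle\cdot\rangle$ of another.

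With $\Phi_\mathbf{b}$ in hand the argument is immediate. Given an analytic $A\subseteq\mathbf{B}$ and $\mathbf{b}\in\mathbf{B}$, the set $A'=\Phi_\mathbf{b}^{-1}(A\cap[\mathbf{b}])$ is an analytic subset of $\mathbf{B}$ by continuity of $\Phi_\mathbf{b}$. Milliken's theorem applied to $A'$ provides $\mathbf{d}\in\mathbf{B}$ with $[\mathbf{d}]$ homogeneous for $A'$. Setting $\mathbf{c}=\Phi_\mathbf{b}(\mathbf{d})\in[\mathbf{b}]$ and using the identity $\Phi_\mathbf{b}([\mathbf{d}])=[\mathbf{c}]$, the two alternatives $[\mathbf{d}]\subseteq A'$ and $[\mathbf{d}]\cap A'=\varnothing$ transfer respectively to $[\mathbf{c}]\subseteq A$ and $[\mathbf{c}]\cap A=\varnothing$, as required.

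The entire content of the statement is absorbed by Milliken's theorem, so there is no genuine obstacle; the only mild verifications are the well-definedness, continuity, and block-subsequence-preservation of $\Phi_\mathbf{b}$, each of which follows by direct inspection of the definitions.
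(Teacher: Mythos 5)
Your argument is correct. The paper itself gives no proof of Theorem \ref{milth} --- it is stated as a direct consequence of Milliken's theorem \cite{Mil} --- and your reindexing homeomorphism $\Phi_{\mathbf{b}}:\mathbf{B}\to[\mathbf{b}]$, together with the identity $\Phi_{\mathbf{b}}([\mathbf{e}])=[\Phi_{\mathbf{b}}(\mathbf{e})]$, is exactly the standard (and valid) way to pass from the unrelativized form of Milliken's theorem to the version relativized to a given block sequence $\mathbf{b}$, so your write-up supplies precisely the routine verification the authors left implicit.
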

For every $\mathbf{b}=(b_n)_n\in \mathbf{B}$ and every $n\in\nn$
we set $i_n=\bigcup_{i=0}^n b_i$. We define $C:\mathbf{B}\to \Sigma^\nn$
and $A:\mathbf{B}\to \Sigma^\nn$ by
\[  C\big( (b_n)_n\big)=(i_0, ..., i_n, ...) \ \text{ and } \
A\big( (b_n)_n\big)=(i_0\cup b_2, ..., i_{3n}\cup b_{3n+2}, ...).\]
We notice that for every $\mathbf{b}\in\mathbf{B}$ the sequence
$C(\mathbf{b})$ is a chain of $\Sigma$ while $A(\mathbf{b})$ is
an antichain of $\Sigma$ converging, in the sense of Definition
\ref{rd1}, to $\sg=\bigcup_n i_n\in [\Sigma]$.
We also notice that the functions $C$ and $A$ are continuous.
\begin{lem}
\label{mill1} Let $\{f_t\}_{t\in\Sigma}$ be a Krawczyk tree of $f$
with respect to $\{f_n\}_n$ and $\mathcal{C}$. Then there exists a
block sequence $\mathbf{b}=(b_n)_n$ such that for every
$\mathbf{c}\in [\mathbf{b}]$ the sequence $(f_t)_{t\in C(\mathbf{c})}$
is pointwise convergent to a function belonging to $\ccc$ and
different than $f$, while the sequence $(f_t)_{t\in A(\mathbf{c})}$
is pointwise convergent to $f$.
\end{lem}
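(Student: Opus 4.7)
The plan is to invoke Theorem~\ref{milth} (Milliken) twice, with two carefully chosen coanalytic colourings, and use the fact that the positive side of each dichotomy must occur.

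\textbf{First Milliken step (chain side).} I would consider
\[
\mathcal{U}_1=\{\mathbf{c}\in\mathbf{B}:(f_t)_{t\in C(\mathbf{c})}\text{ converges pointwise}\},
\]
which is a coanalytic subset of $\mathbf{B}$ because convergence of a sequence in the Rosenthal compact $\kk$ is coanalytic (as in the proof of Theorem~\ref{rt1}). Theorem~\ref{milth} produces $\mathbf{b}^{(1)}\in[\mathbf{b}]$ homogeneous for $\mathcal{U}_1$. The negative alternative is impossible: by the Bourgain--Fremlin--Talagrand theorem, some subsequence of $(f_t)_{t\in C(\mathbf{b}^{(1)})}$ converges, and any increasing subsequence $(i_{n_k})_k$ of the chain $C(\mathbf{b}^{(1)})=(i_n)_n$ is exactly $C(\mathbf{c})$ for the block subsequence $\mathbf{c}\in[\mathbf{b}^{(1)}]$ obtained by grouping consecutive $b^{(1)}_j$'s into $c_k=b^{(1)}_{n_{k-1}+1}\cup\cdots\cup b^{(1)}_{n_k}$ (with $c_0=i_{n_0}$). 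Hence $[\mathbf{b}^{(1)}]\subseteq\mathcal{U}_1$. Moreover, for every $\mathbf{c}\in[\mathbf{b}^{(1)}]$, the chain $C(\mathbf{c})$ is a subsequence of $(\sigma|n)_n$ for $\sigma=\bigcup_n c_n\in[\Sigma]$, and by property (1) of Corollary~\ref{krac1} its pointwise limit belongs to $\ccc$ and differs from $f$.

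\textbf{Second Milliken step (antichain side).} I would then consider
\[
\mathcal{U}_2=\{\mathbf{c}\in[\mathbf{b}^{(1)}]:(f_t)_{t\in A(\mathbf{c})}\text{ converges pointwise to }f\},
\]
which is coanalytic (the negation is an analytic condition obtained by an existential quantifier over the Polish space $X$). Applying Theorem~\ref{milth} inside $[\mathbf{b}^{(1)}]$ yields $\mathbf{b}\in[\mathbf{b}^{(1)}]$ homogeneous for $\mathcal{U}_2$, and once the positive alternative is forced, $\mathbf{b}$ witnesses the conclusion of the lemma, in view of the first step.

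\textbf{Ruling out the negative alternative.} The heart of the argument is to exhibit a single $\mathbf{c}\in[\mathbf{b}^{(1)}]$ with $A(\mathbf{c})\to f$. I would build $\mathbf{c}=(c_j)_j\in[\mathbf{b}^{(1)}]$ by induction on $j$, making each $c_j$ a finite union of members of $\mathbf{b}^{(1)}$. Having fixed $c_0,\ldots,c_{3k+1}$, the prefix $i_{3k}(\mathbf{c})$ is determined, so by property~(P) of the Krawczyk tree the ``fan'' $(f_{i_{3k}(\mathbf{c})\cup b})_b$, as $b$ ranges over single blocks of $\mathbf{b}^{(1)}$ lying strictly above $c_{3k+1}$, is a sequence pointwise convergent to $f$. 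I would use Lemma~\ref{poll1} (a consequence of the bi-sequentiality of $\iii_f$ due to Pol) to make a diagonal choice of $c_{3k+2}$ at each stage so that the resulting antichain $A(\mathbf{c})=(s_n)_n$ with $s_n=i_{3n}(\mathbf{c})\cup c_{3n+2}$ satisfies $(f_{s_n})_n\to f$ pointwise on $X$. This forces $\mathcal{U}_2\cap[\mathbf{b}^{(1)}]\neq\varnothing$, so the Milliken dichotomy yields $[\mathbf{b}]\subseteq\mathcal{U}_2$.

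The main obstacle is precisely this last diagonal construction: pointwise convergence on the uncountable Polish space $X$ cannot be read off from convergence on a countable dense set, so to turn the sequence of fans produced by property~(P) into a single convergent sequence one genuinely needs the bi-sequential structure of $\iii_f$ through Lemma~\ref{poll1}. Once this obstacle is overcome, the rest of the argument is essentially bookkeeping of the Milliken dichotomy and unpacking of Corollary~\ref{krac1}.
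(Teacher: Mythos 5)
Your overall strategy is exactly the paper's: the same two colourings, two applications of Theorem \ref{milth}, sequential compactness plus the regrouping of blocks to force the chain side, Corollary \ref{krac1}(1) to see that the chain limits lie in $\ccc$ and differ from $f$, and property (P) together with Lemma \ref{poll1} to force the antichain side. However, as written there is a genuine slip in the second Milliken step: once Milliken gives you a homogeneous $\mathbf{b}\in[\mathbf{b}^{(1)}]$, the alternative you must exclude is $[\mathbf{b}]\cap\mathcal{U}_2=\varnothing$, so the witness $\mathbf{c}$ has to be produced \emph{inside} $[\mathbf{b}]$. Exhibiting some $\mathbf{c}\in[\mathbf{b}^{(1)}]$ with $A(\mathbf{c})\to f$ only shows $\mathcal{U}_2\cap[\mathbf{b}^{(1)}]\neq\varnothing$; it does not preclude the negative alternative for the smaller family $[\mathbf{b}]$, so your final sentence ``this forces $\mathcal{U}_2\cap[\mathbf{b}^{(1)}]\neq\varnothing$, so the Milliken dichotomy yields $[\mathbf{b}]\subseteq\mathcal{U}_2$'' is a non sequitur. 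The repair is immediate, because your construction uses only property (P) and Lemma \ref{poll1}, which are available for any block sequence: run it on $\mathbf{b}$ itself, which is precisely what the paper does.

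A second, smaller defect is the phrase ``make a diagonal choice of $c_{3k+2}$ at each stage'' using Lemma \ref{poll1}. As you yourself observe, $f$ need not have a countable neighbourhood basis, so no finite-stage selection can guarantee pointwise convergence of the final diagonal; moreover, in your scheme the fan at stage $k$ depends on the earlier $c_j$'s, which were themselves to be chosen via the lemma, a circularity. The correct (and the paper's) use is one-shot: fix the fans $A_l=\{i_l\cup b_m:\ m>l+1\}$ attached to the prefixes $i_l$ of the fixed homogeneous $\mathbf{b}$ (each converges pointwise to $f$ by property (P), since they do not depend on any $\mathbf{c}$ under construction), apply Lemma \ref{poll1} once to get a single set $D\subseteq\bigcup_l A_l$ with $(f_t)_{t\in D}\to f$ meeting infinitely many $A_l$, extract indices $l_0<m_0<l_1<m_1<\ldots$ with $l_n+1<m_n<l_{n+1}$ and $i_{l_n}\cup b_{m_n}\in D$, and only then define $\mathbf{c}\in[\mathbf{b}]$ by grouping the blocks of $\mathbf{b}$ so that $A(\mathbf{c})=(i_{l_n}\cup b_{m_n})_n\subseteq D$. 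With these two corrections your argument coincides with the paper's proof.
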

\begin{proof}
Let
\[ C_1=\{ \mathbf{c}\in\mathbf{B}: \text{the sequence }
(f_t)_{t\in C(\mathbf{c})} \text{ is pointwise convergent}\}.\]
It is easy to see that $C_1$ is a co-analytic subset of
$\mathbf{B}$. By Theorem \ref{milth} and the sequential
compactness of $\kk$, we find $\mathbf{d}\in\mathbf{B}$ such
that $[\mathbf{d}]$ is a subset of $C_1$. As we have already
remarked, for every block sequence $\mathbf{c}$ the sequence
$C(\mathbf{c})$ is a chain of $\Sigma$. Hence, by Corollary
\ref{krac1}(1), we see that for every $\mathbf{c}\in [\mathbf{d}]$
the sequence $(f_t)_{t\in C(\mathbf{c})}$ must be pointwise
convergent to a function belonging to $\ccc$ and different
than $f$.

Now let
\[ C_2=\{ \mathbf{c}\in [\mathbf{d}]: \text{the sequence }
(f_t)_{t\in A(\mathbf{c})} \text{ is pointwise convergent to } f\}. \]
Again by Milliken's theorem, there exists $\mathbf{b}=(b_n)_n\in [\mathbf{d}]$
such that either $[\mathbf{b}]\subseteq C_2$, or $[\mathbf{b}]\cap
C_2=\varnothing$. We claim that $[\mathbf{b}]$ is subset of $C_2$.
It is enough to show that $[\mathbf{b}]\cap C_2\neq \varnothing$.
 To this end we argue as follows. Recall that for every
$l\in\nn$ we have set $i_l=b_0\cup ...\cup b_l$. Let
\[ A_l=\{i_l\cup b_m: m>l+1\} \subseteq \Sigma. \]
As the sequence $(b_n)_n$ is block, by property (P) above,
we see that the sequence $(f_t)_{t\in A_l}$ is pointwise
convergent to $f$. By Lemma \ref{poll1}, there exists
$D\subseteq \bigcup_{l} A_l$ such that the sequence $(f_t)_{t\in D}$
is pointwise convergent to $f$ and $D\cap A_l\neq\varnothing$
for infinitely many $l$. We may select
$L=\{l_0<l_1<...\}, M=\{m_0<m_1<...\}\in [\nn]$ such that
$l_n+1<m_n<l_{n+1}$ and $i_{l_n}\cup b_{m_n}\in D$ for all $n\in\nn$.
Now we define $\mathbf{c}=(c_n)_n\in [\mathbf{b}]$ as follows.
We set $c_0=i_{l_0}$, $c_1=b_{l_0+1}\cup ... \cup b_{m_0-1}$
and $c_2=b_{m_0}$. For every $n\in\nn$ with $n\geq 1$ let
$I_n=[m_{n-1}+1,l_n]$ and $J_n=[l_n,m_n-1]$ and set
\[ c_{3n}=\bigcup_{i\in I_n} b_i \ , \
c_{3n+1}=\bigcup_{i\in J_n} b_i \text{ and }
c_{3n+2}=b_{m_n}. \]
It is easy to see that $\mathbf{c}\in [\mathbf{b}]$ and
$A(\mathbf{c})=(i_{l_n}\cup b_{m_n})_n\subseteq D$.
Hence, the sequence $(f_t)_{t\in A(\mathbf{c})}$ is
pointwise convergent to $f$. It follows that $[\mathbf{b}]\cap
C_2 \neq\varnothing$ and the proof is completed.
\end{proof}
We are ready to proceed to the proof of Theorem \ref{agdt1}.
\begin{proof}[Proof of Theorem \ref{agdt1}]
Let $\mathbf{b}=(b_n)_n$ be the block sequence obtained
by Lemma \ref{mill1}. If $\beta=(b_{n_0},..., b_{n_k})$
with $n_0<...<n_k$ is a finite subsequence of $\mathbf{b}$,
then we let $\cup \beta=b_{n_0}\cup ...\cup b_{n_k}\in\Sigma$.
Recursively, we shall select a family $(\beta_s)_{s\in\ct}$
such that the following are satisfied.
\begin{enumerate}
\item[(C1)] For every $s\in\ct$, $\beta_s$ is a finite
subsequence of $\mathbf{b}$.
\item[(C2)] For every $s, s'\in \ct$ we have $s\sqsubset s'$
if and only if $\beta_s\sqsubset \beta_{s'}$.
\item[(C3)] For every $s\in\ct$ and every
$\mathbf{c}\in [\beta_{s^{\con}0}, \mathbf{b}]$ we have
$\cup \beta_{s^{\con}1}\in A(\mathbf{c})$.
\end{enumerate}
The construction proceeds as follows. We set
$\beta_{(\varnothing)}=(\varnothing)$. For every
$M=\{m_0<m_1<...\}\in [\nn]$, let $\mathbf{b}_M=(b_{m_n})_n$ be
the subsequence of $\mathbf{b}$ determined by $M$. Assume that for
some $s\in \ct$ the finite sequence $\beta_s$ has been defined. We
select $M=M_s\in [\nn]$ such that $\beta_s\sqsubset
\mathbf{b}_{M}$. The set $A(\mathbf{b}_{M})$ converges to the
unique branch of $\Sigma$ determined by the infinite chain
$C(\mathbf{b}_M)$. So, we may select a finite subsequence
$\beta_{s^{\con}1}$ with $\beta_s\sqsubset\beta_{s^{\con}1}$ and
such that $\cup \beta_{s^{\con}1}\in A(\mathbf{b}_{M})$. The
function $A:[\mathbf{b}]\to \Sigma^\nn$ is continuous. Hence,
there exists a finite subsequence $\beta_{s^{\con}0}$ of
$\mathbf{b}$ with $\beta_{s^{\con}0}\sqsubset \mathbf{b}_M$ and
such that condition (C3) above is satisfied. Finally, notice that
$\beta_{s^{\con}0}$ and $\beta_{s^{\con}1}$ are incomparable with
respect to the partial order $\sqsubset$ of extension.

One can also provide a recursive formula defining a family
$(\beta_s)_{s\in\ct}$ satisfying conditions (C1)-(C3) above.
In particular, set $\beta_{(\varnothing)}=(\varnothing)$,
$\beta_{(0)}=(b_0,b_1,b_2)$ and $\beta_{(1)}=(b_0,b_2)$.
Assume that $\beta_s$ has been defined for some $s\in\ct$.
Let $n_s=\max\{ n: b_n\in \beta_s\}$. If $s$ ends with $0$,
then we set
\[ \beta_{s^{\con}0}=\beta_s^{\con}(b_{n_s+1}, b_{n_s+2}, b_{n_s+3})
\ \text{ and } \ \beta_{s^{\con}1}= \beta_s^{\con}(b_{n_s+1},b_{n_s+3}).\]
If $s$ ends with $1$, then we set
\[ \beta_{s^{\con}0}=\beta_s^{\con}(b_{n_s+1}, b_{n_s+2},
b_{n_s+3}, b_{n_s+4}) \
\text{ and } \beta_{s^{\con}1}=\beta_s^{\con}(b_{n_s+1},b_{n_s+2}, b_{n_s+4}). \]
It is easy to see that, with the above choices,
conditions (C1)-(C3) are satisfied.

Having defined the family $(\beta_s)_{s\in\ct}$ for every $s\in \ct$
we let
\[ t_s=\cup \beta_s\in\Sigma \text{ and } h_s=f_{t_s}. \]
Clearly the family $\{h_s\}_{s\in\ct}$ is a dyadic subtree of the
Krawczyk tree $\{f_t\}_{t\in\Sigma}$ of $f$ with respect to
$\{f_n\}_n$ and $\mathcal{C}$. The basic properties of the family
$\{h_s\}_{s\in\ct}$ are summarized in the following claim.
\bigskip

\noindent \textsc{Claim 1.} \textit{The following hold.
\begin{enumerate}
\item[(1)] For every $\sg\in 2^\nn$ the sequence $(h_{\sg|n})_n$
is pointwise convergent to a function $g_\sg\in\mathcal{C}$ with
$g_\sg\neq f$.
\item[(2)] For every $P\subseteq 2^\nn$ perfect the function
$f$ belongs to the closure of the family $\{g_\sg:\sg\in P\}$.
\end{enumerate} }
\bigskip

\noindent \textit{Proof of the claim.} (1) Let $\sg\in 2^\nn$
and put $\mathbf{b}_{\sg}=\bigcup_n \beta_{\sg|n}\in [\mathbf{b}]$.
It is easy to see that the sequence $(t_{\sg|n})_n$
is a subsequence of the sequence $C(\mathbf{b}_\sg)$. So the
result follows by Lemma \ref{mill1}.\\
(2) Assume not. Then there exist $P\subseteq 2^\nn$ perfect and a
neighborhood $V$ of $f$ in $\rr^X$ such that $g_\sg\notin
\overline{V}$ for all $\sg\in P$. By part (1), for every $\sg\in
P$ there exists $n_\sg\in \nn$ such that $h_{\sg|n}\notin V$ for
all $n\geq n_\sg$. For every $n\in\nn$ let $P_n=\{\sg\in P:
n_\sg\leq n\}$. Then each $P_n$ is a closed subset of $P$ and
clearly $P=\bigcup_n P_n$. Thus, there exist $n_0\in\nn$ and
$Q\subseteq 2^\nn$ perfect with $Q\subseteq P_{n_0}$. It follows
that $h_{\sg|n}\notin V$ for all $\sg\in Q$ and $n\geq n_0$. Let
$\tau$ be the lexicographical minimum of $Q$. We may select a
sequence $(\sg_k)_k$ in $Q$ such that, setting $s_k=\tau\wedge
\sg_k$ for all $k\in\nn$, we have that $\sg_k\to \tau$, $\tau\prec
\sg_k$ and $|s_k|>n_0$. Notice that $s_k^{\con}0\sqsubset\tau$
while $s_k^{\con}1\sqsubset \sg_k$ and $|s_k^{\con}1|>n_0$. Hence,
by our assumptions on the set $Q$ and the definition of
$\{h_s\}_{s\in\ct}$, we get that
\begin{eqnarray}
\label{crucial} h_{s_k^{\con}1}=f_{t_{s_k^{\con}1}}\notin V \ \text{ for all } k\in\nn.
\end{eqnarray}
We are ready to derive the contradiction.
We set $\mathbf{b}_\tau=\bigcup_n \beta_{\tau|n}\in [\mathbf{b}]$.
As $\beta_{s_k^{\con}0}\sqsubset \mathbf{b}_\tau$, by property (C3)
in the above construction, we see that $t_{s_k^{\con}1}=\cup
\beta_{s_k^{\con}1}\in A(\mathbf{b}_\tau)$ for all $k\in\nn$.
By Lemma \ref{mill1}, the sequence $(f_t)_{t\in A(\mathbf{b}_\tau)}$
is pointwise convergent to the function $f$. It follows
that the sequence $(f_{t_{s_k^{\con}1}})_k$ is also
pointwise convergent to $f$, which clearly contradicts
(\ref{crucial}) above. The proof is completed. \hfill
$\lozenge$
\bigskip

\noindent We apply Theorem \ref{basis} to the family
$\{h_s\}_{s\in \ct}$ and we get a regular dyadic subtree
$T=(s_t)_{t\in\ct}$ of $\ct$ such that the family $\{h_{s_t}\}_{t\in\ct}$
is canonicalized. The main claim is the following.
\bigskip

\noindent \textsc{Claim 2.} \textit{The family $\{h_{s_t}\}_{t\in\ct}$
is equivalent to the canonical dense family of $\alex$. }
\bigskip

\noindent \textit{Proof of the claim.}  In order to prove
the claim we will isolate a property of the whole family
$\{h_s\}_{s\in\ct}$ (property (Q) below). Let $S$ be an
arbitrary regular dyadic subtree of $\ct$. Notice that
$g_\sg\in\overline{\{h_s\}}^p_{s\in S}$ for every
$\sg\in[\hat{S}]$. By property (2) in Claim 1,
we see that the function $f$ belongs to the pointwise closure of
$\{h_s\}_{s\in S}$ in $\rr^X$. By the Bourgain-Fremlin-Talagrand
theorem there exists $A\subseteq S$ such that the sequence
$(h_s)_{s\in A}$ is pointwise convergent to $f$. By property
(1) in Claim 1, we see that $A$ can be chosen to
be an antichain converging to some $\sg\in [\hat{S}]$.
As all these facts hold for every regular dyadic subtree
$S$ of $\ct$ we arrive to the following property of the
family $\{h_s\}_{s\in\ct}$.
\begin{enumerate}
\item[(Q)] For every regular dyadic subtree $S$ of $\ct$, there
exist two antichains $A_1, A_2$ of $S$ and $\sg_1,\sg_2\in
[\hat{S}]$ with $\sg_1\neq\sg_2$ such that $A_1$ converges to
$\sg_1$, $A_2$ converges to $\sg_2$ while both sequences
$(h_s)_{s\in A_1}$ and $(h_s)_{s\in A_2}$ are pointwise convergent
to $f$.
\end{enumerate}
Now let $T=(s_t)_{t\in\ct}$ be the regular dyadic subtree of $\ct$
such that the family $\{h_{s_t}\}_{t\in\ct}$ is canonicalized.
Invoking property (Q) above and referring to the description of
the families $\{d^i_t:t\in\ct\} \ (1\leq i\leq 7)$, we see that
$\{h_{s_t}\}_{t\in \ct}$ must be equivalent either to the
canonical dense family of $A(\ct)$ or the canonical dense family
of $\alex$. By property (1) in Claim 1, the first case
is impossible. It follows that $\{h_{s_t}\}_{t\in\ct}$
must be equivalent to the canonical dense family of $\alex$.
The claim is proved. \hfill $\lozenge$
\bigskip

Let $T=(s_t)_{t\in\ct}$ and $\{h_{s_t}\}_{t\in\ct}$ be as above.
Observe that for every $t\in \ct$ there exists a unique $n_t\in\nn$
with $h_{s_t}=f_{n_t}$. Thus, by passing to dyadic subtree of $T$
if necessary and invoking the minimality of the canonical
dense family of $\alex$, we get that the function
$\ct\ni t\mapsto n_t\in\nn$ is a canonical injection
and that the map
 \[ \alex\ni v_t \mapsto f_{n_t}\in \kk\]
is extended to homeomorphism $\Phi$ between $\alex$ and $\overline{\{
f_{n_t}\}}^p_{t\in\ct}$. That this homeomorphism sends $0$ to
$f$ is an immediate consequence of property (Q) in Claim 2 above.
Moreover, by Claim 1(1), we see that $\Phi(\delta_\sg)\in\mathcal{C}$
for every $\sg\in 2^\nn$. The proof is completed.
\end{proof}
By Theorem \ref{agdt1} and Proposition \ref{nap1}(1) we get the following corollary.
\begin{cor}
\label{gdt1} Let $\kk$ be a separable Rosenthal compact on a Polish
space $X$, $\{f_n\}_n$ a countable dense subset of $\kk$ and $f\in\kk$.
If $f$ is a non-$G_\delta$ point of $\kk$, then there exists a canonical
homeomorphic embedding of $\alex$ into $\kk$ with respect to $\{f_n\}_n$
which sends $0$ to $f$.
\end{cor}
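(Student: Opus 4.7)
The plan is a direct specialization of Theorem \ref{agdt1} to the case $\ccc=\kk$, using Proposition \ref{nap1}(1) to verify the analyticity hypothesis. Concretely, given $\kk$, $\{f_n\}_n$ and the non-$G_\delta$ point $f\in\kk$, I would first invoke Proposition \ref{nap1}(1) with the countable dense set $\{f_n\}_n$: taking $A=[\nn]$ (which is trivially analytic) witnesses that $\ccc:=\kk$ is an analytic subspace of itself with respect to the given dense sequence.

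Next I would observe that the hypotheses of Theorem \ref{agdt1} are now literally satisfied: $\ccc=\kk$ is an analytic subspace, $\{f_n\}_n$ is a countable dense subset, $A=[\nn]$ witnesses analyticity, and $f\in\ccc$ is a non-$G_\delta$ point of $\ccc$ (since being non-$G_\delta$ in $\kk$ coincides with being non-$G_\delta$ in $\ccc=\kk$). Applying Theorem \ref{agdt1} then yields a canonical injection $\phi\colon \ct\to\nn$ such that the map $v_t\mapsto f_{\phi(t)}$ extends to a homeomorphism between $\alex$ and $\overline{\{f_{\phi(t)}\}}^p_{t\in\ct}$, with $0\mapsto f$. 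Since the embedding is with respect to $\{f_n\}_n$ and $\ccc=\kk$, the condition $\mathrm{Acc}\big(\{f_{\phi(t)}:t\in\ct\}\big)\subseteq\ccc$ required by Definition \ref{canemb} is automatic.

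There is essentially no obstacle here; the whole content of the corollary is packaged in Theorem \ref{agdt1}, and the only verification needed is that $\kk$ qualifies as an analytic subspace of itself, which is the trivial part of Proposition \ref{nap1}. Thus the proof reduces to two lines: cite Proposition \ref{nap1}(1) with $A=[\nn]$ and then apply Theorem \ref{agdt1}.
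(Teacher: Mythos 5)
Your proposal is correct and is exactly the paper's own argument: the paper derives this corollary by combining Theorem \ref{agdt1} with Proposition \ref{nap1}(1), i.e.\ by taking $\ccc=\kk$ with $A=[\nn]$ witnessing analyticity. Nothing is missing.
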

After a first draft of the present paper, S. Todor\v{c}evi\'{c}
informed us (\cite{To3}) that he is aware of the above corollary
with a proof based on his approach in \cite{To1}.

We notice that if $\kk$ is a non-metrizable separable Rosenthal
compact on a Polish space $X$, then the constant function $0$ is
a non-$G_\delta$ point of $\kk-\kk$. Indeed, since $\kk$ is
non-metrizable, for every $D\subseteq X$ countable there exist
$f,g\in\kk$ with $f\neq g$ and such that $f|_D=g|_D$. This easily
yields that $0$ is a non-$G_\delta$ point of $\kk-\kk$. By Corollary
\ref{gdt1}, we see that there exists a homeomorphic embedding
of $\alex$ into $\kk-\kk$ with $0$ as the unique non-$G_\delta$ point
of its image. This fact can be lifted to the class of analytic
subspaces, as follows.
\begin{cor}
\label{kmk} Let $\kk$ be a separable Rosenthal compact and $\ccc$
an analytic subspace of $\kk$ which is non-metrizable. Let also
$D=\{f_n\}_n$ be a countable dense subset of $\kk$ witnessing
the analyticity of $\ccc$. Then there exists a family
$\{f_t\}_{t\in\ct}\subseteq D-D$, equivalent to the canonical
dense family of $\alex$, with $Acc\big(f_t:t\in\ct\}\big)\subseteq
\ccc-\ccc$ and such that the constant function $0$
is the unique non-$G_\delta$ point of
$\overline{\{f_t\}}^p_{t\in\ct}$.
\end{cor}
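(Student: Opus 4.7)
The plan is to reduce the corollary to an application of Theorem \ref{agdt1} to the difference compact $\kk-\kk$. First, $\kk-\kk$ is itself a separable Rosenthal compact: $\bbb_1(X)$ is closed under subtraction, so $\kk-\kk\subseteq\bbb_1(X)$, and as the continuous image of $\kk\times\kk$ under subtraction it is compact. The countable set $D-D$ is pointwise dense in $\kk-\kk$, and $\ccc-\ccc$ is closed in $\kk-\kk$ by the same compactness argument. Fix a bijection $k\mapsto(a_k,b_k)\in\nn^2$ and enumerate $D-D$ accordingly as $\{g_k\}_{k\in\nn}$, with $g_k=f_{a_k}-f_{b_k}$.

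The first step is to verify that $\ccc-\ccc$ is an analytic subspace of $\kk-\kk$ with respect to $\{g_k\}_k$. Starting from a hereditary analytic set $A\subseteq[\nn]$ witnessing the analyticity of $\ccc$ with respect to $D$, set
\[ A'=\big\{L\in[\nn]:\ k\mapsto a_k \text{ and } k\mapsto b_k \text{ are injective on } L,\text{ and } \{a_k:k\in L\},\{b_k:k\in L\}\in A\big\}. \]
Then $A'$ is analytic and hereditary. For $L\in A'$, any accumulation point $h$ of $\{g_k:k\in L\}$ admits, after thinning $L$ twice, a subsequence along which $g_k\to h$, $f_{a_k}\to f$, and $f_{b_k}\to g$, giving $h=f-g$; hereditariness of $A$ combined with clause (1) of Definition \ref{nad1} for $\ccc$ then forces $f,g\in\ccc$, whence $h\in\ccc-\ccc$. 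Conversely, any accumulation point $h\in(\ccc-\ccc)\cap Acc(\kk-\kk)$ decomposes as $f-g$ with $f,g\in\ccc$; choosing witnesses $L_1,L_2\in A$ for $f,g$, extracting pointwise convergent subsequences, and zipping them through the bijection produces a member of $A'$ whose associated sequence converges to $h$.

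The second step is to show that $0$ is a non-$G_\delta$ point of $\ccc-\ccc$. If not, write $\{0\}=\bigcap_n U_n$ in $\ccc-\ccc$ with each $U_n$ a pointwise-basic neighborhood of $0$ determined by finitely many evaluation points, and let $D_X\subseteq X$ be the countable union of all those points. Then $f|_{D_X}=g|_{D_X}$ would force $f-g\in\bigcap_n U_n=\{0\}$, so the continuous restriction map $\ccc\to\rr^{D_X}$ would be injective; compactness of $\ccc$ together with metrizability of $\rr^{D_X}$ would then render $\ccc$ metrizable, contradicting the hypothesis.

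With both ingredients in place, Theorem \ref{agdt1} applied to $\kk-\kk$, $\ccc-\ccc$, the dense sequence $\{g_k\}_k$, the witness $A'$, and the non-$G_\delta$ point $0$ produces a canonical injection $\phi:\ct\to\nn$ such that the family $\{f_t\}_{t\in\ct}:=\{g_{\phi(t)}\}_{t\in\ct}\subseteq D-D$ is equivalent to the canonical dense family of $\alex$, has $Acc(\{f_t:t\in\ct\})\subseteq\ccc-\ccc$, and its associated homeomorphism sends $0\in\alex$ to the constant function $0$. Since $0$ is the unique non-$G_\delta$ point of $\alex$, the same holds in $\overline{\{f_t\}}^p_{t\in\ct}$, concluding the proof. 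The delicate point is the splitting step in the analyticity verification: one must decompose a convergent subsequence of $\{g_k:k\in L\}$ into coherent convergent sequences in $D$ with limits in $\ccc$, which is where the interplay of the Bourgain--Fremlin--Talagrand theorem with the hereditariness of $A$ matters most.
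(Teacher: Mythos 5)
Your overall strategy coincides with the paper's: pass to $\kk-\kk$ with the dense set $D-D$, check that $\ccc-\ccc$ is an analytic subspace of $\kk-\kk$ and that $0$ is a non-$G_\delta$ point of it (via non-metrizability of $\ccc$), and apply Theorem \ref{agdt1}. The second and third steps are correct and are exactly what the paper does. The problem lies in the first step, where the paper only says ``it is easy to see'' and you supply an explicit witness $A'$: that witness does not work. You require both coordinate maps $k\mapsto a_k$ and $k\mapsto b_k$ to be injective on $L$ with images in $A$, and this can violate condition (2) of Definition \ref{nad1}. Indeed, Definition \ref{nad1}(2) only provides members of $A$ converging to points of $\ccc\cap Acc(\kk)$; if $h=f-g\in(\ccc-\ccc)\cap Acc(\kk-\kk)$ with $f\in\ccc$ an \emph{isolated} point of $\kk$ (a common situation, since the $f_n$ are typically isolated and may belong to $\ccc$), then every sequence in $D$ converging to $f$ is eventually constant, so the only realizations of $h$ as $\lim_k(f_{a_k}-f_{b_k})$ may have $a_k$ eventually constant, and the corresponding $L$ is excluded from your $A'$. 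A concrete instance of the mechanism: $X=\nn$, $f_n=\chi_{\{n\}}$, $\kk=\{f_n:n\in\nn\}\cup\{0\}=\ccc$; then $f_{n_0}=\lim_m(f_{n_0}-f_m)$ lies in $(\ccc-\ccc)\cap Acc(\kk-\kk)$, but whenever both coordinates are injective on $L$ one has $\overline{\{g_k\}}^p_{k\in L}\subseteq\{g_k:k\in L\}\cup\{0\}$, which never contains $f_{n_0}$. There is a parallel, milder issue in your verification of condition (1): ``$f,g\in\ccc$'' follows from clause (1) for $A$ only when $f$ and $g$ are genuine accumulation points of the relevant subfamilies, i.e.\ when the subsequences $(f_{a_{k_j}})$ and $(f_{b_{k_j}})$ are not eventually constant as functions; this is automatic only if the $f_n$ are pairwise distinct.

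The repair is routine but needs to be made. Enlarge $A'$ so as to also admit sets $L$ on which one (or both) of the coordinate maps is constant with value in the fixed set $N_{\ccc}=\{n:f_n\in\ccc\}$; since $N_\ccc$ is just a subset of $\nn$, the enlarged $A'$ remains analytic, and it is hereditary because subsets of injective (resp.\ constant) configurations are of the same type. Condition (1) still holds for the new sets: if $a_k\equiv n_0$ on $L$ with $f_{n_0}\in\ccc$ and $h$ is an accumulation point of $\{g_k:k\in L\}$, then along a realizing subsequence $(f_{b_{k_j}})$ cannot be eventually constant (otherwise $g_{k_j}$ would eventually equal $h$), so its limit is an accumulation point of $\{f_n:n\in\{b_k:k\in L\}\}$ and hence lies in $\ccc$, giving $h\in\ccc-\ccc$. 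Condition (2) is then satisfied because, writing $h=f-g$ with $f,g\in\ccc$, each of $f,g$ is either in $\ccc\cap Acc(\kk)$ (use Definition \ref{nad1}(2) together with the Bourgain--Fremlin--Talagrand theorem and hereditariness of $A$ to get an injective witness) or an isolated point of $\kk$ lying in $\ccc$, hence equal to some $f_{n_0}$ with $n_0\in N_\ccc$ (use a constant coordinate). With this corrected witness the remainder of your argument goes through and reproduces the paper's proof.
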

\begin{proof}
Let $\{g_n\}_n$ be an enumeration of the set
$D-D$, which is dense in $\kk-\kk$. It is easy to see that
$\ccc-\ccc$ is analytic subspace of $\kk-\kk$,
witnessed by the sequence $\{g_n\}_n$. Moreover, by the fact that
$\ccc$ is non-metrizable, we get that the constant function
$0$ belongs to $\ccc-\ccc$ and it is a non-$G_\delta$
point of it. By Theorem \ref{agdt1},
the result follows.
\end{proof}


\section{Connections with Banach space Theory}

This section is devoted to applications, motivated by the results
obtained in \cite{ADKbanach}, of the embedding of $\alex$ in
analytic subspaces of separable Rosenthal compacta containing $0$
as a non-$G_\delta$ point. The first one concerns the existence of
unconditional families. The second deals with spreading and level
unconditional tree bases.


\subsection{Existence of unconditional families}

We recall that a family $\{x_i\}_{i\in I}$ in a Banach space $X$
is said to be 1-unconditional, if for every $F\subseteq G\subseteq
I$ and every $(a_i)_{i\in G}\in \rr^G$ we have
\[ \Big\| \sum_{i\in F} a_i x_i\Big\| \leq
\Big\| \sum_{i\in G} a_i x_i\Big\|. \] We will need the following
reformulation of Theorem 4 in \cite{ADKbanach}, where we also
refer the reader for a proof.
\begin{thm}
\label{unct1} Let $X$ be a Polish space and $\{f_\sg:\sg\in
2^\nn\}$ be a bounded family of real-valued functions on $X$ which
is pointwise discrete and having the constant function $0$ as the
unique accumulation point in $\rr^X$. Assume that the map
$\Phi:2^\nn\times X\to\rr$ defined by $\Phi(\sg,x)=f_\sg(x)$ is
Borel. Then there exists a perfect subset $P$ of $2^\nn$ such that
the family $\{f_\sg:\sg\in P\}$ is 1-unconditional in the supremum
norm.
\end{thm}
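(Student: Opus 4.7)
The plan is to build the perfect set $P$ by a fusion argument driven by Galvin-type partition theorems for perfect sets. By density of $\qq$ in $\rr$ and continuity of finite linear combinations in the coefficients, 1-unconditionality of $\{f_\sigma:\sigma\in P\}$ reduces to verifying a countable family of \emph{tests}: for every $n\geq 1$, every $F\subseteq G=\{1,\ldots,n\}$, every rational vector $\bar a=(a_1,\ldots,a_n)\in\qq^n$, and every $q\in\qq_+$, the inequality
\[
\Bigl\|\sum_{i\in F} a_i f_{\sigma_i}\Bigr\|_\infty \leq \Bigl\|\sum_{i=1}^{n} a_i f_{\sigma_i}\Bigr\|_\infty + q
\]
should hold for every $\prec$-increasing $n$-tuple $(\sigma_1,\ldots,\sigma_n)$ in $P$. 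Since $\Phi$ is Borel, the failure of such a test defines an analytic subset of $(2^\nn)^n_\prec$, and in particular one with the Baire property.

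Enumerating the tests as $(T_k)_{k\in\nn}$, I would apply to each $T_k$ the straightforward $n$-dimensional extension of Theorem \ref{galvin2} (obtained by iterating Galvin's doubleton theorem along the coordinates), combined with a Cantor-scheme fusion, to construct a decreasing sequence of perfect sets $2^\nn=P_0\supseteq P_1\supseteq\cdots$ whose intersection $P=\bigcap_k P_k$ is still perfect and is monochromatic for each $T_k$ on ordered $n$-tuples.

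The decisive step is to argue that on $P$ the monochromatic color is always the \emph{good} one. Fix a test $T=(n,F,\bar a,q)$ and suppose toward contradiction that every $\prec$-increasing $n$-tuple in $P$ fails the inequality. Pick such a tuple $(\sigma_i)_{i=1}^n$. Processing the coordinates in $G\setminus F$ from the outermost inward, I would, at each step, replace $\sigma_j$ by $\tau^{(k)}_j$ chosen from the nonempty perfect piece of $P$ lying in the appropriate open $\prec$-interval between the already-fixed neighbors, with $f_{\tau^{(k)}_j}\to 0$ pointwise as $k\to\infty$; such sequences exist because $\{f_\sigma:\sigma\in 2^\nn\}$ is bounded, pointwise discrete, and has $0$ as its unique accumulation point, so every uncountable subfamily accumulates at $0$ by sequential compactness in $\rr^X$. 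Diagonalizing in $k$ over the coordinates in $G\setminus F$, the bad-color hypothesis yields, for each $k$,
\[
\Bigl\|\sum_{i\in F} a_i f_{\sigma_i}\Bigr\|_\infty > \Bigl\|\sum_{i\in F} a_i f_{\sigma_i}+\sum_{j\in G\setminus F} a_j f_{\tau^{(k)}_j}\Bigr\|_\infty + q,
\]
whereas the right-hand sums converge pointwise to $\sum_{i\in F}a_i f_{\sigma_i}$, so by the lower semicontinuity of $\|\cdot\|_\infty$ under pointwise convergence
\[
\Bigl\|\sum_{i\in F} a_i f_{\sigma_i}\Bigr\|_\infty \leq \liminf_k \Bigl\|\sum_{i\in F} a_i f_{\sigma_i}+\sum_{j\in G\setminus F} a_j f_{\tau^{(k)}_j}\Bigr\|_\infty \leq \Bigl\|\sum_{i\in F} a_i f_{\sigma_i}\Bigr\|_\infty - q,
\]
the required contradiction.

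The main obstacle I expect is calibrating the three ingredients so that they interact correctly: the $n$-dimensional Galvin partition must apply to an analytic (and not merely Borel) coloring, the fusion must guarantee that $P$ is genuinely perfect while still meeting every $\prec$-interval in a set that accumulates at $0$, and the lower-semicontinuity estimate must be reconciled with the fact that $f_{\tau^{(k)}_j}$ does not in general tend to $0$ in supremum norm. The rational slack $q$ built into each test is precisely what converts the purely pointwise accumulation hypothesis into the quantitative supremum-norm estimate needed to defeat the bad color.
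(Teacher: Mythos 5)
The paper does not actually prove Theorem \ref{unct1}; it quotes it from \cite{ADKbanach}, so your attempt has to stand on its own. Its skeleton is reasonable (countable rational tests with slack $q$, colorings with the Baire property, a perfect-set partition argument, and the accumulation-at-$0$ hypothesis to exclude the bad alternative), but the pivotal partition step is unjustified as written: there is no ``straightforward $n$-dimensional extension of Theorem \ref{galvin2}'' producing a perfect set monochromatic for a Baire-property coloring of ordered $n$-tuples, and it cannot be obtained by iterating the doubleton theorem coordinate by coordinate. Already for triples the clopen coloring of $\sigma_1\prec\sigma_2\prec\sigma_3$ according to whether $|\sigma_1\wedge\sigma_2|<|\sigma_2\wedge\sigma_3|$ realizes both colors inside every perfect set. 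The correct general tool is Blass's canonical partition theorem \cite{Bl}, which only yields a perfect set on which the color depends on the shape (the pattern of splitting levels) of the tuple; with mere shape-homogeneity your ``decisive step'' breaks down, because replacing the coordinates in $G\setminus F$ by arbitrary points of the prescribed $\prec$-intervals can change the shape, so the modified tuples are not known to carry the bad color. (A repair: choose the replacements inside small basic clopen neighborhoods of the original coordinates, below all relevant splitting levels, which preserves the shape; or bypass partition calculus entirely, as indicated below.)

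The second gap is your appeal to ``sequential compactness in $\rr^X$'': $\rr^X$ is not sequentially compact, the $f_\sigma$ are only Borel rather than Baire-1, so Bourgain--Fremlin--Talagrand is unavailable, and ``$0$ is the unique accumulation point'' gives only that every neighborhood of $0$ meets every infinite subfamily, not pointwise convergent sequences. Fortunately you need much less than sequences converging on all of $X$: note first that for every $x\in X$ and $\ee>0$ all but finitely many functions of the family satisfy $|f_\sigma(x)|<\ee$ (otherwise, by compactness of a cube in $\rr^X$, an infinite subfamily would have an accumulation point $g$ with $|g(x)|\geq\ee$, contradicting uniqueness of $0$). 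Then pick $x_0$ nearly norming $\sum_{i\in F}a_i f_{\sigma_i}$ and choose the replacements $\tau_j$ with $|f_{\tau_j}(x_0)|$ small; evaluating the bad inequality at $x_0$ gives the contradiction with slack $q$, with no lower semicontinuity needed. In fact this finiteness observation also yields a cleaner overall route that avoids Galvin/Blass altogether: each bad set has nowhere dense sections over the $F$-coordinates, hence (having the Baire property --- it is a countable union of differences of analytic sets, not an analytic set as you assert, though this slip is harmless) it is meager by Kuratowski--Ulam, and the Kuratowski--Mycielski theorem produces a single perfect $P$ avoiding all countably many bad sets at once; this is in the spirit of the category argument the paper does give for Theorem \ref{biorthogonal}.
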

In \cite{ADKbanach} it is shown that if $X$ is a separable Banach
space not containing $\ell_1$ and with non-separable dual, then
$X^{**}$ contains an $1$-unconditional family of the size of the
continuum. This result can be lifted to the frame of separable
Rosenthal compacta, as follows.
\begin{thm}
\label{tC} Let $\kk$ be a separable Rosenthal compact on a Polish
space $X$. Let also $\ccc$ be an analytic subspace of $\kk$
consisting of bounded functions.
\begin{enumerate}
\item[(a)] If $\ccc$ contains the function $0$ as a non-$G_\delta$
point, then there exists a family $\{f_\sg:\sg\in 2^\nn\}$ in
$\ccc$ which is 1-unconditional in the supremum norm, pointwise
discrete and having $0$ as unique accumulation point. \item[(b)]
If $\ccc$ is non-metrizable, then there exists a family
$\{f_\sg-g_\sg:\sg\in 2^\nn\}$, where $f_\sg, g_\sg\in\ccc$ for
all $\sg\in 2^\nn$, which is 1-unconditional in the supremum norm.
\end{enumerate}
\end{thm}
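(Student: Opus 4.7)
For part (a), the plan is to apply Theorem \ref{agdt1} with the non-$G_\delta$ point $f=0\in\ccc$. This furnishes a canonical injection $\phi:\ct\to\nn$ such that the map $v_t\mapsto f_{\phi(t)}$ extends to a homeomorphism $\Psi$ from $\alex$ onto $\overline{\{f_{\phi(t)}\}}^p_{t\in\ct}$ with $\Psi(0)=0$, and with $\Psi(\delta_\sg)\in\ccc$ for every $\sg\in 2^\nn$. Setting $h_\sg=\Psi(\delta_\sg)$, the family $\{h_\sg:\sg\in 2^\nn\}$ lies in $\ccc$, is pointwise discrete, and has $0$ as its unique accumulation point in $\rr^X$; moreover each $h_\sg$ is bounded since $\ccc$ consists of bounded functions. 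After these preparations Theorem \ref{unct1} applies (once we check the Borelness of the parameterization), yielding a perfect subset $P\subseteq 2^\nn$ on which $\{h_\sg:\sg\in P\}$ is $1$-unconditional in the supremum norm; a homeomorphism of $P$ with $2^\nn$ then re-indexes the family as required.

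The Borelness of $(\sg,x)\mapsto h_\sg(x)$ is the only technical step. It follows from the identity $h_\sg(x)=\lim_k f_{\phi(\sg|k)}(x)$: the finite-depth approximations $(\sg,x)\mapsto f_{\phi(\sg|k)}(x)$ depend only on $\sg|k$ in the first variable (hence are continuous in $\sg$) and are Baire-$1$ in $x$, so they are jointly Borel; and the pointwise limit of a sequence of Borel functions is Borel. I expect this verification, together with the identification of the image of $\delta_\sg$ in the embedding, to be the main bookkeeping task.

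For part (b), the plan is to reduce to part (a) applied to $\ccc-\ccc$. Since $\ccc$ is non-metrizable, for every countable $D_0\subseteq X$ there exist $f\neq g$ in $\ccc$ with $f|_{D_0}=g|_{D_0}$, so the constant function $0$ is a non-$G_\delta$ point of $\ccc-\ccc$. Moreover $\kk-\kk$ is a separable Rosenthal compact on $X$ (as the continuous image of $\kk\times\kk$ under subtraction), and $\ccc-\ccc$ is an analytic subspace of $\kk-\kk$ witnessed by an enumeration of $D-D$, where $D=\{f_n\}_n$ is a countable dense subset of $\kk$ witnessing the analyticity of $\ccc$ (this is the content of the first step in the proof of Corollary \ref{kmk}). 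Since $\ccc$ consists of bounded functions, so does $\ccc-\ccc$. Applying part (a) to $\ccc-\ccc$ in $\kk-\kk$ produces a $1$-unconditional family $\{h_\sg:\sg\in 2^\nn\}\subseteq\ccc-\ccc$; writing $h_\sg=f_\sg-g_\sg$ with $f_\sg,g_\sg\in\ccc$ concludes the proof.
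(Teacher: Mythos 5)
Your proposal is correct and follows essentially the same route as the paper: part (a) is Theorem \ref{agdt1} applied to the non-$G_\delta$ point $0$ of $\ccc$, followed by the Borelness check for $(\sg,x)\mapsto f_\sg(x)$ and an application of Theorem \ref{unct1}; part (b) is the reduction to $\ccc-\ccc$ inside $\kk-\kk$, which is exactly the content of Corollary \ref{kmk} that the paper invokes. The only detail worth flagging (present in the paper's own proof as well) is that Theorem \ref{unct1} asks for a bounded family, i.e.\ uniform boundedness, which should be noted rather than just the boundedness of each individual $h_\sg$.
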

\begin{proof}
(a) Let $D=\{f_n\}_n$ be a countable dense subset of $\kk$
witnessing the analyticity of $\ccc$. As $0$ is a non-$G_\delta$
point of $\ccc$, by Theorem \ref{agdt1} there exists a family
$\{f_t\}_{t\in\ct}\subseteq D$, equivalent to the canonical dense
family of $\alex$, with $Acc\big(\{f_t:t\in\ct\}\big)\subseteq
\ccc$ and such that the constant function $0$ is the unique
non-$G_\delta$ point of $\overline{\{f_t\}}^p_{t\in\ct}$. For
every $\sg\in 2^\nn$ let $f_\sg$ be the pointwise limit of the
sequence $(f_{\sg|n})_n$. Clearly the family $\{f_\sg:\sg\in
2^\nn\}$ is pointwise discrete, having $0$ as the unique
accumulation point. Moreover, it is easy to see that the map
$\Phi:2^\nn\times X\to \rr$ defined by $\Phi(\sg,x)=f_\sg(x)$ is
Borel. By Theorem \ref{unct1}, the result follows.\\
(b) It follows by Corollary \ref{kmk} and Theorem \ref{unct1}.
\end{proof}
Actually we can strengthen the properties of the family
$\{f_\sg:\sg\in 2^\nn\}$ obtained by part (a) of Theorem \ref{tC}
as follows.
\begin{thm}
\label{biorthogonal} Let $\kk$ be a separable Rosenthal compact
on a Polish space $X$ and $\ccc$ be an analytic subspace of $\kk$
consisting of bounded functions. Assume that $\ccc$ contains the
function $0$ as a non-$G_\delta$ point. Then there exist a family
$\{(g_\sg,x_\sg):\sg\in 2^\nn\}\subseteq \ccc\times X$ and $\ee>0$
satisfying $|g_\sg(x_\sg)|>\ee$, $g_\sg(x_\tau)=0$ if $\sg\neq \tau$
and such that the family $\{g_\sg:\sg\in 2^\nn\}$ is 1-unconditional
in the supremum norm and having $0$ as unique accumulating point.
\end{thm}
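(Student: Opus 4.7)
The plan is to begin from the $\alex$-embedding supplied by Theorem \ref{agdt1}, pass to the family of pointwise limits along the branches of the embedded tree, and refine twice: once by Galvin's theorem to install biorthogonal witnesses, and once by Theorem \ref{unct1} to recover $1$-unconditionality. First, I would apply Theorem \ref{agdt1} with the non-$G_\delta$ point $f$ chosen to be the constant function $0$; this gives a canonical injection $\phi:\ct\to\nn$ such that $\{f_{\phi(t)}\}_{t\in\ct}$ is equivalent to the canonical dense family of $\alex$, all accumulation points lie in $\ccc$, and the image of $0\in\alex$ is $0$. For each $\sg\in 2^\nn$ set $h_\sg=\lim_n f_{\phi(\sg|n)}\in\ccc$. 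Then $h_\sg\neq 0$, the family $\{h_\sg:\sg\in 2^\nn\}$ is bounded, pointwise discrete with $0$ as its unique accumulation point, and the evaluation $\Phi(\sg,x)=h_\sg(x)$ is Borel.

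The key observation is a finiteness property: for every $x\in X$ and every $\delta>0$, the set $E_\delta(x)=\{\sg\in 2^\nn:|h_\sg(x)|>\delta\}$ is finite. Indeed, any sequence of distinct points in $E_\delta(x)$ would admit, by sequential compactness of $\kk$, a subsequence along which $(h_{\sg_n})_n$ converges pointwise; the limit must be $0$, the unique accumulation of the family, contradicting $|h_{\sg_n}(x)|>\delta$. Consequently $\{\sg:h_\sg(x)\neq 0\}=\bigcup_k E_{1/k}(x)$ is countable for every $x\in X$. Since $2^\nn=\bigcup_k\{\sg:\sup_x|h_\sg(x)|>1/k\}$ and each set on the right is analytic, the perfect set theorem yields some $\ee>0$ and a perfect subset $P_0$ of $2^\nn$ with $\sup_x|h_\sg(x)|>\ee$ throughout $P_0$. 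Jankov--von Neumann uniformization applied to the Borel set $\{(\sg,x)\in P_0\times X:|h_\sg(x)|>\ee\}$ delivers a universally measurable selector $\sg\mapsto x_\sg$ satisfying $|h_\sg(x_\sg)|>\ee$ for every $\sg\in P_0$.

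For biorthogonality I would apply Theorem \ref{galvin2} twice. Define
\[ D_1=\{(\sg,\tau)\in[P_0]^2: h_\tau(x_\sg)\neq 0\} \quad\text{and}\quad D_2=\{(\sg,\tau)\in[P_0]^2: h_\sg(x_\tau)\neq 0\}. \]
Both sets are universally measurable and hence have the Baire property, so Galvin's theorem applies. The monochromatic alternative $[Q]^2\subseteq D_1$ is impossible: for any non-maximal $\sg_0\in Q$, the uncountable set $\{\tau\in Q:\sg_0\prec\tau\}$ would be contained in the countable set $\{\tau:h_\tau(x_{\sg_0})\neq 0\}$. Symmetrically, $[Q]^2\subseteq D_2$ is ruled out by fixing a non-minimal $\tau_0\in Q$. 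Hence two successive applications of Theorem \ref{galvin2} produce a perfect $Q\subseteq P_0$ with $[Q]^2\cap(D_1\cup D_2)=\varnothing$, that is, $h_\sg(x_\tau)=0$ whenever $\sg,\tau\in Q$ and $\sg\neq\tau$.

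Finally, fix a homeomorphism $\psi:2^\nn\to Q$; the re-indexed family $\{h_{\psi(\sg)}:\sg\in 2^\nn\}$ still meets the hypotheses of Theorem \ref{unct1}, which then provides a perfect $P\subseteq 2^\nn$ on which the family is $1$-unconditional in the supremum norm. Biorthogonality with the witnesses $\{x_{\psi(\sg)}:\sg\in P\}$ and the lower bound $\ee$ transfer from $Q$ without change, and a further homeomorphic re-indexing $2^\nn\to P$ produces the required pairs $(g_\sg,x_\sg)$. The delicate point is the partition step: since the selector $\sg\mapsto x_\sg$ need not be Borel, some care is needed to keep $D_1,D_2$ inside a class to which Galvin's theorem applies, and this is precisely why the universally measurable framework is used.
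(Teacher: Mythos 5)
Your proof is correct, and it reaches the biorthogonality by a genuinely different combinatorial route than the paper. The paper starts from Theorem \ref{tC}(a) (so $1$-unconditionality is in hand from the outset), uses the Yankov--von Neumann selector, passes to a perfect set on which the selector is \emph{continuous} (via the Baire property of analytically-measurable maps), and then kills the off-diagonal values by showing the Borel relations $A_m=\{(\sg,\tau):|f_\tau(z_\sg)|>\frac{1}{m+1}\}$ have finite sections, invoking Kuratowski--Ulam to conclude they are meager, and finishing with Mycielski's theorem. You instead keep the selector merely measurable, colour the doubletons $[P_0]^2$ by the relations $D_1,D_2$, and rule out the bad monochromatic alternative directly from the countability of $\{\tau:h_\tau(x_{\sg_0})\neq 0\}$, so that two applications of Galvin's theorem (Theorem \ref{galvin2}) do the work of Kuratowski--Ulam plus Mycielski; you then apply Theorem \ref{unct1} at the end rather than at the start. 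Both orderings are harmless since $1$-unconditionality, the lower bound $\ee$, and biorthogonality all pass to subfamilies, and your finiteness-of-$E_\delta(x)$ observation is exactly the sectional finiteness the paper exploits. Your route is arguably more in the spirit of the rest of the paper, which leans on Galvin's theorem throughout.

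One small imprecision: you justify applying Galvin's theorem by saying $D_1,D_2$ are ``universally measurable and hence have the Baire property,'' but universal measurability does not imply the Baire property in general. The correct justification is that the Jankov--von Neumann selector is measurable with respect to the $\sg$-algebra generated by the analytic sets, so $D_1$ and $D_2$ lie in that $\sg$-algebra, whose members do have the Baire property (this is precisely the class of colours the paper flags in the remark after Theorem \ref{rt1}); alternatively you could first restrict to a perfect set on which the selector is continuous, as the paper does, making $D_1,D_2$ Borel. This is a fixable slip in the stated reason, not a gap in the argument.
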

\begin{proof}
Let $\{f_\sg:\sg\in 2^\nn\}\subseteq \ccc$ be the family obtained
by Theorem \ref{tC}(a). We notice that, by the proof of Theorem
\ref{tC}, we also have that the map $\Phi:2^\nn\times X\to \rr$
defined by $\Phi(\sg,x)=f_\sg(x)$ is Borel. Using this and by
passing to a perfect subset of $2^\nn$ if necessary, we may find
$\ee>0$ such that $\|f_\sg\|_\infty>\ee$ for all $\sg\in 2^\nn$.
Define $N\subseteq 2^\nn\times X$ by
\[ (\sg,z)\in N \Leftrightarrow |f_\sg(z)|>\ee.\]
As the map $\Phi$ is Borel, we see that the set $N$ is Borel.
Moreover, by the choice of $\ee$, we have that for every
$\sg\in 2^\nn$ the section $N_\sg=\{z:(\sg,z)\in N\}$ of $N$
at $\sg$ is non-empty. By the Yankov-Von Neumann Uniformization
theorem (see \cite{Kechris}, Theorem 18.1), there exists
a map
\[ 2^\nn\ni \sg\mapsto z_\sg\in X\]
which is measurable with respect to the $\sg$-algebra generated
by the analytic sets and such that $(\sg,z_\sg)\in N$
for every $\sg\in 2^\nn$. Invoking the classical fact
that analytic sets have the Baire property, by Theorem
8.38 in \cite{Kechris} and by passing to a further perfect
subset of $2^\nn$ if necessary, we may assume that the
map $\sg\mapsto z_\sg$ is actually continuous.

For every $m\in\nn$ define $A_m\subseteq 2^\nn\times 2^\nn$ by
\[ (\sg,\tau)\in A_m \Leftrightarrow |f_\tau(z_\sg)|>\frac{1}{m+1}.\]
Notice that the set $A_m$ is Borel. Since the family $\{f_\sg:\sg\in 2^\nn\}$
accumulates to $0$, we get that for every $\sg\in 2^\nn$
the section $(A_m)_\sg=\{\tau:(\sg,\tau)\in A_m\}$ of $A_m$
at $\sg$ is finite, hence meager in $2^\nn$. By the Kuratowski-Ulam
theorem (see \cite{Kechris}, Theorem 8.41), we have that the
set $A_m$ is meager in $2^\nn\times 2^\nn$. Hence so is
the set
\[ A=\bigcup_{m\in\nn} A_m.\]
By a result of J. Mycielski (see \cite{Kechris}, Theorem 19.1)
there exists $P\subseteq 2^\nn$ perfect such that for every
$\sg,\tau\in P$ with $\sg\neq \tau$ we have that $(\sg,\tau)\notin A$
and $(\tau,\sg)\notin A$. This implies that $f_\tau(z_\sg)=0$
and $f_\sg(z_\tau)=0$. We fix a homeomorphism $h:2^\nn\to P$
and we set $g_\sg=f_{h(\sg)}$ and $x_\sg=z_{h(\sg)}$ for every
$\sg\in 2^\nn$. Clearly the family $\{(g_\sg,x_\sg):\sg\in 2^\nn\}$
is as desired.
\end{proof}
The proof of the corresponding result in \cite{ADKbanach} is based
on Ramsey and Banach space tools, avoiding the embedding of
$\alex$ into $(B_{X^{**}}, w^*)$.

We recall that a Banach space $X$ is said to be representable if
$X$ isomorphic to a subspace of $\ell_\infty(\nn)$ which is
analytic in the weak* topology (see \cite{GT}, \cite{GL} and
\cite{AGR}). We close this subsection with the following.
\begin{thm}
\label{uncp1} Let $X$ be a non-separable representable Banach
space. Then $X^{*}$ contains an unconditional family of size
$|X^{*}|$.
\end{thm}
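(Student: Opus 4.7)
The plan is to apply Theorem \ref{tC}(b) to a non-metrizable analytic subspace $\ccc$ of a separable Rosenthal compact $\kk$, naturally constructed from the representation $X\hookrightarrow\ell_\infty(\nn)$, in such a way that $\ccc-\ccc\subseteq X^*$ with the supremum norm on the ambient Polish space comparable to the $X^*$-norm. This will yield a $1$-unconditional family of cardinality continuum in $X^*$, and since $|X^*|=2^{\aleph_0}$ (Hahn--Banach realizes $X^*$ as a quotient of $\ell_\infty(\nn)^*$, whose cardinality is $2^{\aleph_0}$, and non-separability of $X^*$ forces equality), this family has cardinality $|X^*|$, as required.

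For the construction, one exploits the two key consequences of representability: (i) $X$ contains no copy of $\ell_1$ (Godefroy--Talagrand, since the analytic structure of $X$ in $\ell_\infty(\nn)$ forbids an $\ell_1$-copy), and (ii) $B_X$ is an analytic subset of the compact metrizable space $B_{\ell_\infty(\nn)}$ in the weak* (equivalently, pointwise) topology, hence admits a Polish topology $\tau$ refining it. Using (i) via Rosenthal's dichotomy together with an extraction argument, every $x^*\in X^*$ defines a Baire-1 function on $(B_X,\tau)$; hence $B_{X^*}$ sits inside the Baire-1 class on this Polish space, compact in the pointwise topology. Taking $\kk$ to be the pointwise closure in $\rr^{(B_X,\tau)}$ of a countable weak*-dense subset of $B_{X^*}$ drawn from $j^*(\ell_1)$, one obtains a separable Rosenthal compact with a natural analytic subspace $\ccc\supseteq B_{X^*}$ arising from the weak*-analyticity (ii). Non-separability of $X^*$ forces $\ccc$ to be non-metrizable, and the supremum norm on $(B_X,\tau)$ agrees with the $X^*$-norm on functionals.

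The main obstacle is ensuring that every $x^*\in X^*$ is Baire-1 on $(B_X,\tau)$: this requires sequential weak*-approximation of arbitrary $x^*\in X^*$ by elements of $j^*(\ell_1)$, which for non-separable $X^*$ is delicate; representability, via (i) and (ii), provides the needed sequential approximation through Rosenthal's theorem applied on the Polish refinement and a compactness argument against $\ell_1$-generating subsequences. Once $\ccc$ is in place, Corollary \ref{kmk} and Theorem \ref{tC}(b) apply to yield the desired $1$-unconditional family $\{f_\sigma-g_\sigma:\sigma\in 2^\nn\}\subseteq \ccc-\ccc\subseteq X^*$ of cardinality $|X^*|$ in the supremum norm on $(B_X,\tau)$, which by the norm comparison is unconditional in the $X^*$-norm.
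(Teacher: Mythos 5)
There is a genuine gap, and it sits at the foundation of your construction. Your step (i) — that representability forbids a copy of $\ell_1$ in $X$ — is false. Every separable Banach space is representable (its unit ball is a continuous, hence analytic, image of a Polish space inside $(B_{\ell_\infty},w^*)$), so $\ell_1$ itself is representable; more to the point for the non-separable case, $\ell_\infty(\nn)$ is representable (its ball is weak*-compact, hence analytic) and contains $\ell_1$. Godefroy--Talagrand prove no such exclusion. Without this step you cannot conclude that the functionals restricted to $(B_X,\tau)$ are Baire-1, i.e.\ that the countable family you start from is relatively compact in $\mathcal{B}_1$, so the separable Rosenthal compact $\kk$ you need may simply not exist. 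The paper's proof confronts exactly this: it parametrizes $B_X$ by a continuous surjection $f:\nn^\nn\to B_X$, restricts a norm-dense sequence $(x_n)_n\subseteq \ell_1\subseteq\ell_\infty^*$ to get $f_n=x_n\circ f$, and then invokes the (non-effective) Debs/Rosenthal dichotomy for this sequence of functions on $\nn^\nn$. Only in the second alternative is $(f_n)_n$ relatively compact in $\mathcal{B}_1(\nn^\nn)$, where non-separability of $X$ makes $0$ a non-$G_\delta$ point and Theorem \ref{tC}(a) applies. In the first alternative one finds an independent family of pairs and, taking ultrafilter limits $g_p=p\text{-}\lim_k f_{n_k}$ over $p\in\beta\nn$, an isometric copy of the $\ell_1(2^{\mathfrak{c}})$-basis inside $X^*$ — which is itself the desired unconditional family. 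Your argument has no counterpart for this case.

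The same omission breaks your cardinality count. The assertion $|X^*|=2^{\aleph_0}$ via ``$X^*$ is a quotient of $\ell_\infty(\nn)^*$, whose cardinality is $2^{\aleph_0}$'' is wrong: $\ell_\infty(\nn)^*$ contains $\ell_1(2^{\mathfrak{c}})$ and has cardinality $2^{\mathfrak{c}}$, and for $X=\ell_\infty(\nn)$ one indeed has $|X^*|=2^{\mathfrak{c}}>\mathfrak{c}$, so a continuum-sized family would not witness the theorem. The dichotomy repairs this too: in the first alternative the unconditional family produced has size $2^{\mathfrak{c}}\geq |X^*|$, while in the second alternative the restriction map $x^*\mapsto x^*\circ f$ injects $B_{X^*}$ into a separable Rosenthal compact, forcing $|X^*|=\mathfrak{c}$, so the continuum-sized family from Theorem \ref{tC}(a) suffices there. (A minor further remark: the paper uses part (a) of Theorem \ref{tC} directly, with $0$ a non-$G_\delta$ point of $\kk$ itself, rather than the $\ccc-\ccc$ route through part (b) and Corollary \ref{kmk}; but that difference is cosmetic compared to the missing dichotomy.)
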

\begin{proof}
Identify $X$ with its isomorphic copy in $\ell_\infty(\nn)$. Then
$B_X$ is an analytic subset of $(B_{\ell_\infty},w^*)$. Let
$f:\nn^\nn\to B_X$ be an onto continuous map. Let $\{x_n\}_n$ be a
norm dense subset of $\ell_1(\nn)$. Viewing $\ell_1$ as a subspace
of $\ell_\infty^*$, we define $f_n:\nn^\nn\to\rr$ by $f_n=x_n\circ
f$. Then $\{f_n\}_n$ is a uniformly bounded sequence of continuous
real-valued functions on $\nn^\nn$. Notice that
$\overline{\{f_n\}}^p_n=\{ x^{*}\circ f: x^*\in B_{X^*}\}$ which
can be naturally identified with $\{x^*|_{B_X}:x^*\in B_{X^*}\}$.
By the non-effective version of Debs' theorem (see \cite{AGR}) we
have the following alternatives.
\bigskip

\noindent \textbf{Case 1.} There exist an increasing sequence
$(n_k)_k$, a continuous map $\phi:2^\nn\to\nn^\nn$ and real
numbers $a<b$ such that for every $\sg\in 2^\nn$ and every
$k\in\nn$, if $\sg(k)=0$ then $f_{n_k}\big(\phi(\sg)\big)<a$,
while if $\sg(k)=1$, then $f_{n_k}\big(\phi(\sg)\big)>b$. In this
case, for every $p\in\beta\nn$ we set
\[ g_p=p-\lim_{k} f_{n_k}. \]
Then $g_p=x^*_p|_{B_X}$ for some $x^*_p\in X^*$. We claim that
$\{x^*_p:p\in\beta\nn\}$ is equivalent to the natural basis of
$\ell_1(2^{\mathfrak{c}})$. To see this, observe that $g_p\big(
\phi(\sg)\big)\leq a$ if and only if $\{k:\sg(k)=0\}\in p$ and
$g_p\big(\phi(\sg)\big)\geq b$ if and only if $\{k:\sg(k)=1\}\in
p$. Setting $A_p=[g_p\leq a]$ and $B_p=[g_p\geq b]$ for all
$p\in\beta\nn$, we see that the family $(A_p,B_p)_{p\in\beta\nn}$
is an independent family of disjoint pairs. By Rosenthal's
criterion, the family $\{g_p:p\in\beta\nn\}$ is equivalent to
$\ell_1(2^{\mathfrak{c}})$. Thus, so is $\{x^*_p:p\in \beta\nn\}$.
\bigskip

\noindent \textbf{Case 2.} The sequence $\{f_n\}_n$ is relatively
compact in $\mathcal{B}_1(\nn^\nn)$. In this case, as $X$ is
non-separable, $0\in \overline{\{f_n\}}^p_n$ is a non-$G_\delta$
point. Thus, by Theorem \ref{tC}(a), there exists an
1-unconditional family in $X^{*}$ of the size of the continuum.
\end{proof}
It can be also shown that every representable Banach space has a
separable quotient (see Theorem 15 in \cite{ADKbanach}). For
further applications of the existence of unconditional families we
refer the reader to \cite{ADKbanach}.


\subsection{Spreading and level unconditional tree bases}

We start with the following definition.
\begin{defn}
\label{dtreebases} Let $X$ be a Banach space.
\begin{enumerate}
\item[(1)] A tree basis is a bounded family $\{x_t\}_{t\in\ct}$ in
$X$ which is Schauder basic when it is enumerated according to the
canonical bijection $\phi_0$ between $\ct$ and $\nn$. \item[(2)] A
tree basis $\{x_t\}_{t\in\ct}$ is said to be spreading if there
exists $(\ee_n)_n\downarrow 0$ such that for every $n,m \in\nn$
with $n<m$, every $0 \leq d< 2^n$ and every pair
$\{s_i\}_{i=0}^d\subseteq 2^n$ and $\{t_i\}_{i=0}^d\subseteq 2^m$
with $s_i\sqsubset t_i$ for all $i\in\{0,...,d\}$, we have that
$\|T\|\cdot \|T^{-1}\|<1+\ee_n$ where
$T:\mathrm{span}\{x_{s_i}:i=0,...,d\}\to
\mathrm{span}\{x_{t_i}:i=0,...,d\}$ is the natural 1-1 and onto
linear operator. \item[(3)] A tree basis $\{x_t\}_{t\in\ct}$ is
said to be level unconditional if there exists
$(\ee_n)_n\downarrow 0$ such that for every $n\in\nn$, the family
$\{x_t: t\in 2^n\}$ is $(1+\ee_n)$-unconditional.
\end{enumerate}
\end{defn}
In \cite{ADKbanach} the existence of spreading and level
unconditional tree bases was established for every separable
Banach space $X$ not containing $\ell_1$ and with non-separable
dual.

This result can be extended in the frame of separable Rosenthal
compacta, as follows.
\begin{thm}
\label{nst1} Let $\kk$ be a uniformly bounded separable Rosenthal
compact on a compact metrizable space $X$ and having a countable
dense subset $D$ of continuous functions. Let also $(\ee_n)_n$ be
a decreasing sequence of positive reals with $\ee_n\to 0$. Assume
that the constant function $0$ is a non-$G_\delta$ point of $\kk$.
Then there exists a family $\{u_t\}_{t\in\ct} \subseteq
\mathrm{conv}(D)$ equivalent to the canonical dense family of
$\alex$ such that, setting $g_\sg=\lim_n u_{\sg|n}$ for all
$\sg\in 2^\nn$, the following are satisfied.
\begin{enumerate}
\item[(1)] The function $0$ is the unique non-$G_\delta$ point of
$\overline{\{u_t\}}^p_{t\in\ct}$. \item[(2)] The family
$\{u_t\}_{t\in\ct}$ is a tree basis with respect to the supremum
norm. \item[(3)] The family $\{g_\sg:\sg\in 2^\nn\}$ is a subset
of $\kk$ and 1-unconditional. \item[(4)] For every $n\in\nn$, if
$\{t_0\prec ...\prec t_{2^n-1}\}$ is the $\prec$-increasing
enumeration of $2^n$, then for every $\{\sg_0,...,\sg_{2^n-1}\}
\subseteq 2^\nn$ with $t_i\sqsubset \sg_i$ for all
$i\in\{0,...,2^n-1\}$ we have that $(g_{\sg_i})_{i=0}^{2^n-1}$ is
$(1+\ee_n)$-equivalent to $(u_{t_i})_{i=0}^{2^n-1}$.
\end{enumerate}
\end{thm}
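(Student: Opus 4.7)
The strategy combines the embedding of $\alex$ from Corollary~\ref{gdt1}, the perfect unconditionality theorem~\ref{unct1}, and an inductive Mazur-style averaging on dyadic subtrees.

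First, since $0$ is a non-$G_\delta$ point of $\kk$, I apply Corollary~\ref{gdt1} to the dense subset $D$ to obtain a family $\{f_t\}_{t\in\ct}\subseteq D$ equivalent to the canonical dense family of $\alex$, with $0$ the unique non-$G_\delta$ point of $\overline{\{f_t\}}^p_{t\in\ct}$. Set $g_\sg=\lim_n f_{\sg|n}$; the family $\{g_\sg:\sg\in 2^\nn\}$ is pointwise discrete with unique accumulation at $0$, and since each $f_t$ is continuous the map $\Phi(\sg,x)=g_\sg(x)$ is Borel. By Theorem~\ref{unct1} there is a perfect $P\subseteq 2^\nn$ such that $\{g_\sg:\sg\in P\}$ is $1$-unconditional in $\|\cdot\|_\infty$. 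Realizing $P$ as $[\hat S]$ for a regular dyadic subtree $S=(s_t)_{t\in\ct}$ of $\ct$, and replacing $\{f_t\}$ by $\{f_{s_t}\}$, Proposition~\ref{stability} preserves equivalence with the canonical dense family of $\alex$, so I may assume outright that $\{g_\sg:\sg\in 2^\nn\}$ is $1$-unconditional, which is property~(3).

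Next, I build $\{u_t\}_{t\in\ct}\subseteq\mathrm{conv}(D)$ by recursion on the level of $\ct$, each $u_t$ being a convex combination of $f_s$'s with $s$ in a deep regular dyadic subtree above $t$. At level $n$, prior to choosing the $u_{t_i}$, I first canonicalize the finite-dimensional norm data $(a_i)_{i<2^n}\mapsto\|\sum_i a_i g_{\sg_i}\|_\infty$ in the extensions $\sg_i\sqsupset t_i$: a dense set of rational coefficient vectors yields countably many analytic colorings of $2^n$-tuples of branches, to which I apply the generalization of Galvin's theorem~\ref{galvin2} to $2^n$-tuples (obtained by induction on the arity from the doubleton case), followed by a diagonal passage through the perfect-set lattice. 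This produces a further regular dyadic subtree on whose branches the norms $\|\sum_i a_i g_{\sg_i}\|_\infty$ depend only on $(a_i)$ up to an $\ee_n/2$ error. On this canonicalized tree, a sufficiently deep Mazur-style convex averaging of the $f_s$'s above each $t_i$ yields $u_{t_i}$ achieving the $(1+\ee_n)$-equivalence of~(4) uniformly over all choices of extensions. The Schauder basic property~(2) is enforced by a simultaneous Bessaga-Pe\l{}czy\'nski-type refinement: each new $u_t$ is chosen with an additional smallness constraint against the finitely many previously selected $u_s$'s, which is possible because $f_s\to 0$ pointwise along any antichain with $|s|\to\infty$, so that the partial-sum projections in the $\phi_0$ order remain uniformly bounded by a product $\prod_k(1+\ee_k/2^k)<\infty$. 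Property~(1) is then automatic: the averaging is taken along dyadic subtrees so the family $\{u_t\}_{t\in\ct}$ remains equivalent to the canonical dense family of $\alex$, with $(u_{\sg|n})_n$ pointwise-convergent to $g_\sg$ along each branch and all antichain-type accumulations still equal to $0$, making $0$ the unique non-$G_\delta$ point of $\overline{\{u_t\}}^p_{t\in\ct}$.

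The main obstacle is coordinating the uniform $(1+\ee_n)$-equivalence required by~(4), which is a stability statement across all possible extensions, with the Schauder basic property~(2), which is a compatibility condition across levels, while simultaneously preserving the $\alex$ accumulation structure for~(1) and the unconditionality for~(3). This is handled by interleaving, within one induction, the Galvin canonicalization of finite-level norm data, the Mazur averaging realizing the finite-dimensional equivalence, and the Bessaga-Pe\l{}czy\'nski refinement controlling the partial sums; the hereditary stability of $1$-unconditionality and of the canonicalized norms under passage to further regular dyadic subtrees is what keeps the recursion inside the class of trees and families we need.
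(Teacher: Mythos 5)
The paper does not actually prove Theorem \ref{nst1}; it defers to a ``slight modification of Theorem 17 in \cite{ADKbanach}''. Your overall architecture --- Corollary \ref{gdt1} to plant a copy of $\alex$ inside $D$, Theorem \ref{unct1} to extract a perfect set of $1$-unconditional limits, then a level-by-level stabilization of the finite-dimensional norm data followed by convex averaging and a perturbation argument for the basis --- is exactly the intended route. However, three steps in your sketch are either wrong as stated or hide the real difficulty. First, you take $u_t$ to be a convex combination of $f_s$'s with $s$ ranging over ``a deep regular dyadic subtree above $t$''. If the mass of the combination is spread over a subtree (hence over antichains), then along a branch $\sg$ only the on-branch part of the support of $u_{\sg|n}$ contributes $g_{\bar\sg}(x)$ in the limit, while the off-branch part contributes $0$ (antichains converge pointwise to $0$ in the $\alex$ structure); the branch limits become proper multiples of $g_{\bar\sg}$, or $0$, which destroys both $g_\sg\in\kk$ in (3) and the equivalence with the canonical family of $\alex$. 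The averaging must be taken along chains, i.e.\ segments of a designated branch sitting between consecutive nodes of the subtree being selected. Second, ``the generalization of Galvin's theorem to $2^n$-tuples obtained by induction on the arity from the doubleton case'' does not exist: by Blass's theorem (cited as \cite{Bl} in the paper), already for triples of a single perfect set there are clopen colorings with no monochromatic perfect set. What saves you is that your tuples live in a product $\prod_i Q_i$ of pairwise disjoint perfect sets, one above each $t_i$, so their splitting pattern is fixed; the correct tool is the polarized statement for such products, obtained from the Kuratowski--Ulam theorem together with Mycielski's theorem, exactly as the paper does in the proof of Theorem \ref{biorthogonal}.

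Third, your justification of the Schauder-basic property (2) --- ``possible because $f_s\to 0$ pointwise along any antichain'' --- is not sufficient. The family $\{u_t\}$ enumerated by $\phi_0$ is not pointwise null (its branch subsequences converge to $g_\sg\neq 0$), so the smallness of the next $u_t$ against the finitely many point evaluations $\delta_{x_1},\dots,\delta_{x_m}$ norming the previously built span cannot be arranged by depth alone: if the branch $\bar\sg$ designated for the chain-averaging at $t$ has $g_{\bar\sg}(x_j)$ large, then deep averages along $\bar\sg$ are \emph{not} small at $x_j$. One must also use that $\{g_\sg\}$ is discrete with $0$ as its unique accumulation point, so that for any finite set of points and any $\delta>0$ all but finitely many branches $\sg$ satisfy $|g_\sg(x_j)|<\delta$, and then re-choose the designated branch accordingly \emph{inside the same recursion} that fixes the perfect sets for the norm stabilization of (4). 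This coordination --- choosing, at each node, a branch that is simultaneously admissible for the level-$n$ stabilization, for the Mazur averaging realizing $(1+\ee_n)$-equivalence, and for the basis perturbation --- is the actual content of Theorem 17 of \cite{ADKbanach}, and your sketch names the obstacle without resolving it.
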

The proof of the above result is a slight modification of Theorem
17 in \cite{ADKbanach}, where we also refer the reader for more
information.

We close this subsection with the following result whose proof is
based on Stegall's construction \cite{St}.
\begin{thm}
\label{newtreebases} Let $X$ be a Banach space such that $X^*$ is
separable and $X^{**}$ is non-separable. Let also $\ee>0$. Then
there exists a family $\{u_t\}_{t\in\ct} \subseteq B_X$ such that
the following are satisfied.
\begin{enumerate}
\item[(i)] The family $\{u_t\}_{t\in\ct}$ is equivalent to the
canonical dense family of $2^{\leqslant\nn}$. \item[(ii)] For
every $\sg\in 2^\nn$, if $y^{**}_\sg$ is the weak* limit of
$(u_{\sg|n})_n$, then there exists $y^{***}_\sg\in X^{***}$ with
$\|y^{***}_\sg\|\leq 1+\ee$ and such that
$y^{***}_\sg(y^{**}_\sg)=1$ while $y^{***}_\sg(y^{**}_\tau)=0$ for
all $\tau\neq \sg$. \item[(iii)] For every $n\in\nn$, if
$\{t_0\prec...\prec t_{2^n-1}\}$ is the $\prec$-increasing
enumeration of $2^n$, then for every
$\{\sg_0,...,\sg_{2^n-1}\}\subseteq 2^\nn$ with
$t_i\sqsubset\sg_i$ for all $i\in\{0,...,2^n-1\}$, we have that
$(y^{**}_{\sg_i})_{i=0}^{2^n-1}$ is $(1+\frac{1}{n})$-equivalent
to $(u_{t_i})_{i=0}^{2^n-1}$.
\end{enumerate}
\end{thm}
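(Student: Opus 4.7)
The plan is to run Stegall's classical dyadic-tree construction \cite{St} inside $(B_{X^{**}},w^{*})$, mirroring the inductive proof of Theorem \ref{metrnew}, and to exploit the $(1+\ee)$-$\ell_{1}$ structure forced by Stegall's refinement in order to produce the biorthogonal functionals $y^{***}_{\sigma}$ via Hahn--Banach. Note first that $X^{*}$ separable forces $X$ to be separable and to contain no copy of $\ell_{1}$, so $(B_{X^{**}},w^{*})$ is compact metrizable; fix a compatible metric $d$. By Goldstine, $B_{X}$ is $w^{*}$-dense in $B_{X^{**}}$, and by the Odell--Rosenthal theorem $X^{**}\subseteq\mathcal{B}_{1}((B_{X^{*}},w^{*}))$. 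Since $X^{**}$ is non-separable in norm, pick $\ee_{0}>0$ and an uncountable family $\Gamma=\{z^{**}_{\xi}:\xi<\omega_{1}\}\subseteq B_{X^{**}}$ which is $\ee_{0}$-separated in norm and each of whose points is a $w^{*}$-condensation point of $\Gamma$.

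By recursion on $|t|$ I would construct the data $(B_{t},x_{t},r_{t},q_{t},u_{t})_{t\in\ct}$, where $B_{t}\subseteq B_{X^{**}}$ is $w^{*}$-open with $d$-diameter at most $\eta_{|t|}\downarrow 0$, $x_{t}\in B_{X^{*}}$, $r_{t}<q_{t}$ are rationals, and $u_{t}\in B_{X}\cap B_{t}$ is chosen via Goldstine, satisfying the direct analogues of (P1)--(P5) of Theorem \ref{metrnew}: the closures $\overline{B}_{t\frown 0}$ and $\overline{B}_{t\frown 1}$ are disjoint subsets of $B_{t}$ each meeting $\Gamma$ uncountably; $y^{**}(x_{t})<r_{t}$ on $\overline{B}_{t\frown 0}$ and $y^{**}(x_{t})>q_{t}$ on $\overline{B}_{t\frown 1}$. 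Following Stegall's refinement, I would in addition arrange that $r_{t}\downarrow 0$ and $q_{t}\uparrow 1$ along every branch at a geometric rate tied to $\ee$, and that $u_{t}$ is chosen $w^{*}$-close to a reference element of $\Gamma\cap B_{t}$ when tested against every $x_{s}$ with $|s|\leq|t|$, within a prescribed tolerance $\delta_{|t|}\downarrow 0$. For each $\sigma\in 2^{\nn}$ the intersection $\{y^{**}_{\sigma}\}=\bigcap_{n}B_{\sigma|n}$ is a singleton that is the $w^{*}$-limit of $u_{\sigma|n}$, the $y^{**}_{\sigma}$'s are pairwise norm-separated, and passing to a regular dyadic subtree of $\ct$ so that each $u_{t}$ is isolated in $\overline{\{u_{t}\}}^{p}$ (exactly as at the end of the proof of Theorem \ref{metrnew}) makes the family $\{u_{t}\}_{t\in\ct}$ equivalent to the canonical dense family of $2^{\leqslant\nn}$, which gives (i).

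The Stegall calibration ensures that at every level $n$ the systems $\{u_{t_{i}}\}_{i=0}^{2^{n}-1}$ and $\{y^{**}_{\sigma_{i}}\}_{i=0}^{2^{n}-1}$ (for any choices $\sigma_{i}\sqsupset t_{i}$) are $(1+1/n)$-equivalent to the unit basis of $\ell_{1}^{2^{n}}$, and the equivalence constants for $u_{t_{i}}$ and $y^{**}_{\sigma_{i}}$ differ by at most $\delta_{n}+\eta_{n}$; choosing $\delta_{n},\eta_{n}$ to shrink fast enough yields the $(1+1/n)$-equivalence of (iii). Applied to the infinite Cantor family $\{y^{**}_{\sigma}:\sigma\in 2^{\nn}\}$, the same calibration gives a $(1+\ee)$-$\ell_{1}$ structure (that is, $\|\sum a_{\sigma}y^{**}_{\sigma}\|\geq (1+\ee)^{-1}\sum|a_{\sigma}|$ for every finitely supported $(a_{\sigma})$), so by Hahn--Banach each coordinate functional on $\overline{\mathrm{span}}\{y^{**}_{\sigma}\}$ extends to some $y^{***}_{\sigma}\in X^{***}$ with $\|y^{***}_{\sigma}\|\leq 1+\ee$, $y^{***}_{\sigma}(y^{**}_{\sigma})=1$, and $y^{***}_{\sigma}(y^{**}_{\tau})=0$ for $\tau\neq\sigma$, which is (ii). The principal obstacle is the joint quantitative calibration of $r_{t},q_{t},\eta_{n},\delta_{n}$ so that the tree is simultaneously $(1+\ee)$-$\ell_{1}$ at the infinite branching level (needed for (ii)) and $(1+1/n)$-$\ell_{1}$ at every finite level $n$ (needed for (iii)): this requires the Stegall schedule $r_{t}\downarrow 0$, $q_{t}\uparrow 1$ to be tied in lock-step with the diameter-shrinking sequences $\eta_{n},\delta_{n}$, and is the step where the argument departs from the more flexible construction of Theorem \ref{metrnew} in favor of Stegall's sharper refinement from \cite{St}.
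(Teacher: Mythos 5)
Your derivation of (ii) hinges on the claim that the Stegall calibration forces the branch family $\{y^{**}_\sg:\sg\in 2^\nn\}$ to satisfy the lower $\ell_1$-estimate $\big\|\sum a_\sg y^{**}_\sg\big\|\geq (1+\ee)^{-1}\sum|a_\sg|$, from which you then extract the coordinate functionals by Hahn--Banach. That estimate is false in general: for the James tree space $JT$ one has $JT^*$ separable, $JT^{**}$ non-separable, and $JT^{**}/JT$ isomorphic to a Hilbert space of density the continuum, so the branch limits produced by any such construction satisfy $\big\|\sum_{i=1}^{N}y^{**}_{\sg_i}\big\|=O(\sqrt{N})$ and no $\ell_1$-lower bound is available. (The remark following the theorem makes the same point: $\{y^{**}_\sg\}$ cannot even be chosen unconditional, whereas a $(1+\ee)$-$\ell_1$ family certainly would be.) For the same reason, your assertion that each level $\{u_{t_i}\}_{i=0}^{2^n-1}$ is $(1+\frac{1}{n})$-equivalent to the unit basis of $\ell_1^{2^n}$ is both stronger than what (iii) asserts and false in general. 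The paper's route to (ii) avoids lower $\ell_1$-estimates entirely: Stegall's construction is run so as to retain, besides the boxes $B_t$, the separating functionals $x^*_t\in X^*$ themselves, normalized with $1<\|x^*_t\|<1+\ee$ and satisfying $|x^{**}(x^*_t)-1|<\frac{1}{|t|+1}$ on $B_t$ and $|x^{**}(x^*_t)|<\frac{1}{|t|+1}$ on \emph{every other} box of the same level; then $y^{***}_\sg$ is taken to be a weak* cluster point of $(x^*_{\sg|k})_k$ in $X^{***}$, which automatically has norm at most $1+\ee$ and is biorthogonal to the $y^{**}_\tau$'s. Note that your two-sided separation ($y^{**}(x_t)<r_t$ on $\overline{B}_{t^{\con}0}$ versus $>q_t$ on $\overline{B}_{t^{\con}1}$, the scheme of Theorem \ref{metrnew}) only separates siblings and cannot yield a functional vanishing on all other branches; the full level-wise condition above is needed.

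There is a second gap, in (iii). Weak* proximity of $u_t$ to the branch limits (your $\delta_n,\eta_n$ calibration) does not control the norms of linear combinations, so choosing $u_t$ as a single Goldstine representative in $B_t\cap X$ does not produce the $(1+\frac{1}{n})$-equivalence between $(u_{t_i})_{i}$ and $(y^{**}_{\sg_i})_{i}$. The paper instead takes $u_t\in\mathrm{conv}\{x_s:s\in\ct\}$ and runs the stabilization argument of Theorem 17 in \cite{ADKbanach}, passing to a regular dyadic subtree, to force the finite-level equivalences; this convex-block step is essential and is absent from your outline.
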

\begin{proof}
Since $X^*$ is separable, we have that $(B_{X^{**}},w^*)$ is
compact metrizable. Fix a compatible metric $\rho$ for
$(B_{X^{**}},w^*)$. Using Stegall's construction \cite{St}, we get
the following.
\begin{enumerate}
\item[(C1)] A family $\{x^*_t\}_{t\in\ct}\subseteq X^*$, and
\item[(C2)] a family $\{B_t\}_{t\in\ct}$ of open subsets of
$(B_{X^{**}},w^*)$
\end{enumerate}
such that for all $t\in\ct$ the following are satisfied.
\begin{enumerate}
\item[(P1)] $1<\|x^*_t\|<1+\ee$. \item[(P2)]
$\overline{B}_{t^{\con}0}\cap
\overline{B}_{t^{\con}1}=\varnothing$,
$\overline{B}_{t^{\con}0}\cup \overline{B}_{t^{\con}1}\subseteq
B_t$ and $\rho-\mathrm{diam}(B_t)\leq \frac{1}{|t|+1}$.
\item[(P3)] For all $x^{**}\in B_t$,
$|x^{**}(x^*_t)-1|<\frac{1}{|t|+1}$. \item[(P4)] For all $t'\neq
t$ with $|t|=|t'|$ and for all $x^{**}\in B_{t'}$,
$|x^{**}(x^*_t)|<\frac{1}{|t|+1}$.
\end{enumerate}
By property (P2), for every $\sg\in 2^\nn$ we have that $\bigcap_n
B_{\sg|n}=\{x^{**}\}$. Moreover, the map $2^\nn\ni \sg\mapsto
x^{**}_\sg\in (B_{X^{**}},w^*)$ is a homeomorphic embedding. By
Goldstine's theorem, for every $t\in \ct$ we choose $x_t\in
B_t\cap X$. Notice that $w^*-\lim_{n} x_{\sg|n}=x^{**}_\sg$ for
all $\sg\in 2^\nn$. For every $\sg\in 2^\nn$ we choose
$x^{***}_\sg\in \bigcap_n\overline{\{x^*_{\sg|k}:k\geq n\}}^{w*}$.
By (P3) we see that $x^{***}_\sg(x^{**}_\sg)=1$ while, by (P4),
$x^{***}_\sg(x^{**}_\tau)=0$ for all $\tau\neq\sg$. Moreover, we
have that
\[ \sup\{ |\lambda_i|:i=0,...,n\}\leq (1+\ee) \Big\|\sum_{i=0}^n
\lambda_i x^{**}_{\sg_i} \Big\| \] for every $n\in\nn$, every
$\{\sg_0,...,\sg_n\}\subseteq 2^\nn$ and every
$(\lambda_i)_{i=0}^n \in \rr^{n+1}$. Arguing as in the proof of
Theorem 17 in \cite{ADKbanach}, we may construct a family
$\{u_t\}_{t\in\ct}\subseteq \mathrm{conv}\{ x_t:t\in\ct\}$ and a
regular dyadic subtree $S=(s_t)_{t\in\ct}$ of $\ct$ such that the
following are satisfied.
\begin{enumerate}
\item[(1)] For all $\sg\in 2^\nn$, the sequence $(u_{\sg|n})_n$ is
weak* convergent to $y^{**}_\sg$, where
\[ y^{**}_\sg=\lim_n x_{s_{\sg|n}}. \]
\item[(2)] For every $n\in\nn$, if $\{t_0\prec...\prec
t_{2^n-1}\}$ is the $\prec$-increasing enumeration of $2^n$, then
for every $\{\sg_0,...,\sg_{2^n-1}\} \subseteq 2^\nn$ with
$t_i\sqsubset \sg_i$ for all $i\in\{0,...,2^n-1\}$ we have that
$(y^{**}_{\sg_i})_{i=0}^{2^n-1}$ is $(1+\frac{1}{n})$-equivalent
to $(u_{t_i})_{i=0}^{2^n-1}$.
\end{enumerate}
For all $\sg\in 2^\nn$, let $\bar{\sg}=\bigcup_n s_{\sg|n}\in
2^\nn$. Setting $y^{***}_\sg=x^{***}_{\bar{\sg}}$ for all $\sg\in
2^\nn$, we see that properties (ii) and (iii) in the statement of
the theorem are satisfied. Finally, by passing to a regular dyadic
subtree if necessary, we also have that the family
$\{u_t\}_{t\in\ct}$ is equivalent to the canonical dense family of
$2^{\leqslant\nn}$, i.e. property (i) is satisfied. The proof is
completed.
\end{proof}
\begin{rem}
(1) We do not know if the family $\{u_t\}_{t\in\ct}$ obtained in
Theorem \ref{newtreebases} can be chosen to be Schauder basic or
an FDD. It seems also to be unknown whether for every Banach space
$X$ with $X^*$ separable and $X^{**}$ non-separable, there exists
a subspace $Y$
of $X$ with a Schauder basis such that $Y^{**}$ is non-separable. \\
(2) The family $\{y^{**}_\sg:\sg\in 2^\nn\}$ obtained in Theorem
\ref{newtreebases} cannot be chosen to be unconditional, as the
examples of non-separable HI spaces show (see \cite{AAT},
\cite{AT}). However, all these second dual, non-separable HI
spaces have quotients with separable kernel which contains
unconditional families of the cardinality of the continuum. The
following problem is motivated by the previous observation.
\end{rem}
\noindent \textit{Problem.} Let $X$ be a separable Banach space
with $X^{**}$ non-separable. Does there exist a quotient $Y$ of
$X^{**}$ containing an unconditional family of size $|X^{**}|$?



\begin{thebibliography}{99}
\bibitem[AU]{AU} P. Alexandroff and P. Uryshon, \textit{M\`{e}moire
sur les espaces topologiques compacts}, Verh. Akad. Wetensch.
Amstredam, 14(1929).

\bibitem[AAT]{AAT} S. A. Argyros, A. D. Arvanitakis and A. G. Tolias,
\textit{Saturated extensions, the attractors method and
hereditarily James tree spaces}, Methods in Banach spaces, edited
by J. M. Castillo and W. B. Johnson, LMS Lecture Notes, 337,
Cambridge Univ. Press, 2006.

\bibitem[ADK1]{ADK} S. A. Argyros, P. Dodos and V. Kanellopoulos,
\textit{Tree structures associated to a family of functions}, J.
Symbolic Logic, 70(2005), 681-695.

\bibitem[ADK2]{ADKbanach} S. A. Argyros, P. Dodos and V. Kanellopoulos,
\textit{Unconditional families in Banach spaces}, Math. Ann. (to appear).

\bibitem[AFK]{AFK} S. A. Argyros, V. Felouzis and V. Kanellopoulos,
\textit{A proof of Halpern-L\"{a}uchli Partition Theorem}, Europ.
J. Comb., 23(2002), 1-10.

\bibitem[AGR]{AGR} S. A. Argyros, G. Godefroy and H. P. Rosenthal,
\textit{Descriptive Set Theory and Banach spaces}, Handbook of the
Geometry of Banach Spaces, Vol. 2, edited by W. B. Johnson and J.
Lindenstrauss, Elsevier, 2003.

\bibitem[AT]{AT} S. A. Argyros and A. Tolias, \textit{Methods in the Theory
of Hereditarily Indecomposable Banach spaces}, Memoirs AMS, 806, 2004.

\bibitem[Bl]{Bl} A. Blass, \textit{A partition for perfect sets},
Proc. AMS, 82(1981), 271-277.

\bibitem[B]{B} J. Bourgain, \textit{On convergent sequences of continuous
functions}, Bulletin de la Soci\'et\'e Math\'ematique de Belgique,
32(1980), 235-249.

\bibitem[BFT]{BFT} J. Bourgain, D. H. Fremlin and M. Talagrand, \textit{Pointwise
compact sets of Baire-measurable functions}, Amer. J. Math., 100(1978), 845-886.

\bibitem[De]{De} G. Debs, \textit{Effective properties in compact sets of
Borel functions}, Mathematika, 34(1987), 64-68.

\bibitem[Do]{D} P. Dodos, \textit{Codings of separable compact subsets of
the first Baire class}, Annals of Pure and Appl. Logic, 142(2006), 425-441.

\bibitem[E]{E} R. Engelking, \textit{General Topology}, Sigma Series in
Pure Math., Heldermann Verlag, Berlin, 1989.

\bibitem[F]{F} J. Farahat, \textit{Espaces be Banach
contenant $\ell_1$ d' apres H. P. Rosenthal}, Seminaire
Maurey-Schwartz, Ecole Polytechnique, 1973-74.

\bibitem[Go]{Go} G. Godefroy, \textit{Compacts de Rosenthal}, Pacific J. Math.,
91(1980), 293-306.

\bibitem[GL]{GL} G. Godefroy and A. Louveau, \textit{Axioms of
determinacy and biorthogonal systems}, Israel J. Math., 67(1989), 109-116.

\bibitem[GT]{GT} G. Godefroy and M. Talagrand, \textit{Espaces de
Banach representables}, Israel J. Math. 41(1982), 321-330.

\bibitem[HL]{HL} J. D. Halpern and H. L\"{a}uchli, \textit{A partition
theorem}, Trans. AMS, 124(1966), 360-367.

\bibitem[Ka]{Ka} V. Kanellopoulos, \textit{Ramsey families of substrees of the
dyadic tree}, Trans. AMS, 357(2005), 3865-3886.

\bibitem[Ke]{Kechris} A. S. Kechris, \textit{Classical Descriptive Set Theory},
Grad. Texts in Math., 156, Springer-Verlag, 1995.

\bibitem[KL]{KL} A. S. Kechris and A. Louveau, \textit{A classification of Baire
class 1 functions}, Trans. AMS, 318(1990), 209-236.

\bibitem[Kr]{Kra} A. Krawczyk, \textit{Rosenthal compacta and analytic sets},
Proc. AMS, 115(1992), 1095-1100.

\bibitem[LSV]{LSV} A. Louveau, S. Shelah and B. Veli\v{c}kovi\'{c}, \textit{Borel
partitions of infinite subtrees of a perfect tree}, Annals Pure and Appl. Logic,
63(1993), 271-281.

\bibitem[Ma]{Ma} W. Marciszewski, \textit{On a classification of pointwise compact
sets of the first Baire class functions}, Fund. Math. 133(1989), 195-209.

\bibitem[Mi]{Mi} A. W. Miller, \textit{Infinite combinatorics and definability},
Annals Pure Appl. Logic, 41(1989), 179-203.

\bibitem[Mil1]{Mil} K. Milliken, \textit{Ramsey's theorem with sums and
unions}, J. Combin. Theory (A), 18(1975), 276-290.

\bibitem[Mil2]{Mil2} K. Milliken, \textit{A partition theorem for the
infinite subtrees of a tree}, Trans. AMS, 263(1981), 137-148.

\bibitem[Od]{Od} E. Odell, \textit{Applications of Ramsey theorems to
Banach space theory}, Notes in Banach spaces, 379-404, Univ. Texas Press,
Austin, Texas, 1980.

\bibitem[Pa]{Pa} J. Pawlikowski, \textit{Parametrized Ellentuck Theorem}, Topol. Appl.,
37(1990), 65-73.

\bibitem[Po1]{Pol} R. Pol, \textit{On pointwise and weak topology in function spaces},
Warszaw University, 1984.

\bibitem[Po2]{Pol2} R. Pol, \textit{Note on compact sets of first Baire class functions},
Proc. AMS, 96(1986), 152-154.

\bibitem[Po3]{Pol3} R. Pol, \textit{Note on pointwise convergence of sequences
of analytic sets}, Mathematika, 36(1989), 290-300.

\bibitem[Ro1]{Ro-ell1} H. P. Rosenthal, \textit{A characterization of Banach spaces
containing $\ell_1$}, Proc. Nat. Acad. Sci. USA, 71(1974), 2411-2413.

\bibitem[Ro2]{Ro1} H. P. Rosenthal, \textit{Pointwise compact subsets of the
first Baire class}, Amer. J. Math., 99(1977), 362-378.

\bibitem[Si]{Si} J. H. Silver, \textit{Every analytic set is Ramsey}, Journal Symb. Logic.,
35(1970), 60-64.

\bibitem[St]{St} C. Stegall, \textit{The Radon-Nikodym property in conjugate Banach spaces},
Trans. AMS, 206(1975), 213-223.

\bibitem[Ste]{S} J. Stern, \textit{A Ramsey theorem for trees with an application
to Banach spaces}, Israel J. Math., 29(1978), 179-188.

\bibitem[To1]{To1} S. Todor\v{c}evi\'{c}, \textit{Compact subsets of the first Baire class},
Journal AMS, 12(1999), 1179-1212.

\bibitem[To2]{To2} S. Todor\v{c}evi\'{c}, \textit{Introduction to Ramsey spaces} (to appear).

\bibitem[To3]{To3} S. Todor\v{c}evi\'{c}, e-mail communication.

\end{thebibliography}
\end{document}